\definecolor{deepgreen}{cmyk}{1,0,1,0.5}
\definecolor{violetrev}{rgb}{0.55,0.0,0.7}
\newcommand{\Del}[1]{}
\numberwithin{equation}{section}
\newtheorem{theorem}{Theorem}[section]
\newtheorem{corollary}[theorem]{Corollary}%[section]
\newtheorem{lemma}[theorem]{Lemma}%[section]
\newtheorem{proposition}[theorem]{Proposition}%[section]
\newtheorem{claim}[theorem]{Claim}%[section]
\newtheorem{remark}[theorem]{Remark}%[section]
\newcommand{\norm}[1]{\Vert#1\Vert}
\newcommand{\set}[1]{\{#1\}}
\newcommand{\angles}[2]{\langle #1,#2\rangle}
\newcommand{\be}{\begin{equation}}
\newcommand{\ee}{\end{equation}}
\newcommand{\bpr}{\begin{proof}}
\newcommand{\epr}{\end{proof}}
\newcommand{\bel}{\begin{equation}\label}
\newcommand{\eeq}{\end{equation}}
\newcommand{\ba}{\begin{aligned}}
\newcommand{\ea}{\end{aligned}}
\newcommand{\bpm}{\begin{pmatrix}}
\newcommand{\epm}{\end{pmatrix}}
\newcommand{\bp}{\begin{pmatrix}}
\newcommand{\ep}{\end{pmatrix}}
\newcommand{\bea}{\begin{eqnarray}}
\newcommand{\eea}{\end{eqnarray}}
\newcommand{\bee}{\begin{eqnarray*}}
\newcommand{\eee}{\end{eqnarray*}}
\newcommand{\ben}{\begin{enumerate}}
\newcommand{\een}{\end{enumerate}}
\newcommand{\sgn}{{\mathrm{sgn}}}
\newcommand{\bfu}{{\bf u}}
\newcommand{\bfw}{{\bf w}}
\newcommand{\bfH}{{\bf H}}
\newcommand{\bfY}{{\bf Y}}
\newcommand{\bfZ}{{\bf Z}}
\newcommand{\bfphi}{\mathbf{\phi}}
\newcommand{\bark}{{\overline k}}
\newcommand{\barx}{{\overline x}}
\renewcommand{\hbar}{{\underline h}}
\newcommand{\kbar}{{\underline k}}
\newcommand{\calA}{\mathcal A}
\newcommand{\calB}{\mathcal B}
\newcommand{\calC}{\mathcal C}
\newcommand{\calG}{\mathcal G}
\newcommand{\calH}{\mathcal H}
\newcommand{\calI}{\mathcal I}
\newcommand{\calJ}{\mathcal J}
\newcommand{\calK}{\mathcal K}
\newcommand{\calL}{\mathcal L}
\newcommand{\calM}{\mathcal M}
\newcommand{\calO}{\mathcal O}
\newcommand{\calY}{\mathcal Y}
\newcommand{\tila}{{\tilde{a}}}
\newcommand{\tilb}{{\tilde{b}}}
\newcommand{\wtilf}{{\widetilde{f}}}
\newcommand{\tilu}{{\tilde{u}}}
\newcommand{\tilw}{{\tilde{w}}}
\newcommand{\tilx}{{\tilde{x}}}
\newcommand{\wtilC}{{\widetilde{C}}}
\newcommand{\wtilH}{{\widetilde{H}}}
\newcommand{\wtilJ}{{\widetilde{J}}}
\newcommand{\wtilL}{{\widetilde{L}}}
\newcommand{\tilN}{{\tilde{N}}}
\newcommand{\wtilQ}{{\widetilde{Q}}}
\newcommand{\checka}{{\check{a}}}
\newcommand{\checkb}{{\check{b}}}
\newcommand{\checku}{{\check{u}}}
\newcommand{\checkN}{{\check{N}}}
\DeclareMathOperator{\arcosh}{arcosh}
\newcommand{\px}{\partial_x}
\newcommand{\pt}{\partial_t}
\newcommand{\bfvareps}{\boldsymbol{\varepsilon}}
\DeclareMathOperator{\sech}{sech}
\newcommand{\R}{{\mathbb R}}
\newcommand{\N}{{\mathbb N}}
\newcommand{\eps}{{\varepsilon}}
\providecommand{\MR}{\relax\ifhmode\unskip\space\fi MR }
\providecommand{\href}[2]{#2}
\begin{document}

\title[Yang-Mills kink dynamics]{Kink dynamics for the Yang-Mills field in an extremal Reissner-Nordstr\"om black hole}

\author{Ignacio Acevedo}
\address{Laboratoire de Math\'{e}matiques d’Orsay (UMR 8628), Universit\'{e} Paris-Saclay, CNRS, 91405 Orsay Cedex, France.}
\email{ignacio-andres.acevedo-ramos@cnrs.fr}

\thanks{
	I.A. was partially funded by Chilean research grant ANID/MAGISTER NACIONAL/2022-22221702, Fondecyt 1231250 and ANID Exploraci\'on 13220060. 
}

\author[Claudio Mu\~noz]{Claudio Mu\~noz}  
\address{Departamento de Ingenier\'{\i}a Matem\'atica and Centro
	de Modelamiento Matem\'atico (UMI 2807 CNRS), Universidad de Chile, Casilla
	170 Correo 3, Santiago, Chile.}
\email{cmunoz@dim.uchile.cl}
\thanks{C.M. was partially funded by Chilean research grants FONDECYT 1191412, 1231250, ANID Exploraci\'on 13220060 and Basal CMM FB210005.}

\begin{abstract}
	Considered in this work is the Yang-Mills field in an extremal Reissner-Nordstr\"om black hole, a physically motivated mathematical model introduced by Bizo\'n and Kahl. The kink is a fundamental, strongly unstable stationary solution in this non-perturbative, variable coefficients model, with a polynomial tail and no explicit form. In this paper, we introduce and extend several virial techniques, adapt them to the inhomogeneous medium setting, and construct a finite codimensional manifold of the energy space where the kink is asymptotically stable. In particular, we handle, using virial techniques, the emergence of a weak threshold resonance in the description of the stable manifold.
\end{abstract}

\subjclass[2020]{Primary 35L70, 35B40, 37K40; Secondary 70S15, 83C57}
\keywords{Yang-Mills, Reissner-Nordstr\"om, black hole, kink, asymptotic stability}

\maketitle 

\tableofcontents

\section{Introduction}

\subsection{Setting} 
The exterior of the extremal Reissner-Nordstr\"om black hole is a globally hyperbolic static spacetime $(\mathcal M,\hat g)$ with metric
\[
 \hat g := -\left( 1-\frac{M}r\right)^2dt^2+ \left( 1-\frac{M}r\right)^{-2} dr^2 +r^2 (d\theta^2 + \sin^2\theta d\phi^2),
\]
with $t\in\mathbb R$, $r>M$, $(\theta,\phi)\in \mathbb S^2$, and $M>0$ a positive constant. Extremal black holes have recently  become of great importance in Physics and Astronomy because it is believed that supermassive black holes in the center of galaxies are precisely characterized by extremal or near to extremal properties \cite{Daly}. Under the change of variables $\tau =\frac t{4M} \in \mathbb R$, $x= \log \left( \frac rM-1 \right) \in \mathbb R$, $\hat g = 16M^2 (1+e^{-x})^{-2} g$, a geodesically complete spacetime $(\mathcal{M}, g)$ is obtained, where the metric $g$ is given by ($\tau=t$ by simplicity)
\begin{equation*}
	g = -dt^2 + \cosh^4 \left(\frac x2\right)(dx^2 + d\theta^2 + \sin^2\theta d\varphi^2).
\end{equation*}
In \cite{BizKah16}, Bizo\'n and Kahl studied the static solutions of a Yang-Mills field placed in the exterior of an extremal Reissner-Nordstr\"om black hole defined by $g$ (see also \cite{BizChmRosR07} for previous work in the case of other black holes).  Proposing a spherically symmetric and purely magnetic $\text{SU}(2)$ Yang-Mills field propagating in $(\mathcal{M}, g)$ and having the specific form
\begin{equation*}
	A(t, x) = \varphi(t, x)\omega(\tau_1, \tau_2) + \tau_3 \cos\theta d\varphi,
\end{equation*}
where $\omega(\tau_1, \tau_2) = \tau_1 d\theta + \tau_2 \sin\theta d\varphi$ and $\varphi =\varphi(t,x)$ is a real scalar field. Here, $\{\tau_k\}_{k=1}^3$ are the $2\times 2$ complex matrix generators of $SU(2)$ such that $[\tau_k,\tau_l]=i \epsilon_{klm}\tau_m$. With this ansatz, we obtain the variable coefficients Lagrangian density
\begin{equation}\label{eq:lagrangian_density}
\ba
	& \mathcal{L}[x,\varphi, \partial_x\varphi,\partial_t \varphi]  = -\frac12 \cosh^2 \left(\frac x2\right) (\partial_t \varphi)^2 + \frac12 \sech^2 \left(\frac x2\right) \left(  (\partial_x \varphi)^2 + \frac12 (1-\varphi^2)^2 \right).
\ea
\end{equation}
The associated Euler-Lagrange equation  for the field $\varphi$, equivalent to the associated Yang-Mills model, is given by
\begin{equation}\label{eq:EL}
	\partial_t^2 \varphi - Q\partial_x (Q\partial_x \varphi) + Q^2 (\varphi^2 - 1)\varphi = 0,
\end{equation}
obtained after the time rescaling $\varphi(t,x)  \mapsto \varphi(\frac32 t, x)$, where $Q$ is the standard KdV soliton:
\begin{equation}\label{Q}
	Q(x) := \frac 32\sech^2\left(\frac x2\right).
\end{equation} 
Unlike standard scalar field models, \eqref{eq:EL} has neither Lorentz nor space translation invariance, and the theory of asymptotic stability developed in \cite{KMMV20} does not apply. However, the time translation invariance induces a Hamiltonian structure. Indeed, from the Lagrangian density \eqref{eq:lagrangian_density}, the \emph{energy}
\begin{equation*}
	\mathcal E[\varphi, \varphi_t] = \int \left( \frac12 Q^{-1}(\partial_t \varphi)^2 + \frac12 Q \left( (\partial_x \varphi)^2 + \frac12(1 - \varphi^2)^2 \right) \right)dx
\end{equation*}
is formally conserved along the flow, thanks to the associated continuity equation
\begin{equation*}
	Q^{-1}\partial_t (\partial_t \varphi)^2 + Q \partial_t \left( (\partial_x \varphi)^2 + \frac12(1 - \varphi^2)^2 \right) + \partial_x (Q \partial_t \varphi \partial_x \varphi) = 0.
\end{equation*}
Since there is no space translation invariance over the system, there is a lack of conservation for the natural physical momentum
\begin{equation}\label{momentum_law}
	\mathcal P[\varphi, \partial_t \varphi] = \int Q^{-1}\partial_t\varphi \partial_x\varphi dx.
\end{equation}	
However, a particular version of this quantity will be essential for the proof of our main results.

\subsection{Kinks}
Static solutions $H=H(x)$ of \eqref{eq:EL} solve
\begin{equation}\label{eq:static-EL}
	H'' - \tanh\left(\frac{x}{2} \right)H' + H(1-H^2) = 0, \quad x\in\mathbb R. 
\end{equation}
The first non-trivial solution to this equation is given by  \cite{BizKah16}
\begin{equation}\label{eq:H}
	H(x)  = \tanh\left(\frac{x}{2}\right)\,.
\end{equation}
We call $\bfH = (H, 0)$ the \textit{kink} associated to this model. The physical meaning of kinks and their key importance in High Energy Physics and General Relativity have been described in detail in the literature, the reader can consult the monographs \cite{ManSut04,Vac06,Sre07}. The mathematical structure of kink solutions has achieved an impressive knowledge during the past years. Among them, the kink of the integrable sine-Gordon equation has garnered attention due to its complexity and the absence of kink asymptotic stability in the energy space \cite{MP,AMP,CLL,LuS1,ChenLu}. See \cite{KMM17_Survey,CucMae_survey,Germain} for detailed surveys on the long-time behavior and asymptotic of nonlinear waves.  

More generally, in \cite{BizKah16} a countable family of time-independent smooth finite energy solutions $H_n(x)$, $n\geq 0$ of \eqref{eq:static-EL} was found. These are characterized by $H_0:=1$, $H_1=H$, with $H_n$ having $n$ zeros, $|H_n(x)|<1$ for all $x$, $\lim_{|x|\to \infty} |H_n(x)| =1$, $H_n$ is even (odd) for even (odd) $n$, and $\lim_{n\to +\infty} H_n(x)=0$. They also provided strong evidence that $L_n$, the linearized operator at the ``kink'' $H_n$, has exactly $n$ negative eigenvalues. Finally, they introduced the hyperboloidal formulation $s= t -\frac12 (\cosh x +\log (2\cosh x)),$ $z=\tanh \left(\frac{x}2\right)$ for the variable coefficients nonlinear wave problem and proved that, after a compactification of space, there is a decreasing energy. In these coordinates, $H_1(x) =H(x)=z$.

Following Bizo\'n and Kahl \cite{BizKah16}, we introduce the function 
\begin{equation}\label{eq:alpha}
	\alpha(x) = \frac13 (\sinh x + x),
\end{equation}
which is strictly monotone and bijective from $\mathbb R$ onto itself. Its inverse function, denoted $\alpha^{-1}$, has no exact closed form and exhibits only logarithmic growth. Define the distorted soliton and kink as
\begin{equation}\label{tilde Q tilde H}
	\wtilQ(x) = Q(\alpha^{-1}(x)), \qquad \wtilH(x) = H(\alpha^{-1}(x)),
\end{equation}
with $Q$ and $H$ as in \eqref{Q} and \eqref{eq:H}, respectively. Both functions have only polynomial rate of convergence at infinity, with
\begin{equation}\label{tails}
0\leq \wtilQ(x) \lesssim \frac{1}{|x|}, \qquad \left| \wtilH (x)\mp 1 \right| \lesssim \frac{1}{|x|}, \qquad \hbox{as}\quad  x\to \pm \infty.
\end{equation}
If $\varphi$ is a solution of the equation \eqref{eq:EL}, then $\phi = \varphi \circ \alpha^{-1}$ solves
\begin{equation}\label{eq:varEL}
	\partial_{t}^2 \phi - \partial_x^2 \phi - \wtilQ^2 (\phi -\phi^3) = 0.
\end{equation}  
Let $\boldsymbol{\phi} = (\phi, \partial_t \phi) = (\phi_1, \phi_2)$. Then \eqref{eq:varEL} becomes
\begin{equation}\label{eq:varELsystem}
	\begin{cases}
		\partial_t\phi_1 = \phi_2 \\[0.2cm]
		\partial_t\phi_2 = \partial_x^2 \phi_1 + \wtilQ^2(x) (1-\phi_1^2) \phi_1 .
	\end{cases}
\end{equation}
Notice that $\widetilde{\bfH}=(\widetilde H,0)$ is an exact solution to this model. The conserved energy reads now
\begin{equation}\label{eq:energy}
	E[\phi_1, \phi_2] =\frac12  \int  \left( \phi_2^2 + (\partial_x \phi_1)^2 + \frac12 \wtilQ^2(1-\phi_1^2)^2 \right)dx.
\end{equation}
The states $(\pm 1,0)$ are global minima of $E[\phi_1, \phi_2]$: $0 = E[\pm 1,0] < E[\widetilde H,0] = \frac12 \int Q H'^2 +\frac14 \int Q(1-H^2)^2 =\frac65.$  Due to the dissipation of energy by dispersion, solutions of the system \eqref{eq:varELsystem} are expected to settle down to critical points of the potential energy. The energy is well-defined for the set of functions 
\[
\begin{aligned}
	\bold{E} := &~{} \Big\{\boldsymbol{\phi}=(\phi_1,\phi_2)\in (L^1_{\textup{loc}}(\R))^2 : \ \px \phi_1 \in L^2(\R), \  \wtilQ(1 - \phi_1^2) \in L^2(\R) \ , \ \phi_2 \in L^2(\R) \Big\}.
\end{aligned}
\]
To study the stability of $\widetilde\bfH$, we introduce the following metric structure. We consider the weighted Sobolev space
\[
H_0(\R) := \left\{ \phi \in L^1_{\textup{loc}}(\R) : \ \px\phi \in L^2(\R), ~ \wtilQ \phi \in L^2(\R) \right\}.
\]
which we endow with the Hilbert norm
\[
\|\phi\|_{H_0(\R)}^2 := \|\px\phi\|_{L^2(\R)}^2 + \|\wtilQ \phi\|_{L^2(\R)}^2.
\]
Due to the equivalence of norms in Lemma \ref{claim:Linfty}, the rough estimate $|\phi(x)| \leq |\phi(0)| + \|\px \phi\|_{L^2}|x|^{\frac12}$, and the polynomial decay of $\wtilQ$ in \eqref{tails}, we have that the energy space $\bold{E}$ appears as the subset of $H_0(\R)\times L^2(\R)$ given by
\[
\bold{E} =  \Big\{\boldsymbol{\phi}=(\phi_1,\phi_2)\in H_0(\R)\times L^2(\R) : \ \wtilQ(1 - \phi_1^2) \in L^2(\R) \Big\}.
\]
We endow the energy space with the metric structure given by
\begin{equation}\label{energy_norm}
	\|\boldsymbol{\phi}\|_{(H_0\times L^2)(\R)}^2 := \| \phi_1\|_{H_0(\R)}^2 +\| \phi_2\|_{L^2(\R)}^2.
\end{equation}
Notice that the energy norm $\|\cdot \|_{(H_0\times L^2)(\R)}^2$ need not be similar to the standard $H^1\times L^2$ norm. In particular, perturbations of the kink need not be necessarily bounded in space. By standard fixed-point arguments, the system \eqref{eq:varELsystem} is locally well-posed for arbitrary finite energy data; however, the global existence of solutions for initial data with small energy is not obvious. In what follows, we refers to \textit{global solution} of \eqref{eq:varELsystem} to a function $ \mathbf{\phi}\in C([0,\infty); \bold{E})$ that satisfies \eqref{eq:varELsystem} for all $t\geq 0$.

\subsection{Main results} In this work we shall address three main objectives. First, to analyze the long time evolution and stability of the Bizo\'n and Kahl \cite{BizKah16} 1D kink emerging in the setting of the Yang-Mills field in the extremal Reissner-Nordstr\"om black hole. Second, to describe the long time behavior of kinks in a non perturbative, inhomogeneous medium represented by a variable coefficients setting, with no restriction on the data except their perturbative character. Finally, we aim to describe the dynamics of a kink only presenting a polynomial tail.

Our main result establishes that, for globally defined perturbations of the kink $\widetilde{\bfH}$, stability in the energy space $\bold{E}$ (see \eqref{energy_norm}) implies asymptotic stability in a spatially localized energy norm.

\begin{theorem}\label{th:Main}
	There exists $\delta>0$  such that if a global solution $\boldsymbol{\phi} \in  \mathbf{E}$ of \eqref{eq:varELsystem} satisfies
	\begin{equation}\label{eq:vicinity}
	\sup_{t\geq 0} \left\|\boldsymbol{\phi}(t) - \widetilde{\bfH} \right\|_{(H_0\times L^2)(\R)}  < \delta,
	\end{equation}
	then for any $I$ bounded interval in $\mathbb R$,
	\begin{equation}\label{eq:AS}
		\lim_{t\to\infty}  \left\| \boldsymbol{\phi} (t) -\widetilde{\bfH} \right\|_{(H^1\times L^2)(I) } = 0\, .
	\end{equation}
\end{theorem}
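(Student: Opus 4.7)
The plan is to implement a Kowalczyk--Martel--Mu\~noz-type virial strategy, adapted to the inhomogeneous medium encoded by $\wtilQ$ and to the peculiarities (polynomial tails, threshold resonance) of this model. First I would set $\phi_1 = \wtilH + u$, $\phi_2 = v$ and linearize \eqref{eq:varELsystem} around $\widetilde{\bfH}$, obtaining $\pt u = v$ and $\pt v = \px^2 u - V u + N(u)$ with potential $V = \wtilQ^{2}(3\wtilH^{2}-1)$ and nonlinear remainder $N(u) = -\wtilQ^{2}(3\wtilH\, u^2 + u^3)$. The assumption \eqref{eq:vicinity} yields uniform smallness of $(u,v)$ in the energy norm; however, the spectral picture recalled from \cite{BizKah16} (the operator $\calL = -\px^2 + V$ at $H_1$ has exactly one negative eigenvalue $-\lambda^2$) forces a preliminary modulation. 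I would write $u(t) = a(t)\,Y_- + z(t)$ with $\angles{z(t)}{Y_-}=0$ and $Y_-$ the normalized negative eigenfunction; the ODE satisfied by $a$ has the form $\ddot a = \lambda^2 a + \text{(quadratic remainder in $z$)}$, and the uniform boundedness enforced by \eqref{eq:vicinity} traps $a(t)$ in the stable manifold of this linear ODE, giving $a(t) \to 0$ as $t\to\infty$.

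The core of the argument is to extract integrated local decay for $z$ through two virial identities. The first virial works directly on $(z,v)$ with a functional of the form $\calI_1(t) = \int \zeta_A(x)\, z\, v\, dx$ (plus orthogonality corrections), where $\zeta_A$ is a bounded, slowly-varying weight tuned to the logarithmic scale of $\alpha^{-1}$. Differentiating in time and integrating by parts produces, once the negative-mode contribution is discarded by the decomposition and the modulation equation is inserted, a coercive quadratic form in $z$ modulo the zero-energy resonance direction; integrating in time then controls a weighted local $H^1$ norm of $z$. The second virial is performed after a Darboux-type transformation that factors out the bound state of $\calL$, reducing the dynamics to a wave equation whose potential has no negative eigenvalue; a Morawetz-type functional in the transformed variable supplies the derivative and kinetic-energy control for $v$ away from the kink, which the first virial cannot reach.

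The main obstacle, flagged in the abstract, is the \emph{weak threshold resonance} at the edge of the spectrum of the conjugated operator: it survives the Darboux step and obstructs the standard coercivity needed for the second virial. I would resolve it by (i) introducing an extra orthogonality parameter that projects $z$ off the resonance profile, and tracking its (harmless, quadratic-driven) modulation equation, and (ii) refining the virial weight with an explicit correction adapted to the resonance, so that the contribution of the resonance direction to the time-derivative of the second virial is integrable and absorbable into the coercive part. A persistent secondary difficulty is that by \eqref{tails} the distortions $\wtilQ$ and $\wtilH\mp 1$ decay only polynomially, sharply slower than the exponential tails assumed in \cite{KMMV20}; consequently every cutoff scale must be chosen with logarithmic slack and every error term must be tracked with weighted $L^2$ norms compatible with this slow decay and with the energy norm $\|\cdot\|_{(H_0\times L^2)(\R)}$ used in \eqref{eq:vicinity}.

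Combining the time-integrated estimates produced by the two virials with $a(t)\to 0$ and a standard localized continuity argument on the perturbation energy, one deduces that for every bounded $I\subset\R$,
\[
\lim_{t\to\infty}\|\boldsymbol{\phi}(t)-\widetilde{\bfH}\|_{(H^1\times L^2)(I)} = 0,
\]
which is exactly \eqref{eq:AS}.
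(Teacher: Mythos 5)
Your plan sits in the right family (double virial, Darboux conjugation, treatment of the unstable mode) but has several gaps that would prevent it from closing.

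First, the claim that the boundedness forced by \eqref{eq:vicinity} ``traps $a(t)$ in the stable manifold of the linear ODE, giving $a(t)\to 0$'' is circular. The forcing in $\ddot a = \lambda^2 a + R$ is quadratic in $z$, and the decay of $z$ is precisely what is to be proven; boundedness of $a$ alone does not give $a\to0$ unless you already know $R\to 0$. The paper closes this loop by a different, non-heuristic mechanism: it converts the unstable-mode dynamics into the algebraic quantity $\mathcal B = b_+^2 - b_-^2$ (see \eqref{eq:calB}), shows $\frac{d}{dt}\mathcal{B}\geq \frac{\mu_0}{2}(a_1^2+a_2^2) - C\int\wtilQ^3 w_1^2$, and then combines $\mathcal{B}$ with the virial functional $\mathcal{H}$ so that the sum is monotone and bounded. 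This yields time-integrability of $a_1^2+a_2^2+\int \wtilQ[(\partial_x w_1)^2+\wtilQ^2 w_1^2]$, which is subsequently upgraded to pointwise decay via the auxiliary functionals $\mathcal K$ and $\mathcal G$. This monotonicity argument is the actual substitute for your ``trapping'' heuristic, and it is a substantive piece that your sketch omits.

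Second, you never address what the paper flags as the central difficulty: in $H_0\times L^2$ the perturbation has no $L^\infty$ smallness, so the quadratic and cubic terms in the nonlinearity are a priori the \emph{same} size as the linear terms, and cannot be absorbed by the usual $\|u\|_{L^\infty}\ll 1$ bookkeeping. The paper's first virial (Lemma~\ref{lem:virial I non linear}, identity \eqref{eq:I34est}) is engineered to exhibit hidden positivity in the cubic/quartic pieces — the nonlinear contributions are reorganized by completing squares so that only the extra good-sign terms $\frac{1}{72}\int\wtilQ^3\wtilH^2 w_1^2$ and $\frac49\int\wtilQ^3|\varphi_A\wtilH|\wtilH^2 u_1^2$ remain, and these are fed \emph{into} the quadratic part of the identity rather than treated as small errors. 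Without something playing this role, a naive virial estimate on \eqref{eq:BKsystem} fails.

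Third, your treatment of the threshold resonance — extra orthogonality and a refined virial weight — is not what the paper does for Theorem~\ref{th:Main}. The paper explicitly avoids decomposing $u_1$ into resonant and non-resonant parts for the asymptotic-stability result; that decomposition ($u_1 = a\widetilde H + \tilde u_1$) is only invoked in Section~\ref{sec:proof_manifold} for Theorem~\ref{th:Manifold}. For Theorem~\ref{th:Main} the resonance is handled implicitly by working in $\wtilQ$-weighted norms, in which the resonance profile $\wtilH$ has finite weight and does not obstruct the time-integrated local decay. Finally, two smaller but real omissions: your first virial $\int \zeta_A z v$ is an $L^2$-type functional whose time derivative requires spectral positivity of $L$ that is not available (there is no spectral gap), whereas the paper uses the transport-type functional $\mathcal{I}=\int(\varphi_A\partial_x u_1+\frac12\varphi_A' u_1)u_2$ which produces repulsivity directly; and you do not mention the regularization $(1-\gamma\partial_x^2)^{-1}$ that is needed after Darboux to control the loss of one derivative introduced by $U$.
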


Theorem \ref{th:Main} can be recast as the local asymptotic stability of the variable coefficients, unstable kink $H$. Compared with the classical $\phi^4$ model studied in \cite{KMM2017, KM2022, CucMae} through the use of virial identities, the $L^\infty$ norm of the perturbation is not \emph{globally in space} small in principle, meaning that nonlinear terms are as large as the linear ones: the contribution of nonlinear terms has to be measured equally with linear ones. 

\medskip

The case of kinks in variable coefficients scalar field models was first studied by Snelson in the $\phi^4$ case \cite{Snelson}, see also the recent results by Alammari and Snelson \cite{AS1,AS2} for general scalar field models around the zero solution. In this paper, Theorem \ref{th:Main} refers to the asymptotic stability of an unstable kink in a slowly decaying in space setting. In particular, the spectral theory of variable coefficients operators cannot be taken for granted, and it is independently performed in Section \ref{A:LINEAL-SPECTRAL-THEORY}.

Restricted to the constant coefficients case, kinks are better understood. Cuccagna \cite{Cuc} studied the stability of the $\phi^4$ kink in 3D using vector field methods. Komech and Kopylova \cite{KK1,KK2} established the asymptotic stability of kinks in highly degenerate scalar field theories under higher order weighted norms. Delort and Masmoudi \cite{DM} utilized Fourier analysis techniques to provide detailed asymptotics for odd perturbations of the kink up to times of order $O(\varepsilon^{-4})$, where $\varepsilon$ represents the size of the perturbation. It is worth noting that the analysis in \cite{KMM2017} was limited to odd data, and the stability in the general case remains an open question. In \cite{KMMV20}, a condition was proposed to describe the long-term dynamics of kink perturbations for any data in the energy space, encompassing many models of interest in Quantum Field Theory \cite{Lohe}, excluding the sine-Gordon and $\phi^4$ models. However, the modulation of kinks in terms of scaling and shifts in this scenario complicates computations. Cuccagna and Maeda introduced a new sufficient condition for asymptotic stability in the case of odd data \cite{CucMae}. 

Let us review some relevant works related to the Yang-Mills mathematical theory. Chen-Ning Yang and Robert Mills presented the first concepts of a gauge theory for non-abelian groups that could explain strong interactions in Physics \cite{YangMills54}. This constituted the beginning of the so-called Yang-Mills theory, present now in the foundations of the Standard Model, a theory that describes the interactions between fundamental particles. The global dynamics of a Yang-Mills field propagating in a 4-dimensional Minkowski spacetime is well-understood in the case of a smooth initial data \cite{EarMon82,Chris81}, as well as the global in time regularity in any globally hyperbolic 4-dimensional curved spacetime \cite{ChrSha97}. The hyperbolic energy critical case, where the instanton plays a threshold role, has been successfully addressed in a series of works \cite{OT1,OT2,OT3}.

Of particular interest is the comparison of the results presented in this paper with the energy critical equivariant reduction of the Yang-Mills model for a field $\phi=\phi(t,r)$ in 1+4 dimensions
\[
\partial_{t}^2 \phi - \partial_r^2 \phi -\frac1{r} \partial_r \phi - \frac{2}{r^2} (\phi -\phi^3) = 0, \quad t\in \mathbb R, r>0.
\]
The associated static solution (better known as the instanton) is explicit and given by $H(r)= \frac{1-r^2}{1+r^2}$. In this case, a precise stable blow up mechanism around the kink was showed in \cite{RR}, while other blow up rates are constructed in \cite{KST}. In this work, we construct an asymptotically stable manifold for $\wtilH$, but the understanding of a possible blow up mechanism outside this manifold remains an interesting open question. Conversely, our results open a path towards a better understanding of the (asymptotically) stable manifold for the equivariant Yang-Mills instanton $H(r)$, in the sense of giving a better description on the existence and characterization of this manifold.

\medskip

For the sake of completeness, and following the construction described in \cite{KMM19}, we provide an explicit description of a set of initial data leading to global solutions satisfying \eqref{eq:vicinity}. It turns out that, unlike other kinks \cite{HPW}, the linearized problem around $\wtilH$ has a strongly unstable direction \cite{BizKah16}. Let us consider a perturbation in \eqref{eq:varELsystem} over $\widetilde \bfH$ of the form $\boldsymbol{\phi} = \widetilde\bfH + \bfw$. Explicitly,
\[
\phi_1(t,x) = \wtilH(x) + w_1(t,x), \quad \phi_2(t,x) = w_2(t,x)\,.
\]
Then $\bfw$ satisfies the following system:
\begin{equation}\label{eq:BKsystem}
	\begin{cases}
		\partial_t w_1 = w_2 \\
		\partial_t w_2 = - Lw_1 - \wtilQ^2 ( 3\wtilH w_1^2 + w_1^3 ),
	\end{cases}
\end{equation}
where we have defined the linear operator
\begin{equation}\label{eq:L}
	Lw = -\partial_x^2 w + V(x) w, \quad \textup{with} \quad V(x) = 2\wtilQ^{2}(1- \wtilQ).
\end{equation}
Consequently, for the well-understanding of the problem we require to study the second order operator $L$. In Section \ref{A:LINEAL-SPECTRAL-THEORY} we will show that $L$ has an even eigenfunction $\phi_0(x)$ of unit norm, associated with the first simple and negative eigenvalue $-\mu_0^2 $ (numerically studied by Bizo\'n and Kahl in \cite{BizKah16}). Moreover, $\phi_0$ satisfies (Lemma \ref{lemma:Lproperties})
\begin{equation}\label{eq:properties-eigenvalL}
	L\phi_0 = -\mu_0^2\phi_0, \quad  \left| \partial_x^{k}\phi_0(x) \right| \lesssim e^{-\frac{\sqrt{2}}{2}\mu_0 |x|}, \ \ k =0, 1, 2.
\end{equation}
The negative eigenvalue of the linearized operator $L$ introduces exponentially stable and unstable modes for the dynamics in the neighborhood of the kink. Let
\begin{equation}\label{eq:pmY}
	\bold Y_{\pm} = \begin{pmatrix} \phi_0 \\ \pm \mu_0\phi_0 \end{pmatrix}, \qquad \bfZ_{\pm} = \begin{pmatrix} \phi_0 \\ \pm \mu_0^{-1}\phi_0 \end{pmatrix},
\end{equation}
and $\delta_0>0$, let $\calA_0$ be the manifold given by
\begin{equation}\label{eq:A0}
	\calA_0 = \big\{ \boldsymbol{\eps}
	\text{ such that }  \|\boldsymbol{\eps}\|_{(H_0\times L^2)(\R)}< \delta_0 \text{ and } \langle \boldsymbol{\eps}, \bfZ_+ \rangle = 0 \big\}.
\end{equation}
Notice that some work is required to ensure that $\langle \boldsymbol{\eps}, \bfZ_+ \rangle$ is well-defined, but \eqref{eq:pmY} and \eqref{eq:properties-eigenvalL} are sufficient to conclude. 

\begin{theorem}\label{th:Manifold}
There exist $C, \delta_0>0$ and a Lipschitz function $h:\calA_0\to \R$ with $h(0) = 0$ and $|h(\boldsymbol{\eps})|\leq C\|\boldsymbol{\eps}\|_{H_0\times L^2} ^{3/2}$, such that denoting
\begin{equation}\label{eq:calM}
\calM = \big\{ {\bf \wtilH} + \boldsymbol{\eps} + h(\boldsymbol{\eps})\bfY_+ \text{ with }\boldsymbol{\eps} \in \calA_0 \big\}
\end{equation}
the following holds:
\begin{enumerate}
	\item[(i)] If $(\phi, \pt\phi)(0)\in \calM$ then the solution $(\phi, \pt\phi)$ of \eqref{eq:varEL} with initial data $(\phi, \pt\phi)(0)$ is global and satisfies, for all $t\geq 0$,
	\begin{equation}\label{eq:cont initial data}
		\begin{aligned}
		\|\boldsymbol{\phi}(t) - \widetilde{\bfH} \|_{(H_0\times L^2)(\R)}  \leq  C \|\boldsymbol{\phi}(0) - \widetilde{\bfH} \|_{(H_0\times L^2)(\R)}.
		\end{aligned}
	\end{equation}
	\item[(ii)] If a global solution $\bfphi$ of \eqref{eq:varEL} satisfies, for all $t\geq0$,
	\[
	\|\boldsymbol{\phi}(t) - \widetilde{\bfH} \|_{(H_0\times L^2)(\R)}  \leq \frac{\delta_0}2, 
	\]
	then for all $t\geq0$, $(\phi, \partial_t \phi)(t)\in \calM$.
\end{enumerate}
\end{theorem}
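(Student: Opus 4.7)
The plan is to build $\calM$ as the graph, over $\calA_0$, of a Lyapunov--Perron type map coming from the projection onto the one-dimensional unstable eigenspace of the linearized operator. Setting $\bfw = \boldsymbol{\phi} - \widetilde{\bfH}$, the perturbation satisfies \eqref{eq:BKsystem}, driven by a first order generator $\calA$ on $H_0\times L^2$. A direct check using \eqref{eq:properties-eigenvalL} shows that $\bfY_\pm$ are eigenvectors of $\calA$ with eigenvalues $\pm\mu_0$, that $\bfZ_+$ is an eigenvector of $\calA^\ast$ with eigenvalue $\mu_0$, and that the biorthogonality relations $\langle \bfY_+,\bfZ_+\rangle = 2\|\phi_0\|_{L^2}^2$ and $\langle \bfY_-,\bfZ_+\rangle = 0$ hold. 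I would then split
\[
\bfw(t) = a(t)\,\bfY_+ + \bfw^\perp(t), \qquad a(t) := \tfrac{1}{2\|\phi_0\|_{L^2}^2}\langle \bfw(t),\bfZ_+\rangle,
\]
with $\bfw^\perp$ orthogonal to $\bfZ_+$. Projecting \eqref{eq:BKsystem} yields $\dot a = \mu_0 a + \calN_+[\bfw]$, where
\[
\calN_+[\bfw] \;=\; -\frac{1}{2\mu_0\|\phi_0\|_{L^2}^2}\,\big\langle \wtilQ^{\,2}\big(3\wtilH\,w_1^{\,2} + w_1^{\,3}\big),\phi_0\big\rangle,
\]
while the linearized flow on $\bfw^\perp$ preserves a quadratic form that, by the spectral analysis in Section \ref{A:LINEAL-SPECTRAL-THEORY} (nonnegativity of $L$ on $\phi_0^\perp$), is equivalent to $\|\bfw^\perp\|_{(H_0\times L^2)(\R)}^2$.

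Next, for any solution staying in an $H_0\times L^2$ ball the variation-of-constants formula for $a$ forces
\[
a(0) \;=\; -\int_0^\infty e^{-\mu_0 s}\,\calN_+[\bfw(s)]\,ds.
\]
This is the target identity of a fixed point scheme: given $\boldsymbol{\eps}\in\calA_0$, I would work in $X := C_b([0,\infty);H_0\times L^2)$ on a small ball of radius $C\|\boldsymbol{\eps}\|_{(H_0\times L^2)(\R)}$ and look for $\bfw\in X$ solving \eqref{eq:BKsystem} with initial datum $\bfw(0) = \boldsymbol{\eps} + h\bfY_+$, where $h$ is defined self-consistently by the integral above evaluated along $\bfw$. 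The exponential localization of $\phi_0$ from \eqref{eq:properties-eigenvalL} controls $\calN_+[\bfw]$ quadratically by $\|\bfw\|_X^2$, while the factor $e^{-\mu_0 s}$ carries the contraction despite the fact that $\bfw$ itself does not decay. Global existence of the iterate in $X$ comes by bootstrap: the a priori bound $|a(t)|\lesssim \|\boldsymbol{\eps}\|$ together with the conserved quadratic form on $\bfw^\perp$ keeps the trajectory in the ball, and a Banach fixed point argument produces $\bfw$ and, with it, the Lipschitz map $\boldsymbol{\eps}\mapsto h(\boldsymbol{\eps})$. The sharper estimate $|h(\boldsymbol{\eps})|\le C\|\boldsymbol{\eps}\|^{3/2}$ would be obtained by splitting $\calN_+[\bfw]$ into a part quadratic in $\bfw^\perp$ (whose $X$-norm is $O(\|\boldsymbol{\eps}\|)$ uniformly in time) and a part involving the unstable coordinate of size $|h|$, and interpolating against the slow center dynamics.

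Property (i) then follows by construction, and (ii) from the fact that any global solution satisfying $\|\boldsymbol{\phi}(t)-\widetilde{\bfH}\|\le\delta_0/2$ has uniformly bounded $a(\cdot)$, so the variation-of-constants formula forces $a(0)$ to equal the integral evaluated along that trajectory, i.e.\ $(\phi,\pt\phi)(0)\in\calM$; applying the same argument to each time-shifted trajectory $(\phi,\pt\phi)(\cdot + t_0)$ yields forward invariance of $\calM$. The main obstacle I anticipate is bounding $\calN_+[\bfw]$ in the genuine energy norm $(H_0\times L^2)(\R)$: unlike in the $\phi^4$ setting, perturbations in $H_0$ are not uniformly bounded in $L^\infty$, so the quadratic control on the source must rely entirely on the exponential decay of $\phi_0$ from \eqref{eq:properties-eigenvalL}, which turns $\wtilH w_1^{\,2}\phi_0$ into an integrable quantity controlled by $\|\bfw\|_{H_0\times L^2}^2$. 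A secondary subtlety, handled through Section \ref{A:LINEAL-SPECTRAL-THEORY}, is the coercivity of the conserved quadratic form on ${\bfZ_+}^\perp$ in the presence of the weak threshold resonance of $L$: one must still extract an equivalent norm on $\bfw^\perp$ despite this spectral degeneracy.
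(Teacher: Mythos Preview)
Your Lyapunov--Perron outline is natural, but there is a genuine gap at the step where you say ``the linearized flow on $\bfw^\perp$ preserves a quadratic form that \ldots\ is equivalent to $\|\bfw^\perp\|_{(H_0\times L^2)}^2$.'' No such conserved quadratic form exists here. The only conserved quantity is the full nonlinear energy, whose expansion around $\widetilde{\bfH}$ contains the cubic term $2\int\wtilQ^2\wtilH\,w_1^3$ (see \eqref{mL0}). This term is \emph{not} perturbatively small in $H_0$: from Claim~\ref{claim:Linfty} you only get $\|\wtilQ^{1/2}w_1\|_{L^\infty}\lesssim\|w_1\|_{H_0}$, and $\int\wtilQ^{3/2}w_1^2$ is not controlled by $\|w_1\|_{H_0}^2$ since $w_1$ may grow like $|x|^{1/2}$. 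So you cannot absorb the cubic into a small error and bootstrap, and the iterate is not a priori confined to a ball in $X$. The localization of $\phi_0$ saves $\calN_+[\bfw]$, as you note, but not the $\bfw^\perp$ evolution itself.

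The paper circumvents this by an algebraic, not perturbative, mechanism: completing the square in the quartic rewrites $\langle L u_1,u_1\rangle + \tfrac12\int\wtilQ^2(4\wtilH u_1^3+u_1^4)$ as $\langle\wtilL u_1,u_1\rangle + \tfrac12\int\wtilQ^2(u_1+2\wtilH)^2u_1^2$, with $\wtilL=-\partial_x^2-\tfrac23\wtilQ^3$ (see \eqref{mL}). The second term is manifestly nonnegative, but the price is that $\wtilL$ (not $L$) governs coercivity, and $\wtilL$ has a threshold resonance at $\wtilH$. This forces a further decomposition $u_1=a(t)\wtilH+\tilde u_1$ with two orthogonality conditions, and a separate coercivity lemma for $\wtilL$ (Lemma~\ref{lema_final}) together with a lower bound on $\int\wtilQ^2(u_1+2\wtilH)^2u_1^2$ to recover control of the resonant amplitude $a(t)$; only then does the energy yield the key estimate \eqref{eq:energy estimate}. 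With that in hand, the paper uses a shooting/bootstrap argument on $b_+(0)$ (Step~2 of Section~\ref{sec:proof_manifold}) rather than a contraction, and proves Lipschitz regularity and uniqueness a posteriori by comparing two global solutions. Your scheme could likely be salvaged once \eqref{eq:energy estimate} is established, but as written it misses the resonance decomposition and the hidden positivity that make the energy coercive.
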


Although it seems very similar to previous constructions done in \cite{KMM19,Mau23}, the proof of Theorem \ref{th:Manifold} requires important changes in the specific deep description of the manifold $ \calM$. We mention some of them in the following lines.

\subsection{Main difficulties} The proofs of Theorem \ref{th:Main} and \ref{th:Manifold} are mainly based on the previously published works \cite{KMM2017,KMM19,KMMV20,KM2022} whose main ingredient is the use of combined virial estimates to leverage the convergence of perturbations of the kink at large times. Despite the remarkable stability of this theory in many models, in this work we will require several improvements and/or extensions of this set of techniques due to the lack of important basic properties of the kink in the considered scalar field model, and that we proceed to explain now.

\medskip

{\it Lack of standard $L^\infty$ smallness}. Working with small 1D perturbations in the energy space $H^1\times L^2$ possesses several advantages, among them   the $L^\infty$ smallness that allows one in virial estimates to control quadratic and cubic nonlinear terms in terms of estimates for the linear ones. An important issue in this paper is related to the lack of suitable $L^\infty$ control on the perturbations. As a consequence of this fact, as far as we understand, nonlinear terms must be treated in estimates as elements with sizes as large as the linear ones. As an example, terms such as $\wtilQ^2 ( 3\wtilH w_1^2 + w_1^3 )$ in \eqref{eq:BKsystem} are as large as $Lw_1$. We have found a particular positivity structure in Bizo\'n-Kahl's problem, related to the quartic potential, and which becomes a key actor to either estimate nonlinearities jointly with linear terms as a whole, or to absorb them in terms of classical virial estimates. 

\medskip

{\it A degenerate energy}. Deeply related to the previous issue is the fact that the classical energy does not enjoy a natural coercivity structure as in standard kink problems. This is probably caused by the supercritical character of the problem, and it is both a fundamental and technical issue essentially saying that the second variation of the energy $E$ is in practice different to the bilinear operator represented by $L$, the latter being the case in classical scalar field models. We have found a correct representative for the energy around the kink $\wtilH$ for large scales, given by a modified linearization denoted $\widetilde L$ (see \eqref{mL}), an operator satisfying $\widetilde L <L$ (essentially strictly below $L$), under which the value of eigenvalues decrease, but an improved algebra appears: for example $\widetilde L\wtilH =0$. Additionally, $\widetilde L$ does not posses spectral gap, and coercivity estimates must be always placed in weighted spaces. Then, naturally $H_0$ becomes the correct space to describe the long time behavior.

\medskip

{\it Existence of a resonance}. Precisely, $\widetilde L$ is an operator with an ``$L^2$ threshold resonance'' at zero, with generalized eigenfunction $\wtilH$. This fact makes the decay analysis hard enough, since under $\langle \phi_0 , u \rangle =0$  one only has $\langle \widetilde L u , u \rangle \geq 0$, meaning that even in the energy space $\bold{E}$ the influence of the resonance is strong.  Even proving this last fact requires a delicate construction of solutions to the equation $\widetilde L\phi_1 = \phi_0$ and showing that $\langle \phi_1,\phi_0\rangle <0$. While doing this, we have realized two surprising findings: $\phi_1$ can be chosen even and in $L^2$ (despite $\widetilde L$ not having spectral gap), and $\phi_0$ is actually orthogonal to the full kernel of $\widetilde L$. 

Resonances induce natural weak instability directions and, as far as we know, have not been treated using virial methods. The reason is deeply related to the fact that local virial estimates ``feel'' resonances, even if they are outside the energy space. Additionally, resonances announce the existence of breathers, periodic in time solutions that contradict the asymptotic stability, for at least one possible nonlinearity in the model. This makes them complicated to handle with techniques only placed in the energy space. Here we propose a first direction to handling them for all times \emph{using just virial techniques}, namely for data in the energy space only. See also the works by Palacios and Pusateri \cite{PP} for an approach to resonances and asymptotic stability via mixed virial/distorted Fourier transform techniques in the case of nonlinear cubic Klein-Gordon up to exponentially large but finite time, and the recent work by Chen and Luhrmann on sine-Gordon considering the kink odd resonant mode in weighted Sobolev spaces \cite{ChenLu}. In our case, because of the resonance $\widetilde H$, orbital stability is not clear as in standard cases even under orthogonal conditions with respect to the negative eigenvalues (notice that shifts are not present here). Consequently, the presence of the resonance makes our setting more involved than the one studied in \cite{KMM19}. Indeed, we will show that at an initial time the manifold \eqref{eq:calM} has the particular structure
\[
(\phi, \pt\phi)(0) = (1+a(0)){\bf \widetilde H} + (\tilde u_1, u_2)(0)  + b_-(0)\bfY_- + h(\bfvareps)\bfY_+,
\]
where $(u_1, u_2)(0)$ are error terms, $a(0)$ is a new modulation term representing the resonant mode associated to $\widetilde L\wtilH =0$, and $\tilde u_1$ results from the decomposition of the error term $u_1$ into resonant and nonresonant terms. In principle, looking at the energy in \eqref{eq:energy} one realizes that there is no actual topological obstruction on the kink ${\bf \widetilde H}$ and $a(0)$ may be later growing in time destroying the orbital stability. Therefore, an important part of the proof will be devoted to show that the instability direction associated to the resonance stays bounded in time, and the manifold indeed exists. This being said, without using shift modulations. A new setting involving a careful choice of new orthogonalities in the decomposition of the stable manifold will be the first action towards a good control of the energy norm. Then, a second step will involve a suitable decomposition of the energy functional profiting of the fact that the model is quartic to get new positivity bounds, in the sense that roughly speaking
\[
\|u_2\|_2^2 + \|\tilde u_1\|_{H_0}^2+  a^2 \lesssim \|u_2\|_2^2 + \langle \widetilde L \tilde u_1, \tilde u_1\rangle + \int \wtilQ^2(u_1^2 + 2\wtilH u_1)^2 \lesssim \delta_0^2 .
\]
 This fact is also deeply related to the first point above, because nonlinear terms are as large as linear ones, and no actual control on the resonance amplitude is obtained without finding a hidden ``defocusing'' behavior. In other words, resonances may be handled via hidden positivities in cubic and quartic order terms. Putting all this together, it will allow us to ensure the boundedness and decay of $a(t)$, i.e., the control of the resonance modulation, and therefore the existence of a stable manifold. Finally, the asymptotic stability will be ensured by improved primal and dual estimates, where we have control of every good sign term (Propositions \ref{prop:virial I} and \ref{prop:virial II}). Indeed, we need to get track of good-sign weighted $L^2$ norms in both virial estimates, reducing to its minimal value bad sign terms, since we do not have full control on nonlinear terms. It will be the case that bad terms will have improved decay properties, allowing us to prove the convergence without the necessity of decomposing the dynamics into resonant and nonresonant parts. Consequently, the constructed manifold will satisfy convergence to zero locally in space (or in a subspace of $H_0$) as time tends to infinity, also implying the convergence of the resonant modulation.

\medskip

{\it No explicit kink solution}.  Another issue present in the considered model is the lack of an explicit representation for the kink $\wtilH$ that permits effective computations for spectral analysis and by consequence explicit control of virial estimates. In particular, this lack of explicit knowledge poses interesting challenges for the understanding of the associated point spectrum theory for $L$. By using well-chosen test functions, we have computed suitable estimates on the spectrum of $L$, its smallest eigenvalue (Lemma \ref{valor mu0}), and obtained suitable coercivity estimates by partial local estimates valid for each particular region of space. A particular issue to be mentioned is the one related to the so called ``transformed problem'', where the associated potential has no explicit representation at all. Section \ref{A:LINEAL-SPECTRAL-THEORY} provides a rigorous description of the functional setting related to this operator, that we believe could be used in other models with no explicit kinks. We emphasize that all our proofs do not use extended numerical computations to describe the spectral theory, except by some simple evaluations of certain explicit functions at some particular points, which are done with standard mathematical programs and enjoy great accuracy. An example of this type of numerical computation is to find the solutions of the equation $\alpha^{-1}(x)=1$, or the zeros/solutions of the equation $\wtilQ(x)=1$.

\medskip

{\it Lack of an exponential tail in the kink solution.} Previous works in the field  \cite{KMM2017,KMM19,KMMV20,KM2022,LuS1} consider a kink or soliton solution with an exponential convergence at infinity, representing in this case a quickly converging tail. In this work, this is not the case (see \ref{tails}) and only a slightly above the minimally sufficient (in terms of spectral theory) polynomial decay is present in our setting. This is in some sense equivalent to the degenerate setting $W'' =0$ at the spatial infinite limit of the Lohe's kink solutions \cite{Lohe}, which is indirectly mentioned but not treated in \cite{KMMV20} (special cases are some $\phi^8$ models with polynomial tail kinks).  The polynomial character of the kink $\wtilH$ imposes restrictions in several standard estimates, which are not satisfied now and which must to consider any possible gain in decay. This is for instance the case of coercivity estimate \eqref{eq:coercivity}, which is only valid if one imposes a strong weight of order at least $O(|x|^{-6})$. Following a series of estimates, we will track weighted estimates with weights as optimal as one can get. Examples as this one are present in many places in this paper (see e.g. \eqref{eq:N0est}, Lemma \ref{claim:Linfty}, to mention a few in the first part of the paper), leading to the introduction of several new estimates that must consider polynomially decaying functions.

\subsection{Related literature} We finish this introduction with some final comments on related results. An alternative perspective, equivalent to considering kinks under symmetry assumptions (essentially no shifts or Lorentz boosts), involves studying 1D nonlinear Klein-Gordon models with variable coefficients. Foundational works in 3D were conducted by Soffer and Weinstein \cite{SW2,SW3}, and scattering studies and dispersive decay include those by Lindblad and Soffer \cite{LS1,LS2,LS3}, Hayashi and Naumkin \cite{HN2,HN3,HN4}, Bambusi and Cuccagna \cite{Bam_Cucc}, Lindblad and Tao \cite{Lin_Tao}, and Lindblad et al. \cite{LLS,LLS2,LLSS}, among several other works. Recent enhancements include considerations of quadratic nonlinearities, exemplified by the work of Germain and Pusateri \cite{GP}, and related studies \cite{GPZ}. On the other hand, non-topological solitons in nonlinear Klein-Gordon models have been a focal point of research since the recent works on the description of the stable and unstable soliton manifold by Krieger-Nakanishi-Schlag \cite{KNS}, Nakanishi-Schlag \cite{NS}, alongside earlier results by Ibrahim, Masmoudi, and Nakanishi \cite{IMN}; see also former results in references therein. Subcritical dynamics around solitons have been extensively explored, particularly in the presence of at least one unstable mode, see details in \cite{BCS,KMM19,KMM17_Survey,GJ1,GJ2,LiLuhrmann22,KP,LP,LP2,LuS2}.

Another interesting comparison is related to the long time behavior in energy critical equivariant wave maps. Here a much more detailed description of the so-called soliton resolution conjecture is available, see e.g. \cite{DKMM,JL}. There is an interesting relation among these models, specially from the fact that the solutions $H_n$ in our case can be related to equivariant wave maps in different topological classes. There is probably a soliton resolution conjecture associated to our problem, as Bizo\'n has personally communicated to us. This comparison needs to be though in more detail because it is only weakly understood from a rigorous point of view. Several differences appear with the model under attention here, and probably the most relevant is the lack of fixed topological classes which makes the kink worked here more inclined to be destroyed by general perturbations. Additionally, the existence of a scaling symmetry is also relevant in the critical setting. In our case, such structure is not present, but it is weakly mimicked by the existence of the mild resonance. 

\subsection*{Organization of this paper} This paper is organized as follows. In Section \ref{Sect:1} we introduce preliminary estimates and concepts essential for the proof of Theorem \ref{th:Main}. Section \ref{Sect:3} introduces the first virial estimates. Section \ref{sec:second virial estimate} is concerned with dual virial estimates. Section \ref{sec:proof theorem 1} proves Theorem \ref{th:Main} and Section \ref{sec:proof_manifold} proves Theorem \ref{th:Manifold}. Next, Section \ref{A:LINEAL-SPECTRAL-THEORY} is devoted to the deep understanding of the operator $L$. Section \ref{B:POSITIVITY-POTENTIAL} proves the repulsivity of the associated virial operator.

\subsection*{Acknowledgments}  We would like to thank the referee for its careful reading of the manuscript, pointing out several inaccuracies and typos from a first version of this manuscript, and helping to improve the quality of this work.

\section{Preliminaries}\label{Sect:1}

\emph{Notation}. The standard $\lesssim $ symbol means that there exists $C>0$ such that $a(x) \leq C b(x)$, $C$ independent of $x$.

\medskip

We shall start with some basic properties about the function $\alpha$ defined in \eqref{eq:alpha}, and the modified soliton $\widetilde{Q}$ in \eqref{tilde Q tilde H}, deeply involved in the spectral analysis of $L$.
\begin{lemma}
	The function $\alpha(x)$ is strictly monotone, bijective. Moreover, if $\alpha^{-1}$ denotes the inverse of $\alpha$,
	\begin{equation}\label{eq:dalpha}
		\px\alpha(x) = Q^{-1}(x), \quad \px\alpha^{-1}(x) = \wtilQ(x),
	\end{equation}
	and
	\begin{equation}\label{derivada}
	\partial_x \wtilQ (x) = - \wtilQ^2(x) \wtilH(x), \quad \partial_x^2 \wtilQ (x) =2\wtilQ^3(x) - \frac53\wtilQ^4(x).
	\end{equation}
	
\end{lemma}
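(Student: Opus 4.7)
The plan is to verify each identity by direct computation using the chain rule, once we record the relevant elementary identities for $\cosh$, $\sech$ and $\tanh$.

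For strict monotonicity and bijectivity, I would first note that $\alpha'(x) = \tfrac{1}{3}(\cosh x + 1) > 0$ for all $x\in\mathbb{R}$, so $\alpha$ is strictly increasing. Since $\cosh x + x \to \pm\infty$ as $x\to\pm\infty$, $\alpha$ is a bijection $\mathbb{R}\to\mathbb{R}$, and $\alpha^{-1}$ is well-defined and smooth. Next, using the half-angle identity $2\cosh^2(x/2) = 1 + \cosh x$, one has
\[
Q^{-1}(x) = \tfrac{2}{3}\cosh^2(x/2) = \tfrac{1}{3}(1+\cosh x) = \alpha'(x),
\]
which is the first identity in \eqref{eq:dalpha}. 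The second follows from the inverse function theorem: $\partial_x\alpha^{-1}(x) = 1/\alpha'(\alpha^{-1}(x)) = Q(\alpha^{-1}(x)) = \widetilde Q(x)$.

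For \eqref{derivada}, I would first compute $Q'(y) = -\tfrac{3}{2}\sech^2(y/2)\tanh(y/2) = -Q(y)H(y)$ and $H'(y) = \tfrac{1}{2}\sech^2(y/2) = \tfrac{1}{3}Q(y)$. Applying the chain rule together with \eqref{eq:dalpha},
\[
\partial_x\widetilde Q(x) = Q'(\alpha^{-1}(x))\,\widetilde Q(x) = -\widetilde Q(x)\widetilde H(x)\cdot \widetilde Q(x) = -\widetilde Q^2(x)\widetilde H(x),
\]
which is the first assertion. Differentiating this once more gives
\[
\partial_x^2\widetilde Q = -2\widetilde Q(\partial_x\widetilde Q)\widetilde H - \widetilde Q^2\,\partial_x\widetilde H = 2\widetilde Q^3\widetilde H^{\,2} - \tfrac{1}{3}\widetilde Q^4,
\]
where I used $\partial_x\widetilde H = H'(\alpha^{-1})\widetilde Q = \tfrac{1}{3}\widetilde Q^2$.

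The only non-formal step is to eliminate $\widetilde H$ from the above expression. For this I would use the identity $\tanh^2(y/2) = 1 - \sech^2(y/2)$, evaluated at $y=\alpha^{-1}(x)$, which reads
\[
\widetilde H^{\,2}(x) = 1 - \tfrac{2}{3}\widetilde Q(x).
\]
Substituting this into the expression for $\partial_x^2\widetilde Q$ yields
\[
\partial_x^2\widetilde Q = 2\widetilde Q^3\bigl(1-\tfrac{2}{3}\widetilde Q\bigr) - \tfrac{1}{3}\widetilde Q^4 = 2\widetilde Q^3 - \tfrac{5}{3}\widetilde Q^4,
\]
completing the proof. There is no real obstacle here; the only point worth flagging is the algebraic identity relating $\widetilde H^2$ and $\widetilde Q$, which is what allows the second derivative to be expressed purely in terms of $\widetilde Q$ — a structural fact that will presumably be useful later when analyzing the potential $V = 2\widetilde Q^2(1-\widetilde Q)$ from \eqref{eq:L}.
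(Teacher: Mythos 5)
Your proof is correct and follows essentially the same route as the paper's: compute $\alpha' = Q^{-1}$ via the half-angle identity, apply the inverse function theorem, then use the chain rule together with $Q' = -QH$, $\wtilH' = \tfrac13\wtilQ^2$, and the Pythagorean identity $\wtilH^2 = 1 - \tfrac23\wtilQ$ to eliminate $\wtilH$ from the second derivative. No gaps.
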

\begin{proof}
	By direct computation one has $\alpha'(x) = \frac13 (\cosh x + 1) = \frac23 \cosh^2\left(\frac{x}{2}\right) = \frac1{Q(x)}$,  proving that $\alpha(x)$ is strictly monotone and bijective, since $\alpha'(x)$ grows with $x$. For the inverse of $\alpha$ we have
	\[
	(\alpha^{-1})'(x) = \frac{1}{\alpha'(\alpha^{-1}(x))} = Q(\alpha^{-1}(x)) = \wtilQ(x).
	\]
	This ends the proof of \eqref{eq:dalpha}. In order to prove \eqref{derivada}, notice that from \eqref{Q} one has $Q'(x) = -\frac32  \sech^2 \left( \frac{x}2\right) \tanh \left( \frac{x}2\right)=-Q(x)H(x)$. Then,  using \eqref{tilde Q tilde H},
	\begin{equation}\label{derivada de tQ}
	\partial_x \wtilQ (x)= Q'(\alpha^{-1}(x))(\alpha^{-1})'(x) = - \wtilQ^2(x) \wtilH(x).
	\end{equation}
	Finally, since $\wtilH'(x) =\frac13 \wtilQ^2(x)$ and $\wtilH^2=1-\frac23 \wtilQ$,
	\[
	\begin{aligned}
	\partial_x^2 \wtilQ (x) = &~{} -2 \wtilQ(x)\wtilQ'(x) \wtilH(x) -  \wtilQ^2(x) \wtilH'(x)  \\
	=&~{}  2\wtilQ^3(x) \wtilH^2(x)-\frac13 \wtilQ^4(x) = 2\wtilQ^3(x) - \frac53\wtilQ^4(x). 
	\end{aligned}
	\]
The proof is complete.
\end{proof}

\begin{lemma}\label{lemma:estimation}
	The functions $\alpha^{-1}(x)$, $\wtilH(x)$ and $\wtilQ(x)$ are odd, odd and even, respectively, and they have the following asymptotic descriptions.
	
	For $|x| \ll 1$,
	\begin{equation}\label{eqn:equivalencias2}
	\begin{gathered}
		\alpha^{-1}(x) = \frac32 x + \calO(x^2), \quad \wtilQ(x) = \frac32 - \frac{27}{32}x^2 + \calO(x^4), \\
		 \wtilH(x)= \frac34x - \frac{9}{32}x^3 + \calO(x^5).
	\end{gathered}
	\end{equation}
	
	For $|x| \gg 1$, we have the limits
	\begin{equation}\label{eqn:equivalencias}
	\begin{gathered}
		\lim_{x\to \pm \infty} \frac{|\alpha^{-1}(x)|}{\ln(|x|)} =1, \quad \lim_{x\to \pm \infty} (1 + |x|)\wtilQ(x) = 1, \\
		\lim_{x\to \pm \infty} (1 + |x|)|\wtilH(x) \mp 1| =\frac13,
	\end{gathered}
	\end{equation}
	and for every $\epsilon\in(0,1)$ there exists $M_\epsilon>0$ such that
	\begin{equation}\label{eq:lowerboundQ}
		\epsilon \wtilQ(x) \leq \frac{1}{|x|} \qquad \text{for all } x \geq M_\epsilon.
	\end{equation}
	Even more, the integral $\int \wtilQ^{1+\varepsilon}dx$ is finite for any $\varepsilon>0$.
\end{lemma}
\begin{proof}
Let us first prove \eqref{eqn:equivalencias}. Recall that $\widetilde Q(x)= Q(\alpha^{-1}(x))$. Employing the fact that $x=\alpha(y)$ is continuous bijective, and goes to $\pm \infty$ when $y \to \pm \infty$, as well as \eqref{eq:alpha}, we have that
\begin{align*}
	\lim_{x\to\pm\infty} x \wtilQ(x) &= \lim_{y \to \pm \infty} \alpha(y)Q(y)  = \lim_{y \to \pm \infty} \frac12 \frac{\sinh y + y}{ \cosh^2\left(\frac{y}{2}\right)  } \\
	& = \lim_{y \to \pm \infty} \frac{\cosh y + 1}{\sinh y} =\pm 1,
\end{align*}
where in the second line we have used a simple L'H\^opital's rule.
On the other hand, using \eqref{eq:dalpha},
\begin{align*}
	\lim_{x\to\pm\infty} \frac{|\alpha^{-1}(x)|}{\ln(|x|)} = \lim_{x\to\pm\infty} |x|\wtilQ(x) = 1.
\end{align*}
This proves the first limit of \eqref{eqn:equivalencias}, and $\wtilQ \lesssim |x|^{-1}$.

\medskip

Now we restrict our analysis of $\wtilQ$, by parity, to the positive real numbers. From definition \eqref{eq:alpha} we obtain for $x>0$, $e^{x} = e^{-x} - 2x + 6\alpha(x).$ Employing this,
\[
\sech^2\left(\frac{x}{2}\right) = \frac{1}{\cosh^2\left(\frac{x}{2} \right) } = \frac{4}{e^{x} + 2 + e^{-x}} = \frac{4}{3e^{-x} + 2 - 2x + 6\alpha(x) }.
\]
Replacing in \eqref{tilde Q tilde H}, and using that $|\alpha^{-1}| \sim \frac12\ln(|x|)$, we have for any $x>0$
\[
\begin{aligned}
	\wtilQ(x) = &~{} \frac{6}{3e^{-\alpha^{-1}(x)} + 2 - 2\alpha^{-1}(x) + 6x} \leq   \frac{3}{1 - \alpha^{-1}(x) + 3x}.
\end{aligned}
\]
Analogously, since $0 < e^{-\alpha^{-1}(x)} \leq 1$ for $x\geq 0$, the denominator is bounded above by $5 - 2\alpha^{-1}(x) + 6x$, hence
\[
\begin{aligned}
	\wtilQ(x) \geq &~{}  \frac{6}{5 - 2\alpha^{-1}(x) + 6x}=\frac{3}{\tfrac{5}{2} - \alpha^{-1}(x) + 3x}\geq \frac{3}{\tfrac{5}{2} + 3x}.
\end{aligned}
\]
Therefore $\lim_{x\to +\infty} (1+x)\wtilQ(x)  =1.$ The case $x\to -\infty$ is obtained by parity,
which proves \eqref{eqn:equivalencias} in the case of $\wtilQ$. Finally, we consider the case of $\wtilH(x) =H(\alpha^{-1}(x))$. We have
\[
\lim_{x\to \pm \infty} x (\wtilH(x) \mp 1)  = \lim_{y\to \pm \infty} \alpha(y) \left( \tanh\left( \frac{y}2\right) \mp 1\right) =-\frac13.
\]
To prove \eqref{eq:lowerboundQ}, we restrict again the analysis by parity to the positive real numbers. Let $\epsilon\in (0,1)$. Notice that the statement is equivalent to prove that there exists $M_\epsilon>0$ such that
\[
\epsilon Q(y) \alpha(y) \leq 1
\]
for all $y\geq \alpha^{-1}(M_\epsilon)$. By the definitions \eqref{Q} and \eqref{eq:alpha} we have
\[
Q(y) \alpha(y) = \frac{e^{y} - e^{-y} + 2y}{e^{y} + e^{-y} + 2} = 1 + 2\frac{y-1-e^{-y}}{e^{y}+e^{-y}+2}.
\]
For $y\geq 2$ the last term can be bounded easily as follows
\[
Q(y) \alpha(y) \leq 1 + 2ye^{-y} :=1 + g(y).
\]
Since $g$ is a strictly positive and decays exponentially for all $y>2$, there exist $M_0>0$ such that
\[
1+g(y) \leq \epsilon^{-1}
\]
for all $y\geq M_0$. Then it suffices to take $M_\epsilon = \alpha(M_0)$ to conclude \eqref{eq:lowerboundQ}.

Now we prove \eqref{eqn:equivalencias2}. The proof is based in a simple Taylor expansion in second and fourth order around $x=0$.
\begin{align*}
	\alpha^{-1}(x) = \alpha^{-1}(0) + \px \alpha^{-1}(0) x + \calO(x^2) = \frac32 x + \calO(x^2). 
\end{align*}
Also,
\begin{align*}
	\wtilQ(x) =&~{} \wtilQ(0) + \wtilQ'(0)x + \frac12 \wtilQ''(0) x^2 + \frac16 \wtilQ'''(0)x^3 + \calO(x^4) \\
	=&~{} \frac32 - \frac{27}{32}x^2 + \calO(x^4).
\end{align*}
and
\begin{align*}
	\wtilH(x) &= \wtilH(0) + \wtilH'(0)x + \frac12 \wtilH''(0) x^2 + \frac16 \wtilH'''(0)x^3 +\frac1{24} \wtilH''''(0)x^4 + \calO(x^5) \\
	& = \frac34x - \frac{9}{32}x^3 + \calO(x^5).
\end{align*}
In the previous expansions we have used that $ \wtilQ'(x) = -\wtilQ^2(x)\wtilH(x)$, $\wtilQ''(x) = 2\wtilQ^3(x) - \frac{5}{3}\wtilQ^4(x)$ (see \eqref{derivada}), $\wtilH'(x) =\frac13 \wtilQ^2(x) $, $\wtilH''(x)= -\frac23\wtilQ^3 (x) \wtilH(x) $, and $\wtilH'''(x) = 2\wtilQ^4 \wtilH^2(x)  -\frac29\wtilQ^5 $, and that $\wtilQ$ is even and $\wtilH$ is odd. Finally, by \eqref{eq:dalpha} we have $\int \wtilQ^{1+\varepsilon}(x)dx = \int Q^\varepsilon(s)ds <+\infty.$
\end{proof}

\subsection{Expansion of the conserved energy around the kink.}

We have $\phi_1(t,x) = \wtilH + \bar w_1(t,x)$, $\phi_2(t,x) = \bar w_2(t,x)$, and
	\[
	(1 - \phi_1^2)^2 = (1-\wtilH^2)^2 - 2(1-\wtilH^2)(2\wtilH \bar w_1 + \bar w_1^2) + (2\wtilH \bar w_1 + \bar w_1^2)^2.
	\]
Replacing in \eqref{eq:energy}, and using that $\wtilH$ is the static solution of \eqref{eq:varEL}, we obtain
	\[
	\begin{aligned}
		E[\phi_1, \phi_2] = &~{} \int  \left( \frac12 \phi_2^2 + \frac12 (\partial_x \phi_1)^2 + \frac14 \wtilQ^2(1-\phi_1^2)^2 \right)dx \\
		= &~{} E[\wtilH, 0] + \frac12\int \bar w_2^2 + \frac12\int (\px \bar w_1)^2 - \int \wtilH'' \bar w_1 \\
		&~{} - \frac12\int \wtilQ^2(1-\wtilH^2)(2\wtilH \bar w_1 + \bar w_1^2) +  \frac14\int \wtilQ^2(2\wtilH \bar w_1 + \bar w_1^2)^2 \\
		= &~{} E[\wtilH, 0] + \frac12\int \bar w_2^2 + \frac12\int \bar w_1 \left( -\px^2 \bar w_1 + 2\wtilQ^2(1-\wtilQ)\bar w_1 \right) \\
		&~{} + \frac14\int \wtilQ^2 \left( 4\wtilH \bar w_1^3 + \bar w_1^4 \right).
	\end{aligned}
	\]
	Therefore,
	\begin{equation}\label{mL0}
	\begin{aligned}
		2\{ E(\phi_1, \phi_2) - E(\wtilH, 0)\} =&~{} \int \bar w_2^2 + \langle L \bar w_1, \bar w_1\rangle +  \frac12\int \wtilQ^2(4\wtilH \bar w_1^3 + \bar w_1^4).
	\end{aligned}
	\end{equation}

\section{Virial estimate at large scale}\label{Sect:3}
The first step is to consider a small perturbation of the modified kink $(\wtilH, 0)$. In what follows we describe this decomposition, introduce some notation, and develop a first virial estimate.

\subsection{Decomposition of the solution in a vicinity of the kink}\label{subsec:decomposition}
Let $(\phi, \partial_t \phi)$ be a solution of \eqref{eq:varEL} satisfying \eqref{eq:vicinity} for some $\delta > 0$. Let $(\mu_0,\phi_0)$ be given in \eqref{eq:properties-eigenvalL}. Using $\bold Y_+$ from \eqref{eq:pmY}, we decompose $(\phi, \partial_t \phi)$ as follows
\begin{equation}\label{eq:decomposition}
	\begin{cases}
		\phi(t, x) - \wtilH = a_1(t)\phi_0(x) + u_1(t,x) \\
		\hspace{0.5cm} \partial_t \phi(t, x) = \mu_0 a_2(t) \phi_0(x) + u_2(t,x),
	\end{cases}
\end{equation}
where we define (see \eqref{eq:properties-eigenvalL})
\[
\begin{aligned}
a_1(t) = &\langle \phi(t) - \wtilH, \phi_0\rangle = -\frac{1}{\mu_0^2}\langle \phi(t) - \wtilH, L[\phi_0]\rangle ,\\
a_2(t) = & \frac{1}{\mu_0}\langle \partial_t \phi(t), \phi_0\rangle = -\frac{1}{\mu_0^3}\langle \partial_t \phi(t), L[\phi_0]\rangle,
\end{aligned} 
\]
such that
\begin{equation}\label{eq:perp-conditions}
	\langle u_1(t), \phi_0\rangle = 0 = \langle u_2(t), \phi_0\rangle.
\end{equation}
Additionally, we set the variables
\begin{equation}\label{eq:b1b2}
	b_+ = \frac12 (a_1 + a_2),  \quad b_- = \frac12 (a_1 - a_2).
\end{equation}

\begin{lemma}\label{lema_energia_separacion}
Under \eqref{eq:vicinity}, there exists $C>0$ fixed such that one has, for all $t\in \R_+$,
\begin{equation}\label{eq:bound}
\begin{gathered}
\|u_1(t)\|_{H_0} + \|u_2(t)\|_{L^2} + |a_1(t)| + |a_2(t)| + |b_+(t)| + |b_-(t)|
 \leq C\delta.
\end{gathered}
\end{equation}
\end{lemma}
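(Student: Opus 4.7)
The strategy is to bound $a_1$ and $a_2$ first, and then deduce the bounds for $u_1$, $u_2$, and $b_\pm$ by triangle inequality. The bound on $a_2$ is immediate from Cauchy--Schwarz: since $\phi_0 \in L^2(\R)$ by the exponential decay in \eqref{eq:properties-eigenvalL}, we have
\[
|a_2(t)| = \mu_0^{-1}|\langle \pt\phi(t), \phi_0\rangle| \leq \mu_0^{-1}\|\pt\phi(t)\|_{L^2} \|\phi_0\|_{L^2} \lesssim \delta.
\]

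The subtle point is $a_1$: the hypothesis \eqref{eq:vicinity} only controls $\phi-\wtilH$ in the weighted space $H_0(\R)$, not in $L^2(\R)$, so we cannot pair directly with $\phi_0$ using Cauchy--Schwarz. Instead I would use the alternative expression $a_1(t) = -\mu_0^{-2}\langle \phi(t) - \wtilH, L\phi_0\rangle$ and integrate by parts:
\[
\langle \phi - \wtilH, L\phi_0\rangle = \int \px(\phi-\wtilH)\px\phi_0 \, dx + \int V(\phi-\wtilH)\phi_0\, dx.
\]
The first term is bounded by $\|\px(\phi-\wtilH)\|_{L^2}\|\px\phi_0\|_{L^2} \lesssim \delta$ directly. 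For the second, I exploit the specific form $V=2\wtilQ^2(1-\wtilQ)$ from \eqref{eq:L} and factor one $\wtilQ$ onto the perturbation:
\[
\int V(\phi-\wtilH)\phi_0\, dx = \int \bigl[\wtilQ(\phi-\wtilH)\bigr]\bigl[2\wtilQ(1-\wtilQ)\phi_0\bigr]\, dx \leq \|\wtilQ(\phi-\wtilH)\|_{L^2}\|2\wtilQ(1-\wtilQ)\phi_0\|_{L^2}.
\]
The first factor is $\lesssim \delta$ by \eqref{eq:vicinity}, and the second is finite since $\wtilQ$ is bounded while $\phi_0$ decays exponentially by \eqref{eq:properties-eigenvalL}. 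Therefore $|a_1(t)|\lesssim\delta$.

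Once $|a_1|+|a_2|\lesssim\delta$ is established, the bounds on $u_1, u_2$ follow by the triangle inequality applied to \eqref{eq:decomposition}. Namely, $\|u_2\|_{L^2} \leq \|\pt\phi\|_{L^2} + \mu_0|a_2|\|\phi_0\|_{L^2} \lesssim \delta$, and for $u_1$ I use that $\phi_0 \in H_0(\R)$ (since $\px\phi_0 \in L^2$ by \eqref{eq:properties-eigenvalL} and $\wtilQ\phi_0 \in L^2$ by the boundedness of $\wtilQ$ together with exponential decay of $\phi_0$), giving
\[
\|u_1\|_{H_0} \leq \|\phi - \wtilH\|_{H_0} + |a_1|\,\|\phi_0\|_{H_0} \lesssim \delta.
\]
Finally, $|b_\pm| \leq \tfrac12(|a_1|+|a_2|) \lesssim \delta$ by definition \eqref{eq:b1b2}.

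The only nontrivial step is controlling $a_1$, where the lack of an $L^2$ bound on the perturbation forces one to trade the loss of one derivative on $\phi_0$ for the $H^1$ part of the $H_0$ norm, and to trade one power of $\wtilQ$ from $V$ onto the perturbation to match the weighted $L^2$ part of $H_0$. Everything else is linear bookkeeping.
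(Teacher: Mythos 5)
Your proof is correct. Where you differ from the paper is the bound on $a_1$, and the difference is genuine. You use the eigenvalue relation $L\phi_0 = -\mu_0^2\phi_0$ to rewrite $a_1 = -\mu_0^{-2}\langle \phi_1 - \wtilH, L\phi_0\rangle$ and then integrate by parts and redistribute one power of $\wtilQ$, so that the pairing is estimated directly against the $H_0$ norm of $\phi_1 - \wtilH$; this is a clean duality trade that never needs an $L^2$ bound on the perturbation. The paper instead localizes: it uses $\wtilQ\gtrsim R^{-1}$ on $[-R,R]$ to get $\int_{-R}^R(a_1\phi_0+u_1)^2 \lesssim R^2\delta^2$, then expands the square, flips the cross term $\int_{-R}^R\phi_0 u_1$ to the exterior region $|x|\ge R$ via the orthogonality $\langle u_1,\phi_0\rangle=0$, and absorbs it there using the exponential smallness of $\phi_0$ (accepting an $R^2\delta$-sized bound on $\|u_1\|_{L^2(-R,R)}$ as a byproduct, which it does not ultimately need). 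Your approach is shorter and avoids the $R$-dependent bookkeeping; the paper's approach is more elementary in the sense that it never invokes the eigenvalue equation (only the exponential decay of $\phi_0$), though since the paper already writes $a_1$ in the $L\phi_0$ form in \eqref{eq:decomposition}, this is not a substantive advantage here. Your observation that $a_2$ and then $u_2$ can be bounded independently is correct; the paper handles both at once by expanding $\|\phi_2\|_{L^2}^2$ using the orthogonality, which is marginally tighter but equivalent. The remaining steps (triangle inequality for $u_1$ in $H_0$, $|b_\pm|\le\tfrac12(|a_1|+|a_2|)$) match the paper exactly.
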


\begin{proof}
In what follows, we will require the stability hypothesis \eqref{eq:vicinity}, and the decomposition \eqref{eq:decomposition}. First, using \eqref{eq:perp-conditions} we have
\begin{equation}\label{la clave 0}
\begin{aligned}
\|\phi_2\|_{L^2}^2 = & ~{} \mu_0^2 |a_2|^2 \|\phi_0\|_{L^2}^2 + \mu_0 a_2\langle u_2(t),\phi_0\rangle + \|u_2(t)\|_{L^2}^2 \\
= &~{} \mu_0^2 |a_2|^2 \|\phi_0\|_{L^2}^2 + \|u_2(t)\|_{L^2}^2 \leq \delta^2.
\end{aligned}
\end{equation}
This implies that $|a_2|, \|u_2(t)\|_{L^2} \lesssim \delta$. Let $R>0$ be a large number. Since $\|\partial_x(\phi_1 - \widetilde H) \|_{L^2(\R)}^2+\|\wtilQ(\phi_1 - \widetilde H)\|_{L^2(\R)}^2 \leq \delta^2$, one has $\int_{-R}^R (a_1 \phi_0 +   u_1 )^2  \lesssim R^2 \delta^2 $, and therefore
\[
a_1^2 +  \int_{-R}^R   u_1^2  \le CR^2 \delta^2 +  C|a_1|\left| \int_{-R}^R \phi_0 u_1 \right|. 
\]
Since $\langle u_1(t), \phi_0\rangle = 0$ and \eqref{eq:properties-eigenvalL} holds, one has 
\[
\begin{aligned}
& C |a_1| \left| \int_{-R}^R \phi_0 u_1 \right| \\
&~{}\le    C |a_1|\|\wtilQ^{-1}\phi_0\|_{L^2(|x|\geq R)} \|\wtilQ(a_1\phi_0 + u_1)\|_{L^2(|x|\geq R)} + C|a_1|^2 \|\phi_0\|_{L^2(|x|\geq R)}^2 \\
&~{} \le   \delta^2 + C e^{-2c_0 R}a_1^2.
\end{aligned}
\]
Consequently, fixing $R_0$ large,
\[
|a_1(t)| \le C\delta,  \quad |b_+(t)| + |b_-(t)| \leq C\delta,
\]
and for all $R>R_0$
\[
 \| u_1(t)\|_{L^2(-R,R)} \le  CR^2\delta.
\]
Now, using that $\|\wtilQ(a_1 \phi_0 + u_1)\|_{L^2(\R)} \leq \delta$, we obtain $\|\wtilQ u_1\|_{L^2(\R)} \leq C\delta$. Finally,
since $\|a_1 \phi_0' + \partial_x u_1 \|_{L^2(\R)} \leq \delta$, we arrive to $\| \partial_x u_1 \|_{L^2(\R)} \leq C\delta$.
\end{proof}

\begin{lemma}\label{claim:Linfty}
	For all $p\in [2, \infty]$ one has $\|\wtilQ^{\frac{p+2}{2p}}u\|_{L^p} \leq \sqrt2\|u\|_{H_0}$. In particular, $\|\cdot\|_{H_0}$ is equivalent to the norm
	\begin{equation}\label{norm_cubic}
		\|u\|^2:= \|\px u\|_{L^2(\R)}^2 + \|\wtilQ^{\frac32} u\|_{L^2(\R)}^2.
	\end{equation}
\end{lemma}
\begin{proof}
	Defining $u(x)=v(\alpha^{-1}(x))$, we obtain that it holds 
	\[
	\partial_x u(x)= \partial_y v(\alpha^{-1}(x))  Q(\alpha^{-1}(x)).
	\] 
	Therefore, applying a change of variable
	\[
	\int(\px u)^2  = \int Q(\partial_y v)^2, \quad \int \wtilQ^{k} u^2 = \int Q^{k-1} v^2, \quad \hbox{ for } k=2,3.
	\]
	Now, defining $g = Q^{1/2}v$, one has $Q^{1/2}\partial_y v = \partial_y g + \frac12 H g$. Replacing and integrating by parts, we get

	\[
	\begin{aligned}
	 \|u\|_{H_0}^2 =&~{} \int \left(  (\partial_y g)^2 + Hg\partial_y g + \frac14 H^2 g^2  +  g^2 \right) \\
	=&~{}\int (\partial_y g)^2 + \left(\frac54 -  \frac13 Q \right)g^2
	\geq \frac12\|g\|_{H^1}^2.
	\end{aligned}
	\]
	Since $\|\wtilQ^{\frac{p+2}{2p}}u\|_{L^p} = \|g\|_{L^p} \leq \|g\|_{H^1}$ for all $p\in [2, \infty]$ (by the standard Sobolev embedding $H^1(\R)\hookrightarrow L^p(\R)$ for $p\geq 2$), we obtain the first result.
	Using that $\wtilQ$ is bounded we have $\|u\| \lesssim \|u\|_{H_0}$. Next, applying the change of variable in \eqref{norm_cubic} and computing we get
	\[
	\frac12\int(\px u)^2 + \int \wtilQ^3 u^2  = \frac12 \int (\partial_y g)^2 + \left(\frac18 + \frac56 Q \right)g^2 \geq \frac18 \|g\|_{H^1}^2.
	\]
	From the Sobolev embedding for $p=2$ we have $ \|\wtilQ u\|_{L^2(\R)} \leq \|g\|_{H^1}$. This implies $\|u\|_{H_0}\lesssim \|u\|$.
\end{proof}

Using \eqref{eq:BKsystem}, \eqref{eq:properties-eigenvalL} and \eqref{eq:decomposition}, we obtain that $(a_1, a_2)$ satisfies the following differential system
\begin{equation}\label{eq:a1a2}
\ba
& 	\begin{cases}
		\dot{a}_1(t) = \mu_0 a_2(t)\\[0.1cm]
		\dot{a}_2(t) = \mu_0 a_1(t) - \dfrac{N_0}{\mu_0},
	\end{cases}
	\qquad	 \text{or equivalently } 
\\
& 	\begin{cases}
		\dot{b}_+(t) = \mu_0 b_+(t) - \dfrac{N_0}{2\mu_0}\\[0.3cm]
		\dot{b}_-(t) = -\mu_0 b_-(t) + \dfrac{N_0}{2\mu_0},
	\end{cases}
\ea
\end{equation}
where
\begin{equation}\label{eq:N}
	N =  \wtilQ^2 \left( 3\wtilH(a_1\phi_0 + u_1)^2 + (a_1\phi_0 + u_1)^3 \right),
\end{equation}
and
\begin{equation}\label{eq:Np}
	N^{\perp} = N-N_0 \phi_0, \quad \textup{and}\quad N_0 = \langle N,\phi_0\rangle.
\end{equation}
Then, $(u_1, u_2)$ satisfies the following system
\begin{equation}\label{eq:system-perturbated}
	\begin{cases}
		\dot{u}_1 = u_2 \\[0.1cm]
		\dot u_2 = - Lu_1 - N^{\perp}.
	\end{cases}
\end{equation}

\subsection{Local well-posedness in a neighborhood of the kink} Let $\delta >0$ and $T>0$ small enough to be chosen. We consider an initial data $ \mathbf{\phi}(0)\in \mathbf{E}$ such that
\begin{equation}
	\|\mathbf{\phi}(0) - \widetilde{\bfH} \|_{(H_0\times L^2)(\R)} \leq \delta.
\end{equation}
We decompose \eqref{eq:varELsystem} around $\widetilde \bfH$ in the form $\mathbf{\phi}(t) = \widetilde\bfH + \bfw(t)$ where
\[
\phi_1(t,x) = \wtilH(x) + w_1(t,x), \quad \phi_2(t,x) = w_2(t,x).
\]
Then we are reduced to solve
\begin{equation}\label{eq:local system}
	\begin{cases}
		\partial_t w_1 = w_2 \\
		\partial_t w_2 = \px^2 w_1 - F(t,x,w_1),
	\end{cases}
\end{equation}
where $F(t, x, w_1) =  2\wtilQ^2(1-\wtilQ)w_1 + \wtilQ^2 ( 3\wtilH w_1^2 + w_1^3 ).$ Invoking Lemma \ref{claim:Linfty}, we will solve this model in the space in $H_0 \times L^2$. If we denote by $S(t)(\bfw_0)$ the solution to the linear wave equation on $[-T,T]\times \R$, thanks to Lemma \ref{lemma:estimation} one can prove that $(S(t))_{t\in [-T,T]}$ defines a strongly continuous group of contractions in $H_0\times L^2$. In addition, there exists $C>0$ such that for any $w_1$, $\tilw_1$, if $\|\wtilQ^{1/2}w_1\|_{L^\infty}\leq 1$ and $\|\wtilQ^{1/2}\tilw_1\|_{L^\infty}\leq 1$ then
\[
|F(t, x, w_1) - F(t, x, \tilw_1)| \leq  C\wtilQ|w_1 - \tilw_1|.
\]
By Lemma \ref{claim:Linfty} and standard arguments, for $T$ and $\delta$ small enough, there exists a local in time solution $(w_1, w_2)$ of \eqref{eq:local system} in $H_0 \times L^2$. In this paper we will only work with the above notion of solution $\mathbf{\phi} = (\phi_1, \phi_2)$ of \eqref{eq:varELsystem}.

\subsection{Notation for virial argument}
In this paper, the notation $F \lesssim G$ means that $F \leq CG$ for some constant $C>0$ independent of F and G. Unless otherwise indicated, the implicit constant $C > 0$ is supposed to be independent of the parameters $A$, $B$, $\gamma$ and $\delta$ introduced below. As in \cite{LiLuhrmann22,KM2022}, it is convenient to define a \emph{modified space} $\calY$ of smooth functions $f:\R\to\R$ with the property that for any $k\geq 0$, there exists a constant $C_k>0$ such that
\[ |f^{(k)}(x)|\leq C_k \wtilQ(x)^3 \quad \text{for all } x \in \R.
\]
It is important to stress that $\wtilQ$ and $V$ in \eqref{eq:L} have only polynomial decay, consequently the definitions of $\mathcal Y$ and the virial type functions $\zeta$ need some care in our case. Note for example that $\wtilQ, h_0', V \in \calY$.

Let $\chi\in C^{\infty}_c(\mathbb{R})$ be a smooth even function satisfying
\begin{equation}\label{eq:chi}
	\chi(x) = 1 \textup{ for } |x|\leq 1, \quad \chi(x) = 0 \textup{ for } |x|\geq 2, \quad \chi'(x)\leq 0 \textup{ for } x\geq 0.
\end{equation}
For $A>0$, we define the function $\zeta_A$ and $\phi_A$ as follows
\begin{equation}\label{eq:zeta}
	\begin{aligned}\displaystyle
	&	\zeta_A^2(x) = \exp\left(-\dfrac{1}{A}|\alpha^{-1}(x)|(1-\chi(x)) \right), \quad \varphi_A(x) = \int_0^x \wtilQ\zeta_A^2(y)dy, \ x\in\R.
	\end{aligned}
\end{equation}
Moreover, we introduce the weight function
\begin{equation}\label{def:sigmaA}
	\sigma_A(x) = \sech\left(\frac1A \alpha^{-1}(x)\right).
\end{equation}
Notice that $\zeta_A\lesssim \sigma_A \lesssim \zeta_A$. Also, $\varphi_A' \sim  \wtilQ  \sigma_A^2.$ For $B>0$, we also define
\begin{equation}\label{eq:virial II notation}
	\begin{gathered}
		\zeta_B^2(x) =  \exp\left(-\dfrac{1}{B}|\alpha^{-1}(x)|(1-\chi(x)) \right), \quad \varphi_B(x) = \int_0^x \wtilQ\zeta_B^2(y)dy, \ x\in\R, \\[0.1cm]
		\tilde\chi_A(x)=\chi\left(\frac{\alpha^{-1}(x)}{A}\right), \quad \psi_{A,B}(x) = \tilde\chi_A^2(x)\varphi_B(x), \\
		\tilde\chi_B(x)=\chi\left(\frac{\alpha^{-1}(x)}{B^2}\right).
	\end{gathered}
\end{equation}
These functions will be used in two distinct virial arguments to prove Proposition \ref{prop:virial I} and Proposition \ref{prop:virial II} with different scales
\begin{equation}\label{eq:scales}
	1 \ll B\lll A.
\end{equation}
The choice of the switch function $\varphi_A$ is specifically adapted to the decay rate of the potential of the linear operator in \eqref{eq:BKsystem} and \eqref{eq:transformed system}.
We denote by $^{\sim}$ the composition with $\alpha^{-1}$ (i.e., $\wtilf(x) = (f \circ \alpha^{-1})(x)$).

\subsection{Virial estimate at large scale}\label{subsec:virial I}

Following \cite{KMM19}, and having in mind \eqref{momentum_law} in our new coordinates, we introduce the time dependent virial functional $\calI(t)$ defined by
\begin{equation}\label{eq:I}
	\mathcal{I} = \int \left( \varphi_A \partial_x u_1 + \frac12 \varphi_A' u_1 \right) u_2,
\end{equation}
and introduce the variables
\begin{equation}\label{tildeu}
	w_i = \zeta_A u_i,\quad  i=1,2.
\end{equation}
Here, as in \cite{KMM19}, $(w_1,w_2)$ represent a localized version of $(u_1, u_2)$ at scale $A$.

\begin{proposition}\label{prop:virial I}
	There exist $C_0, C > 0$ and $\delta_1 > 0$ such that for any $0<\delta \leq \delta_1$, the following holds. Fix 
	\begin{equation}\label{eq:choiceA}
		A=\delta^{-1/4}
	\end{equation}
	and assume that for all $t\geq 0$, \eqref{eq:bound} holds. Then for all $t\geq 0$, the functional $\mathcal I$ in \eqref{eq:I} satisfies the estimate
	\begin{equation}\label{eq:1virial}
		\frac{d}{dt}\mathcal{I} \leq - \frac12 C_0\int \wtilQ [(\partial_x w_1)^2 + \wtilQ^2 w_1^2] + C \int \wtilQ^7 u_1^2 + C |a_1|^4.
	\end{equation}
\end{proposition}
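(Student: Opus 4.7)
The plan is a Kowalczyk-Martel-Mu\~noz style virial computation adapted to the polynomially decaying profile $\wtilQ$. First I would differentiate $\mathcal{I}$ along the flow \eqref{eq:system-perturbated}. Using $\dot u_1 = u_2$, the term $\int(\varphi_A \partial_x u_2 + \tfrac12 \varphi_A' u_2)u_2$ cancels identically after a single integration by parts (the classical virial cancellation between the two pieces). Using $\dot u_2 = \partial_x^2 u_1 - V u_1 - N^\perp$ with $V$ as in \eqref{eq:L}, three integrations by parts of the type $\int f\, \partial_x u\, \partial_x^2 u = -\tfrac12\int f'(\partial_x u)^2$ produce the clean formula
\begin{align*}
\frac{d}{dt}\mathcal{I} = -\int \varphi_A'(\partial_x u_1)^2 + \tfrac14 \int \varphi_A'''\,u_1^2 + \tfrac12 \int \varphi_A V'\,u_1^2 - \int\bigl(\varphi_A \partial_x u_1 + \tfrac 12\varphi_A'u_1\bigr)N^\perp.
\end{align*}

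Next, I pass to the localized variable $w_1 = \zeta_A u_1$ of \eqref{tildeu}. Since $\varphi_A' = \wtilQ\,\zeta_A^2$ and $\zeta_A\,\partial_x u_1 = \partial_x w_1 - (\partial_x\zeta_A/\zeta_A)\,w_1$, one more integration by parts on the resulting cross term collapses the three quadratic pieces in $u_1$ into $-\int \wtilQ[(\partial_x w_1)^2 + \mathcal{W}_A w_1^2]$ plus small corrections of size $O(A^{-2})\int \wtilQ^3 w_1^2$, where the effective potential $\mathcal{W}_A$ is built from $V'$, $\varphi_A'''$ and $(\partial_x\zeta_A/\zeta_A)^2$. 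The key structural input is then the repulsivity established in Section \ref{B:POSITIVITY-POTENTIAL}: for $A$ large and for $w$ satisfying the quasi-orthogonality $\langle w,\phi_0\rangle\approx 0$, one should obtain the coercivity
\begin{align*}
\int \wtilQ\bigl[(\partial_x w)^2+\mathcal{W}_A w^2\bigr]\geq C_0\int \wtilQ\bigl[(\partial_x w)^2 + \wtilQ^2 w^2\bigr].
\end{align*}
Since $\langle u_1,\phi_0\rangle = 0$ by \eqref{eq:perp-conditions} and $\phi_0$ decays exponentially by \eqref{eq:properties-eigenvalL}, the mismatch $\langle w_1,\phi_0\rangle = \langle (\zeta_A-1)u_1,\phi_0\rangle$ is localized against a strongly decaying weight, yielding precisely the residual $C\int \wtilQ^7 u_1^2$ in \eqref{eq:1virial}.

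For the nonlinear term I would expand $N$ from \eqref{eq:N} as $\wtilQ^2[3\wtilH(a_1\phi_0+u_1)^2+(a_1\phi_0+u_1)^3]$. The purely-$u_1$ pieces $\wtilQ^2 \wtilH u_1^2$ and $\wtilQ^2 u_1^3$ are bounded by Cauchy-Schwarz and the weighted $L^p$ embedding of Claim \ref{claim:Linfty}; together with the smallness of $\|u_1\|_{H_0}$ from Lemma \ref{lema_energia_separacion} and the choice $A = \delta^{-1/4}$ in \eqref{eq:choiceA}, they are absorbable into a small fraction of the coercive quadratic term from the previous step. The pieces carrying at least one factor of $\phi_0$ inherit the exponential spatial localization of $\phi_0$; pairing them against $\partial_x w_1$ or $\wtilQ w_1$ via Cauchy-Schwarz and then applying Young's inequality $ab\leq \eta a^2 + C_\eta b^2$ turns them into a small fraction of the main positive term plus $C|a_1|^4$ contributions. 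Summing everything up produces \eqref{eq:1virial}.

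The hardest step is the coercivity itself. Because $\wtilQ$ has only polynomial decay \eqref{tails}, one cannot cut off cheaply far from the origin, and the strong residual weight $\wtilQ^7$ emerges precisely from combining the exponential localization of $\phi_0$ with the slow decay of $\mathcal{W}_A$ and of the cutoff-correction terms in $\zeta_A$. A secondary technical hurdle is that the energy norm \eqref{energy_norm} does not provide global $L^\infty$ control on $u_1$, so the cubic contribution $\int \varphi_A \partial_x u_1 \cdot \wtilQ^2 u_1^3$ closes only because the weight $\wtilQ^{1/2}$ supplied by Claim \ref{claim:Linfty} combines with the two $\wtilQ$'s in $N$ to yield an integrable profile.
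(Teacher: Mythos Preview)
Your computation of $\dot{\mathcal I}$ and the passage to $w_1$ are correct, but the mechanism you propose for the coercivity step is not the one that works here, and your treatment of the dominant nonlinear term has a genuine gap.

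First, the coercivity in the first virial does \emph{not} come from repulsivity or from the orthogonality $\langle u_1,\phi_0\rangle=0$. Section~\ref{B:POSITIVITY-POTENTIAL} concerns the transformed potential $V_0$ of $L_0$, which enters only in the second virial (Proposition~\ref{prop:virial II}); the original potential $V$ is not repulsive. What actually happens is a pointwise sign computation: after combining $\tfrac14\wtilQ''$ and $\tfrac12\varphi_A V'$ with certain quadratic remainders coming from the nonlinearity (see below), the resulting effective potential is bounded above by $-C\wtilQ^3$ for all $|x|\geq\tilde x$; this is \eqref{eq:V_-}. On the compact region $|x|\leq\tilde x$ one simply bounds everything by $C\int \wtilQ^7 u_1^2$. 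So the $\wtilQ^7 u_1^2$ error is a near-origin remainder, not an orthogonality mismatch; no spectral information is used (cf.\ the remark immediately following Proposition~\ref{prop:virial I}).

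Second, the cubic piece $3\int \wtilQ^2\wtilH\, u_1^2\bigl(\varphi_A\partial_x u_1+\tfrac12\varphi_A'u_1\bigr)$ is \emph{not} absorbable by smallness. A naive bound gives at best $A\delta\int \wtilQ(\partial_x u_1)^2$, but the good term from the linear part only carries the weight $\wtilQ\zeta_A^2$, not $\wtilQ$, so this cannot be absorbed. The paper instead integrates by parts and completes the square jointly with the quartic term $I_4=\int\wtilQ^2 u_1^3(\varphi_A\partial_x u_1+\tfrac12\varphi_A'u_1)$: the combination $3I_{3,1}+I_4$ is bounded below by two nonnegative squares minus the explicit quadratic terms $\tfrac{1}{72}\int\wtilQ^3\wtilH^2 w_1^2$ and $\tfrac49\int\wtilQ^3|\varphi_A\wtilH|\wtilH^2 u_1^2$ (Lemma~\ref{lem:virial I non linear}). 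These remainders are not small; they are precisely the extra inputs fed into the pointwise sign computation \eqref{eq:V_-}. This hidden defocusing structure---flagged in Remark~\ref{rem:quadratic-nonlinear} and in the introduction---is what makes the first virial close; without it your outline does not go through.
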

\begin{remark}
	Estimate \eqref{eq:1virial} does not involve any type of spectral analysis. Its purpose is to give weighted control of $(u_1,\px u_1)$ on a large scale $A$ in terms of a weighted and localized $H^1$ norm of $u_1$.
\end{remark}
The rest of this section is devoted to the proof of Proposition \ref{prop:virial I}. We start with the following basic lemma.

\begin{lemma}\label{lemma:virial I linear}
	Let $(u_1,u_2) \in H^1(\R)\times L^2(\R)$ be a solution of \eqref{eq:system-perturbated}. Consider $\varphi_A = \varphi_A(x)$ the smooth bounded function defined by \eqref{eq:zeta}. Then
	\begin{equation}\label{eq:dIdt}
		\frac{d}{dt}\mathcal{I} = I_{L}(t) - \int \left(\varphi_A \partial_x u_1 + \frac12 \varphi_A' u_1 \right)N^{\perp},
	\end{equation}
	where $I_L$ denotes the linear contribution, given by
		\begin{equation}\label{df:Ilinear}
			I_{L}(t):= - \int \varphi_A' (\partial_x u_1)^2 + \frac14 \int \varphi_A''' u_1^2 + \frac12 \int \varphi_A V' u_1^2.
		\end{equation}
\end{lemma}
\begin{proof}
	 The proof relies on direct substitution and integration by parts. The novelty lies in the presence of the algebraic weight in the virial functional. For clarity of exposure, we present the detailed computations.
	
	We define the integrals
	\begin{equation*}
		\mathcal{I}_1(t) = \int \varphi_A u_2 \partial_x u_1, \qquad \mathcal{I}_2(t) = \frac12 \int \varphi_A' u_1 u_2.
	\end{equation*}
	Differentiating $\calI_1$ with respect to time and using \eqref{eq:system-perturbated},
	\begin{align*}
		\frac{d}{dt}\mathcal{I}_1(t) &= \int \varphi_A (\dot u_2 \partial_x u_1 +  u_2 \partial_x \dot u_1)  \\
		&= - \int \varphi_A \left(L[u_1] + N^{\perp} \right) \partial_x u_1 +   \int \varphi_A u_2 \partial_x u_2\\
		&= - \int \varphi_A L[u_1] \partial_x u_1 - \int \varphi_A \px u_1 N^{\perp} - \frac12 \int \varphi_A' u_2^2.
	\end{align*}
	For the first integral on the RHS,
	\begin{align*}
		\int \varphi_A L[u_1] \partial_x u_1 =&~{} \int \varphi_A (-\partial_x^2 u_1 + V u_1) \partial_x u_1 \\
		=&~{} - \frac12 \int \varphi_A \px(\partial_x u_1)^2  + \frac12 \int \varphi_A V \px u_1^2 \\
		=&~{} \frac12 \int \varphi_A' (\partial_x u_1)^2 - \frac12 \int \varphi_A' V u_1^2 - \frac12 \int \varphi_A V' u_1^2.
	\end{align*}
	Then, replacing we obtain
	\begin{align}\label{eq:dI1dt}
		\begin{split}
			\frac{d}{dt}\mathcal{I}_1 &= - \frac12 \int \varphi_A' (\partial_x u_1)^2 + \frac12 \int \varphi_A' V u_1^2 + \frac12 \int \varphi_A V' u_1^2 - \frac12 \int  \varphi_A' u_2^2 - \int \varphi_A \px u_1 N^{\perp}.
		\end{split}
	\end{align}
	Now, for the second virial term $\calI_2$ we proceed analogously. Taking time derivative and using \eqref{eq:system-perturbated}:
	\begin{align*}
		\frac{d}{dt}\mathcal{I}_2 &= \frac12 \int \varphi_A' (\dot u_1 u_2 + u_1 \dot u_2) = \frac12 \int \varphi_A' u_2^2 - \frac12 \int \varphi_A' u_1 \left( L[u_1] + N^{\perp} \right) \\
		&= \frac12 \int \varphi_A' u_2^2 - \frac12 \int \varphi_A' u_1 L[u_1] - \frac12 \int \varphi_A' u_1 N^{\perp}.
	\end{align*}
	For the second integral above we have
	\begin{align*}
		\int \varphi_A' u_1 L[u_1] &= \int \varphi_A' u_1 (- \partial_x^2 u_1 + Vu_1) \\
		&= \int (\varphi_A' u_1)_x \partial_x u_1 + \int \varphi_A' Vu_1^2 \\
		&= \int \varphi_A'' u_1 \partial_x u_1 + \int \varphi_A' (\partial_x u_1)^2 + \int \varphi_A' Vu_1^2 \\
		&= - \frac12 \int \varphi_A''' u_1^2 + \int \varphi_A' (\partial_x u_1)^2 +  \int \varphi_A' V u_1^2.
	\end{align*}
	Then, replacing we obtain
	\begin{align}\label{eq:dI2dt}
		\begin{split}
			\frac{d}{dt} \mathcal{I}_2(t) &= \frac12 \int \varphi_A' u_2^2 + \frac14 \int \varphi_A''' u_1^2 - \frac12 \int \varphi_A' (\partial_x u_1)^2  - \frac12 \int \varphi_A' V u_1^2 - \frac12 \int \varphi_A' u_1 N^{\perp}.
		\end{split}
	\end{align}
	Finally, adding \eqref{eq:dI1dt} and \eqref{eq:dI2dt} we arrive at the equation,
	\begin{align*}
		\frac{d}{dt}\mathcal{I}(t) &= - \int \varphi_A' (\partial_x u_1)^2 + \frac14 \int \varphi_A'''u_1^2  + \frac12 \int \varphi_A V' u_1^2 - \int \left(\varphi_A \partial_x u_1 + \frac12 \varphi_A' u_1 \right)N^{\perp},
	\end{align*}
	which is precisely \eqref{eq:dIdt}.
\end{proof}

Unlike previous results in the area, the nonlinear term poses several problems in estimates. This is mainly due to the lack of dispersion for the one-dimensional wave equation, and the weak spatial decay of the linear part with respect to the non-linearity. For this reason, we treat the nonlinear term first. Recall that the nonlinear term is
\[
- \int \left(\varphi_A \partial_x u_1 + \frac12 \varphi_A' u_1 \right)N^{\perp},
\]
where $N^\perp$ was introduced in \eqref{eq:N}-\eqref{eq:Np}. We have the following intermediate result.
\begin{lemma}\label{lem:virial I non linear}
	There exists a universal constant $C>0$ such that 
	\begin{equation}\label{eq:virial I non linear}
		\begin{aligned}
			& - \int \left(\varphi_A \partial_x u_1 + \frac12 \varphi_A' u_1 \right) \left(\wtilQ^2 \left( 3\wtilH(a_1\phi_0 + u_1)^2 + (a_1\phi_0 + u_1)^3 \right) - N_0 \phi_0 \right)    \\
			& \hspace{2cm} \leq ~{} C a_1^4 + C\int \wtilQ^7 u_1^2 + C A \| \wtilQ^{1/2} u_1\|_{L^\infty}\int \wtilQ^3 w_1^2 + \frac14\int \wtilQ^3 \wtilH^2  w_1^2 \\
			& \hspace{2cm} \quad + 2\int \wtilQ^3  \left|\varphi_A \wtilH \right|\wtilH^6 u_1^2.
		\end{aligned}
	\end{equation}
\end{lemma}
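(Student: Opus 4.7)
My plan is to expand $N - N_0\phi_0$ by multilinearity into three families: (i) monomials carrying at least one factor of $a_1\phi_0$ (including the $N_0\phi_0$ correction, which is a scalar multiple of $\phi_0$ with $N_0 = \langle N,\phi_0\rangle$); (ii) the pure quadratic $3\wtilH\wtilQ^2 u_1^2$; and (iii) the pure cubic $\wtilQ^2 u_1^3$.

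All terms in family (i) inherit the pointwise exponential decay of $\phi_0$ from \eqref{eq:properties-eigenvalL}. After integration by parts to move $\partial_x$ off $u_1$, the resulting expressions become weighted pairings of $u_1$ against Schwartz-class weights, multiplied by powers of $a_1$. Cauchy--Schwarz followed by Young's inequality then yields a bound by $C|a_1|^4 + C\int \wtilQ^7 u_1^2$; the $\wtilQ^7$ weight is harmless because the weights in question decay faster than any polynomial.

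For the two remaining integrals, the key device is the identity
\[
-\int\!\Bigl(\varphi_A \partial_x u_1 + \tfrac12 \varphi_A' u_1\Bigr) f(x)\, u_1^k \,dx
= \int\!\Bigl[\tfrac{1}{k+1}\varphi_A f' - \tfrac{k-1}{2(k+1)}\varphi_A' f\Bigr] u_1^{k+1}\,dx,
\]
obtained by writing $u_1^k \partial_x u_1 = \partial_x(u_1^{k+1})/(k+1)$ and integrating by parts. Applied with $(f,k) = (3\wtilH\wtilQ^2, 2)$ and $(\wtilQ^2, 3)$, and simplified using $\varphi_A' = \wtilQ\zeta_A^2$, $\wtilH' = \tfrac13\wtilQ^2$, and $\wtilQ' = -\wtilQ^2\wtilH$, everything reduces to integrals of the form $\wtilQ^3 \zeta_A^2 \wtilH u_1^3$, $\varphi_A\wtilQ^3\wtilH^2 u_1^3$, $\varphi_A\wtilQ^4 u_1^3$, $\wtilQ^3\zeta_A^2 u_1^4$, and $\varphi_A\wtilQ^3\wtilH u_1^4$. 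These are bounded via the pointwise inequality $|u_1|\le \|\wtilQ^{1/2}u_1\|_{L^\infty}\wtilQ^{-1/2}$ and $|\varphi_A|\lesssim A$, together with Young's inequality applied on the pieces odd in $\wtilH$ so as to square the isolated $\wtilH$ factor and produce either a $\wtilH^2 w_1^2$ weight or a $|\varphi_A\wtilH|\wtilH^2 u_1^2$ weight.

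The main obstacle is the sharpness of the constants $\tfrac{1}{72}$ and $\tfrac{4}{9}$: they are not aesthetic but forced by the downstream absorption in the proof of \prop{prop:virial I}. The $\tfrac{1}{72}\int\wtilQ^3 \wtilH^2 w_1^2$ term must be absorbed into the coercivity contribution $\tfrac12 C_0\int\wtilQ^3 w_1^2$ using $\wtilH^2\le 1$, while $\tfrac{4}{9}\int\wtilQ^3|\varphi_A\wtilH|\wtilH^2 u_1^2$ must be dominated by the favorable part of $\tfrac12\int\varphi_A V'u_1^2$ from \eqref{eq:dIdt}. Since $V = 2\wtilQ^2(1-\wtilQ)$ gives $V' = -\wtilQ^3\wtilH(4 - 6\wtilQ)$, and since $\varphi_A\wtilH\ge 0$ (both are odd and positive on $\R_+$), this linear contribution is $-2\int|\varphi_A\wtilH|\wtilQ^3 u_1^2 + 3\int|\varphi_A\wtilH|\wtilQ^4 u_1^2$, whose negative part dominates the $\tfrac49$ nonlinear piece using $\wtilH^2\le 1$ and the smallness of $\wtilQ$ at infinity. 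Tracking the Young parameter back through this absorption is exactly what pins down the precise values $\tfrac{1}{72}$ and $\tfrac{4}{9}$.
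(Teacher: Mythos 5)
Your decomposition into three families and your integration-by-parts identity match the paper's own strategy exactly: the paper splits the nonlinearity into $I_1,I_2,I_3,I_4$ by powers of $a_1\phi_0$, handles $I_1$, $I_2$, $I_{3,2}$ and the $N_0\phi_0$ correction by Cauchy--Schwarz with the exponential decay of $\phi_0$, and then integrates by parts to reduce $3I_{3,1}+I_4$ to the same collection of cubic and quartic integrals in $u_1$ that you list. So the approach is the same.

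The step you gloss over is, however, exactly the content of the lemma, and your phrasing risks getting it wrong. After integration by parts, the two quartic integrals $\int\varphi_A'\wtilQ^2\,u_1^4$ and $\int\wtilQ^3|\varphi_A\wtilH|\,u_1^4$ carry a \emph{fixed} favorable sign (because $\varphi_A'=\wtilQ\zeta_A^2\geq 0$ and $\varphi_A\wtilH\geq 0$), and the paper extracts $\tfrac1{72}$ and $\tfrac49$ by keeping these quartics intact and completing the square:
\[
3I_{3,1}+I_4 = \tfrac12\!\int\!\varphi_A'\wtilQ^2\,u_1^2\bigl(u_1+\tfrac16\wtilH\bigr)^2
+\tfrac14\!\int\!\wtilQ^3|\varphi_A\wtilH|\,u_1^2\bigl(u_1+\tfrac43\wtilH\bigr)^2
-\tfrac1{72}\!\int\!\varphi_A'\wtilQ^2\wtilH^2 u_1^2
-\tfrac49\!\int\!\wtilQ^3|\varphi_A\wtilH|\wtilH^2 u_1^2
-\tfrac19\!\int\!\varphi_A\wtilQ^4 u_1^3 ,
\]
after which the two squared terms are \emph{discarded because they are nonnegative}, and only the remaining signless mixed integral $\int\varphi_A\wtilQ^4 u_1^3$ is estimated by $|u_1|\lesssim\|\wtilQ^{1/2}u_1\|_{L^\infty}\wtilQ^{-1/2}$ and $|\varphi_A|\lesssim A$ to produce the $CA\|\wtilQ^{1/2}u_1\|_{L^\infty}\int\wtilQ^3 w_1^2$ contribution. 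Your sketch lists the pointwise bound on $u_1$ in the same breath as the Young step, as if the two are interchangeable over the whole collection of integrals. They are not: if you apply the pointwise bound to the quartics before using their sign, you trade an $O(1)$ absorption for an $O(A\|\wtilQ^{1/2}u_1\|_{L^\infty})$ bound and can no longer produce the fixed constants $\tfrac1{72}$ and $\tfrac49$ in front of the $\wtilH^2$-weighted quadratics, which are needed downstream against the linear virial. You do correctly flag that these constants are forced, so I read this as an omission rather than a wrong turn --- but the completing-the-square identity is the step that has to be written out, not alluded to, and the role of the quartic sign should be stated explicitly.
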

\begin{remark}\label{rem:quadratic-nonlinear}
	Notice that the last two terms in \eqref{eq:virial I non linear} are nothing but quadratic, revealing that the purely nonlinear terms are not that small as one usually has in NLKG models. Precisely, these terms will be added to the ``quadratic part'' in \eqref{eq:dIdt}.
\end{remark}

\begin{proof}
	We decompose the first integral of \eqref{eq:virial I non linear} into several parts and write
	\begin{align*}
		& \int \left(\varphi_A \partial_x u_1 + \frac12 \varphi_A' u_1 \right) \wtilQ^2 \left( 3\wtilH(a_1\phi_0 + u_1)^2 + (a_1\phi_0 + u_1)^3 \right) \\
		& \hspace{3cm} = a_1^2\int \wtilQ^2(3\wtilH + a_1 \phi_0)\phi_0^2 \left(\varphi_A \partial_x u_1 + \frac12 \varphi_A' u_1 \right)  \\
		& \hspace{3cm} \quad + 3a_1 \int \wtilQ^2(2\wtilH + a_1 \phi_0) \phi_0 u_1 \left(\varphi_A \partial_x u_1 + \frac12 \varphi_A' u_1 \right) \\[0.1cm]
		& \hspace{3cm} \quad + 3 \int \wtilQ^2 (\wtilH + a_1\phi_0)u_1^2 \left(\varphi_A \partial_x u_1 + \frac12 \varphi_A' u_1 \right) \\
		& \hspace{3cm} \quad + \int \wtilQ^2 u_1^3 \left(\varphi_A \partial_x u_1 + \frac12 \varphi_A' u_1 \right) \\[0.1cm]
		& \hspace{3cm} = I_1 + I_2 + I_3 + I_4.
	\end{align*}
	For the first term, using integration by parts, the Cauchy-Schwarz inequality, the decay estimates on $\wtilQ$ and $\phi_0$, and noticing that for all $x\in\R$, $|\varphi_A'(x)|\leq \wtilQ$ and $|\varphi_A(x)|\leq |\alpha^{-1}(x)|$,
	\begin{equation}\label{eq:I1est}
		\begin{aligned}
			|I_1| \leq &~{} a_1^2 \int |\px(\wtilQ^2(3\wtilH + a_1\phi_0)\phi_0^2) \varphi_A u_1| + \frac12 a_1^2\int |\wtilQ^2(3\wtilH + a_1\phi_0)\phi_0^2 \varphi_A' u_1| \\
			\lesssim &~{} a_1^2 \left[ \left(\int \wtilQ\phi_0^4 |\alpha^{-1}|^2\right)^{\frac12} + \left(\int \wtilQ^{3}|\phi_0 \phi_0' \alpha^{-1}|^2\right)^{\frac12}
			\right] \left(\int \wtilQ^7 u_1^2 \right)^{\frac12} \\
			\lesssim &~{}  a_1^4 + \int \wtilQ^7 u_1^2.
		\end{aligned}
	\end{equation}
	For the second integral, by integration by parts, using the exponential decay \eqref{eq:properties-eigenvalL}, $\phi_A(x) \lesssim |\alpha^{-1}(x)|$, and in addition $|a_1|<1$ (see \eqref{eq:bound}), we obtain
	\begin{equation}\label{eq:I2est}
		\begin{aligned}
			|I_2| = &~{} \frac32 |a_1|\left|\int \px(\wtilQ^2(2\wtilH + a_1\phi_0)\phi_0 ) \varphi_A u_1^2 \right| \\
			\lesssim &~{} |a_1|\int |\alpha^{-1}(x)|(\wtilQ\phi_0 + \phi_0') \wtilQ^2 u_1^2 \lesssim \int \wtilQ^7 u_1^2.
		\end{aligned}
	\end{equation}
	For $I_3$ the control is more subtle. We decompose $I_3:=I_{3,1} +I_{3,2}$, where
	\[
	\begin{aligned}
		&	I_{3,1}:= 3 \int \wtilQ^2 \wtilH  u_1^2 \left(\varphi_A \partial_x u_1 + \frac12 \varphi_A' u_1 \right), \\
		&	I_{3,2}:= 3 a_1\int \wtilQ^2 \phi_0 u_1^2 \left(\varphi_A \partial_x u_1 + \frac12 \varphi_A' u_1 \right).
	\end{aligned}
	\]
	One easily has from the exponential decay of $\phi_0$,
	\begin{equation}\label{eq:I32est}
		\left| I_{3,2}\right| \lesssim A|a_1|\int \wtilQ^2 (|\phi_0|+|\phi_0'|) |u_1|^3 \lesssim A \| \wtilQ^{1/2} u_1\|_{L^\infty}\int \wtilQ^3 w_1^2 .
	\end{equation}
	On the other hand, using that $ \wtilQ' = -\wtilQ^2\wtilH $ and $\wtilH'  = \frac13 \wtilQ^2$, one has $ (\wtilQ^2 \wtilH)' = \frac13 \wtilQ^4 -2\wtilQ^3 \wtilH^2$, and
	\begin{equation}\label{eq:I31}
		\begin{aligned}
			I_{3,1}=&~{} \int   \left( \frac12 \varphi_A'   \wtilQ^2 \wtilH  - \varphi_A (\wtilQ^2 \wtilH)' \right) u_1^3 \\
			=&~{} \int   \left( \frac12\varphi_A' \wtilQ^2 \wtilH -\varphi_A \wtilQ^3 \left( \frac13 \wtilQ -2 \wtilH^2\right) \right) u_1^3.
		\end{aligned}
	\end{equation}
	To estimate $I_{3,1}$, we will exploit the positivity of $I_4$. Integrating by parts, we find
	\begin{equation}\label{eq:I4est}
		\begin{aligned}
			I_4 =&~ \int \wtilQ^2 u_1^3 \left(\varphi_A \partial_x u_1 + \frac12 \varphi_A' u_1 \right) \\
			=&~ \frac14 \int\wtilQ^2 \varphi_A' u_1^4 + \frac12 \int \wtilQ^3\wtilH\varphi_A u_1^4.
		\end{aligned}
	\end{equation}
	Note that each term in $I_4$ is non-negative. We have from this last identity and \eqref{eq:I31},\label{22}
		\[
		\begin{aligned}
			I_{3,1}+ I_4 = &~{}  \frac12 \int  \varphi_A'   \wtilQ^2 \wtilH  u_1^3+ \frac14 \int \varphi_A' \wtilQ^2 u_1^4  \\
			&~{} +  2 \int  \wtilQ^3 \left|\varphi_A \wtilH \right| \wtilH \left(1 - \frac23 \wtilQ\right) u_1^3  + \frac12 \int \wtilQ^3 \left| \varphi_A \wtilH \right| u_1^4  \\
			&~{} + \frac43 \int \wtilQ^4 \left|\varphi_A \wtilH \right|\wtilH u_1^3 -   \frac13 \int \wtilQ^4 \varphi_A u_1^3,
		\end{aligned}
		\]
		where we have added a cubic term with enough decay to be controlled and absorbed into the error estimates later. Now, we complete the square as follows
		\[
		\begin{aligned}
			I_{3,1}+ I_4 = &~{} \frac14 \int  \varphi_A' \wtilQ^2 u_1^2  \left( u_1+ \wtilH \right)^2  -\frac14 \int \varphi_A'\wtilQ^2 \wtilH^2  u_1^2  \\
			&~{} +  \frac12 \int \wtilQ^3  \left|\varphi_A \wtilH \right|u_1^2 \left( u_1 + 2\wtilH^3 \right)^2  - 2 \int \wtilQ^3  \left|\varphi_A \wtilH \right|\wtilH^6 u_1^2  \\
			&~{} + \frac43\int \wtilQ^4 \left|\varphi_A \wtilH \right|\wtilH u_1^3 -   \frac13 \int \wtilQ^4 \varphi_A u_1^3,
		\end{aligned}
		\]
		We bound the last two integrals using an $L^{\infty}$ estimate
		\[
		\frac43\int \wtilQ^4 \left|\varphi_A \wtilH^2 u_1^3\right|  +   \frac13 \int \wtilQ^4 \left|\varphi_A u_1^3\right| \lesssim A \|\wtilQ^{1/2} u_1\|_{L^\infty} \int \wtilQ^{7/2}u_1^2 \lesssim A \|\wtilQ^{1/2} u_1\|_{L^\infty} \int \wtilQ^3 w_1^2.
		\]
		Thus, we conclude that
		\begin{equation}\label{eq:I34est}
			\begin{aligned}
				I_{3,1}+ I_4 \geq &~{} - \frac14 \int \varphi_A'\wtilQ^2 \wtilH^2  u_1^2  - 2 \int \wtilQ^3  \left|\varphi_A \wtilH \right|\wtilH^6 u_1^2  - CA \|\wtilQ^{1/2} u_1\|_{L^\infty} \int  \wtilQ^3 w_1^2.
			\end{aligned}
		\end{equation}
	The last term that we treat from \eqref{eq:virial I non linear} is $N_0  \int\phi_0 \left(\varphi_A \partial_x u_1 + \frac12 \varphi_A' u_1 \right)$. By a point-wise estimate in \eqref{eq:N},
	\begin{equation}\label{eq:Nexplicit}
		N=  \wtilQ^2 \left( 3\wtilH(a_1^2\phi_0^2  +2a_1 \phi_0 u_1 + u_1^2) + a_1^3 \phi_0^3 + 3 a_1^2 \phi_0^2 u_1 + 3 a_1\phi_0 u_1^2 + u_1^3 \right)
	\end{equation}
	and using that  $|a_1|\lesssim 1$ (see \eqref{eq:bound}),
	\begin{equation}\label{eq:Nest}
		|N| \lesssim \wtilQ^2(a_1^2\phi_0^2 + u_1^2+|u_1|^3+u_1^4),
	\end{equation}
	and thus, by the decay estimates on $\wtilQ$ and $\phi_0$, $\| \wtilQ^{1/2}u_1\|_{L^{\infty}}\lesssim \| \wtilQ^{1/2} u_1\|_{H^1}\lesssim 1$, $A\geq 2$, it holds that \eqref{eq:Nest} implies
	\begin{equation}\label{eq:N0}
		|N_0| =\left| \langle \phi_0, N \rangle\right| \lesssim a_1^2 + \int \wtilQ^2 \phi_0 u_1^2 \lesssim a_1^2 + \int \wtilQ^7 u_1^2.
	\end{equation}
	Integrating by parts $ - \int\phi_0 \left(\varphi_A \partial_x u_1 + \frac12 \varphi_A' u_1 \right) = \int u_1 \left( \varphi_A \phi_0' + \frac12 \varphi_A' \phi_0 \right).$ Note that from the exponential decay of $\phi_0$, $\phi_0'$, and from the polynomial decay of $\wtilQ$, $\zeta_A$ we have
	\[
	\left| \varphi_A \phi_0' + \varphi_A' \phi_0 \right| \lesssim \alpha^{-1}(x)\phi_0' + \wtilQ\zeta_A^2\phi_0 \lesssim \wtilQ^7.
	\]
	Thus, using \eqref{eq:N0}, the Cauchy-Schwarz inequality and Lemma \ref{lemma:estimation},
	\begin{align}\label{eq:N0est} 
		\begin{split}
			\left| N_0  \int\phi_0 \left(\varphi_A \partial_x u_1 + \frac12 \varphi_A' u_1 \right) \right| \lesssim&~  \left(a_1^2 + \int \wtilQ^7 u_1^2\right) \int \wtilQ^7|u_1| \\
			\lesssim&~  \left(a_1^2 + \int \wtilQ^7 u_1^2\right) \left(\int \wtilQ^7 u_1^2\right)^{\frac12}\left(\int \wtilQ^7\right)^{\frac12} \\
			\lesssim&~  a_1^4 + \int \wtilQ^7 u_1^2.
		\end{split}
	\end{align}
	Gathering \eqref{eq:I1est}, \eqref{eq:I2est}, \eqref{eq:I32est}, \eqref{eq:I34est} and \eqref{eq:N0est}, we obtain for a constant $C>0$,
	\[
	\begin{aligned}
		- \int \left(\varphi_A \partial_x u_1 + \frac12 \varphi_A' u_1 \right) N^{\perp} \leq &~{} Ca_1^4 + C\int \wtilQ^7 u_1^2 + CA \| \wtilQ^{1/2} u_1\|_{L^\infty}\int \wtilQ^3 w_1^2  \\
		&~{} + \frac14 \int \wtilQ^3 \wtilH^2  w_1^2 + 2 \int \wtilQ^3  \left|\varphi_A \wtilH \right| \wtilH^6  u_1^2,
	\end{aligned}
	\]
	which is nothing but \eqref{eq:virial I non linear}.
\end{proof}

In the following lemma we rewrite the linear part of the virial identity \eqref{df:Ilinear} using the new variables $(w_1, w_2)$.

\begin{lemma}\label{lem:identidades}
	It holds that
	\begin{equation}\label{eq:virial1}
		\begin{aligned}
			I_{L}(t) = &~{} -\int  \wtilQ(\partial_x w_1)^2 + \frac12 \int \left[\frac{\zeta_A''}{\zeta_A} - \left(\frac{\zeta_A'}{\zeta_A}\right)^2 \right]\wtilQ w_1^2 + \frac14\int \wtilQ'' w_1^2 + \frac12\int \varphi_A V' u_1^2,
		\end{aligned}
	\end{equation}
	where $I_L(t)$ is defined in \eqref{df:Ilinear}. Moreover,
	\begin{equation}\label{eq:422}
		\frac{\zeta_A''}{\zeta_A} - \left(\frac{\zeta_A'}{\zeta_A}\right)^2 =
		\frac{1}{A} \left[ \chi'' |\alpha^{-1}| + 2\chi'\wtilQ  \, \sgn(\alpha^{-1}) + (1-\chi)\wtilQ^2\wtilH \, \sgn(\alpha^{-1}) \right].
	\end{equation}
	Additionally, 
	\begin{equation}\label{eq:estimations}
		\left|\frac{\zeta_A'}{\zeta_A}\right| \lesssim \frac{1}{A}\wtilQ\mathbf{1}_{\{|x|\geq 1\}}, \qquad \left| \frac{\zeta_A''}{\zeta_A} - \left(\frac{\zeta_A'}{\zeta_A}\right)^2 \right| \lesssim \frac1A \wtilQ^2\mathbf{1}_{\{|x|\geq 1\}}.
	\end{equation}
\end{lemma}
\begin{remark}
	Unlike previous works using this type of virial function, we obtain an expression in terms of $w_1$ with weight $\wtilQ$, and an extra term $\frac14\int\wtilQ''w_1^2$. This is due to the particular definition of $\zeta_A$ and $\varphi_A$ in \eqref{eq:zeta} chosen to handle the specific polynomial decay of the linearized potential. Another relevant feature is the loss of a compact support for the second expression in \eqref{eq:estimations}, which shall be controlled by the specific decay from $V'$ in \eqref{eq:1virial}.
\end{remark}
\begin{proof}
	Considering $w_1 = \zeta_A u_1$, and $\varphi_A' = \wtilQ \zeta_A^2$, we have,
	\begin{align*}
		\int \varphi_A'(\partial_x u_1)^2 = &~{} \int \wtilQ\left(\partial_x w_1 - \dfrac{ \zeta'_A }{ \zeta_A }w_1\right)^2 \\
		= &~{} \int \wtilQ(\partial_x w_1)^2 - 2\int \wtilQ\dfrac{ \zeta'_A }{ \zeta_A } w_1 \partial_x w_1 + \int  \left(\dfrac{ \zeta_A' }{ \zeta_A }\right)^2 \wtilQ w_1^2 \\
		= &~{} \int \wtilQ(\partial_x w_1)^2 + \int\left[\left(\frac{\wtilQ\zeta_A'}{\zeta_A}\right)' + \wtilQ \left(\frac{\zeta_A'}{\zeta_A}\right)^2 \right]w_1^2 \\
		= &~{} \int \wtilQ(\partial_x w_1)^2 + \int \frac{(\wtilQ\zeta_A')'}{\zeta_A}w_1^2,
	\end{align*}
	and
	\begin{align*}
		\int \varphi_A''' u_1^2 = \int \left[\wtilQ'' + 2\wtilQ'\left(\frac{\zeta_A'}{\zeta_A}\right)+2\frac{(\wtilQ\zeta_A')'}{\zeta_A} + 2\wtilQ\left(\frac{\zeta_A'}{\zeta_A} \right)^2\right]w_1^2.
	\end{align*}
	Then, 
	\[
	\begin{aligned}
		- \int \varphi'_A (\partial_x u_1)^2 + \frac14 \int \varphi'''_A u_1^2 =& -\int  \wtilQ(\partial_x w_1)^2 + \frac14\int \wtilQ'' w_1^2  \\
		&~ +  \frac12 \int \left[\wtilQ'\left(\frac{\zeta_A'}{\zeta_A}\right) - \frac{(\wtilQ\zeta_A')'}{\zeta_A} + \wtilQ\left(\frac{\zeta_A'}{\zeta_A} \right)^2\right] w_1^2 \\
		=& -\int  \wtilQ(\partial_x w_1)^2 + \frac14\int \wtilQ'' w_1^2 + \frac12 \int \wtilQ \left[\frac{\zeta_A''}{\zeta_A} - \left(\frac{\zeta_A'}{\zeta_A}\right)^2\right]w_1^2.
	\end{aligned}
	\]
	Replacing the above identities we obtain \eqref{eq:virial1}. By elementary computations of \eqref{eq:zeta}, we have
	\begin{align}
		\begin{split}
			\frac{\zeta_A'}{\zeta_A} &= \frac{1}{A} \left[ \chi'|\alpha^{-1}| - \sgn(\alpha^{-1})(\alpha^{-1})'(1 - \chi)\right]  \nonumber \\[0.1cm] 
			\frac{\zeta_A''}{\zeta_A} &= \left(\frac{\zeta_A'}{\zeta_A}\right)^2 + \frac{1}{A} \left[\chi''|\alpha^{-1}| + 2\chi'(\alpha^{-1})'  \sgn(\alpha^{-1}) - (1-\chi)(\alpha^{-1})'' \sgn(\alpha^{-1}) \right].
		\end{split}
	\end{align}
	Hence, replacing with \eqref{eq:dalpha}, we get \eqref{eq:422} and the first inequality of \eqref{eq:estimations}.
	
	\medskip
	
	Now, we describe in more detail the behavior of \eqref{eq:422}, which will differ from previous works on the subject. First, for $1\leq |x|\leq 2$, we can see that
	\[ \left| \frac{\zeta_A''}{\zeta_A} - \left(\frac{\zeta_A'}{\zeta_A}\right)^2 \right| \lesssim \frac{1}{A}. \]
	For $|x|\geq 2$, using \eqref{eq:dalpha}
	\[ \left| \frac{\zeta_A''}{\zeta_A} - \left(\frac{\zeta_A'}{\zeta_A}\right)^2 \right| = \frac{1}{A} \wtilQ^2|\wtilH| \leq \frac{1}{A}\wtilQ^2(x). \]
	Then one can see that
	\[ \left| \frac{\zeta_A''}{\zeta_A} - \left(\frac{\zeta_A'}{\zeta_A}\right)^2 \right| \lesssim  \frac{\wtilQ^2 \mathbf{1}_{\{|x|\geq 1\}}}{A},
	\]
	which proves the second estimate of \eqref{eq:estimations}.
\end{proof}

Finally, we focus on the control of the quadratic terms obtained from the non-linear part in \eqref{eq:virial I non linear} using the negativity of the linear part in \eqref{eq:virial1}. For this, we will employ the repulsivity of $V$ when $x$ is far from the origin and, unlike previous results, a Hardy-type inequality. This reflects the weak dispersion of the problem.

\begin{lemma}\label{lemma:virial I quadratic control}
	Let $(u_1,u_2)$ be a solution of \eqref{eq:system-perturbated}. Then, for $A$ large enough, there exist positive constants $C_0, C>0$ such that
	\begin{equation}
		\begin{aligned}
		& I_L(t) +\frac14\int \wtilQ^3 \wtilH^2  w_1^2 + 2 \int \wtilQ^3  \left|\varphi_A \wtilH \right| \wtilH^6 u_1^2\\
		& \hspace{2cm} \leq ~ -C_0\int  \wtilQ[(\partial_x w_1)^2 + \wtilQ^2 w_1^2] + C \int \wtilQ^7 u_1^2.
		\end{aligned}
	\end{equation}
\end{lemma}
\begin{proof}
	From \eqref{eq:virial1} and \eqref{eq:estimations}, grouping terms according to their spatial decay, there exist $C>0$ such that
	\begin{equation}\label{linearControl1}
		\begin{aligned}
		& I_L +\frac14\int \wtilQ^3 \wtilH^2  w_1^2 + 2 \int \wtilQ^3  \left|\varphi_A \wtilH \right| \wtilH^6 u_1^2 \\
		& \hspace{2.5cm} \leq ~ -\int  \wtilQ(\partial_x w_1)^2 + \frac{C}{A} \int_{|x|\geq1} \wtilQ^3 w_1^2   \\
		& \hspace{2.5cm} \quad ~ + \frac14 \int \left\{ (\wtilQ'' + \wtilH^2 \wtilQ^3)\zeta_A^2 + 2\varphi_A V' + 8\left|\varphi_A \wtilH\right| \wtilH^6 \wtilQ^3 \right\} u_1^2. 
		\end{aligned}
	\end{equation}
	In the rest of the proof, the constant $C$ may be redefined by a larger one, but remains bounded independently of the parameters.
	First, we study the weighted $L^2$ terms involved in the last integral. Using the definition of $\wtilQ$ and $V$, in addition to \eqref{eq:dalpha} and \eqref{derivada}, we compute
	\begin{equation}\label{linearControl2}
		\begin{aligned}
		&	(\wtilQ'' + \wtilH^2 \wtilQ^3)\zeta_A^2 + 2\varphi_A V' + 8\left|\varphi_A \wtilH\right| \wtilH^6 \wtilQ^3 \\
		&\hspace{2cm} = ~ \left(2 - \frac53\wtilQ + \wtilH^2 \right)\wtilQ^3\zeta_A^2 - \left(8 - 12\wtilQ - 8\left(1 - \frac23\wtilQ \right)^3  \right)\wtilQ^3 \varphi_A \wtilH \\
		&\hspace{2cm}	= ~ \left(3 - \frac73\wtilQ \right)\wtilQ^3\zeta_A^2 - \left(4 - \frac{32}{3}\wtilQ + \frac{64}{27}\wtilQ^2 \right)\wtilQ^4 \varphi_A \wtilH.
		\end{aligned}
	\end{equation}
	Notice that the last term on the RHS is not localized at scale $A$. Nevertheless, the factor in parenthesis is strictly positive for $|x|$ sufficiently large, giving it a favorable sign and so it can be dropped easily from the upper bound. We thus focus on controlling the first term, which has the same scale that the variable $w_1$. For this, we will apply a Hardy inequality to the weighted $\dot{H}^1$ term from the linear part in \eqref{linearControl1}.
	
	We proceed to localize the integral as follows. Let $\chi_K(\cdot):= \chi(\frac{\cdot}{K})$ be a cut-off function at scale $K>0$ to be determined later, where $\chi$ is defined by \eqref{eq:chi}. For simplicity, we denote $\eta_K^2:= 1 - \chi_K$. 
	
	\medskip
	
	A direct computation yields
	\[
	\begin{aligned}
		\left[\partial_x(\wtilQ^{\frac12} \eta_K w_1)\right]^2 = &~ \left[(\wtilQ^{\frac12})'\right]^2\eta_K^2 w_1^2 + (\wtilQ^{\frac12})'\wtilQ^{\frac12}\eta_K^2 \px (w_1^2) + \wtilQ\eta_K^2 (\px w_1)^2 \\
		&~ + 2(\wtilQ^{\frac12})'\wtilQ^{\frac12}\eta_K\eta_K' w_1^2 + \wtilQ \eta_K\eta_K'\px (w_1^2) + \wtilQ(\eta_K')^2 w_1^2.
	\end{aligned}
	\]
	Thus, after integration by parts we get
	\begin{equation}\label{dotH1}
		\begin{aligned}
			\int \wtilQ (1 - \chi_K)(\px w_1)^2 = &~ \int \left[\px(\wtilQ^\frac12 \eta_K w_1)\right]^2 + \int (\wtilQ^{\frac12})''\wtilQ^{\frac12}(1 - \chi_K) w_1^2 \\
			&~ + \int \left(\wtilQ' \eta_K' + \wtilQ \eta_K''  \right)\eta_K w_1^2.
		\end{aligned}
	\end{equation}
	For the first integral in the RHS of \eqref{dotH1} we can employ the one dimensional Hardy's inequality. For the second integral, we compute the term explicitly using \eqref{derivada}, and the last integral is localized at $\{K\leq |x|\leq 2K\}$, and so bounded by a reminder term. Thus, estimating from below, we have
	\[
	\begin{aligned}
		\int \wtilQ (1 - \chi_K)(\px w_1)^2 \geq &~ \frac14 \int \frac{\wtilQ}{x^2} (1 - \chi_K) w_1^2 + \frac{3}{4} \int \left(1 - \frac{8}{9} \wtilQ\right)\wtilQ^3 (1 - \chi_K)w_1^2 \\
		&~ - C \int_{\{K\leq |x|\leq 2K\}} \wtilQ w_1^2 \\
		\geq &~ \frac14 \int_{\{|x|\geq K\}} \frac{\wtilQ}{x^2}w_1^2 + \frac{3}{4} \int_{\{|x|\geq K\}} \left(1 - \frac{8}{9} \wtilQ\right)\wtilQ^3 w_1^2\\
		&~ - C \int \wtilQ^7 w_1^2.
	\end{aligned}
	\]
	Now, employing the estimate \eqref{eq:lowerboundQ} from Lemma \ref{lemma:estimation} with $\epsilon^2 = \tfrac{13}{19}$ and denoting $M_0>0$ the associated constant, we conclude that for $K\geq M_0$,
	\begin{equation}\label{hardy}
		\begin{aligned}
			\int \wtilQ (1 - \chi_K)(\px w_1)^2 \geq &~ \frac{13}{76} \int_{\{|x|\geq K\}} \wtilQ^3 w_1^2 + \frac{3}{4}\int_{\{|x|\geq K\}} \left(1 - \frac{8}{9} \wtilQ\right)\wtilQ^3 w_1^2  - C \int \wtilQ^7 w_1^2 \\
			= &~ \frac{35}{38} \int_{\{|x|\geq K\}} \left(1 - \frac{76}{105} \wtilQ\right)\wtilQ^3 w_1^2  - C \int \wtilQ^7 u_1^2.
		\end{aligned}
	\end{equation}
	Now we are in a position to prove the lemma. First, let $M_1>0$ be a fixed constant such that $1 - \frac83 \wtilQ(x) + \frac{16}{27}\wtilQ^2(x)>0$ for all $|x|\geq M_1$ (see Lemma \ref{lemma:estimation}). Fixing $K = \max\{M_0, M_1 \}$ and from \eqref{linearControl2}, we have
	\[
	\begin{aligned}
		& \frac14 \int \left\{ (\wtilQ'' + \wtilH^2 \wtilQ^3)\zeta_A^2 + 2\varphi_A V' + 8\left|\varphi_A \wtilH\right| \wtilH^6 \wtilQ^3 \right\} u_1^2  \\
		& \hspace{1cm} = ~ \frac14 \int_{\{|x|\leq K\}}  \left(3 - \frac73\wtilQ \right)\wtilQ^3 w_1^2 - \int_{\{|x|\leq K\}} \left(1 - \frac{8}{3}\wtilQ + \frac{16}{27}\wtilQ^2 \right)\wtilQ^4 \varphi_A \wtilH u_1^2 \\
		& \hspace{1cm}\quad + \frac14 \int_{\{|x|\geq K\}}  \left(3 - \frac73\wtilQ \right)\wtilQ^3 w_1^2 - \int_{\{|x|\geq K\}} \left(1 - \frac{8}{3}\wtilQ + \frac{16}{27}\wtilQ^2 \right)\wtilQ^4 \varphi_A \wtilH u_1^2 \\
		& \hspace{1cm} \leq C\int \wtilQ^7 u_1^2 + \frac34 \int_{\{|x|\geq K\}} \left(1 - \frac79\wtilQ \right)\wtilQ^3 w_1^2.
	\end{aligned}
	\]
	
	Now, we decompose the first integral in \eqref{linearControl1} and using \eqref{hardy}, we have
	\[
	\begin{aligned}
		\int  \wtilQ(\partial_x w_1)^2 \geq &~ \frac{1}{20} \int  \wtilQ(\partial_x w_1)^2 + \frac{19}{20} \int  \wtilQ (1-\chi_K)(\partial_x w_1)^2 \\
		\geq &~  \frac{1}{20} \int  \wtilQ(\partial_x w_1)^2 + \frac{7}{8} \int_{\{|x|\geq K\}} \left(1 - \frac{76}{105} \wtilQ\right)\wtilQ^3 w_1^2  - C \int \wtilQ^7 u_1^2.
	\end{aligned}
	\]
	Gathering these two estimates,  we bound \eqref{linearControl1} obtaining
	\[
	\begin{aligned}
		& I_L +\frac14\int \wtilQ^3 \wtilH^2  w_1^2 + 2 \int \wtilQ^3  \left|\varphi_A \wtilH \right| \wtilH^6 u_1^2 \\
		& \hspace{1cm} \leq ~ -\frac{1}{20} \int  \wtilQ(\partial_x w_1)^2 - \frac{7}{8} \int_{\{|x|\geq K\}} \left(1 - \frac{76}{105} \wtilQ\right)\wtilQ^3 w_1^2 \\
		&\hspace{1cm} \quad   + \frac34 \int_{\{|x|\geq K\}} \left(1 - \frac79\wtilQ \right)\wtilQ^3 w_1^2 + C\int \wtilQ^7 u_1^2 + \frac{C}{A} \int_{|x|\geq1} \wtilQ^3 w_1^2 \\
		&\hspace{1cm} = ~ -\frac{1}{20} \int  \wtilQ(\partial_x w_1)^2 - \frac{1}{8}\int_{\{|x|\geq K\}} \left( 1 - \frac{2}{5} \wtilQ\right)\wtilQ^3 w_1^2 + C\int \wtilQ^7 u_1^2 + \frac{C}{A} \int_{|x|\geq1} \wtilQ^3 w_1^2 \\
		&\hspace{1cm} \leq ~ -\frac{1}{20} \int  \wtilQ(\partial_x w_1)^2 - \frac{1}{20}\int_{\{|x|\geq K\}} \wtilQ^3 w_1^2 + C\int \wtilQ^7 u_1^2 + \frac{C}{A} \int_{|x|\geq1} \wtilQ^3 w_1^2.
	\end{aligned}
	\]
	The proof is completed by taking $C_0 = \frac{1}{40}$ and choosing $A\geq 40C$.
\end{proof}

\subsection{End of Proposition \ref{prop:virial I}}
Applying Lemmas \ref{lemma:virial I linear}, \ref{lem:virial I non linear} and \ref{lemma:virial I quadratic control}, there exist constants $C_0, C>0$ such that
\begin{align*}
	\frac{d}{dt}\mathcal{I} =&~{}  -\int \wtilQ (\partial_x w_1)^2 - \frac12 \int \left[\frac{\zeta_A''}{\zeta_A} - \left(\frac{\zeta_A'}{\zeta_A}\right)^2 \right]\wtilQ w_1^2 + \frac14\int \wtilQ'' w_1^2\\
	&~{} + \int \varphi_A V' u_1^2  - \int \left(\varphi_A \partial_x u_1 + \frac12 \varphi_A' u_1 \right)N^{\perp} \\
	\leq &~{}  - C_0\int \wtilQ [(\partial_x w_1)^2 + \wtilQ^2 w_1^2] + C \int \wtilQ^7 u_1^2 + C |a_1|^4 + C A \| \wtilQ^{1/2} u_1\|_{L^\infty}\int \wtilQ^3 w_1^2.
\end{align*}
Using $A = \delta^{-\frac14}$  (from \eqref{eq:choiceA}) and $\| \wtilQ^{1/2} u_1\|_{L^\infty}\lesssim \delta$ (from \eqref{eq:bound}), for $\delta_1$ small enough, we obtain \eqref{eq:1virial}.

\section{Transformed problem and second virial estimates}\label{sec:second virial estimate}

\subsection{Transformed problem}\label{subsec:transformedproblem}
We refer to \cite[Section~3]{Chang08} for more details about factorizations of Schr\"odinger operators and to \cite{KM2022,KMM19,LiLuhrmann22} for other uses in similar contexts.
Recall $L$ and $V$ from \eqref{def_L_V}, and let $L_0$, $U$, $U^*$ be defined as follows:
\begin{equation}\label{eq:L0}
	\begin{gathered}
		\hspace{.5cm} L_{0} = -\px^2 + V_0, \quad \text{with} \quad V_0 := 2\left(\frac{\px \phi_0}{\phi_0} \right)^2 - 2\mu_0^2 - V, \\[0.2cm]
		U = \phi_0 \cdot \px \cdot \phi_0^{-1}, \quad U^* = - \phi_0^{-1} \cdot \px \cdot \phi_0. 
	\end{gathered}
\end{equation}
An important point to remark here is the unknown character of the terms forming $L_0$ in \eqref{eq:L0}.  

\medskip

Then, the operators $L$ and $L_0$ rewrite as $L=U^* U - \mu_0^2$, $L_{0} = UU^* - \mu_0^2$ and it follows that
\[
UL = L_{0}U.\]
Let $(u_1, u_2)$ be a solution of the linear part of \eqref{eq:system-perturbated}, and set $v_1 = U u_1$, $v_2 = U u_2$. Then,
\begin{equation}\label{sist:L0}
	\begin{cases}
		\dot v_1 = v_2 \\
		\dot v_2 = - L_{0}[v_1].
	\end{cases}
\end{equation}
Our analysis relies in the crucial fact that the potential of $L_0$ is positive and repulsive. These properties happens to be the only spectral information needed for the proof of Theorem \ref{th:Main}. See Section \ref{B:POSITIVITY-POTENTIAL} for more details and the prove of these statements.

With respect to the above heuristic, we must take care of the loss of one derivative due to the operator $U$, without destroying the special algebra described. Therefore we need a regularization procedure of the functions involved, as in \cite{KMM19}. For this purpose, and for $\gamma > 0$ small to be defined later, we define the operator $(1 - \gamma \px^2)^{-1} :L^2(\R) \to H^2(\R)$ via its Fourier transform representation: for $h\in L^2$,
\begin{equation*}
	\widehat{(1 - \gamma \px^2)^{-1} h}(\xi) = \frac{\hat h(\xi)}{1 + \gamma \xi^2}.
\end{equation*}
Set
\begin{equation}\label{eq:def v1 v2}
	\begin{cases}
		v_1 = (1 - \gamma \px^2)^{-1} U(\tilde\chi_B u_1), \\
		v_2 = (1 - \gamma \px^2)^{-1} U(\tilde\chi_B u_2).
	\end{cases}
\end{equation}
where $\tilde\chi_B$ is defined in \eqref{eq:virial II notation}. We need this localization since the term $\int \wtilQ^7 u_1^2$ from Proposition \ref{prop:virial I} provides a localized estimate of $u_1$, and so the functions $(v_1, v_2)$ also must have a certain localization to compete against this term.

From the system \eqref{eq:system-perturbated} for $(u_1, u_2)$, follows that $(v_1, v_2)\in (H_0\cap \dot{H}^2)(\R)\times H^1(\R)$, and satisfies  the system
\begin{equation}\label{corrected}
	\begin{cases}
		\dot v_1 = v_2 \\
		\dot v_2 = -(1 - \gamma \px^2)^{-1} U(\tilde\chi_B Lu_1) - (1 - \gamma \px^2)^{-1}U(\tilde\chi_B N^{\perp}).
	\end{cases}
\end{equation}
In order to rewrite the first term in the right-hand side in terms of $v_1$, we first note that
\[
\tilde\chi_B Lu_1 = L(\tilde\chi_B u_1) + 2\tilde\chi_B' \px u_1 + \tilde\chi_B'' u_1.
\]
Second, we note that $UL = L_0 U$, then
\begin{align*}
	-(1 - \gamma \px^2)^{-1}UL(\tilde\chi_B u_1) =&~{} -(1 - \gamma \px^2)^{-1} L_0 U(\tilde\chi_B u_1) \\
	=&~{} - (1 - \gamma \px^2)^{-1} L_0[(1 - \gamma \px^2)v_1]\\
	=&~{} - (1 - \gamma \px^2)^{-1} (-\px^2 + V_0)(1 - \gamma \px^2)v_1\\
	=&~{} \px^2 v_1 - (1 - \gamma \px^2)^{-1}[V_0(1 - \gamma \px^2)v_1].
\end{align*}

Since
\begin{align*}
	(1 - \gamma \px^2)[V_0 v_1] &= V_0 v_1 - \gamma( V_0''v_1 + 2 V_0' \px v_1 + V_0 \px^2 v_1) \\
	&= V_0 (1 - \gamma \px^2) v_1 - \gamma (V_0''v_1 + 2 V_0' \px v_1),
\end{align*}
we obtain
\[ -(1 - \gamma \px^2)^{-1} UL(\tilde\chi_B u_1) = -L_0 v_1 - \gamma (1 - \gamma \px^2)^{-1}(V_0'' v_1 + 2 V_0' \px v_1). \]
Therefore, we have obtained the following system for $(v_1, v_2)$ (compare with \eqref{sist:L0}):
\begin{equation}
	\begin{cases}\label{eq:transformed system}
		\dot v_1 = v_2 \\
		\dot v_2 = -L_0 v_1 - \gamma (1 - \gamma \px^2)^{-1}(V_0'' v_1 + 2V_0' \px v_1) 
		\\[0.1cm] 
		\qquad - (1 - \gamma \px^2)^{-1}U[2\tilde\chi_B'\px u_1 + \tilde\chi_B'' u_1]
		- (1 - \gamma \px^2)^{-1}U (\tilde\chi_B N^{\perp}).
	\end{cases}
\end{equation}
An important point to be stressed now is that system \eqref{eq:transformed system}, unlike previous systems obtained recently in the field, has unknown function $V_0$. We do not assume any specific spectral property on $V_0$, but we will succeed to show the required repulsivity conditions on \eqref{eq:transformed system} by making interesting computations on its local and global behavior.

\subsection{Virial functional for the transformed problem}\label{subsec:virial II}
Recall $(v_1,v_2)$ from \eqref{eq:def v1 v2}. Set
\begin{equation}\label{def:J}
	\calJ (t) = \int \left(\psi_{A,B} (x) \px v_1(t,x) + \frac12\psi_{A,B}' (x) v_1 (t,x) \right)v_2 (t,x) dx
\end{equation}
where we recall that $\psi_{A,B}=\tilde\chi_{A}^2\varphi_B$ (see \eqref{eq:zeta} and \eqref{eq:virial II notation}), and define the localized version of the function $v_1$ at scale $B$ as follows
\begin{equation}\label{eq:z}
	z: = \tilde\chi_A \zeta_B v_1.
\end{equation}
This scale is intermediate, and $\calJ$ involves a cut-off at scale $A$, which will allow us to obtain an estimate in the same scale than the information obtained in Proposition \ref{prop:virial I}, needed to bound some bad error and nonlinear terms; see \cite{KMM19,KMMV20,MM23} for similar procedure.
\begin{proposition}\label{prop:virial II}
	There exist $C_2, C_3 > 0$ and $\delta_2>0$ such that for $\gamma$ small enough and for any $0<\delta\leq \delta_2$, the following holds. Fix 
	\begin{equation}\label{eq:choiceB}
		B=\alpha^{-1}(\delta^{-1/8}),
	\end{equation}
	and assume that for all $t\geq 0$, \eqref{eq:bound} holds. Then, for all $t\geq 0$, $\mathcal J$ in \eqref{def:J} satisfies
	\begin{equation}\label{eqn:bound_dtJ}
		\frac{d}{dt}\calJ \leq  -C_2\int \wtilQ[(\px z)^2 + \wtilQ^2 z^2] + C_3\ln(\delta^{-\frac18})^{-1} \int \wtilQ[(\px w_1)^2 + \wtilQ^2 w_1^2] + C_3\delta^\frac12|a_1|^3.
	\end{equation}
\end{proposition}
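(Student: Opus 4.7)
The plan is to mirror the virial computation performed for $\calI$ in Section \ref{Sect:3}, but now at the intermediate scale $B$ and applied to the transformed unknowns $(v_1,v_2)$ satisfying \eqref{eq:transformed system}. First I would differentiate $\calJ$ in time using \eqref{eq:transformed system} and, as in the proof of Lemma \ref{lemma:virial I linear}, perform integration by parts with $\psi_{A,B}$. This produces the ``principal'' linear block
\[
-\int \psi_{A,B}' (\px v_1)^2 + \frac14 \int \psi_{A,B}''' v_1^2 + \frac12 \int \psi_{A,B} V_0' v_1^2,
\]
together with four error contributions corresponding, in order, to the regularization remainder $\gamma(1-\gamma\px^2)^{-1}(V_0''v_1 + 2V_0'\px v_1)$, the cut-off commutator $-(1-\gamma\px^2)^{-1}U[2\tilde\chi_B'\px u_1 + \tilde\chi_B'' u_1]$, the nonlinear term $-(1-\gamma\px^2)^{-1}U(\tilde\chi_B N^\perp)$, and (via $\tilde\chi_A^2$ in $\psi_{A,B}$) the truncation at scale $A$.

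Next I would convert the principal block into the localized variable $z=\tilde\chi_A \zeta_B v_1$ exactly as in Lemma \ref{lem:identidades}. Since $\varphi_B'=\wtilQ \zeta_B^2$, this yields
\[
-\int \wtilQ(\px z)^2 + \frac12 \int \wtilQ\Bigl[\tfrac{\zeta_B''}{\zeta_B}-\bigl(\tfrac{\zeta_B'}{\zeta_B}\bigr)^2\Bigr] z^2 + \frac14 \int \wtilQ'' z^2 + \frac12 \int \psi_{A,B} V_0' v_1^2,
\]
modulo errors supported in $\{|\alpha^{-1}(x)|\sim A\}$ coming from derivatives of $\tilde\chi_A^2$. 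The point of the transformed problem, established in Section \ref{B:POSITIVITY-POTENTIAL}, is that $V_0\geq 0$ and $xV_0'(x)\leq 0$ in a strong enough sense. Combined with the pointwise bound $|\zeta_B''/\zeta_B-(\zeta_B'/\zeta_B)^2|\lesssim B^{-1}\wtilQ^2$ analogous to \eqref{eq:estimations}, the repulsivity of $V_0$ and an analogue of \eqref{eq:V_-} adapted to $V_0$ furnish, for $B$ large enough, the coercive bound
\[
-2C_2\int \wtilQ[(\px z)^2 + \wtilQ^2 z^2],
\]
absorbing the bad weight $\wtilQ''$ in a compact region.

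Then I would treat the four error families. The $\gamma$-regularization error is handled by Plancherel and \eqref{eq:commuta Xgamma}: since $V_0,V_0',V_0''\in \calY$, it is controlled by $C\gamma \int \wtilQ[(\px z)^2+\wtilQ^2 z^2]$ and absorbed by choosing $\gamma$ small. The $\tilde\chi_A$-cut-off error is supported where $\zeta_B$ is already negligible, so it is bounded by $C\int_{|\alpha^{-1}|\sim A}\wtilQ(\cdots)$ which, via Claim \ref{claim:Linfty} and $A=\delta^{-1/4}\gg B^2$, is absorbed into the $w_1$-term with a gain. The nonlinear contribution $(1-\gamma\px^2)^{-1}U(\tilde\chi_B N^\perp)$ is estimated exactly as in Lemma \ref{lem:virial I non linear} after pairing with $\psi_{A,B}\px v_1+\tfrac12 \psi_{A,B}'v_1$ and using $\|U\cdot\|_{L^2}\lesssim \|\px(\cdot/\phi_0)\|_{L^2}$ together with \eqref{eq:Nexplicit}, \eqref{eq:N0}, the decay of $\phi_0$ and $\wtilQ^{1/2}$-$L^\infty$ smallness from \eqref{eq:bound}; this produces $C\delta^{1/2}|a_1|^3$ plus terms absorbed by the coercive part.

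\emph{Main obstacle.} The delicate step is the cut-off commutator term $-(1-\gamma\px^2)^{-1}U[2\tilde\chi_B'\px u_1 + \tilde\chi_B'' u_1]$. Its support lies where $|\alpha^{-1}(x)|\sim B^2$, precisely an annulus where $\wtilQ\zeta_A^2\sim \wtilQ$, so after pairing with $\psi_{A,B}\px v_1+\tfrac12\psi_{A,B}' v_1$ and undoing $U$ and $X_\gamma$ by duality, the resulting bound takes the form $C\,|\alpha^{-1}(x)|^{-1}\big|_{|\alpha^{-1}|\sim B^2}\int \wtilQ[(\px w_1)^2+\wtilQ^2 w_1^2]$. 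With the tuning $B=\alpha^{-1}(\delta^{-1/8})$ in \eqref{eq:choiceB}, the weight $|\alpha^{-1}|^{-1}$ evaluates to $\delta^{1/8}B^{-2}\sim \ln(\delta^{-1/8})^{-1}$ on the commutator support, which is exactly the factor appearing in \eqref{eqn:bound_dtJ}. Gathering the four error bounds and the coercive principal block yields \eqref{eqn:bound_dtJ} for $\gamma$ and $\delta\leq \delta_2$ small enough.
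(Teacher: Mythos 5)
Your high-level plan matches the paper's: differentiate $\calJ$ using \eqref{eq:transformed system}, pass to the localized variable $z=\tilde\chi_A\zeta_B v_1$ in the principal block, and control the four error families (regularization, $\tilde\chi_A$-truncation, $\tilde\chi_B$-commutator, nonlinearity) separately. However, two steps are materially off.

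First, the arithmetic explaining the logarithmic loss is wrong. You claim the weight on the commutator support evaluates to ``$\delta^{1/8}B^{-2}\sim\ln(\delta^{-1/8})^{-1}$.'' With $B=\alpha^{-1}(\delta^{-1/8})$ and the asymptotics $\alpha^{-1}(x)\sim\ln x$ from Lemma \ref{lemma:estimation}, one has $B\sim\ln(\delta^{-1/8})$, so $\delta^{1/8}B^{-2}\ll\ln(\delta^{-1/8})^{-1}$; the two are nowhere near comparable. The factor $\ln(\delta^{-1/8})^{-1}$ in \eqref{eqn:bound_dtJ} actually arises directly as $B^{-1}$: after pairing $(1-\gamma\px^2)^{-1}U[2\tilde\chi_B'\px u_1+\tilde\chi_B''u_1]$ with $\psi_{A,B}\px v_1+\tfrac12\psi_{A,B}'v_1$, the first factor is of size $B$ (from $|\varphi_B|\lesssim B$) and the second of size $B^{-2}$ (from $|\tilde\chi_B'|\lesssim B^{-2}\wtilQ$, $|\tilde\chi_B''|\lesssim B^{-4}\wtilQ^2$ on its support), so the product is $B^{-1}$. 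You also conflate $|\alpha^{-1}(x)|^{-1}$ with $\wtilQ(x)$ on the set $|\alpha^{-1}(x)|\sim B^2$: there $|\alpha^{-1}|^{-1}\sim B^{-2}$ but $\wtilQ\sim e^{-B^2}$, which are wildly different.

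Second, the coercivity of the principal block is not obtained by ``an analogue of \eqref{eq:V_-}'' with absorption on a compact region, as you suggest. In the first virial, $V'$ changes sign and one must sacrifice a compact piece using $1\lesssim\wtilQ$ there. In the transformed problem the situation is qualitatively different: $V_0'\leq 0$ globally (Lemma \ref{lem:repulsivity}), and the paper exploits this by splitting the effective potential $V_B$ in \eqref{def:V_B} into $V_B^{\mathrm{I}}+V_B^{\mathrm{II}}$ (Lemmas \ref{lem:V_BI}--\ref{lem:V_BII}), proving each is pointwise bounded below by $c\,\wtilQ^3$ for $B$ large — with $V_B^{\mathrm{II}}$ requiring a delicate comparison between $-\tfrac14\wtilQ''$ (which is positive near the origin) and $-\tfrac25\tfrac{\varphi_B}{\zeta_B^2}V_0'$, done via the explicit auxiliary function $k(\alpha^{-1}(x))$ and the lower bound $|V_0'|\gtrsim\wtilQ^3$ from Lemma \ref{lem:decay of V0}. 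This is a genuine lemma, not a by-analogy absorption, and your proposal would fail to produce the coercive constant without it.
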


The rest of this section is devoted to the proof of Proposition \ref{prop:virial II}, which has been divided in several subsections.

\subsection{Proof of Proposition \ref{prop:virial II}: first computations}
Analogously to the computation of $\dot \calI$ in the proof of Proposition \ref{prop:virial I}, we have from \eqref{eq:transformed system},
\begin{equation}\label{los Ji}
	\begin{aligned}
		\frac{d}{dt}\calJ &= \int \left(\psi_{A,B} \px v_1 + \frac12\psi_{A,B}' v_1 \right)\dot v_2 \\
		&= - \int \left(\psi_{A,B} \px v_1 + \frac12\psi_{A,B}' v_1 \right)L_0 v_1 \\
		&\quad  - \gamma \int \left(\psi_{A,B} \px v_1 + \frac12\psi_{A,B}' v_1 \right)(1 - \gamma \px^2)^{-1}( V_0''v_1 + 2 V_0' \px v_1) \\
		&\quad - \int \left(\psi_{A,B} \px v_1 + \frac12\psi_{A,B}' v_1 \right)(1 - \gamma \px^2)^{-1}U[2\tilde\chi_B'\px u_1 + \tilde\chi_B'' u_1] \\
		&\quad -  \int \left(\psi_{A,B} \px v_1 + \frac12\psi_{A,B}' v_1 \right)(1 - \gamma \px^2)^{-1}U(\tilde\chi_B N^{\perp}) \\
		&=:  J_1 + J_2 + J_3 + J_4.
	\end{aligned}
\end{equation}
First, using the definition of $L_0$ and integrating by parts such as in the proof of Lemma \ref{lemma:virial I linear}, we have
\[ J_1 = -\int \psi_{A,B}'(\px v_1)^2 + \frac14\int \psi_{A,B}''' v_1^2 - \int \left(\psi_{A,B} \px v_1 + \frac12\psi_{A,B}' v_1 \right) V_0 v_1.  \]
By definition of $\psi_{A,B}$ (see \eqref{eq:virial II notation}), it follows that
\begin{equation}\label{psi_computations}
	\begin{aligned}
		\psi_{A,B}' &= \wtilQ\tilde\chi_A^2 \zeta_B^2 + (\tilde\chi_A^2)' \varphi_B\\
		\psi_{A,B}'' &= \wtilQ'\tilde\chi_A^2 (\zeta_B^2) + \wtilQ \tilde\chi_A^2 (\zeta_B^2)' + 2\wtilQ(\tilde\chi_A^2)'\zeta_B^2 + (\tilde\chi_A^2)''\varphi_B \\
		\psi_{A,B}''' &= \wtilQ'' \tilde\chi_A^2 \zeta_B^2 + 3\wtilQ'(\tilde\chi_A^2)' \zeta_B^2 + 2\wtilQ'\tilde\chi_A^2 (\zeta_B^2)' + 3\wtilQ(\tilde\chi_A^2)'(\zeta_B^2)' \\
		&\quad + 3\wtilQ(\tilde\chi_A^2)''\zeta_B^2 + \wtilQ \tilde\chi_A^2 (\zeta_B^2)'' + (\tilde\chi_A^2)'''\varphi_B.
	\end{aligned}
\end{equation}
Thus,
\[
\begin{aligned}
	&	-\int \psi_{A,B}' (\px v_1)^2 + \frac14 \int\psi_{A,B}''' v_1^2\\
	&\hspace{3cm} = - \int \wtilQ \tilde\chi_A^2 \zeta_B^2 (\px v_1)^2 + \frac14 \int \wtilQ''\tilde\chi_A^2\zeta_B^2 v_1^2 + \frac14 \int \wtilQ \tilde\chi_A^2 (\zeta_B^2)'' v_1^2 \\
	&\hspace{3cm} \quad + \frac34 \int \wtilQ'(\tilde\chi_A^2)'\zeta_B^2v_1^2 + \frac34 \int \wtilQ (\tilde\chi_A^2)'(\zeta_B^2)'v_1^2 + \frac34 \int\wtilQ(\tilde\chi_A^2)''\zeta_B^2 v_1^2 \\
	&\hspace{3cm} \quad + \frac12\int \wtilQ'\tilde\chi_A^2(\zeta_B^2)'v_1^2 - \int (\tilde\chi_A^2)'\varphi_B (\px v_1)^2 + \frac14\int (\tilde\chi_A^2)''' \varphi_B v_1^2.
\end{aligned}
\]
For the first term of this integral, by the definition of $z$ in \eqref{eq:z} and proceeding as in the proof of Lemma \ref{lemma:virial I linear}, we have
\begin{align*}
	\int \wtilQ \tilde\chi_A^2 \zeta_B^2 (\px v_1)^2 = &~{} \int \wtilQ(\px z)^2 + \int (\wtilQ(\tilde\chi_A\zeta_B)')'\tilde\chi_A\zeta_B v_1^2 \\
	= &~{} \int \wtilQ(\px z)^2 + \int \wtilQ\frac{\zeta_B''}{\zeta_B} z^2 + \int \wtilQ\tilde\chi_A''\tilde\chi_A \zeta_B^2 v_1^2 + \frac12\int \wtilQ(\tilde\chi_A^2)'(\zeta_B^2)'v_1^2 \\
	&~{} + \frac12\int \wtilQ' (\tilde\chi_A^2)'\zeta_B^2v_1^2 + \frac12\int \wtilQ' \tilde\chi_A^2(\zeta_B^2)'v_1^2,
\end{align*}
and
\begin{equation*}
	\frac14 \int \wtilQ\tilde\chi_A^2 (\zeta_B^2)'' v_1^2 = \frac12 \int \wtilQ\left( \frac{\zeta_B''}{\zeta_B} + \frac{(\zeta_B')^2}{\zeta_B^2} \right)z^2.
\end{equation*}
Thus,
\begin{equation*}
	\begin{aligned}
		& 	-\int \psi_{A,B}' (\px v_1)^2 + \frac14 \int\psi_{A,B}''' v_1^2  \\
		& \hspace{2.cm} = - \left\{ \int \wtilQ(\px z)^2 - \frac14 \int \wtilQ''z^2 + \frac12\int \wtilQ\left( \frac{\zeta_B''}{\zeta_B} - \frac{(\zeta_B')^2}{\zeta_B^2} \right)z^2 \right\} + \wtilJ_1,
	\end{aligned}
\end{equation*}
where we have set
\begin{equation}\label{tJ1}
	\begin{aligned}
		\wtilJ_1 &= \frac14 \int \wtilQ(\tilde\chi_A^2)'(\zeta_B^2)' v_1^2 + \frac14 \int \wtilQ'(\tilde\chi_A^2)'\zeta_B^2 v_1^2 + \frac12 \int\wtilQ[3(\tilde\chi_A')^2 + \tilde\chi_A''\tilde\chi_A] \zeta_B^2 v_1^2 \\
		&\quad - \int (\tilde\chi_A^2)'\varphi_B(\px v_1)^2 + \frac14\int (\tilde\chi_A^2)'''\varphi_B v_1^2.
	\end{aligned}
\end{equation}
Recalling \eqref{eq:z}, \eqref{eq:zeta}, \eqref{eq:virial II notation} and integrating by parts,
\[
\int \left(\psi_{A,B} \px v_1 + \frac12\psi_{A,B}' v_1 \right) V_0 v_1 = \frac12 \int V_0 \px(\psi_{A,B} v_1^2) = -\frac12 \int \frac{\varphi_{B}}{\zeta_B^2} V_0' z^2.
\]
Therefore, we define the potential
\begin{equation}\label{def:V_B}
	V_B = - \frac14 \wtilQ'' + \frac12 \wtilQ\left( \frac{\zeta_B''}{\zeta_B} - \frac{(\zeta_B')^2}{\zeta_B^2} \right) - \frac12 \frac{\varphi_B}{\zeta_B^2} V_0'.
\end{equation}

For convenience, we split this potential into two main parts, given by
\begin{equation}\label{def:V_BI V_BII}
	\begin{aligned}
		V_B = &~{} \left[\frac12 \wtilQ\left( \frac{\zeta_B''}{\zeta_B} - \frac{(\zeta_B')^2}{\zeta_B^2} \right) - \frac{1}{10} \frac{\varphi_B}{\zeta_B^2} V_0'\right] + \left[ - \frac14 \wtilQ'' - \frac25 \frac{\varphi_B}{\zeta_B^2} V_0' \right] \\
		=:  &~{} V_B^\text{I} + V_B^\text{II}.
	\end{aligned}
\end{equation}
Thus, the main part of the virial term can be written as
\begin{equation*}
	J_1 = - \int \left[ \wtilQ(\px z)^2 + V_B^\text{I} z^2 + V_B^\text{II}z^2 \right] + \wtilJ_1,
\end{equation*}
with $V_B^\text{I}$, $V_B^\text{II}$ in \eqref{def:V_BI V_BII}. The following result simplifies the use of $V_B^\text{I}$ in some extent.

\begin{lemma}\label{lem:V_BI}
	There exists $B_0>0$ such that for all $B\geq B_0$, $V_B^\text{I}\geq0$ on $\R$. More precisely, there exists $C_1'>0$ such that
	\begin{equation}\label{eq:bound V1}
		V_B^\text{I} \geq V_1 \quad \text{where} \quad V_1 =  {\color{black}C_1'} \wtilQ^3(x) \mathbf{1}_{ \{ |x|\geq 1 \} }(x),
	\end{equation}
	for all $x\in\R$.
\end{lemma}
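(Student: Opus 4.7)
\textbf{Proof plan for Lemma \ref{lem:V_BI}.} My approach is to analyze $V_B^\text{I}$ pointwise in three regions according to where the cutoff $\chi$ in the definition of $\zeta_B$ is constant or transitioning: $|x|\leq 1$, $1\leq |x|\leq 2$, and $|x|\geq 2$. The two tools used throughout are the explicit formula \eqref{eq:422} applied to $\zeta_B$, and the repulsivity of the transformed potential $V_0$, namely $xV_0'(x)\leq 0$ for all $x\in\mathbb{R}$, established in Section \ref{B:POSITIVITY-POTENTIAL}. Since $\varphi_B$ is odd with the same sign as $x$, repulsivity immediately yields $-\varphi_B V_0'/\zeta_B^2 \geq 0$ throughout $\mathbb R$, and hence the second summand of $V_B^\text{I}$ is always nonnegative.

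In the inner region $|x|\leq 1$ we have $\chi\equiv 1$, so $\zeta_B\equiv 1$ and the first summand of $V_B^\text{I}$ vanishes identically. Thus $V_B^\text{I} = -\tfrac{1}{10}\varphi_B V_0'\geq 0$, which meets the lemma's requirement since no $\wtilQ^3$ lower bound is demanded on $\{|x|\leq 1\}$. In the outer region $|x|\geq 2$, formula \eqref{eq:422} simplifies to $\zeta_B''/\zeta_B-(\zeta_B'/\zeta_B)^2=\tfrac{1}{B}\wtilQ^2|\wtilH|$, using $\wtilH(x)\sgn(\alpha^{-1}(x))=|\wtilH(x)|$. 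The first summand of $V_B^\text{I}$ is therefore $\tfrac{1}{2B}\wtilQ^3|\wtilH|$, and since $|\wtilH|$ is bounded below by some $c_2>0$ on $|x|\geq 2$, this alone gives $V_B^\text{I}\geq \tfrac{c_2}{2B}\wtilQ^3$ there, with the repulsivity term contributing only further positivity.

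The main obstacle is the transition region $1\leq |x|\leq 2$, where $\chi'$ and $\chi''$ generate contributions in \eqref{eq:422} of arbitrary sign and of size $O(1/B)$. To beat these, I would use that $\wtilQ$ is bounded above and below by positive constants on this compact set, and that $\varphi_B(x)/\zeta_B^2(x)$ is bounded below uniformly in $B$ by $\varphi_B(1)=\int_0^1\wtilQ>0$ (because $\zeta_B\equiv 1$ on $[0,1]$). Combined with the strict repulsivity $-V_0'(x)\geq c_0>0$ on $[1,2]$ (a pointwise strengthening of $xV_0'\leq 0$ that must be extracted from Section \ref{B:POSITIVITY-POTENTIAL} via a continuity argument, using that $V_0'(0)=0$ is the only zero near the origin), the second summand of $V_B^\text{I}$ is bounded below by a positive constant independent of $B$, while the negative parts of the first summand are $O(1/B)$.

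Choosing $B_0$ large enough that $O(1/B_0)$ is less than half of the positive contribution from the second summand on $[1,2]$, we obtain $V_B^\text{I}\geq c_1>0$ uniformly on $[1,2]$ for all $B\geq B_0$, which gives $V_B^\text{I}\geq C_1'\wtilQ^3$ there after using the upper bound on $\wtilQ^3$. Combining with the estimate $V_B^\text{I}\geq \tfrac{c_2}{2B}\wtilQ^3$ on $\{|x|\geq 2\}$ and taking $C_1'=\min\{c_1/\sup_{[1,2]}\wtilQ^3,\, c_2/(2B)\}$ (with $B\geq B_0$) produces the stated bound $V_B^\text{I}\geq C_1'\wtilQ^3\mathbf{1}_{\{|x|\geq 1\}}$. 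The principal technical difficulty is the transition-region step, since this is where the explicit algebra of $\zeta_B$ is least favorable and where we must genuinely exploit Section \ref{B:POSITIVITY-POTENTIAL} beyond the weak form of repulsivity.
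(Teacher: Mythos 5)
Your overall three-region organization ($|x|\leq 1$, $1\leq |x|\leq 2$, $|x|\geq 2$) and your use of the sign structure in \eqref{eq:422} and the repulsivity of $V_0$ are in the right spirit, and the treatment of $\{|x|\leq 1\}$ and of the transition region $1\leq |x|\leq 2$ (strict repulsivity from Lemma \ref{lem:repulsivity} plus compactness beats the $O(1/B)$ cutoff contamination) is essentially sound. Your observation that on $\{|x|\geq 2\}$ the first summand of $V_B^\text{I}$ is actually \emph{positive}, equal to $\tfrac{1}{2B}\wtilQ^3|\wtilH|$, is correct and a nice remark the paper does not state explicitly.

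However, there is a genuine gap on $\{|x|\geq 2\}$: the lower bound you extract there is $V_B^\text{I}\geq \tfrac{c_2}{2B}\wtilQ^3$, and you accordingly set $C_1' = \min\{c_1/\sup_{[1,2]}\wtilQ^3,\,c_2/(2B)\}$, which is $O(1/B)$ and degenerates as $B\to\infty$. This is not what the lemma needs: $C_1'$ must be uniform in $B$. In Proposition \ref{prop:virial II} one fixes $B=\alpha^{-1}(\delta^{-1/8})\to\infty$ as $\delta\to 0$, and $C_1'$ enters (via $C_1 = \min\{C_1',C_1''\}$ in \eqref{eq:error terms}) the coefficient $C_2$ of the good term $-C_2\int\wtilQ[(\px z)^2+\wtilQ^2 z^2]$. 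That $C_2$ must be a fixed constant; if it were $O(1/B)=O(1/\ln(\delta^{-1}))$, the absorption of the bad term $C\ln(\delta^{-1/8})^{-1}\int\wtilQ[(\px w_1)^2+\wtilQ^2 w_1^2]$ at the end of Section \ref{sec:proof theorem 1} would collapse (the requirement $\delta_3^{1/8}\leq e^{-16C^2/(C_0C_2)}$ becomes self-contradictory).

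The fix — and what the paper actually does — is to make the \emph{second} summand carry the weight on the far region. The paper splits at $x_{2,2}$ (not $2$) precisely because Lemma \ref{lem:decay of V0} furnishes the two-sided estimate $\wtilQ^3\lesssim |V_0'|\lesssim\wtilQ^3$ for $x\geq x_{2,2}$. Combining this with $\varphi_B/\zeta_B^2\geq\alpha^{-1}(x)\geq\alpha^{-1}(x_{2,2})>0$ (monotone in $x$, uniform in $B$) gives $\tfrac{1}{10}\tfrac{\varphi_B}{\zeta_B^2}|V_0'|\gtrsim\wtilQ^3$ with a $B$-independent constant; the first summand (positive, as you noted, but only $O(1/B)$) can then simply be discarded. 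So on the outer region the repulsivity term is not ``only further positivity'': it is the entire source of the uniform lower bound. Your proof would be repaired by invoking Lemma \ref{lem:decay of V0} on $\{|x|\geq 2\}$ (or, cleaner, matching the paper and splitting at $x_{2,2}$, so the lemma applies directly) rather than relying on the $O(1/B)$ term.
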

\begin{proof}
	First, from \eqref{eq:estimations} (with $A$ replaced by $B$), it holds
	\[ 
	\left| \frac{\zeta_B''}{\zeta_B} - \left(\frac{\zeta_B'}{\zeta_B}\right)^2 \right| \leq \frac{C}B \wtilQ^2(x)\mathbf{1}_{ \{ |x|\geq 1 \} }(x) ,
	\]
	for some $C>0$.
	
	Second, since for $x\in [0, +\infty) \to \zeta_B(x)$ is non-increasing, applying a change of variables, we have for $x\geq 0$,
	\begin{equation}\label{eq:bound alpha}
		\frac{\varphi_B}{\zeta_B^2} = \frac{1}{\zeta_B^2}\int_0^{\alpha^{-1}(x)}\zeta_B^2(\alpha(s))ds \geq \alpha^{-1}(x).
	\end{equation}

	Now we will need some technical results about decay, positivity and repulsivity of $V_0$ that will be proved in Section \ref{B:POSITIVITY-POTENTIAL}.
	From Lemma \ref{lem:repulsivity} we have that $V_0'\leq 0$ for all $x\geq0$. Using the above inequalities and decomposing,
	\begin{align}\label{eq:VB}
		\begin{split}
			V_B^\text{I}(x) \geq &~{} \frac{1}{10} \alpha^{-1}(x)|V_0'(x)| - \frac{C}{B}\wtilQ^3(x) \mathbf{1}_{ \{ |x|\geq 1 \} }(x) \\
			\geq &~{} \left( \frac{1}{20}\alpha^{-1}(x)|V_0'(x)| - \frac{C}{B}\wtilQ^3(x) \right)\mathbf{1}_{\{ 1\leq x\leq x_{2,2} \} }(x) + \frac{1}{20}\alpha^{-1}(x)|V_0'(x)| \\
			&~{}+ \left( \frac{1}{20}\alpha^{-1}(x)|V_0'(x)| - \frac{C}{B}\wtilQ^3(x) \right)\mathbf{1}_{ \{ x\geq x_{2,2} \} }(x) \\
		\end{split}
	\end{align}
	where $x_{2,2} > 1$ is the second positive root of $V''$ (see Lemma \ref{def: roots}).
	
	For $x\in (1, x_{2,2})$, since by Lemma \ref{lem:repulsivity} we know $|V_0'(x)| > 0$, we have that there exist $\wtilC>0$ such that
	\[
	\frac{1}{20} \alpha^{-1}(x)|V_0'(x)| \geq \wtilC.
	\]
	Then, taking $B_1= \frac{27}{4}\frac{C}{\wtilC}$ we obtain
	\[
	\frac{1}{20} \alpha^{-1}(x)|V_0'(x)| - \frac{C}{B}\wtilQ^3 \geq \wtilC - \frac{27}{8}\frac{C}{B} \geq \frac12 \wtilC > 0,
	\]
	for all $B\geq B_1$.
	
	For $x\in (x_{2,2}, \infty)$, using Lemma \ref{lem:decay of V0}, the definition of $V$ and Lemma \ref{lemma:estimation}, we have that
	$\wtilQ^3 \lesssim |V_0'| \lesssim \wtilQ^3.$
	In particular, there exists $C'>0$ such that $C'\wtilQ^3 \leq |V_0'(x)|$ for all $x\geq x_{2,2}$. Using this, we obtain
	\be\label{814_bis}
	\frac{1}{20}\alpha^{-1}(x)|V_0'(x)| - \frac{C}{B}\wtilQ^3 \geq \left(\frac{C'}{20}\alpha^{-1}(x) - \frac{C}{B} \right)\wtilQ^3.
	\ee
	Thus, since by \eqref{eq:alpha} for $x\in [x_{2,2}, +\infty)\longmapsto \alpha^{-1}(x)$ is increasing, we have
	\[
	\frac{1}{20}\alpha^{-1}(x)|V_0'(x)| - \frac{C}{B}\wtilQ^3 \geq \left(\frac{C'}{20}\alpha^{-1}(x_{2,2}) - \frac{C}{B} \right)\wtilQ^3.
	\]
	Taking $B_2 = \frac{10}{\alpha^{-1}(x_{2,2})}\frac{C}{C'}$, it holds
	\[
	\frac{1}{20}\alpha^{-1}(x)|V_0'(x)| - \frac{C}{B}\wtilQ^3 \geq \frac12 C' \wtilQ^3,
	\]
	for all $B\geq B_2$.
	
	Defining $B_0 = \max\{B_1, B_2\}$, collecting the previous estimates in \eqref{eq:VB} and using again that $\alpha^{-1}:\R_+\mapsto\R_+$ is an increasing positive function,
	\[
	\begin{aligned}
		V_B^\text{I}(x) \geq &~{} \frac12 \wtilC\mathbf{1}_{\{ 1\leq x \leq \tilx \} }(x) +   \frac12 C' \wtilQ^3\mathbf{1}_{\{ x\geq \tilx \} }(x) +  \frac{1}{20}\alpha^{-1}(x)|V_0'(x)| \\[0.1cm]
		\geq &~{}  \frac12 \wtilC\mathbf{1}_{\{ 1\leq x \leq \tilx \}}(x) +   \frac12 C'\wtilQ^3\mathbf{1}_{\{ x\geq \tilx \} }(x),
	\end{aligned}
	\]
	for all $B\geq B_0$. We conclude that there exists $C_1'>0$ such that
	\[
	V_B^\text{I}(x) \geq C_1' \wtilQ^3\mathbf{1}_{ \{ x\geq 1 \} }(x),
	\]
	for all $x\geq0$, $B\geq B_0$.  By parity, this estimate holds for any $x\in\R$, obtaining \eqref{eq:bound V1}.
\end{proof}

Now, we have to obtain some estimate for the potential $V_B^\text{II}$ in \eqref{def:V_BI V_BII}. For this, we prove the following result.
\begin{lemma}\label{lem:V_BII}
	The potential $V_B^\text{II}$ is strictly positive on $\R$. Even more, there exists $C_1''>0$ such that
	\begin{equation}\label{eq:bound V2}
		V_B^\text{II} \geq V_2 \quad \text{where} \quad V_2 =  C_1'' \wtilQ^3(x),
	\end{equation}
	for all $x\in \R$.
\end{lemma}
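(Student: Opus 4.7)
By the evenness of $\wtilQ$, $\zeta_B^2$, $V_0$ and the oddness of $\varphi_B$, $V_0'$, the function $V_B^{\textup{II}}$ is even, so it suffices to establish the bound on $\R_+$. From \eqref{derivada} one has $-\tfrac14\wtilQ'' = \tfrac{1}{12}\wtilQ^3(5\wtilQ - 6)$, and combining this with $V_0' \leq 0$ on $\R_+$ (Lemma \ref{lem:repulsivity}) and the $B$-independent bound $\varphi_B/\zeta_B^2 \geq \alpha^{-1}$ from \eqref{eq:bound alpha}, one obtains the pointwise estimate
\begin{equation*}
V_B^{\textup{II}}(x) \;\geq\; \tfrac{1}{12}\wtilQ^3(x)\bigl(5\wtilQ(x)-6\bigr) + \tfrac{2}{5}\,\alpha^{-1}(x)\,|V_0'(x)|, \qquad x \geq 0.
\end{equation*}
The key structural difference with $V_B^{\textup{I}}$ is that the first summand here is strictly positive in a neighborhood of the origin (since $\wtilQ(0)=3/2>6/5$), so that the indicator $\mathbf{1}_{|x|\geq 1}$ present in Lemma \ref{lem:V_BI} is no longer needed; the goal is to dominate this lower bound by $C_1''\wtilQ^3$ uniformly in $x\geq 0$.

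I argue by a three-region splitting of $\R_+$. Near the origin, continuity of $\wtilQ$ and $\wtilQ(0)=3/2$ give $x_1>0$ with $\wtilQ\geq 7/5$ on $[0,x_1]$; there the algebraic term alone yields $V_B^{\textup{II}}\geq \tfrac{1}{12}\wtilQ^3$. At the far end, Lemma \ref{lem:decay of V0} combined with the asymptotics of $\wtilQ$ in Lemma \ref{lemma:estimation} provides constants $c,x_2>0$ with $|V_0'(x)|\geq c\wtilQ^3(x)$ on $[x_2,\infty)$; after enlarging $x_2$ so that $\tfrac{2c}{5}\alpha^{-1}(x_2)\geq 1$ (possible by the unbounded growth of $\alpha^{-1}$), the crude estimate $5\wtilQ-6\geq -6$ produces
\begin{equation*}
V_B^{\textup{II}}(x) \;\geq\; \bigl(-\tfrac12 + \tfrac{2c}{5}\alpha^{-1}(x)\bigr)\wtilQ^3(x) \;\geq\; \tfrac12\wtilQ^3(x) \quad \text{on } [x_2,\infty).
\end{equation*}

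The main obstacle is the intermediate compact interval $[x_1,x_2]$. There the strict repulsivity $V_0'(x)<0$ for $x>0$ (Lemma \ref{lem:repulsivity}) together with continuity guarantees $|V_0'|\geq m>0$ uniformly; combined with $\alpha^{-1}(x)\geq\alpha^{-1}(x_1)>0$ and $\wtilQ^3\leq (3/2)^3$, the positive contribution $\tfrac{2}{5}\alpha^{-1}(x)|V_0'(x)|$ admits a strictly positive uniform lower bound, while the algebraic term is bounded below by $-\tfrac12\max_{[x_1,x_2]}\wtilQ^3$. By an appropriate choice of the thresholds $x_1$ (made slightly larger within the region where $\wtilQ\geq 7/5$, which increases $\alpha^{-1}(x_1)$) and $x_2$ (shrunk within the asymptotic regime), the positive contribution is forced to overcome this worst-case loss, yielding $V_B^{\textup{II}}/\wtilQ^3\geq C''>0$ on $[x_1,x_2]$. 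Taking $C_1''$ to be the minimum of the three regional constants and invoking parity completes the argument.
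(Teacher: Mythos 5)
Your three-region decomposition mirrors the paper's own strategy (which uses $[0,\bar x)$, $(\bar x, x_{2,2})$, $[x_{2,2},\infty)$ with $\bar x$ the root of $\wtilQ=6/5$), and your treatment of the near-origin and far-field regions is sound: near the origin the algebraic term $\frac{1}{12}\wtilQ^3(5\wtilQ-6)$ is positive and furnishes the bound directly, and at the far end the decay estimate $|V_0'|\geq\frac12|V'|\gtrsim\wtilQ^3$ from Lemma \ref{lem:decay of V0} plus the growth of $\alpha^{-1}$ gives $V_B^{\textup{II}}\gtrsim\wtilQ^3$.

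The gap is in the intermediate compact interval. You correctly note that strict repulsivity and compactness give a uniform lower bound $|V_0'|\geq m>0$ there, so the positive contribution $\frac25\alpha^{-1}(x)|V_0'(x)|$ is bounded below by $\frac25\alpha^{-1}(x_1)m>0$, while the algebraic term is bounded below by a fixed negative constant. But the sentence ``by an appropriate choice of the thresholds $x_1$ and $x_2$, the positive contribution is forced to overcome this worst-case loss'' does not follow from anything established. The constant $m$ is determined by the (non-explicit) potential $V_0$, and there is no a priori relationship guaranteeing $\frac25\alpha^{-1}(x_1)m > \frac12\max_{[x_1,x_2]}\wtilQ^3$; moreover, $x_1$ is bounded above by the constraint $\wtilQ\geq 7/5$ and $x_2$ is bounded below by the requirement $\frac{2c}{5}\alpha^{-1}(x_2)\geq 1$, so the ``tuning'' has very limited room. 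Whether the positive term dominates on this interval is a genuinely quantitative fact about $h_0$ and hence $V_0$, and abstract compactness cannot resolve it. This is precisely why the paper replaces the far-field estimate by the explicit auxiliary function $k(s)=\frac45 sH(s)-\frac12+(\frac{5}{12}-\frac65 sH(s))Q(s)$ (obtained by substituting $V_0'\leq\frac12 V'$ and the closed forms of $\wtilQ'',V'$), locates its roots $s_1\sim 0.47$, $s_2\sim 2.21$, and then checks positivity of $V_B^{\textup{II}}$ numerically on the remaining compact set $(\bar x,x_{2,2})$. You need an analogous explicit or computational step on your intermediate interval; the threshold-tuning heuristic cannot replace it.
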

\begin{proof}
	By parity we restrict to $x\geq 0$. First, using \eqref{derivada} and the definition of $\wtilQ$, we have
	\begin{equation}\label{eq:extra term}
		-\frac14 \wtilQ'' = \frac12\wtilQ^3\left(\frac56 \wtilQ - 1 \right).
	\end{equation}
	We notice that \eqref{eq:extra term} is positive for $\wtilQ > \frac65$. If we denote $\bar x$ the unique positive root of \eqref{eq:extra term}, from the definition of $\wtilQ$ we have (solving $\wtilQ(\bar x)=\tfrac{6}{5}$, i.e., $\cosh(\alpha^{-1}(\bar x)/2)=\tfrac{\sqrt 5}{2}$)
	\[
	\barx =
	\alpha\left(2\hbox{arccosh}\left(\tfrac{\sqrt 5}{2}\right)\right)
	\sim 0.693,
	\]
	and we notice, recalling that $\wtilQ$ is a decreasing function on $\R_+$,  that  \eqref{eq:extra term} is positive for $|x|\leq \barx$. Using this, the repulsivity of $V_0$ and the definition of $V_B^\text{II}$, we have that
	\[
	V_B^\text{II}(x) > 0,
	\]
	for any $x\in [0, \barx)$.
	
	For $x\geq x_{2,2}$, where $x_{2,2}$ is the second positive root of $V''$ (see Lemma \ref{def: roots}), using \eqref{eq:bound alpha}, the upper bound decay estimate for $V_0'$ from Lemma \ref{lem:decay of V0}, and replacing \eqref{eq:dalpha} we obtain
	\be\label{814_bis_bis}
	\begin{aligned}
		V_B^\text{II}(x) = &~{} - \frac14 \wtilQ'' - \frac25 \frac{\varphi_B}{\zeta_B^2} V_0' \ge - \frac14\wtilQ'' -\frac15 \alpha^{-1}(x)V'(x) \\
		= &~{} \frac12\wtilQ^3\left(\frac56 \wtilQ - 1 \right) + \frac25 \alpha^{-1}(x)(2 - 3\wtilQ)\wtilQ^3 \wtilH \\ 
		= &~{} \left(\frac45 \alpha^{-1}(x)\wtilH - \frac12 \right)\wtilQ^3 + \left(\frac{5}{12} - \frac{6}{5}\alpha^{-1}(x)\wtilH \right)\wtilQ^4\\
		 =:&~{} k(\alpha^{-1}(x))\wtilQ^3,
	\end{aligned}
	\ee
	where we have defined the auxiliary function $k:\R_+ \mapsto \R$ as
	\[
	k(s) := \frac45 s H(s) - \frac12 + \left(\frac{5}{12} - \frac{6}{5}s H(s) \right)Q(s).
	\]
	Given \eqref{Q} and \eqref{eq:H}, this is an explicit function with two positive roots $s_1\sim 0.47$ and $s_2\sim 2.21$. Even more, from the asymptotic behavior of $k(s)$ for $s\to \infty$ we have that
	$k(s)> 0$ for all $s > s_2$. Using the bijectivity of $\alpha$, that $\wtilQ(x_{2,2}) \sim 0.49$, $Q(s_2)\sim 0.54$, this implies that $\alpha(s_2) < x_{2,2}$, and we conclude that $V_B^\text{II}(x) \gtrsim \wtilQ^3(x)$ for all $x\geq x_{2,2}$.  For $x\in (\barx, x_{2,2})$, computing we have that $V_B^\text{II}(x) > 0.$ Considering the above cases and by parity, there exist $C, \wtilC > 0$ such that
	\[
	V_B^\text{II}(x) \geq C \mathbf{1}_{ |x|\leq x_{2,2} }(x) + \wtilC \wtilQ^3 \mathbf{1}_{ |x|\geq x_{2,2} }(x),
	\]
	for all $x\in\R$. To sum up, we have that there exists $C_1''>0$ where it holds
	\[
	V_B^\text{II}(x) \geq C_1'' \wtilQ^3(x),
	\]
	for all $x\in \R$. This ends the proof of \eqref{eq:bound V2}.
\end{proof}

Using Lemmas \ref{lem:V_BI} and \ref{lem:V_BII}, the definition of $V_B$ in \eqref{def:V_B} and considering $C_1= \min\{C_1', C_1'' \}$, we obtain
\begin{equation}\label{eq:error terms}
	\frac{d}{dt}\calJ \leq  - \int \wtilQ \left[ (\px z)^2 + C_1 \wtilQ^2 z^2 \right] + \wtilJ_1 + J_2 + J_3 + J_4,
\end{equation}
with $J_2$, $J_3$ and $J_4$ as in \eqref{los Ji}, and  $\wtilJ_1$ as in \eqref{tJ1}. To control the terms $\wtilJ_1$, $J_2$, $J_3$ and $J_4$ we need some technical estimates.

\subsection{Technical estimates.}
The following estimates are already classical, but in our context, since the decay is only algebraic, we need some particular care.
We start out with estimates necessary to treat regularized functions. The proof of these are different from previous work due to the slow decay of the potential $V_0$. We first recall the following well-known result.
\begin{lemma}[See \cite{KMM19}]
	For any $\gamma\in(0,1)$ and $f\in L^2$,
	\begin{align}\label{eq:basic inequalities}
		\begin{split}
			&	\left\|(1-\gamma\px^2)^{-1}f\right\|_{L^2}\leq \|f\|_{L^2}, \quad \left\|(1-\gamma\px^2)^{-1} \px f\right\|_{L^2} \leq \gamma^{-\frac12}\|f\|_{L^2}, \\
			& \hspace{2cm}	\left\|(1-\gamma\px^2)^{-1}\px^2 f\right\|_{L^2} \leq \gamma^{-1}\|f\|_{L^2}.
		\end{split}
	\end{align}
\end{lemma}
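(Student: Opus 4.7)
The plan is to prove all three bounds in a unified way via Plancherel's theorem, reducing each estimate to a pointwise bound on a Fourier multiplier. Recalling the definition $\widehat{X_\gamma f}(\xi) = \hat f(\xi)/(1+\gamma\xi^2)$ introduced just before \eqref{eq:commuta Xgamma}, each of the three operators in question is a Fourier multiplier: respectively with symbols
\[
m_0(\xi) = \frac{1}{1+\gamma\xi^2}, \qquad m_1(\xi) = \frac{i\xi}{1+\gamma\xi^2}, \qquad m_2(\xi) = \frac{-\xi^2}{1+\gamma\xi^2}.
\]
By Plancherel, it suffices to verify that $\|m_k\|_{L^\infty(\R)} \leq \gamma^{-k/2}$ for $k=0,1,2$, since then
\[
\|(1-\gamma\px^2)^{-1}\px^k f\|_{L^2}^2 = \|m_k \hat f\|_{L^2}^2 \leq \|m_k\|_{L^\infty}^2 \|\hat f\|_{L^2}^2 = \|m_k\|_{L^\infty}^2 \|f\|_{L^2}^2.
\]

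The three $L^\infty$ bounds are elementary. For $m_0$, the denominator satisfies $1+\gamma\xi^2 \geq 1$, so $|m_0(\xi)|\leq 1$. For $m_2$, one writes $|m_2(\xi)|= \gamma^{-1}\cdot \gamma\xi^2/(1+\gamma\xi^2)\leq \gamma^{-1}$. For $m_1$, the key step is the AM-GM inequality $1+\gamma\xi^2 \geq 2\sqrt{\gamma}|\xi|$, which yields $|m_1(\xi)| \leq \tfrac{1}{2}\gamma^{-1/2}\leq \gamma^{-1/2}$.

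There is no real obstacle here; this lemma is essentially a bookkeeping statement that records the regularizing effect of $X_\gamma = (1-\gamma\px^2)^{-1}$ at scales $\gtrsim \gamma^{1/2}$, and it will be used repeatedly in the sequel to absorb the derivatives introduced by the operator $U = \phi_0\cdot\px\cdot\phi_0^{-1}$ and by commutators with $V_0$ in the transformed system \eqref{eq:transformed system}. The only minor subtlety worth emphasizing is that all three estimates are sharp in their $\gamma$-dependence, which is why the choice of $\gamma$ (small but fixed) in Proposition \ref{prop:virial II} must be made carefully to balance against the coefficients $\gamma$ multiplying the error terms $V_0'' v_1 + 2V_0'\px v_1$ in \eqref{eq:transformed system}.
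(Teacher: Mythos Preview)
Your proof is correct. The paper does not supply its own proof of this lemma but simply cites \cite{KMM19}; your Plancherel/multiplier argument is precisely the standard one and matches what is implicit in that reference.
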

Our third result uses the fact that, even if the decay is only polynomial, it is strong enough to perform commutator estimates.

\begin{lemma}\label{lem:commutativity}
	Let $\alpha(\cdot)$ be the function defined in \eqref{eq:alpha}. For any $0<K\leq 3$, $\gamma > 0$ small enough, and $f\in L^2(\R)$ one has
	\begin{equation}\label{eq:conmutativity sech}
	\begin{aligned}
	& \left\| \sech(K\alpha^{-1}(x))(1-\gamma\px^2)^{-1}f\right\|_{L^2} \le (1+m_0)\left\|(1 - \gamma\px^2)^{-1}[\sech(K\alpha^{-1}(x)) f]\right\|_{L^2},
	\end{aligned}
	\end{equation}
	where $m_0>0$ is any fixed small constant, and
	\begin{equation}\label{eq:conmutativity cosh}
		\left\|\cosh(K\alpha^{-1}(x))(1 - \gamma\px^2)^{-1}f\right\|_{L^2} \lesssim \left\|(1 - \gamma\px^2)^{-1} [\cosh(K\alpha^{-1}(x))f]\right\|_{L^2},
	\end{equation}
	where the implicit constant is independent of $\gamma$ and $K$.
\end{lemma}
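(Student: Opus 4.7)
The plan is to reduce both inequalities to the same commutator identity. Setting $h := (1-\gamma\partial_x^2)^{-1}f$ and letting $w(x)$ denote either weight $\sech(K\alpha^{-1}(x))$ or $\cosh(K\alpha^{-1}(x))$, I would first observe from $(wh)'' = w''h + 2w'h' + wh''$ that
\[
(1-\gamma\partial_x^2)(wh) = wf + \gamma\bigl(w''h + 2w'h'\bigr).
\]
Rewriting $2w'h' = 2(w'h)' - 2w''h$ in order to convert the first derivative on $h$ into a full derivative and applying $(1-\gamma\partial_x^2)^{-1}$ yields
\[
wh = (1-\gamma\partial_x^2)^{-1}[wf] - \gamma(1-\gamma\partial_x^2)^{-1}[w''h] + 2\gamma\,(1-\gamma\partial_x^2)^{-1}\partial_x[w'h].
\]

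Next, I would take $L^2$ norms and apply the basic bounds \eqref{eq:basic inequalities}, the key gain being that the last term carries a derivative and therefore only costs $\gamma^{-1/2}$ rather than $\gamma^{-1}$:
\[
\|wh\|_{L^2} \leq \bigl\|(1-\gamma\partial_x^2)^{-1}[wf]\bigr\|_{L^2} + \gamma\,\|w''h\|_{L^2} + 2\gamma^{1/2}\,\|w'h\|_{L^2}.
\]
The crux is to bound the last two terms by $\|wh\|_{L^2}$. For this I would use that $\widetilde{Q}$ is uniformly bounded by $3/2$ by \eqref{Q}--\eqref{tilde Q tilde H}, together with $\partial_x \alpha^{-1} = \widetilde{Q}$ from \eqref{eq:dalpha} and $\widetilde{Q}' = -\widetilde{Q}^2\widetilde{H}$ from \eqref{derivada}, to compute $w'/w$ and $w''/w$ directly. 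In both cases these ratios are polynomial expressions in $K\widetilde{Q}$, $K^2\widetilde{Q}^2$, $\tanh(K\alpha^{-1})$ (resp.\ $\coth$ does not appear because, e.g., for $w=\cosh(K\alpha^{-1})$ one has $w'/w = K\widetilde{Q}\tanh(K\alpha^{-1})$), and are therefore uniformly bounded by a constant depending only on $K\in(0,3]$. Hence $\|w''h\|_{L^2} + \|w'h\|_{L^2} \leq C\|wh\|_{L^2}$ with $C$ independent of $\gamma$ and (up to a universal constant) of $K$.

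Combining these, the estimate becomes $(1 - C\gamma - 2C\gamma^{1/2})\|wh\|_{L^2} \leq \|(1-\gamma\partial_x^2)^{-1}[wf]\|_{L^2}$, and for $\gamma$ sufficiently small the prefactor exceeds $(1+m_0)^{-1}$ in the $\sech$ case and any fixed constant strictly less than $1$ in the $\cosh$ case, yielding \eqref{eq:conmutativity sech} and \eqref{eq:conmutativity cosh}, respectively. The main obstacle I anticipate is a small rigor issue in the unbounded-weight case \eqref{eq:conmutativity cosh}: one needs to justify $wh \in L^2$ a priori when $wf\in L^2$ so that the displayed manipulations are licit. I would handle this by approximating $w$ by the truncated weights $w_R(x) = w(x)\chi(x/R)$ (with $\chi$ as in \eqref{eq:chi}), running the above argument for each $R$ with uniform constants, and passing to the limit $R\to\infty$ by monotone/dominated convergence.
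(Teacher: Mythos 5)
Your proposal is correct and, once the general weight $w$ is specialized to $\sech(K\alpha^{-1})$ or $\cosh(K\alpha^{-1})$, is essentially the paper's argument: both expand $(1-\gamma\px^2)$ against the weight, rewrite the first-derivative error term as a total derivative so that $\|(1-\gamma\px^2)^{-1}\px\|_{L^2\to L^2}\le\gamma^{-1/2}$ from \eqref{eq:basic inequalities} can be exploited, and absorb the remainder using boundedness of $\wtilQ$, $\wtilH$, and $\tanh(K\alpha^{-1})$, i.e.\ of the ratios $w'/w$ and $w''/w$. One sign slip to correct: since $wf=w(h-\gamma h'')$ and $(wh)''=w''h+2w'h'+wh''$, the identity should read $(1-\gamma\px^2)(wh)=wf-\gamma\bigl(w''h+2w'h'\bigr)$, which is immaterial after the triangle inequality; your closing remark on a priori finiteness in the $\cosh$ case, handled by truncating $w$, is a sensible point that the paper leaves implicit.
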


Let us recall that in view of \eqref{eqn:equivalencias}, the term $\sech(K\alpha^{-1}(x))$ has only polynomial decay.
\begin{proof}
	We set $g = \sech(K\alpha^{-1})(1-\gamma\px^2)^{-1}f$ and $k = (1 - \gamma\px^2)^{-1}[\sech(K\alpha^{-1})f]$. We have
	\begin{align*}
		f = &~{} \cosh(K\alpha^{-1})(1 - \gamma\px^2)k = (1 - \gamma\px^2)[\cosh(K\alpha^{-1})g] \\
		= &~{}\cosh(K\alpha^{-1})g  - \gamma[\cosh(K\alpha^{-1})''g + 2\cosh(K\alpha^{-1})'\px g + \cosh(K\alpha^{-1})\px^2 g] \\
		= &~{} \cosh(K\alpha^{-1})(1 - \gamma\px^2)g - \gamma K\cosh(K\alpha^{-1})\wtilQ^2\left[K - \wtilH\tanh(K\alpha^{-1}) \right]g \\
		&~{} - 2\gamma K\cosh(K\alpha^{-1}) \wtilQ \tanh(K\alpha^{-1}) \px g.
	\end{align*}
	Thus,
	\[
	\begin{aligned}
		(1 - \gamma\px^2)k = &~{} (1 - \gamma\px^2)g  -  \gamma K\wtilQ^2\left[K - \wtilH\tanh(K\alpha^{-1}) \right]g - 2\gamma K\wtilQ \tanh(K\alpha^{-1}) \px g.
	\end{aligned}
	\]
	Applying the operator $(1 - \gamma\px^2)^{-1}$ to this id{} entity, we obtain
	\begin{align*}
		g = &~{} k + \gamma K (1 - \gamma\px^2)^{-1}\left\{\wtilQ^2\left[K - \wtilH\tanh(K\alpha^{-1}) \right]g \right\} \\
		&~{} + 2\gamma K (1 - \gamma\px^2)^{-1} \left[\wtilQ \tanh(K\alpha^{-1}) \px g\right].
	\end{align*}
	We have from \eqref{eq:basic inequalities} that for $\gamma\leq \frac12$,
	\begin{align}\label{eq:norm regularizer operator}
		\|(1 - \gamma\px^2)^{-1}\|_{\calL(L^2, L^2)} \le 1, \quad \|(1 - \gamma\px^2)^{-1}\px\|_{\calL(L^2, L^2)} \le \gamma^{-\frac12}.
	\end{align}
	Thus, for $0<K\leq 3$,
	\[
	\begin{aligned}
		\gamma K \left\| (1 - \gamma\px^2)^{-1}\left\{\wtilQ^2\left[K - \wtilH\tanh(K\alpha^{-1}) \right]g \right\}\right\|_{L^2} \le&  \gamma K  \left\| \wtilQ^2\left[K - \wtilH\tanh(K\alpha^{-1}) \right]g \right\|_{L^2} \\
		\le& (1+K)\gamma K  \| \wtilQ^2 g\|_{L^2},
	\end{aligned}
	\]
	and using again \eqref{eq:norm regularizer operator} and  \eqref{derivada de tQ},
	\begin{align*}
		\left\| (1 - \gamma\px^2)^{-1} \left[\wtilQ \tanh(K\alpha^{-1}) \px g\right] \right\|_{L^2}  \le&~   \left\|(1 - \gamma\px^2)^{-1}\px \left[\wtilQ \tanh \left( K\alpha^{-1} \right) g \right]\right\|_{L^2} \\
		& + \left\| (1 - \gamma\px^2)^{-1} \left[ \px \left( \wtilQ \tanh(K\alpha^{-1}) \right) g \right] \right\|_{L^2} \\
		\le&~{}  \gamma^{-\frac12} \left\| \wtilQ\tanh \left( K\alpha^{-1} \right) g\right\|_{L^2}  \\
		&~{} + \left\| \wtilQ^2\left(K\sech^2(K\alpha^{-1}) - \wtilH\tanh(K\alpha^{-1})  \right)g\right\|_{L^2} \\
		\le&~{} \gamma^{-\frac12} K\|  \wtilQ^2 g\|_{L^2} + \| \widetilde Q^2 g\|_{L^2} \le  3\gamma^{-\frac12}\| \wtilQ^2g\|_{L^2}.
	\end{align*}
	We obtain
	\[
	\|g\|_{L^2} \leq \|k\|_{L^2}+ (1+K)\gamma K  \| \wtilQ^2 g\|_{L^2}+6 K \gamma^{\frac12}\| \wtilQ^2g\|_{L^2}.
	\]
	We deduce that for any $m_0>0$ fixed and small,
	\[
	\|g\|_{L^2} \leq (1+m_0)\|k\|_{L^2},
	\]
	which implies \eqref{eq:conmutativity sech} for $\gamma$ small enough.
	
	\medskip
	
	We prove \eqref{eq:conmutativity cosh} similarly. Setting 
	\[
	g = \cosh(K\alpha^{-1})(1 - \gamma\px^2)^{-1}f\quad \hbox{ and } \quad k=(1 - \gamma\px^2)^{-1}[\cosh(K\alpha^{-1})f],
	\]
	we compute
	\[
	\begin{aligned}
		f = &~{} \sech(K\alpha^{-1})(1 - \gamma\px^2)k = (1 - \gamma\px^2)[\sech(K\alpha^{-1})g] \\
		= &~{} \sech(K\alpha^{-1})g - \gamma \left[ \sech(K\alpha^{-1})'' g + 2\sech(K\alpha^{-1})'\px g + \sech(K\alpha^{-1})\px^2g \right] \\
		= &~{} \sech(K\alpha^{-1})(1 - \gamma\px^2)g  \\
		&~{} - \gamma K\wtilQ\sech(K\alpha^{-1}) \left[ K\wtilQ(1 - 2\sech^2(K\alpha^{-1}))g - 2\tanh(K\alpha^{-1})\px g \right].
	\end{aligned}
	\]
	Thus, applying the operator $(1 - \gamma\px^2)^{-1}$ as before, we have 
	\[
	\begin{aligned}
		g =&~{}  k + \gamma K^2(1 - \gamma\px^2)^{-1} \left[ \wtilQ^2(1 - 2\sech^2(K\alpha^{-1}))g \right] \\
		&~{} - 2\gamma K(1 - \gamma\px^2)^{-1} \left[ \wtilQ\tanh(K\alpha^{-1})\px g \right].
	\end{aligned}
	\]
	Using $0<K\leq 3$ and \eqref{eq:norm regularizer operator}, it follows that
	\[
	\left\|(1 - \gamma\px^2)^{-1}[\wtilQ^2(1 - 2\sech^2(K\alpha^{-1}))g]\right\|_{L^2} \lesssim \left\|\wtilQ^2(1 - 2\sech^2(K\alpha^{-1}))g\right\|_{L^2} \lesssim \|g\|_{L^2},
	\]
	and
	\[
	\begin{aligned}
		&~{} \left\|(1 - \gamma\px^2)^{-1}[\wtilQ\tanh(K\alpha^{-1})\px g]\right\|_{L^2} \\ &~{} \hspace{0.5cm} \lesssim   \left\|(1 - \gamma\px^2)^{-1}\px[\wtilQ\tanh(K\alpha^{-1})g]\right\|_{L^2}  + \left\|(1 - \gamma\px^2)^{-1}[\px(\wtilQ\tanh(K\alpha^{-1})) g]\right\|_{L^2} \\
		&~{} \hspace{0.5cm} \lesssim \gamma^{-\frac12} \left\| \wtilQ\tanh(K\alpha^{-1})g\right\|_{L^2} + \left\|\wtilQ^2[K\sech^2(K\alpha^{-1}) - \wtilH\tanh(K\alpha^{-1})]g\right\|_{L^2} \\
		&~{} \hspace{.5cm}
		\lesssim \gamma^{-\frac12}\|g\|_{L^2}.
	\end{aligned}
	\]
	It follows that there exist $\wtilC > 0$ independent of $\gamma$ such that
	\[
	\|g\|_{L^2} \leq \|k\|_{L^2} + \wtilC\gamma^{\frac12}\|g\|_{L^2}.
	\]
	Considering $\gamma$ small enough we obtain \eqref{eq:conmutativity cosh}.
\end{proof}

\begin{remark}
	There are some interesting consequences of the previous results. Indeed, using \eqref{eq:conmutativity sech} and \eqref{eq:conmutativity cosh} for $K=\frac{n}{2} + \frac2A$ with $A\geq2$, \eqref{eq:basic inequalities} and $n=1,3$ implies the following inequalities
	\begin{equation}\label{eq: tech 1}
		\left\| \sech\left(\left(\frac32 + \frac1A\right)\alpha^{-1}\right) (1 - \gamma\px^2)^{-1}f\right\|_{L^2} \lesssim \left\| (1-\gamma\px^2)^{-1}\left[\sech\left(\left(\frac32 + \frac1A\right)\alpha^{-1}\right)f\right]\right\|_{L^2},
	\end{equation}
	and
	\begin{equation}\label{eq: tech 2}
		\left\| \sech\left(\left(\frac12 + \frac1A\right)\alpha^{-1}\right) (1 - \gamma\px^2)^{-1}f\right\|_{L^2} \lesssim \left\| (1-\gamma\px^2)^{-1}\left[\sech\left(\left(\frac12 + \frac1A\right)\alpha^{-1}\right)f\right]\right\|_{L^2}.
	\end{equation}
	Besides, recall that for $\sigma_A$ as in \eqref{def:sigmaA},
	\begin{equation}\label{eq:equivalence cosh}
		\sigma_A \wtilQ^{-\frac{1}{2}}\lesssim \cosh\left(\frac{2-A}{2A}\alpha^{-1}\right) \lesssim \sigma_A \wtilQ^{-\frac{1}{2}},
	\end{equation}
	for any $A\geq 2$. Using
	\eqref{eq:conmutativity cosh} for $K=\frac{2-A}{2A}$ with $A\geq4$, and \eqref{eq:equivalence cosh}, one gets
	\begin{equation}\label{eq: tech 4}
		\left\| \sigma_A \wtilQ^{-\frac12}(1 - \gamma\px^2)^{-1}f \right\|_{L^2} \lesssim
		\left\|\sigma_A \wtilQ^{-\frac12}f\right\|_{L^2}.
	\end{equation}
	\begin{equation}\label{eq: tech 3}
		\left\| \sigma_A \wtilQ^{-\frac12}(1 - \gamma\px^2)^{-1}\px f \right\|_{L^2} \lesssim
		\gamma^{-\frac12}\left\|\sigma_A \wtilQ^{-\frac12}f\right\|_{L^2}.
	\end{equation}
\end{remark}
The following result is a $\widetilde Q$ localized version of the radiation term.
\begin{lemma}\label{lem:localized estimation}
	For any $A\geq1$ large, any $\gamma > 0$ small and any $u$ measurable, if we define $v$ related with $u$ by
	\begin{equation*}
		v = (1 - \gamma\px^2)^{-1} Uu,
	\end{equation*}
	then
	\begin{equation}\label{eq:tech 11}
		\left\| \sigma_A \wtilQ^\frac32 v \right\|_{L^2} \lesssim \gamma^{-\frac12} \left\| \sigma_A \wtilQ^\frac32 u\right\|_{L^2},
	\end{equation}
	and
	\begin{equation}\label{eq:tech 22}
		\left\| \sigma_A \wtilQ^\frac12\px v\right\|_{L^2} \lesssim \gamma^{-\frac12} \left\| \sigma_A \wtilQ^\frac12 \px u\right\|_{L^2} + \left\| \sigma_A \wtilQ^\frac52 u\right\|_{L^2}.
	\end{equation}
\end{lemma}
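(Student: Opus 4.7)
The key algebraic input is the identity $Uu = \px u + Wu$ with $W := -\phi_0'/\phi_0$. Since $\phi_0>0$ (being the ground state of $L$) and \eqref{eq:properties-eigenvalL} gives the uniform exponential decay of $\phi_0$ and $\phi_0'$, it follows that $W\in L^\infty(\R)$. Differentiating and using $-\phi_0''=(\mu_0^2+V)\phi_0$ we obtain the pointwise identity $W' = W^2 - V - \mu_0^2 = (V_0 - V)/2$, where the second equality comes from the definition \eqref{eq:L0} of $V_0$. Combining the bound $|V|\lesssim \wtilQ^2$ with the decay $|V_0|\lesssim\wtilQ^2$ (which follows from $|V_0'|\lesssim \wtilQ^3$ established in Lemma \ref{lem:decay of V0} together with $V_0(\pm\infty)=0$) yields $|W'|\lesssim \wtilQ^2$ on all of $\R$. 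These two structural facts about $W$ drive the whole argument.

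\emph{Proof of \eqref{eq:tech 11}.} Set $w = \sigma_A\wtilQ^{3/2}$. From \eqref{derivada} and the pointwise bound $|\sigma_A'|\lesssim A^{-1}\wtilQ\sigma_A$, one checks that $|w'|\lesssim \wtilQ\, w$ and $|w''|\lesssim \wtilQ^2 w$, so in particular $|w'|+|w''|\lesssim w$. The commutator identity $[\px^2,w]= w'' + 2w'\px$ gives
\[
wv \;=\; (1-\gamma\px^2)^{-1}(wUu) \;-\; \gamma\,(1-\gamma\px^2)^{-1}\bigl(w''v + 2w'\px v\bigr).
\]
The main term is handled by the rewriting $wUu = \px(wu) + (wW - w')u$: the resolvent bound in \eqref{eq:basic inequalities} produces $\gamma^{-1/2}\|wu\|_{L^2}$ from $(1-\gamma\px^2)^{-1}\px(wu)$, while $\|wWu\|_{L^2}+\|w'u\|_{L^2}\lesssim \|wu\|_{L^2}$ from $|W|\lesssim 1$ and $|w'|\lesssim w$. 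For the error, $\gamma\|w''v\|_{L^2}\lesssim \gamma\|wv\|_{L^2}$ is absorbed directly, and an integration by parts $w'\px v = \px(w'v)-w''v$ followed by \eqref{eq:basic inequalities} contributes $\gamma^{1/2}\|w'v\|_{L^2}\lesssim\gamma^{1/2}\|wv\|_{L^2}$, which is likewise absorbed for $\gamma$ small enough.

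\emph{Proof of \eqref{eq:tech 22}.} The crucial new ingredient is the commutator $[\px, U]=W'$, immediate from $U=\px+W$. Differentiating $(1-\gamma\px^2)v=Uu$ one gets
\[
(1-\gamma\px^2)(\px v) \;=\;\px(Uu) \;=\; U(\px u)+W'u,
\qquad \text{so}\qquad \px v \;=\; (1-\gamma\px^2)^{-1}U(\px u) \,+\, (1-\gamma\px^2)^{-1}(W'u).
\]
With the new weight $\wtilw := \sigma_A\wtilQ^{1/2}$ the bounds $|\wtilw'|+|\wtilw''|\lesssim \wtilw$ still hold, so the commutator argument of \eqref{eq:tech 11} with $u$ replaced by $\px u$ applies verbatim to the first summand, yielding $\|\wtilw(1-\gamma\px^2)^{-1}U(\px u)\|_{L^2}\lesssim \gamma^{-1/2}\|\wtilw \px u\|_{L^2}$. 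For the second summand a direct commutator estimate gives $\|\wtilw(1-\gamma\px^2)^{-1}(W'u)\|_{L^2}\lesssim \|\wtilw W'u\|_{L^2}\lesssim \|\sigma_A\wtilQ^{5/2}u\|_{L^2}$, by the decay $|W'|\lesssim\wtilQ^2$, which is exactly the second piece of the right-hand side in \eqref{eq:tech 22}.

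\emph{Main obstacle.} The delicate point is the bookkeeping of $\wtilQ$-powers: because $\wtilQ$ decays only polynomially, one cannot trade $\sigma_A\wtilQ^{3/2}$ for $\sigma_A\wtilQ^{5/2}$ at will. Expanding $\px(\px u + Wu)$ naively and hitting the $v$-terms of the commutator with the weight $\wtilw$ would produce an irreducible contribution of size $\gamma^{-1/2}\|\sigma_A\wtilQ^{3/2}u\|_{L^2}$ (arising after invoking part (a) to control $\|\wtilw' v\|_{L^2}$), which sits strictly between the two admissible terms in \eqref{eq:tech 22}. Isolating $U(\px u)$ from the lower-order correction $W'u$ via the algebraic identity $[\px,U]=W'$, and exploiting the decay $|W'|\lesssim\wtilQ^2$ inherited from $V$ and $V_0$, is what converts the forbidden $\wtilQ^{3/2}$ factor into the acceptable $\wtilQ^{5/2}$ one.
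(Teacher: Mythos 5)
Your argument is correct, and it reaches the same bounds by a somewhat different route than the paper. The paper first establishes the dedicated commutator Lemma~\ref{lem:commutativity} for weights of the form $\sech(K\alpha^{-1})$, then invokes the pointwise equivalence \eqref{eq:equivalence} between $\sigma_A\wtilQ^{n/2}$ and $\sech((n/2+1/A)\alpha^{-1})$ so as to shuttle the polynomial weight past the resolvent. You bypass that intermediate lemma entirely: you commute $w=\sigma_A\wtilQ^{n/2}$ with $(1-\gamma\px^2)^{-1}$ directly, using nothing more than the structural bounds $|w'|+|w''|\lesssim w$ and the resolvent estimates \eqref{eq:basic inequalities}, absorbing the error terms for $\gamma$ small. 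This is essentially the same calculation that proves Lemma~\ref{lem:commutativity}, carried out in line for the particular weights at hand; it is more self-contained as a proof of this lemma, though of course the paper reuses Lemma~\ref{lem:commutativity} elsewhere, which is why it extracts it as a standalone statement.

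The algebraic decomposition is the same in both: your identity $[\px,U]=W'$ (with $W=-h_0$) is exactly the paper's expansion $\px U=\px^2-h_0\px-h_0'$, and both proofs hinge on the same bookkeeping insight — the zeroth-order commutator piece $W'u=-h_0'u$ carries two extra powers of $\wtilQ$, which converts the unacceptable $\wtilQ^{3/2}$ weight into the admissible $\wtilQ^{5/2}$ on the right of \eqref{eq:tech 22}. You justify $|W'|\lesssim\wtilQ^2$ via $W'=(V_0-V)/2$, the bound $|V_0'|\lesssim\wtilQ^3$ from Lemma~\ref{lem:decay of V0}, and $V_0(\pm\infty)=0$; the paper instead invokes $|h_0'|\lesssim|V|\lesssim\wtilQ^2$ directly from the Riccati equation \eqref{eq:h0 ivp} and the integral formula \eqref{eq:h0'}. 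Both routes to this pointwise bound are valid.

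One minor remark on presentation: in the second summand of part (b) you write that ``a direct commutator estimate gives $\|\wtilw(1-\gamma\px^2)^{-1}(W'u)\|\lesssim\|\wtilw W'u\|$''; it would be worth making explicit that this is the same absorption argument as in part (a) applied to $\tilde v:=(1-\gamma\px^2)^{-1}(W'u)$, since it is not literally a citation of part (a) but a rerun of its mechanism without the first-order term $\px(wu)$. Otherwise the proof is complete and correct.
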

\begin{remark}
	Estimates in \eqref{eq: tech 4}, \eqref{eq: tech 3} and Lemma \ref{lem:localized estimation} require the additional terms $\wtilQ^\frac12$, $\wtilQ^\frac32$ in order to control some nonstandard terms appearing in below estimates.
\end{remark}
\begin{proof}[Proof of Lemma \ref{lem:localized estimation}]
	By direct computations, we have $U = \px - h_0$,
	where the function $h_0$ is bounded (see Appendix Lemma \ref{lem:bounds h0}). In addition, using that
	\begin{equation}\label{eq:equivalence}
		\sigma_A \wtilQ^{\frac{n}{2}} \lesssim \sech\left(\left(\frac{n}{2} + \frac1A\right)\alpha^{-1}\right) \lesssim \sigma_A \wtilQ^{\frac{n}{2}}
	\end{equation}
	with $n=3$, the first estimate is consequence of \eqref{eq: tech 1} and \eqref{eq:basic inequalities},
	\begin{align*}
		\left\| \sigma_A\wtilQ^\frac32 v\right\|_{L^2} \lesssim&~{} \left\|\sech\left(\frac{3A + 2}{2A}\alpha^{-1}\right)v\right\|_{L^2} \\
		\lesssim&~{} \left\|\sech\left(\frac{3A + 2}{2A}\alpha^{-1}\right)(1 - \gamma\px^2)^{-1}\px u\right\|_{L^2} + \left\| \sech\left(\frac{3A + 2}{2A}\alpha^{-1}\right)(1-\gamma\px^2)^{-1}[h_0 u]\right\|_{L^2} \\
		\lesssim&~{} \left\|(1 - \gamma\px^2)^{-1}\left[\sech\left(\frac{3A + 2}{2A}\alpha^{-1}\right)\px u\right]\right\|_{L^2}   + \left\|(1-\gamma\px^2)^{-1}\left[ \sech\left(\frac{3A + 2}{2A}\alpha^{-1}\right)h_0 u \right]\right\|_{L^2} \\
		\lesssim&~{} \left\|(1 - \gamma\px^2)^{-1}\px\left[\sech\left(\frac{3A + 2}{2A}\alpha^{-1}\right) u\right]\right\|_{L^2}  \\
		&~{} + \frac{3A + 2}{2A}\left\|(1 - \gamma\px^2)^{-1}\left[\wtilQ\sech\left(\frac{3A + 2}{2A}\alpha^{-1}\right) u\right]\right\|_{L^2}  + \left\|\sech\left(\frac{3A + 2}{2A}\alpha^{-1}\right)h_0 u\right\|_{L^2}.
	\end{align*}
	Applying \eqref{eq:basic inequalities} again to the previous estimate,
	\begin{align*}
		\left\| \sigma_A\wtilQ^\frac32 v\right\|_{L^2} \lesssim&~{} \gamma^{-\frac12} \left\|\sech\left(\frac{3A + 2}{2A}\alpha^{-1}\right)u\right\|_{L^2} \\
		&~{} + \left\| \wtilQ\sech\left(\frac{3A + 2}{2A}\alpha^{-1}\right) u\right\|_{L^2} + \left\| \sigma_A \wtilQ^\frac32 h_0 u\right\|_{L^2} \\
		\lesssim&~{}  \gamma^{-\frac12}\left\|\sigma_A \wtilQ^\frac32 u\right\|_{L^2} + \left\|\sigma_A \wtilQ^\frac32 u\right\|_{L^2} + \left\| \sigma_A \wtilQ^\frac32 h_0 u\right\|_{L^2} \\
		\lesssim&~{}  \gamma^{-\frac12} \left\| \sigma_A \wtilQ^\frac32 u\right\|_{L^2}.
	\end{align*}
	This proves \eqref{eq:tech 11}.
	
	For the second estimate, we have
	\[
	\px U = \px^2 - h_0\px - h_0'.
	\]
	Using \eqref{eq:equivalence} with $n=1$ and \eqref{eq: tech 2}, plus the fact that $h_0$ is bounded and $|h_0'|\lesssim |V| \lesssim \wtilQ^2$ (see Lemma \ref{lem:h0 properties}, \eqref{eq:h0'}), analogously to the previous estimate we have
	\begin{align*}
		\|\sigma_A\wtilQ^\frac12\px v\|_{L^2} \lesssim & \left\| \sech\left(\frac{A + 2}{2A}\alpha^{-1}\right) \px v \right\|_{L^2} \\
		\lesssim & \left\| \sech\left(\frac{A + 2}{2A}\alpha^{-1}\right)(1 - \gamma\px^2)^{-1}\px^2 u \right\|_{L^2} \\
		& + \left\| \sech\left(\frac{A + 2}{2A}\alpha^{-1}\right)(1-\gamma\px^2)^{-1}[h_0\px u] \right\|_{L^2} \\
		&+ \left\| \sech\left(\frac{A + 2}{2A}\alpha^{-1}\right)(1 - \gamma\px^2)^{-1}[h_0'u] \right\|_{L^2} \\
		\lesssim &  \left\| (1 - \gamma\px^2)^{-1}\px\left[\sech\left(\frac{A + 2}{2A}\alpha^{-1}\right) \px u\right] \right\|_{L^2} \\
		&+ \left\| (1 - \gamma\px^2)^{-1}\left[\sech\left(\frac{A + 2}{2A}\alpha^{-1}\right)' \px u\right] \right\|_{L^2} \\
		& + \left\| (1-\gamma\px^2)^{-1} \left[ \sech\left(\frac{A + 2}{2A}\alpha^{-1}\right)h_0\px u \right] \right\|_{L^2} \\
		&+ \left\|(1-\gamma\px^2)^{-1} \left[ \sech\left(\frac{A + 2}{2A}\alpha^{-1}\right)h_0' u \right] \right\|_{L^2} \\
		\lesssim & \gamma^{-\frac12}\left\|  \sech\left(\frac{A + 2}{2A}\alpha^{-1}\right) \px u \right\|_{L^2} + \left\| \sech\left(\frac{A + 2}{2A}\alpha^{-1}\right)\wtilQ^2 u \right\|_{L^2} \\
		\lesssim&  \gamma^{-\frac12} \left\|\sigma_A \wtilQ^\frac12 \px u\right\|_{L^2} + \left\|\sigma_A \wtilQ^\frac52 u\right\|_{L^2},
	\end{align*}
	which proves \eqref{eq:tech 22}.
\end{proof}

\begin{lemma}\label{lem:tech estimates}
	One has
	\begin{enumerate}
		\item[(1)] \label{l:estimate w} Estimate on $w_1$.
		\begin{equation}\label{eq:tech w}
			\left\|\sigma_A \wtilQ^\frac12 \px(\tilde\chi_B u_1)\right\|_{L^2} \lesssim \left\|\wtilQ^\frac12\px w_1\right\|_{L^2} + \left\|\wtilQ^\frac32 w_1\right\|_{L^2}.
		\end{equation}
		\item[(2)] \label{l:estimate v} Estimates on $v_1$.
		\begin{gather}\label{eq:tech v1}
			\left\|\sigma_A\wtilQ^{\frac32} v_1\right\|_{L^2} \lesssim \gamma^{-\frac12}\left\|\wtilQ^{\frac32} w_1\right\|_{L^2}, \\[0.1cm] \label{eq:tech px v1}
			\left\|\sigma_A\wtilQ^\frac12 \px v_1\right\|_{L^2} \lesssim \gamma^{-\frac12}\left(\left\|\wtilQ^\frac12\px w_1\right\|_{L^2} + \left\|\wtilQ^\frac32 w_1\right\|_{L^2}\right).
		\end{gather}
	\end{enumerate}
\end{lemma}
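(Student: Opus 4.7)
\medskip

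The overall plan is to exploit the separation of scales $B^2 \ll A$ so that on $\mathrm{supp}(\tilde\chi_B)$ the weights $\sigma_A$ and $\zeta_A$ are essentially of size $1$, and then combine this with the commutator-type estimates of Lemma \ref{lem:localized estimation}. Specifically, on $\mathrm{supp}(\tilde\chi_B)$ we have $|\alpha^{-1}(x)|\leq 2B^2$, so by \eqref{eq:zeta}, \eqref{def:sigmaA}, and the choice \eqref{eq:scales},
\begin{equation*}
\sigma_A(x)\,\tilde\chi_B(x) \sim \tilde\chi_B(x), \qquad \zeta_A^{-1}(x)\,\tilde\chi_B(x) \lesssim e^{B^2/A}\,\tilde\chi_B(x)\lesssim \tilde\chi_B(x).
\end{equation*}
This will be the workhorse observation throughout.

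For part (1), I would expand $\px(\tilde\chi_B u_1)=\tilde\chi_B'\,u_1+\tilde\chi_B\,\px u_1$ and use $u_1=\zeta_A^{-1}w_1$, so that $\px u_1 = \zeta_A^{-1}\px w_1 - (\zeta_A'/\zeta_A)\zeta_A^{-1}w_1$. The first piece contributes $\lesssim \|\sigma_A\zeta_A^{-1}\tilde\chi_B'\wtilQ^{1/2}w_1\|\lesssim B^{-2}\|\wtilQ^{3/2}w_1\|$ since $|\tilde\chi_B'|\lesssim \wtilQ/B^2$. For the second piece, the $\zeta_A^{-1}\sigma_A\tilde\chi_B$ factor is bounded and $|\zeta_A'/\zeta_A|\lesssim \wtilQ/A$ by \eqref{eq:estimations}, giving the two terms $\|\wtilQ^{1/2}\px w_1\|$ and $A^{-1}\|\wtilQ^{3/2}w_1\|$. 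Summing proves \eqref{eq:tech w}.

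For part (2), I would apply Lemma \ref{lem:localized estimation} to $u=\tilde\chi_B u_1$. Inequality \eqref{eq:tech 11} immediately gives
\begin{equation*}
\|\sigma_A\wtilQ^{3/2}v_1\|\lesssim \gamma^{-1/2}\|\sigma_A\wtilQ^{3/2}\tilde\chi_B u_1\|\lesssim \gamma^{-1/2}\|\wtilQ^{3/2}w_1\|,
\end{equation*}
where in the last step I again use $\sigma_A\zeta_A^{-1}\tilde\chi_B\lesssim 1$; this is \eqref{eq:tech v1}. For \eqref{eq:tech px v1}, inequality \eqref{eq:tech 22} yields
\begin{equation*}
\|\sigma_A\wtilQ^{1/2}\px v_1\|\lesssim \gamma^{-1/2}\|\sigma_A\wtilQ^{1/2}\px(\tilde\chi_B u_1)\|+\|\sigma_A\wtilQ^{5/2}\tilde\chi_B u_1\|,
\end{equation*}
and now the first term is bounded by part (1), while the second is controlled by $\|\wtilQ^{3/2}w_1\|$ using $\wtilQ\lesssim 1$. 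Combining finishes the proof.

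The only point requiring genuine care, rather than symbol-pushing, is verifying that $\sigma_A\zeta_A^{-1}\tilde\chi_B$ is indeed bounded uniformly in the parameters: this is where the scale hierarchy $B^2\ll A$ becomes essential, and without it neither inequality would close. Everything else reduces to the decay/commutator machinery already developed in Section~\ref{sec:second virial estimate}.
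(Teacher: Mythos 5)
Your argument is correct and follows essentially the same computational route as the paper: expand $\px(\tilde\chi_B u_1)$, convert to $w_1=\zeta_A u_1$, and feed the result through Lemma~\ref{lem:localized estimation}. However, the closing paragraph misidentifies what makes the weights compatible. The bound $\sigma_A\zeta_A^{-1}\lesssim 1$ does \emph{not} rely on restricting to $\{|\alpha^{-1}(x)|\le 2B^2\}$ nor on the hierarchy $B^2\ll A$: from \eqref{eq:zeta} and \eqref{def:sigmaA}, for $|x|\geq 2$ one has $\zeta_A=e^{-|\alpha^{-1}(x)|/(2A)}$ while $\sigma_A\sim e^{-|\alpha^{-1}(x)|/A}$, so in fact $\sigma_A\lesssim\zeta_A^2\lesssim\zeta_A$ holds pointwise on all of $\R$ (with constants independent of $A$), and for $|x|\leq 2$ both functions are $\sim 1$. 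This global inequality $\sigma_A\lesssim\zeta_A$ is exactly what the paper invokes, and it is strictly stronger than the cutoff-localized version you derive. The separation $B^2\ll A$ does real work in the section, but in the error terms $\wtilJ_1$ and $J_3$ (where factors $B/A$ and $B^{-1}$ must be small), not in Lemma~\ref{lem:tech estimates}; so describing it as ``essential'' here, and as ``the only point requiring genuine care,'' is a misreading of where the difficulty lies.
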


\begin{remark}
	Compared with previous results in \cite{KMM19,KMMV20}, Lemma \ref{lem:tech estimates} contains new weighted estimates because of the variable coefficients in the model and the emergence of new weighted terms as well. 
\end{remark}

\begin{proof} Proof of \eqref{eq:tech w}. Using that $\sigma_A \lesssim \zeta_A$, $\tilde\chi_B' \lesssim \wtilQ $ and that from definition \eqref{eq:zeta} $\zeta_A'\lesssim A^{-1}\wtilQ\zeta_A$, we have
	\[
	\begin{aligned}
		\left\|\sigma_A \wtilQ^\frac12 \px (\tilde\chi_B u_1)\right\|_{L^2} \lesssim &~{} \left\|\zeta_A\wtilQ^\frac12 \px u_1\right\|_{L^2} + \left\|\zeta_A\wtilQ^\frac32 u_1\right\|_{L^2} \\
		\lesssim &~{} \left\| \wtilQ^\frac12 \px w_1\right\|_{L^2} + \left\|\wtilQ^\frac32 w_1\right\|_{L^2} + \left\|\wtilQ^\frac12 \zeta_A' u_1\right\|_{L^2} \\
		\lesssim &~{}  \left\|\wtilQ^\frac12 \px w_1\right\|_{L^2} + \left\|\wtilQ^\frac32 w_1\right\|_{L^2}.
	\end{aligned}	
	\]
	
	Proof of \eqref{eq:tech v1}. Estimate \eqref{eq:tech v1} is direct from \eqref{eq:tech 11}, using $\sigma_A \lesssim \zeta_A$ and  \eqref{tildeu}.
	
	Now, using \eqref{eq:tech 22} and \eqref{eq:tech w} we have
	\[
	\begin{aligned}
		\left\|\sigma_A \wtilQ^\frac12\px v_1\right\|_{L^2} \lesssim &~{} \gamma^{-\frac12} \left\| \sigma_A \wtilQ^\frac12 \px (\tilde\chi_B u_1)\right\|_{L^2} + \left\|\sigma_A \wtilQ^\frac52 \tilde\chi_B u_1\right\|_{L^2} \\
		\lesssim &~{} \gamma^{-\frac12}\|\wtilQ^\frac12\px w_1\|_{L^2} + \gamma^{-\frac12} \left\|\wtilQ^\frac32 w_1\right\|_{L^2} + \left\|\sigma_A \wtilQ^\frac52 u_1\right\|_{L^2} \\
		\lesssim &~{} \gamma^{-\frac12}\left( \left\| \wtilQ^\frac12\px w_1\right\|_{L^2} + \left\|\wtilQ^\frac32 w_1\right\|_{L^2} \right),
	\end{aligned}
	\]
	obtaining \eqref{eq:tech px v1}. 
\end{proof}

\subsection{Controlling error and nonlinear terms.}
Now we have in a position to control the error and nonlinear terms in \eqref{eq:error terms}. By the definition of $\zeta_B$ and $\tilde\chi_A$ in \eqref{eq:virial II notation}, it holds that
\begin{equation}\label{eq:estimates}
	\begin{gathered}
		\zeta_B(x) \lesssim e^{-\frac1B|\alpha^{-1}(x)|}, \quad |\zeta_B'(x)| \lesssim \frac{1}{B}\wtilQ e^{-\frac1B |\alpha^{-1}(x)|}, \quad |\varphi_B|\lesssim B, 	\\
		|\tilde\chi_A'|\lesssim \frac1A\wtilQ, \quad |(\tilde\chi_A^2)'|\lesssim \frac1A\wtilQ, \quad |\tilde\chi_A''|\lesssim \frac1A\wtilQ^2,  \quad  |(\tilde\chi_A^2)'''|\lesssim \frac{1}{A}\wtilQ^3.
	\end{gathered}
\end{equation}
Even more, from the definition of $\chi$ in \eqref{eq:chi} we have
\begin{equation}\label{eq:support chi}
	\tilde\chi_A'(x) = \tilde\chi_A''(x) = \tilde\chi_A'''(x) = 0,
\end{equation}
if $|\alpha^{-1}(x)|<A$ or if $|\alpha^{-1}(x)|>2A$.

\subsubsection{Control of $\wtilJ_1$.}
Let us now recall the definition of $\wtilJ_1$:
\begin{align}\label{def:J1}
	\begin{split}
		\wtilJ_1 =&~{}  \frac14 \int \wtilQ(\tilde\chi_A^2)'(\zeta_B^2)' v_1^2 + \frac14 \int \wtilQ'(\tilde\chi_A^2)'\zeta_B^2 v_1^2 + \frac12 \int\wtilQ[3(\tilde\chi_A')^2 + \tilde\chi_A''\tilde\chi_A] \zeta_B^2 v_1^2 \\
		& + \frac14\int (\tilde\chi_A^2)'''\varphi_B v_1^2 - \int (\tilde\chi_A^2)'\varphi_B(\px v_1)^2 \\
		= &~{} J_{1,1} + J_{1,2} + J_{1,3} + J_{1,4} + J_{1,5}.
	\end{split}
\end{align}

For the first four terms, using that $\sigma_A\gtrsim 1$ on $[-2\alpha(A), 2\alpha(A)]$, \eqref{eq:estimates} and \eqref{eq:support chi}, we have
\begin{equation}\label{eq:J1 estimates}
	\begin{gathered}
		|(\tilde\chi_A^2)'(\zeta_B^2)'|\lesssim \frac{1}{AB}e^{-2\frac{A}{B}}\sigma_A^2\wtilQ^2, \quad |(\tilde\chi_A^2)'\zeta_B^2|\lesssim \frac1A e^{-2\frac{A}{B}}\sigma_A^2\wtilQ, \\
		(\tilde\chi_A')^2\zeta_B^2 + |\tilde\chi_A''\tilde\chi_A|\zeta_B^2 \lesssim \frac1A e^{-2\frac{A}{B}}\sigma_A^2\wtilQ^2, \\
		|(\tilde\chi_A^2)'''\varphi_B| \lesssim \frac{B}{A}\sigma_A^2\wtilQ^3, \quad |(\tilde\chi_A^2)'\varphi_B|\lesssim \frac{B}{A}\sigma_A^2\wtilQ.
	\end{gathered}
\end{equation}
Thus, using the above estimates and \eqref{eq:tech v1}, we have for the terms in \eqref{def:J1},
\[
\begin{aligned}
	|J_{1,1}| + |J_{1,2}| + |J_{1,3}| + |J_{1,4}| \lesssim &~{} \frac{B}{A}\left\|\sigma_A\wtilQ^\frac32 v_1\right\|_{L^2}^2 \lesssim  \gamma^{-1}\frac{B}{A}\left\| \wtilQ^\frac32 w_1\right\|_{L^2}^2.
\end{aligned}
\]
In the case of $J_{1,5}$, using \eqref{eq:J1 estimates} and \eqref{eq:tech px v1}, we obtain
\[
\begin{aligned}
	|J_{1,5}| \lesssim &~{} \frac{B}{A}\left\| \sigma_A \wtilQ^\frac12 \px v_1\right\|_{L^2}^2
	\lesssim \gamma^{-1}\frac{B}{A}\left(\left\|\wtilQ^\frac12 \px w_1\right\|_{L^2}^2 +\left\|\wtilQ^\frac32 w_1\right\|_{L^2}^2\right).
\end{aligned}
\]
Therefore, we conclude for this term
\begin{equation}\label{eq:control J1}
	\left| \wtilJ_1 \right| \lesssim \gamma^{-1}\frac{B}{A} \left(\int \wtilQ(\px w_1)^2 + \int \wtilQ^3 w_1^2\right).
\end{equation}

\subsubsection{Control of $J_2$.} Recall $J_2$ from \eqref{los Ji}. 
First, by the Cauchy-Schwarz inequality,
\[
|J_2| \lesssim \gamma \left\| \wtilQ (1 - \gamma\px^2)^{-1}\left(\psi_{A,B} \px v_1 + \frac12\psi_{A,B}' v_1 \right) \right\|_{L^2} \left\| \wtilQ^{-1} (V_0''v_1 + V_0' \px v_1)\right\|_{L^2}.
\]
Using the commutativity estimate \eqref{eq:conmutativity sech}, \eqref{eq:basic inequalities} and $\wtilQ \lesssim \sech(\alpha^{-1}) \lesssim \wtilQ$,
\[
\begin{aligned}
	\|\wtilQ (1 - \gamma\px^2)^{-1}(\psi_{A,B} \px v_1)\|_{L^2} \lesssim &~{} \|\sech(\alpha^{-1})(1 - \gamma\px^2)^{-1}(\psi_{A,B} \px v_1)\|_{L^2} \\
	\lesssim &~{} \|(1 - \gamma\px^2)^{-1}(\sech(\alpha^{-1})\psi_{A,B} \px v_1)\|_{L^2} \\
	\lesssim &~{} \| \sech(\alpha^{-1})\psi_{A,B} \px v_1\|_{L^2} \lesssim  \|\wtilQ \psi_{A,B} \px v_1\|_{L^2}.
\end{aligned}
\]
From the definition of $z$ in \eqref{eq:z}, we have
\[
\px z = \tilde\chi_A \zeta_B \px v_1 + (\tilde\chi_A\zeta_B)'v_1 \quad \Longrightarrow \quad \tilde\chi_A^2 \zeta_B^2(\px v_1)^2 \lesssim (\px z)^2 + |(\tilde\chi_A\zeta_B)'v_1|^2.
\]
Using \eqref{eq:estimates} and again the definition of $z$
\[
|(\tilde\chi_A\zeta_B)'v_1|^2\tilde\chi_A^2 \lesssim \left(\frac1A + \frac1B\right)^2 \wtilQ^2 \tilde\chi_A^2 \zeta_B^2 v_1^2 \lesssim \frac{1}{B^2}\wtilQ^2 z^2,
\]
and so
\[
\tilde\chi_A^4 \zeta_B^2 (\px v_1)^2 \lesssim \tilde\chi_A^2(\px z)^2 + \frac{1}{B^2} \wtilQ^2 z^2.
\]
Thus, using $|\psi_{A,B}|\leq |\alpha^{-1}(x)|\tilde\chi_A^2$,
\[
\begin{aligned}
	\wtilQ^2|\psi_{A,B}\px v_1|^2 \lesssim &~{}  |\alpha^{-1}(x)|^2\wtilQ^2\tilde\chi_A^4(\px v_1)^2 \\
	\lesssim &~{} \wtilQ\tilde\chi_A^4\zeta_B^2(\px v_1)^2 \lesssim \wtilQ(\px z)^2 + \frac{1}{B^2} \wtilQ^3 z^2.
\end{aligned}
\]
So, it follows that
\begin{equation}\label{eq:psidv1}
	\left\| \wtilQ \psi_{A,B}\px v_1\right\|_{L^2} \lesssim \left(\wtilQ(\px z)^2 + \frac{1}{B^2}\wtilQ^3 z^2\right)^\frac12.
\end{equation}
Proceeding as before and using  \eqref{eq:conmutativity sech}, \eqref{eq:basic inequalities}, for the other term we obtain
\[
\left\|\wtilQ (1 - \gamma\px^2)^{-1}(\psi_{A,B}' v_1)\right\|_{L^2} \lesssim \left\|\wtilQ \psi_{A,B}'v_1\right\|_{L^2}.
\]
Now, we claim
\begin{equation}\label{eq:dpsi}
	(\psi_{A,B}')^2 \le \frac{21}{10} \wtilQ^2 \tilde\chi_A^2.
\end{equation}
Indeed, using \eqref{psi_computations} and \eqref{eq:estimates}, the definition of $\psi_{A,B}$ in \eqref{eq:virial II notation} and that $\tilde\chi_A = 0$ for $|\alpha^{-1}(x)|\geq 2A$,
\[
\begin{aligned}
	(\psi_{A,B}')^2 = &~{}  \left[ (\tilde\chi_A^2)'\varphi_B + \tilde\chi_A^2\wtilQ\zeta_B^2 \right]^2 \\
	\le &~{}  8 (\tilde\chi_A\tilde\chi_A' \varphi_B)^2 + 2\wtilQ^2\tilde\chi_A^4 \zeta_B^4 \le  \wtilQ^2 \tilde\chi_A^2 \left(  \frac{C}{A^2}B^2 + 2\right) \le \frac{21}{10}\wtilQ^2 \tilde\chi_A^2.
\end{aligned}
\]
Using \eqref{eq:dpsi}, we have that
\[
\left| \wtilQ^2(\psi_{A,B}')^2 v_1^2 \right| \le \frac{21}{10} \wtilQ^4 \tilde\chi_A^2 v_1^2 \le \frac{63}{20} \wtilQ^3 z^2,
\]
and so from \eqref{eq:z},
\begin{equation}\label{eq:dpsiv1}
	\left\|\wtilQ\psi_{A,B}v_1\right\|_{L^2}^2 \le 4 \int \wtilQ^3 z^2.
\end{equation}
Collecting \eqref{eq:psidv1} and \eqref{eq:dpsiv1} we have
\begin{equation}\label{eq:first term J2}
\ba
&	\left\| \wtilQ (1 - \gamma\px^2)^{-1}\left(\psi_{A,B} \px v_1 + \frac12\psi_{A,B}' v_1 \right) \right\|_{L^2} \\
&\qquad \lesssim \left( \int \wtilQ(\px z)^2 + \wtilQ^3 z^2 \right)^{\frac12}.
\ea
\end{equation}

Now we estimate the term related with the potential $V_0$. By Lemma \ref{lem:decay of V0} we have $|V_0'| \lesssim \wtilQ^3$, and using that
\[|h_0|\lesssim 1, \qquad h_0' = \frac{1}{4h_0}(V_0' + V'), \qquad h_0'' = V' - 2h_0h_0'\]
with
\[
V_0'' = 4(h_0')^2 + 4h_0 h_0'' - V'',
\]
one has $|V_0''|\lesssim \wtilQ^3$. Combining the above estimates,
\[
|V_0''v_1| + |V_0'\px v_1| \lesssim \wtilQ^3|v_1| + \wtilQ^3|\px v_1|,
\]
so
\[
\left\| \wtilQ^{-1} (V_0''v_1 + V_0' \px v_1)\right\|_{L^2} \lesssim \left\| \wtilQ^2 v_1\right\|_{L^2} + \left\| \wtilQ^2 \px v_1\right\|_{L^2}.
\]
From the definition of $z$ in \eqref{eq:z} and the particular polynomial decay of $\zeta_B$ and $\wtilQ$, we have
\begin{equation}\label{eq:v to z}
	\wtilQ^{\frac12} \tilde\chi_A^2 v_1^2  \lesssim \tilde\chi_A^2\zeta_B^2v_1^2 = z^2.
\end{equation}
Thus, using the above and from the definition of $\tilde\chi_A$,
\[
\begin{aligned}
	\wtilQ^4 v_1^2 = &~{} \wtilQ^4 v_1^2\tilde\chi_A^2 + \wtilQ^4 v_1^2(1 - \tilde\chi_A^2) \lesssim \wtilQ^\frac72 z^2 + e^{-\frac{A}{2}}\wtilQ^\frac72 v_1^2.
\end{aligned}
\]
From this, and using that $\wtilQ^\frac14\lesssim \sigma_A$ for $A$ large enough, it follows that
\[
\|\wtilQ^2 v_1\|_{L^2} \lesssim \|\wtilQ^\frac74 z\|_{L^2} + e^{-\frac{A}{2}}\|\wtilQ^\frac74 v_1\|_{L^2} \lesssim \|\wtilQ^\frac32 z\|_{L^2} + e^{-\frac{A}{4}}\|\sigma_A\wtilQ^\frac32 v_1\|_{L^2}.
\]
By estimate \eqref{eq:tech v1} we obtain
\begin{equation}\label{eq:Q2v1}
	\|\wtilQ^2 v_1\|_{L^2} \lesssim \|\wtilQ^\frac32 z\|_{L^2} + \gamma^{-\frac12}e^{-\frac{A}{4}}\|\wtilQ^\frac32 w_1\|_{L^2}.
\end{equation}
For the other term $\|\wtilQ^2\px v_1\|$, differentiating $z = \tilde\chi_A\zeta_B v_1$ we obtain
\[
\tilde\chi_A \zeta_B \px v_1 = \px z - \frac{\zeta_B'}{\zeta_B}z - \tilde\chi_A'\zeta_B v_1.
\]
Thus, from the properties of $\zeta_B$ and $\tilde\chi_A$ in \eqref{eq:estimations} and \eqref{eq:estimates} we get
\begin{equation}\label{eq:pxv to z}
	|\tilde\chi_A \zeta_B \px v_1| \lesssim \px z + \frac1B\wtilQ z.
\end{equation}
Replacing and using the polynomial decay of $\zeta_B$, we have
\[
\begin{aligned}
	\wtilQ^4(\px v_1)^2 = &~{} \wtilQ^4(\px v_1)^2\tilde\chi_A^2 + \wtilQ^4(\px v_1)^2(1 - \tilde\chi_A^2) \\
	\lesssim &~{} \wtilQ^\frac{7}{2}(\px z)^2 + \frac1B \wtilQ^\frac{11}{2} z^2 + e^{-A}\wtilQ^3(\px v_1)^2.
\end{aligned}
\]
Integrating over $\R$ and using \eqref{eq:tech px v1}, we obtain
\begin{equation}\label{eq:Q2pxV2}
	\begin{aligned}
		\| \wtilQ^2\px v_1\|_{L^2} \lesssim &~{} \|\wtilQ^\frac74\px z\|_{L^2} + \frac{1}{\sqrt{B}}\|\wtilQ^\frac{11}{4}z\|_{L^2} + e^{-\frac{A}{2}}\|\wtilQ^\frac74\px v_1\|_{L^2} \\
		\lesssim &~{} \|\wtilQ^\frac12\px z\|_{L^2} + \frac{1}{\sqrt{B}}\|\wtilQ^\frac{3}{2}z\|_{L^2} + e^{-\frac{A}{2}}\|\sigma_A \wtilQ^\frac32\px v_1\|_{L^2} \\
		\lesssim &~{} \|\wtilQ^\frac12\px z\|_{L^2} + \frac{1}{\sqrt{B}}\|\wtilQ^\frac{3}{2}z\|_{L^2} + \gamma^{-\frac12}e^{-\frac{A}{2}}\left(\|\wtilQ^\frac12 \px w_1\|_{L^2} + \|\wtilQ^\frac32 w_1\|_{L^2}\right).
	\end{aligned}
\end{equation}
It follows using \eqref{eq:Q2v1} and \eqref{eq:Q2pxV2} that
\begin{equation}\label{eq:V0v1}
\begin{aligned}
& 	\|\wtilQ^2 v_1\|_{L^2} + \|\wtilQ^2 \px v_1\|_{L^2} \\
&\hspace{2cm} \lesssim \|\wtilQ^\frac12 \px z\|_{L^2} + \|\wtilQ^\frac32 z\|_{L^2} + \gamma^{-\frac12}e^{-\frac{A}{4} }\left(\|\wtilQ^\frac12 \px w_1\|_{L^2} + \|\wtilQ^\frac32 w_1\|_{L^2} \right).
\end{aligned}
\end{equation}
Therefore, collecting the estimates \eqref{eq:first term J2} and \eqref{eq:V0v1} we conclude
\begin{equation}\label{eq:control J2}
	|J_2|\lesssim \gamma\left(\int \wtilQ (\px z)^2 + \wtilQ^3 z^2 \right) + e^{-\frac{A}{4}}\left(\int \wtilQ(\px w_1)^2 + \wtilQ^3 w_1^2 \right).
\end{equation}

\subsubsection{Control of $J_3$.} From \eqref{eq:virial II notation}, we recognize that $\psi_{A,B}$ and $\psi_{A,B}'$ are terms supported in $|\alpha^{-1}(x)| \leq 2A$ because of $\tilde \chi_A^2(x)$ and $\tilde \chi_A(x)\tilde \chi_A'(x)$. Using Cauchy-Schwarz inequality, we have
\begin{equation}\label{eq:CS J3}
	\begin{aligned}
		\left| J_3 \right|\lesssim &~{} \left(\left\|\wtilQ^\frac12 \tilde \chi_A^{-1}\psi_{A,B}\px v_1\right\|_{L^2} + \left\|\wtilQ^\frac12 \tilde \chi_A^{-1}\psi_{A,B}'v_1\right\|_{L^2} \right) \\
		&~{}\quad \left\|\wtilQ^{-\frac12} \tilde\chi_A (1 - \gamma\px^2)^{-1}U (\tilde\chi_B' \px u_1 + \tilde\chi_B'' u_1)\right\|_{L^2}.
	\end{aligned}
\end{equation}
For the term in parenthesis, using \eqref{eq:virial II notation}, $|\varphi_{B}|\lesssim B$, estimate \eqref{eq:tech px v1} and that $\zeta_A \gtrsim 1$ on $[-2\alpha(A), 2\alpha(A)]$,
\[
\begin{aligned}
	\left\|\wtilQ^\frac12\tilde \chi_A^{-1} \psi_{A,B} \px v_1\right\|_{L^2} \lesssim &~{} B\left\|\wtilQ^\frac12\tilde\chi_A \px v_1\right\|_{L^2} \lesssim  \gamma^{-\frac12}B \left( \left\|\wtilQ^\frac12\px w_1\right\|_{L^2} + \left\|\wtilQ^\frac32 w_1\right\|_{L^2} \right).
\end{aligned}
\]
On the other hand, since $\psi_{A,B}=\tilde\chi_{A}^2\varphi_B$ (see \eqref{eq:zeta} and \eqref{eq:virial II notation}), using \eqref{psi_computations} and \eqref{eq:estimates},
\begin{align}\label{psi_AB_p}
	|\psi_{A,B}'| \leq |(\tilde\chi_A^2)'\varphi_B| + \wtilQ\tilde\chi_A^2 \zeta_B^2 \lesssim \frac{B}{A}\wtilQ\tilde\chi_A1_{ \{ A \leq |\alpha^{-1}(x)| \leq 2A \} }   + \wtilQ \tilde\chi_A^2 \zeta_B^2.
\end{align}
From \eqref{psi_AB_p}, using \eqref{eq:tech v1} and \eqref{eq:z}, it follows
\[
\begin{aligned}
\left\|\wtilQ^\frac{1}{2} \tilde\chi_A^{-1}\psi_{A,B}'v_1 \right\|_{L^2} \lesssim & \frac{B}{A}\left\|\wtilQ^\frac32 1_{ \{ A \leq |x| \leq 2A \} } v_1\right\|_{L^2} + \left\|\wtilQ^{3/2} z\right\|_{L^2} \\
\lesssim & \gamma^{-\frac12} \frac{B}{A}\left\|\wtilQ^\frac32 w_1\right\|_{L^2} + \left\|\wtilQ^{3/2} z\right\|_{L^2}.
\end{aligned}
\]
Collecting these estimates, we obtain
\begin{equation}\label{eq:J4 1}
\begin{aligned}
	&\left\|\wtilQ^\frac12 \tilde \chi_A^{-1} \psi_{A,B}\px v_1\right\|_{L^2} + \left\|\wtilQ^\frac12 \tilde \chi_A^{-1} \psi_{A,B}'v_1\right\|_{L^2} \\
	&\hspace{3cm} \lesssim \gamma^{-\frac12}B\left(\left\|\wtilQ^\frac12 \px w_1\right\|_{L^2} + \left\|\wtilQ^\frac32 w_1\right\|_{L^2} \right) + \left\|\wtilQ^{3/2} z\right\|_{L^2}.
\end{aligned}
\end{equation}
For the second term in \eqref{eq:CS J3}, using $1 \lesssim \sigma_A \lesssim \zeta_A$ on $[-2\alpha(A), 2\alpha(A)]$, $U = \px - h_0$ with $h_0\lesssim 1$ and estimate \eqref{eq: tech 3},
\[
\left\|\wtilQ^{-\frac12} \tilde\chi_A (1 - \gamma\px^2)^{-1}U (\tilde\chi_B'' u_1)\right\|_{L^2} \lesssim \gamma^{-\frac12}\left\|\sigma_A \wtilQ^{-\frac12} \tilde\chi_B'' u_1\right\|_{L^2}.
\]
Now, we use that $\tilde\chi_B'' \lesssim B^{-4}\wtilQ^2$ on $[-2\alpha(B^2), 2\alpha(B^2)]$, and so
\[
\left\|\wtilQ^{-\frac12} \tilde\chi_A (1 - \gamma\px^2)^{-1}U (\tilde\chi_B'' u_1)\right\|_{L^2} \lesssim \gamma^{-\frac12}B^{-4}\left\|\sigma_A \wtilQ^{\frac32} u_1\right\|_{L^2} \lesssim \gamma^{-\frac12}B^{-4}\left\|\wtilQ^{\frac32} w_1\right\|_{L^2} .
\]
Repeating this procedure, we obtain
\[
\begin{aligned}
\left\|\wtilQ^{-\frac12} \tilde\chi_A (1 - \gamma\px^2)^{-1}U (\tilde\chi_B' \px u_1)\right\|_{L^2} \lesssim & \gamma^{-\frac12}\left\|\sigma_A \wtilQ^{-\frac12} \tilde\chi_B' \px u_1\right\|_{L^2} \\
\lesssim& \gamma^{-\frac12}B^{-2}\left\|\wtilQ^{\frac12} \px w_1\right\|_{L^2}
\end{aligned}
\]
In conclusion,
\begin{equation}\label{eq:J3 2}
\begin{aligned}
	&\left\|\wtilQ^{-\frac12} \tilde\chi_A (1 - \gamma\px^2)^{-1}U (\tilde\chi_B' \px u_1 + \tilde\chi_B'' u_1)\right\|_{L^2} \\
	&\hspace{3cm}\lesssim \gamma^{-\frac12}B^{-2}\left(\left\|\wtilQ^\frac12 \px w_1\right\|_{L^2} + \left\|\wtilQ^\frac32 w_1\right\|_{L^2} \right).
\end{aligned}
\end{equation}

Collection \eqref{eq:J4 1} and \eqref{eq:J3 2}, we obtain
\begin{equation}\label{eq:control J3}
\begin{aligned}
	\left| J_3 \right|\lesssim& \gamma^{-1}B^{-1}\left(\left\|\wtilQ^\frac12 \px w_1\right\|_{L^2} + \left\|\wtilQ^\frac32 w_1\right\|_{L^2} \right)^2 \\
	&+ \gamma^{-\frac12}B^{-2}\left(\left\|\wtilQ^\frac12 \px w_1\right\|_{L^2} + \left\|\wtilQ^\frac32 w_1\right\|_{L^2} \right)\left\|\wtilQ^{3/2} z\right\|_{L^2}.
\end{aligned}
\end{equation}

\subsubsection{Control of $J_4$.} 
Recall $J_4$ from \eqref{los Ji}.  We need now the explicit version of $N$ as in \eqref{eq:Nexplicit}. We decouple
\[
N= N_{g} + N_b,
\]
with
\begin{equation}\label{eq:Nest_g}
	N_g:= \wtilQ^2 \left( 3\wtilH(a_1^2\phi_0^2  +2a_1 \phi_0 u_1 ) + a_1^3 \phi_0^3 + 3 a_1^2 \phi_0^2 u_1 + 3 a_1\phi_0 u_1^2  \right),
\end{equation}
\begin{equation}\label{eq:Nest_b}
	N_b:=   \wtilQ^2u_1^2  \left( 3\wtilH  + u_1\right),
\end{equation}
and
\[
N^\perp = N - N_0\phi_0 =  \left( N_g   - N_0\phi_0 \right)+ N_b:= N_g^\perp + N_b. 
\]
Also, consider $J_4=J_{4,g} + J_{4,b}$, where one replaces $N_g^\perp$ and $N_b$, respectively. Consequently,
\[
\begin{aligned}
	J_{4,g}= &~{}  -\int \left(\psi_{A,B} \px v_1 + \frac12\psi_{A,B}' v_1 \right)(1 - \gamma \px^2)^{-1}U (\tilde\chi_B N_g^{\perp}).
\end{aligned}
\]
Using the Cauchy-Schwarz inequality, we have
\begin{equation}\label{eq:CS J4}
	\begin{aligned}
		\left| J_{4,g} \right|\lesssim& \left(\left\|\wtilQ^\frac12 \tilde \chi_A^{-1}\psi_{A,B}\px v_1\right\|_{L^2} + \left\|\wtilQ^\frac12 \tilde \chi_A^{-1}\psi_{A,B}'v_1\right\|_{L^2} \right)\\
		&\quad \left\|\wtilQ^{-\frac12} \tilde\chi_A (1 - \gamma\px^2)^{-1}U (\tilde\chi_B N_g^{\perp})\right\|_{L^2}.
	\end{aligned}
\end{equation}
For the first term we use \eqref{eq:J4 1} as before. It remains to bound the second term in \eqref{eq:CS J4}. Using that $N_g^\perp = N_g - N_0\phi_0$, we split it in two parts as follows
\[
\begin{aligned}
	&\left\|\wtilQ^{-\frac12} \tilde\chi_A (1 - \gamma\px^2)^{-1}U(\tilde\chi_B N_g^{\perp})\right\|_{L^2} \\
	& \hspace{1.5cm} \leq 
	\left\| \wtilQ^{-\frac12}\tilde\chi_A(1 - \gamma\px^2)^{-1}U(\tilde\chi_B N_g) \right\|_{L^2} + |N_0| \left\|\wtilQ^{-\frac12}\tilde\chi_A(1 - \gamma\px^2)^{-1}U(\tilde\chi_B \phi_0) \right\|_{L^2}.
\end{aligned}
\]
Now, we recall the estimate of $N_0$ obtained \eqref{eq:N0} and using that $|a_1|\lesssim 1$ we give a pointwise estimate for $N_g$ in \eqref{eq:Nest_g},
\begin{equation}\label{eq:estimation N}
	\begin{gathered}
		|N_g| \lesssim \wtilQ^2 \phi_0  (a_1^2 + u_1^2), \\
		|N_0| \lesssim a_1^2 + \int \wtilQ^2\phi_0 u_1^2 \lesssim a_1^2 +  \left\| \wtilQ^{1/2} u_1\right\|_{L^\infty}  \left\|\wtilQ^\frac32 w_1\right\|_{L^2}.
	\end{gathered}
\end{equation}
Thus, using $1 \lesssim \sigma_A \lesssim \zeta_A$ on $[-2\alpha(A), 2\alpha(A)]$, $U = \px - h_0$ with $h_0\lesssim 1$ and estimate \eqref{eq: tech 3},
\begin{equation*}
	\begin{aligned}
		\left\| \wtilQ^{-\frac12} \tilde\chi_A (1 - \gamma\px^2)^{-1}U( \tilde\chi_B N_g) \right\|_{L^2} \lesssim &~{} \left\|\sigma_A \wtilQ^{-\frac12} (1 - \gamma\px^2)^{-1}U(\tilde\chi_B N_g)\right\|_{L^2} \\
		\lesssim &~{} \gamma^{-\frac12} \left\| \sigma_A\wtilQ^{-\frac12}N_g\right\|_{L^2}.
	\end{aligned}
\end{equation*}
Inserting the pointwise estimate \eqref{eq:estimation N} into this, it follows from \eqref{def:sigmaA} and \eqref{tildeu} that
\begin{equation}\label{eq:J4 2}
	\begin{aligned}
		\left\| \wtilQ^{-\frac12} \tilde\chi_A (1 - \gamma\px^2)^{-1}U( \tilde\chi_B N_g) \right\|_{L^2} \lesssim& \gamma^{-\frac12}\left(a_1^2 \left\|\sigma_A\wtilQ^\frac32 \phi_0 \right\|_{L^2} + \left\|\sigma_A\wtilQ^\frac32  \phi_0 u_1^2\right\|_{L^2} \right) \\
		\lesssim& \gamma^{-\frac12}\left(a_1^2 + \left\| \wtilQ^{1/2} u_1\right\|_{L^\infty} \left\| \wtilQ^\frac32 w_1\right\|_{L^2} \right).
	\end{aligned}
\end{equation}
For the remaining term, using the exponential decay of $\phi_0$, \eqref{eq: tech 3} and \eqref{eq:estimation N} we have
\begin{equation}\label{eq:J4 3}
	\begin{aligned}
		\left\| \wtilQ^{-\frac12} \tilde\chi_A (1 - \gamma\px^2)^{-1}U\phi_0 \right\|_{L^2} \lesssim & \left\| \sigma_A \wtilQ^{-\frac12} (1 - \gamma\px^2)^{-1}U\phi_0 \right\|_{L^2} \\
		\lesssim & \gamma^{-\frac12}\left\|\sigma_A\wtilQ^{-\frac12}\phi_0\right\|_{L^2} \lesssim \gamma^{-\frac12}.
	\end{aligned}
\end{equation}

Now combining the preceding estimates \eqref{eq:J4 1}, \eqref{eq:J4 2}, \eqref{eq:J4 3} and \eqref{eq:estimation N} with \eqref{eq:CS J4} yields
\begin{equation}\label{eq:control J4g}
	\begin{aligned}
		|J_{4,g}|\lesssim &~{} \gamma^{-1}B\left(\left\|\wtilQ^\frac12 \px w_1\right\|_{L^2} + \left\|\wtilQ^\frac32 w_1\right\|_{L^2} \right)\left(a_1^2 + \left\| \wtilQ^{1/2}  u_1\right\|_{L^\infty}\left\|\wtilQ^\frac32 w_1\right\|_{L^2}\right) \\
		&~{} + \gamma^{-\frac12} \left\|\wtilQ^\frac32 z\right\|_{L^2}\left(a_1^2 + \left\| \wtilQ^{1/2} u_1\right\|_{L^\infty}\left\|\wtilQ^\frac32 w_1\right\|_{L^2}\right).
	\end{aligned}
\end{equation}
Finally, we consider \eqref{eq:Nest_b} and $J_{4,b}$:
\[
\begin{aligned}
	J_{4,b}= &~{}  \int \left(\psi_{A,B} \px v_1 + \frac12\psi_{A,B}' v_1 \right)(1 - \gamma \px^2)^{-1}U \left(\tilde\chi_B  N_b \right).
\end{aligned}
\]
Recall $w_1$,  $z$ and $v_1$ defined in \eqref{tildeu}, \eqref{eq:z} and \eqref{eq:def v1 v2}, respectively. Also, from \eqref{eq:J4 1} one has
\begin{equation}\label{partida1}
\begin{aligned}
	& \left\| \wtilQ^{1/2} \tilde \chi_A^{-1}\left( \psi_{A,B} \px v_1 + \frac12\psi_{A,B}' v_1 \right) \right\|_{L^2} \\
	&\hspace{3cm} \lesssim  \gamma^{-\frac12}B\left(\left\|\wtilQ^\frac12 \px w_1\right\|_{L^2} + \left\|\wtilQ^\frac32 w_1\right\|_{L^2} \right) + \left\|\wtilQ^{3/2} z\right\|_{L^2}.
\end{aligned}
\end{equation}
First of all, using \eqref{eq: tech 3}, that $\tilde\chi_A \lesssim \sigma_A$, and \eqref{eq:Nest_b},
\[
\begin{aligned}
	\left\| \wtilQ^{-1/2}\tilde\chi_A (1 - \gamma \px^2)^{-1} U(\tilde\chi_B N_b) \right\|_{L^2} \lesssim &~{} \left\|\sigma_A \wtilQ^{-1/2} (1 - \gamma \px^2)^{-1} U(\tilde\chi_B N_b) \right\|_{L^2} \\
	\lesssim &~{} \gamma^{-\frac12}\left(\left\|\sigma_A \tilde\chi_B \wtilQ^{3/2} u_1^2\right\|_{L^2} + \left\|\sigma_A \tilde\chi_B \wtilQ^{3/2} u_1^3 \right\|_{L^2}\right).
\end{aligned}
\]
Now, using that $1 \lesssim \alpha(B)^2\wtilQ$ on $[-2\alpha(B^2), 2\alpha(B^2)]$ and $\sigma_A \lesssim \zeta_A$, we have
\[
\begin{aligned}
	& \left\| \wtilQ^{-1/2}\tilde\chi_A (1 - \gamma \px^2)^{-1} U(\tilde\chi_B N_b) \right\|_{L^2} \\
	&\quad \lesssim \gamma^{-\frac12} \alpha(B) \left\| \wtilQ^{1/2} u_1 \right\|_{L^\infty} \left\|\sigma_A \wtilQ^{3/2} u_1\right\|_{L^2} +  \gamma^{-\frac12} \alpha(B)^2\left\| \wtilQ^{1/2} u_1 \right\|_{L^\infty}^2 \left\|\sigma_A \wtilQ^{3/2} u_1\right\|_{L^2} \\
	&\quad \lesssim \gamma^{-\frac12}  \alpha(B)\left\| \wtilQ^{1/2} u_1 \right\|_{L^\infty} \left(1 + \alpha(B)\left\| \wtilQ^{1/2} u_1 \right\|_{L^\infty} \right) \left\| \wtilQ^{3/2} w_1 \right\|_{L^2}.
\end{aligned}
\]
This last estimate, together with \eqref{partida1}, are good enough to conclude. Indeed,
\begin{equation}\label{eq:control J4b}
	\begin{aligned}
		\left| J_{4,b} \right| \lesssim &~{} \gamma^{-1}B\alpha(B)\left\| \wtilQ^{1/2} u_1 \right\|_{L^\infty} \\
		&~{}\quad \left(1 + \alpha(B)\left\| \wtilQ^{1/2} u_1 \right\|_{L^\infty} \right) \left(\left\|\wtilQ^\frac12 \px w_1\right\|_{L^2} + \left\|\wtilQ^\frac32 w_1\right\|_{L^2} \right) \left\| \wtilQ^{3/2} w_1 \right\|_{L^2} \\
		&~{} + \gamma^{-\frac12}B\left\| \wtilQ^{1/2} u_1 \right\|_{L^\infty} \left(1 + \alpha(B)\left\| \wtilQ^{1/2} u_1 \right\|_{L^\infty} \right)\left\|\wtilQ^{3/2} z\right\|_{L^2}  \left\| \wtilQ^{3/2} w_1 \right\|_{L^2}
	\end{aligned}
\end{equation}
Gathering  \eqref{eq:control J4g} and \eqref{eq:control J4b}, we obtain
\begin{equation}\label{eq:control J4}
	\begin{aligned}
		|J_{4}|\lesssim &~{} \gamma^{-1}B\left(\left\|\wtilQ^\frac12 \px w_1\right\|_{L^2} + \left\|\wtilQ^\frac32 w_1\right\|_{L^2} \right)\\
		&~{}\quad \left(a_1^2 + \alpha(B)\left(1 + \alpha(B)\left\| \wtilQ^{1/2} u_1 \right\|_{L^\infty} \right)\left\| \wtilQ^{1/2}  u_1\right\|_{L^\infty}\left\|\wtilQ^\frac32 w_1\right\|_{L^2}\right) \\
		&~{} + \gamma^{-\frac12} \left\|\wtilQ^\frac32 z\right\|_{L^2}\left(a_1^2 + B\left(1 + \alpha(B)\left\| \wtilQ^{1/2} u_1 \right\|_{L^\infty} \right)\left\| \wtilQ^{1/2} u_1\right\|_{L^\infty}\left\|\wtilQ^\frac32 w_1\right\|_{L^2}\right).
	\end{aligned}
\end{equation} 

\subsection{End of Proposition \ref{prop:virial II}}
Gathering \eqref{eq:error terms},  \eqref{eq:control J1}, \eqref{eq:control J2}, \eqref{eq:control J3} and \eqref{eq:control J4}, it follows that there exist constants $C_2, C_3>0$ such that
\begin{equation*}
	\begin{aligned}
		\frac{d}{dt}\calJ \leq &~{} -4C_2 \int \wtilQ[(\px z)^2 + \wtilQ^2 z^2] + \gamma^{-1}\frac{C_3 B}{A} \int \wtilQ[(\px w_1)^2 + \wtilQ^2 w_1^2] \\
		&~{}  + \gamma C_3 \int \wtilQ [(\px z)^2 + \wtilQ^2 z^2] + C e^{-\frac{A}{4}} \int \wtilQ[(\px w_1)^2 + \wtilQ^2 w_1^2] \\
		&~{} + \gamma^{-1}B^{-1}\left(\left\|\wtilQ^\frac12 \px w_1\right\|_{L^2} + \left\|\wtilQ^\frac32 w_1\right\|_{L^2} \right)^2 \\
		&~{} + \gamma^{-\frac12}B^{-2}\left(\left\|\wtilQ^\frac12 \px w_1\right\|_{L^2} + \left\|\wtilQ^\frac32 w_1\right\|_{L^2} \right)\left\|\wtilQ^{3/2} z\right\|_{L^2} \\[0.1cm]
		&~{} + \gamma^{-1}C_3 B\left(\left\|\wtilQ^\frac12 \px w_1\right\|_{L^2} + \left\|\wtilQ^\frac32 w_1\right\|_{L^2} \right)~{}\quad \\
		&~{}\quad \left(a_1^2 + \alpha(B)\left(1 + \alpha(B)\left\| \wtilQ^{1/2} u_1 \right\|_{L^\infty} \right)\left\| \wtilQ^{1/2}  u_1\right\|_{L^\infty}\left\|\wtilQ^\frac32 w_1\right\|_{L^2}\right) \\[0.1cm]
		&~{} + \gamma^{-\frac12} C_3\left\|\wtilQ^\frac32 z\right\|_{L^2}\left(a_1^2 + B\left(1 + \alpha(B)\left\| \wtilQ^{1/2} u_1 \right\|_{L^\infty} \right)\left\| \wtilQ^{1/2} u_1\right\|_{L^\infty}\left\|\wtilQ^\frac32 w_1\right\|_{L^2}\right).
	\end{aligned}
\end{equation*}
We fix $\gamma > 0$ such that $\gamma C_3 \leq C_2$ and also small enough to satisfy Lemma \ref{lem:commutativity} and Lemma \ref{lem:localized estimation}.

The value of $\gamma$ being now fixed, we do not mention anymore dependency of $\gamma$. Via standard inequalities and for $A$ large enough, we obtain,  for a possibly larger constant $C_3>0$,
\[
\begin{aligned}
	\frac{d}{dt}\calJ \leq &~{} -C_2 \int \wtilQ[(\px z)^2 + \wtilQ^2 z^2] + C_3\left(\frac1B + \frac{B}{A} + e^{-\frac{A}{4}}\right) \int \wtilQ[(\px w_1)^2 + \wtilQ^2 w_1^2]  \\
	&~{} + C_3 B\alpha(B)\left(a_1^2 +  B\left(1 + \alpha(B)\left\| \wtilQ^{1/2} u_1 \right\|_{L^\infty} \right)\left\|\wtilQ^{1/2} u_1\right\|_{L^\infty}\left\|\wtilQ^\frac32 w_1\right\|_{L^2}\right)^2.
\end{aligned}
\]
Since $A = \delta^{-\frac14}$ and $B = \alpha^{-1}(\delta^{-\frac18})\lesssim \delta^{-\frac18}$ (see \eqref{eqn:equivalencias}, \eqref{eq:choiceA}, \eqref{eq:choiceB}), using assumption \eqref{eq:bound} and standard inequalities, we have
\[
\begin{gathered}
	\alpha(B)\left\| \wtilQ^{1/2} u_1 \right\|_{L^\infty} \lesssim \delta^{\frac78} \lesssim 1, \quad B^{-1} + A^{-1}B + e^{-\frac{A}{4}} \lesssim \ln(\delta^{-\frac18})^{-1} \\[0.1cm]
	B\alpha(B)\left(B \left\|\wtilQ^{1/2} u_1\right\|_{L^\infty}\left\|\wtilQ^\frac32 w_1\right\|_{L^2}\right)^2 \lesssim \delta^{-\frac12} \left\|\wtilQ^{1/2} u_1\right\|_{L^\infty}^2\left\|\wtilQ^\frac32 w_1\right\|_{L^2}^2 \lesssim \delta^{\frac32}\left\|\wtilQ^\frac32 w_1\right\|_{L^2}^2.
\end{gathered}
\]
Therefore, using again \eqref{eq:bound}, for $\delta$ small enough (to absorb some constants), we obtain
\[
\begin{aligned}
	\frac{d}{dt}\calJ \leq & -C_2\int \wtilQ[(\px z)^2 + \wtilQ^2 z^2] \\
	& + C_3 \ln(\delta^{-\frac18})^{-1} \int \wtilQ[(\px w_1)^2 + \wtilQ^2 w_1^2] + C_3\delta^{\frac12}|a_1|^3.
\end{aligned}
\]
This ends the proof of \eqref{eqn:bound_dtJ}.

\section{Proof of Theorem \ref{th:Main}}\label{sec:proof theorem 1}
Before starting the proof of Theorem \ref{th:Main}, we need a coercivity result to deal with the term
\[
\int \wtilQ^7 u_1^2
\]
for $n\in \N$ that appears in the virial estimate of $\calI(t)$ (see \eqref{eq:1virial}), being a term with enough decay to be controlled by the variables $(v_1,v_2)$ and $(z_1, z_2)$. In this section, the constant $\gamma$ is fixed as in Proposition \ref{prop:virial II}.

\subsection{Coercivity}
We prove a coercivity result adapted to the orthogonality condition $\langle u_1, \phi_0\rangle = \langle u_1, L\phi_0\rangle = 0$ in \eqref{eq:perp-conditions}, where $\phi_0$ was introduced in \eqref{eq:properties-eigenvalL}. The idea is to follow the strategy used in \cite{KMM19}, where the linearized operator has an explicit unique negative single eigenvalue $\tau_0$ associated with an explicit $L^2$ eigenfunction denoted $Y_0$. Despite our system we only have the existence of such negative eigenvalue $-\mu_0^2$ associated with $\phi_0$, we still have this control given by orthogonality.

\begin{lemma}\label{lem5p1}
	Let $u$ and $v$ be measurable functions related by
	\begin{equation}\label{def:v}
		v = (1 - \gamma\px^2)^{-1}U (\tilde \chi_B u)
	\end{equation}
	and such that $\langle u, \phi_0\rangle = 0$, the following estimate holds
	\begin{equation}\label{eq:coercivity}
		\int \wtilQ^7 u^2 \lesssim \int \wtilQ^\frac92 [(\px v)^2 + v^2] + e^{-3B^2}\int \wtilQ^4 u^2,
	\end{equation}
	provided the RHS is finite.
\end{lemma}
\begin{proof}
	First, since $\wtilQ^3(1 - \tilde \chi_B) \lesssim e^{-3B^2}$ (see \eqref{tilde Q tilde H} and \eqref{eq:virial II notation}) we get
	\be\label{intQ7}
	\int \wtilQ^7 u^2 \lesssim \int \wtilQ^7 \tilde\chi_B^2 u^2 + e^{-3B^2}\int \wtilQ^4 u^2.
	\ee
	In what follows, we estimate the localized term on the right in \eqref{intQ7}. Using that $U = \phi_0\cdot\px\cdot \phi_0^{-1} $, we rewrite \eqref{def:v} as
		\[
		v - \gamma \px^2v = \phi_0 \px\left(\frac{\tilde \chi_B u}{\phi_0}\right).
		\]
		and thus, after some algebra
		\[
		\px\left(\frac{\tilde \chi_B u}{\phi_0} + \gamma \frac{\px v}{\phi_0}\right) = \frac{1}{\phi_0}\left(v - \gamma h_0 \px v \right)
		\]
		where $h_0=\phi_0'/\phi_0$ (see \eqref{def:h0}). Integrating between 0 and $x>0$, it follows
		\[
		\frac{\tilde \chi_B u}{\phi_0} + \gamma \frac{\px v}{\phi_0} = a + \int_0^x \frac{1}{\phi_0}\left(v - \gamma h_0 \px v \right)
		\]
		for some constant $a$. If we rewrite this last expression, multiplying by $\phi_0$, it follows
		\begin{equation}\label{eq:coer u}
			\tilde \chi_B u = a\phi_0 - \gamma \px v + \tilu,
		\end{equation}
	where
	\[
	\tilu = \phi_0 \int_0^x \frac{1}{\phi_0}\left(v - \gamma h_0 \px v \right).
	\]
	Let us now estimate $\tilu$. First, using the Cauchy-Schwarz inequality, a change of variables, and recalling that $\phi_0$ is even and decreasing for $x>0$, we have
	\be\label{cambio de variables}
	\begin{aligned}
		\phi_0\int_0^x \frac{|v|}{\phi_0} \lesssim &~{} \phi_0 \left(\int\wtilQ^\frac92 v^2 \right)^\frac12 \left(\int_0^x \frac{1}{\wtilQ^\frac92 \phi_0^2} \right)^\frac12 \\
		\lesssim &~{} \|\wtilQ^\frac94 v\|_{L^2} \left(\int_0^{\alpha^{-1}(x)} \frac{1}{Q^\frac{11}{2}} \right)^\frac12 \lesssim \wtilQ^{-\frac{11}{4}} \left\|\wtilQ^\frac94 v\right\|_{L^2}.
	\end{aligned}
	\ee
	Similarly, using that $|h_0|\lesssim 1$,
	\[
	\ba
	\phi_0\int_0^x \frac{|h_0\px v|}{\phi_0} \lesssim &~{}\phi_0 \left(\int \wtilQ^\frac92 (h_0 \px v)^2  \right)^\frac12 \left(\int_0^x \frac{1}{\wtilQ^\frac92 \phi_0^2} \right)^\frac12 \\
	\lesssim &~{}\wtilQ^{-\frac{11}{4}} \left\|\wtilQ^\frac94 \px v\right\|_{L^2}.
	\ea
	\]
	Collecting these estimates, we obtain the uniform bound
	\label{50}
		\[
		\wtilQ^\frac{11}{2} \tilu^2 \lesssim  \int \wtilQ^\frac92 [(\px v)^2 + v^2],
		\]
	for all $x\geq 0$. The same result holds for $x\leq 0$.
	Therefore, multiplying by $\wtilQ^{\frac{3}{2}}$, and integrating we obtain
	\be\label{bound utilde}
	\int \wtilQ^7 \tilu^2 \lesssim \left(\int \wtilQ^{\frac{3}{2}}\right)\left(\int \wtilQ^\frac92 [(\px v)^2 + v^2]\right) \lesssim \int \wtilQ^\frac92 [(\px v)^2 + v^2].
	\ee
	Using that $\langle u,\phi_0\rangle = 0$ and \eqref{eq:properties-eigenvalL}, we have
		\be\label{eq for a}
		\langle \tilde \chi_B u, \phi_0 \rangle = \langle (\tilde \chi_B  - 1)u, \phi_0 \rangle =  a - \gamma \langle\px v,\phi_0\rangle + \langle \tilu, \phi_0\rangle.
		\ee
		Thus, using the Cauchy-Schwarz inequality, the explicit exponential decay of $\phi_0$ (see \eqref{eq:exp-decay}) and the definition of $\tilde\chi_B$ in \eqref{eq:virial II notation}, we obtain
		\[
		\ba
		|\langle (\tilde \chi_B - 1)u, \phi_0\rangle| & \lesssim \|\wtilQ^{-4}\phi_0\|_{L^2}\left( \int (\tilde \chi_B - 1)^2 \phi_0 \wtilQ^4 u^2 \right)^{\frac12} \\
		& \lesssim e^{-\frac{\sqrt2}{4}\mu_0\alpha(B^2)}\left(\int \wtilQ^4 u^2 \right)^{\frac12}.
		\ea
		\]
		Replacing \eqref{bound utilde} and the above estimate in \eqref{eq for a},
		\[
		\begin{aligned}
			& a^2 \lesssim \int \wtilQ^\frac92[(\px v)^2 + v^2] + e^{-\frac{\sqrt2}{2}\mu_0\alpha(B^2)}\int \wtilQ^4 u^2.
		\end{aligned}
		\]
	Using again \eqref{eq:coer u}, we deduce
	\be\label{intQ7tilde}
	\int \wtilQ^7 \tilde\chi_B^2 u^2 \lesssim \int \wtilQ^\frac92[(\px v)^2 + v^2] + e^{-\frac{\sqrt2}{2}\mu_0\alpha(B^2)}\int \wtilQ^4 u^2.
	\ee
	We conclude \eqref{eq:coercivity} from \eqref{intQ7}, \eqref{intQ7tilde}, and recalling the definition of $\alpha$ given in \eqref{eq:alpha}.
\end{proof}

As result of the previous lemma, we have the following transfer estimate from the variable $u_1$ to the transformed and localized variable $z$ introduced in \eqref{eq:z}.
\begin{lemma}\label{lem5p2}
	Let $(u_1, u_2)$ be solution of \eqref{eq:system-perturbated} satisfying \eqref{eq:perp-conditions}, $(w_1, w_2)$ be as in \eqref{tildeu}, and $z$ as in \eqref{eq:z}. Then, for any $A$ and $B$ as in \eqref{eq:scales}, it holds
	\begin{equation}\label{eq:transfer u tx|o z w}
		\int \wtilQ^7 u_1^2 \lesssim \int \wtilQ^2[(\px z)^2 + \wtilQ^2 z^2] + e^{-3B^2} \int \wtilQ[(\px w_1)^2 + \wtilQ^2 w_1^2].
	\end{equation}
\end{lemma}
\begin{proof}
	Since $u_1$ satisfies the orthogonality condition \eqref{eq:perp-conditions}, applying \eqref{eq:coercivity}
		\[
		\int \wtilQ^7 u_1^2 \lesssim \int \wtilQ^\frac92 [(\px v_1)^2 + v_1^2] + e^{-3B^2}\int \wtilQ^4 u_1^2,
		\]
		which implies, using $\wtilQ\lesssim \zeta_A^2$ and by \eqref{tildeu},
		\[
		\int \wtilQ^7 u_1^2 \lesssim \int \wtilQ^\frac92 [(\px v_1)^2 + v_1^2] + e^{-3B^2}\int \wtilQ^3 w_1^2.
		\]
	Using that $\wtilQ\lesssim e^{-|\alpha^{-1}(x)|}$, $\wtilQ^\frac14 \lesssim \zeta_B^2$, \eqref{eq:v to z} and \eqref{eq:pxv to z}, it follows
	\[
	\begin{aligned}
		\wtilQ^\frac92 [(\px v_1)^2 + v_1^2] \lesssim &  \int\wtilQ^4\zeta_B^2 (\px v_1)^2 + \int\wtilQ^4\zeta_B^2 v_1^2 \\
		\lesssim & \int \wtilQ^4[(\px z)^2 + \wtilQ^2 z^2] + \int \wtilQ^4 z^2 \\
		& + e^{-3A}\int \wtilQ \zeta_B^2 (1-\tilde\chi_A^2)(\px v_1)^2 + e^{-A}\int \wtilQ^3 \zeta_B^2 (1-\tilde\chi_A^2) v_1^2,
	\end{aligned}
	\]
	and since $\zeta_B\lesssim \zeta_A \lesssim \sigma_A$, using \eqref{eq:tech v1} and \eqref{eq:tech px v1},
	\[
	\begin{aligned}
		\wtilQ^\frac92 [(\px v_1)^2 + v_1^2] \lesssim& \int \wtilQ^2[(\px z)^2 + \wtilQ^2 z^2] + e^{-3A} \int \sigma_A^2\wtilQ(\px v_1)^2 + e^{-A}\int \sigma_A^2 \wtilQ^3 v_1^2 \\
		\lesssim& \int \wtilQ^2[(\px z)^2 + \wtilQ^2 z^2] + e^{-A} \int \wtilQ[(\px w_1)^2 + \wtilQ^2 w_1^2].
	\end{aligned}
	\]		
	and the asserted estimate \eqref{eq:transfer u tx|o z w} follows from \eqref{eq:scales}, \eqref{eqn:equivalencias} and \eqref{eq:scales}.
\end{proof}

\subsection{Proof of Theorem \ref{th:Main}.}
Recall that the constants $\gamma >0$, $\delta_1, \delta_2 >0$ were defined and fixed in Propositions \ref{prop:virial I} and \ref{prop:virial II}.

\medskip

In this section we prove Theorem \ref{th:Main}, in particular the conditional asymptotic stability property \eqref{eq:AS}. In this case, the orthogonality conditions \eqref{eq:perp-conditions} and the dynamical equations satisfied by $(a_1,a_2)$ in \eqref{eq:a1a2} will be of key importance. It turns out that $(b_1,b_2)$ as in \eqref{eq:b1b2} are better suited variables to fully catch the exponential unstable behavior of the full system.

\begin{proposition}\label{propvirales}
	There exist $C_4>0$ and $0<\delta_3 \leq \min(\delta_1,\delta_2)$ such that for any $0<\delta\leq \delta_3$, the following holds. Fix 
		\begin{equation}\label{AB_def}
			A=\delta^{-\frac14}\quad  \hbox{and} \quad B = \alpha^{-1}(\delta^{-\frac18}).
		\end{equation}
	Assume that for all $t\geq 0$, \eqref{eq:bound} holds. Let
	\begin{equation}\label{def:H}
		\calH = \calJ + 8M C_0^{-1}\ln(\delta_3^{-\frac18})^{-1} \calI,
	\end{equation}
	with $M = \max\{C, C_3\}>0$, where $C_0, C>0$ are the constants from Proposition \ref{prop:virial I}, and $C_3$ is the constant from Proposition \ref{prop:virial II}.
	
	Then, for all $t\geq 0$,
	\begin{equation}\label{eq:dcalH}
		\frac{d}{dt}\calH \leq - C_4 \int \wtilQ\left[(\px w_1)^2 + \wtilQ^2 w_1^2\right] + |a_1|^3.
	\end{equation}
\end{proposition}
\begin{proof}
	In the context of Propositions \ref{prop:virial I} and \ref{prop:virial II}, observe that fixing $A=\delta^{-\frac14}$ and $B = \alpha^{-1}(\delta^{-\frac18}),$ for $\delta>0$ small is consistent with the requirement of scales in \eqref{eq:scales}.
	
	First, combining \eqref{eq:1virial} with \eqref{eq:transfer u tx|o z w}, for $\delta_3>0$ small enough and $0<\delta \leq \delta_3$, we obtain for some constants $C_0, C>0$ fixed, and possibly choosing a smaller $\delta_3$,
	\begin{equation*}
		\begin{aligned}
			\frac{d}{dt}\mathcal{I} \leq &~{} - \frac12 C_0\int \wtilQ [(\partial_x w_1)^2 + \wtilQ^2 w_1^2] + C\int \wtilQ[(\px z)^2 + \wtilQ^2 z^2] \\
			&~{} + Ce^{-3B^2} \int \wtilQ[(\px w_1)^2 + \wtilQ^2 w_1^2] + C\delta_3 |a_1|^3 \\
			\leq &~{} - \frac14 C_0 \int \wtilQ [(\partial_x w_1)^2 + \wtilQ^2 w_1^2] + C\int \wtilQ[(\px z)^2 + \wtilQ^2 z^2] + |a_1|^3.
		\end{aligned}
	\end{equation*}
	Secondly, for $\frac{d}{dt}\mathcal{J}$, using \eqref{eqn:bound_dtJ} and $0<\delta\leq \delta_3$, we get for $C_2, C_3 >0$ fixed,
	\begin{equation}\label{emeC}
	\begin{aligned}
		\frac{d}{dt}\calJ \leq &~{}  -C_2\int \wtilQ[(\px z)^2 + \wtilQ^2 z^2] \\
	& ~{} + C_3\ln(\delta_3^{-\frac18})^{-1} \int \wtilQ[(\px w_1)^2 + \wtilQ^2 w_1^2] + C_3\delta^\frac12|a_1|^3.
	\end{aligned}
	\end{equation}
	Therefore, defining $\calH$ as in \eqref{def:H} and by combining the above estimates, it follows that
	\begin{equation*}
		\begin{aligned}
			\frac{d}{dt}\calH \leq &~{} \left(-C_2 + 8M^2 C_0^{-1}\ln(\delta_3^{-\frac18})^{-1}\right) \int \wtilQ[(\px z)^2 + \wtilQ^2 z^2] \\
			&~{} - M \ln(\delta_3^{-\frac18})^{-1} \int \wtilQ[(\px w_1)^2 + \wtilQ^2 w_1^2] \\
			&~{} + M\left(\delta^{\frac12} + 8C_0^{-1}\ln(\delta_3^{-\frac18})^{-1} \right) |a_1|^3.
		\end{aligned}
	\end{equation*}
	Thus, possible choosing a smaller $\delta_3$ (in particular, $0<\delta_3^{\frac18} \leq e^{-\frac{16C^2}{C_0 C_2}}$), we obtain
	\[
	\begin{aligned}
	\frac{d}{dt}\calH \leq &~{} -\frac{C_2}{2}\int \wtilQ[(\px z)^2 + \wtilQ^2 z^2] \\
	&~{} - C\ln(\delta_3^{-\frac18})^{-1} \int \wtilQ[(\px w_1)^2 + \wtilQ^2 w_1^2] + |a_1|^3.
	\end{aligned}
	\]
	We have that \eqref{eq:dcalH} follows directly from the above estimate where $C_4 = M \ln(\delta_3^{-\frac18})^{-1}>0$.
\end{proof}
We define now
\begin{equation}\label{eq:calB}
	\calB = b_+^2 - b_-^2,
\end{equation}
where $b_+$, $b_-$ are given in \eqref{eq:b1b2}.
\begin{lemma}\label{lem:bound internal modes}
	There exist $C_5>0$ and $0<\delta_4\leq \delta_3$ such that for any $0<\delta\leq \delta_4$, the following holds. Fix $A = \delta^{-\frac14}$. Assume that for all $t\geq 0$, \eqref{eq:bound} holds. Then, for all $t\geq0$,
	\begin{equation}\label{eq:b+ b-}
		|\dot b_+ - \mu_0 b_+| + |\dot b_- + \mu_0 b_-| \leq C_5\left( b_+^2 + b_-^2 + \int \wtilQ^3 w_1^2 \right),
	\end{equation}
	and
	\begin{equation}\label{eq:db+ db-}
		\left| \frac{d}{dt} (b_+^2) - 2\mu_0 b_+^2 \right| + \left|\frac{d}{dt} (b_-^2) + 2\mu_0 b_-^2 \right| \leq C_5 \left(b_+^2 + b_-^2 + \int \wtilQ^3 w_1^2\right)^\frac32.
	\end{equation}
	In particular, for $\calB$ in \eqref{eq:calB}:
	\begin{equation}\label{eq:dcalB}
		\frac{d}{dt}\calB \geq \mu_0(b_+^2 + b_-^2) - C_5\int \wtilQ^3 w_1^2 = \frac{\mu_0}{2} (a_1^2 + a_2^2) - C_5\int \wtilQ^3 w_1^2.
	\end{equation}
\end{lemma}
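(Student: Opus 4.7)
The strategy is essentially computational, relying on the ODE system satisfied by $(b_+, b_-)$ together with the pointwise bound on the coupling term $N_0$. From the second form of \eqref{eq:a1a2}, we have the exact identities
\[
\dot b_+ - \mu_0 b_+ = -\frac{N_0}{2\mu_0}, \qquad \dot b_- + \mu_0 b_- = \frac{N_0}{2\mu_0},
\]
so \eqref{eq:b+ b-} reduces to showing $|N_0| \lesssim b_+^2 + b_-^2 + \int \wtilQ^3 w_1^2$. I would start from \eqref{eq:N0}, which gives $|N_0| \lesssim a_1^2 + \int \wtilQ^7 u_1^2$. The bound $a_1^2 = (b_+ + b_-)^2 \leq 2(b_+^2 + b_-^2)$ handles the first piece. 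For the second, using $u_1 = \zeta_A^{-1} w_1$ and the asymptotic $|\alpha^{-1}(x)| \sim \ln|x|$ from Lemma~\ref{lemma:estimation}, one has $\zeta_A^{-2}(x) \lesssim (1+|x|)^{1/A}$, so $\wtilQ^4 \zeta_A^{-2} \lesssim 1$ for $A$ large. This yields $\int \wtilQ^7 u_1^2 = \int \wtilQ^7 \zeta_A^{-2} w_1^2 \lesssim \int \wtilQ^3 w_1^2$, completing \eqref{eq:b+ b-}.

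For \eqref{eq:db+ db-}, I would simply multiply the first identity by $2b_+$ (and analogously the second by $-2b_-$) to obtain
\[
\frac{d}{dt}(b_+^2) - 2\mu_0 b_+^2 = -\frac{b_+ N_0}{\mu_0}, \qquad \frac{d}{dt}(b_-^2) + 2\mu_0 b_-^2 = \frac{b_- N_0}{\mu_0}.
\]
The Cauchy-Schwarz inequality $|b_\pm| \leq (b_+^2 + b_-^2 + \int \wtilQ^3 w_1^2)^{1/2}$ combined with the bound on $|N_0|$ from the previous paragraph then produces the $3/2$-power right-hand side.

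Finally, for \eqref{eq:dcalB}, subtracting the two identities above gives
\[
\frac{d}{dt}\calB \;=\; 2\mu_0(b_+^2 + b_-^2) \;-\; \frac{(b_+ + b_-)\,N_0}{\mu_0} \;=\; 2\mu_0(b_+^2 + b_-^2) \;-\; \frac{a_1\, N_0}{\mu_0}.
\]
Using $|N_0| \lesssim a_1^2 + \int \wtilQ^3 w_1^2$ and the smallness $|a_1| \leq C\delta$ from \eqref{eq:bound}, the cross term is bounded by $C\delta(b_+^2 + b_-^2) + C\delta \int \wtilQ^3 w_1^2$. For $\delta$ small enough (choose $\delta_4 \leq \delta_3$ accordingly), the first piece is absorbed into a fraction of $2\mu_0(b_+^2 + b_-^2)$, yielding $\dot\calB \geq \mu_0(b_+^2 + b_-^2) - C_4 \int \wtilQ^3 w_1^2$. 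The identity $a_1^2 + a_2^2 = 2(b_+^2 + b_-^2)$, immediate from \eqref{eq:b1b2}, gives the alternative form on the right of \eqref{eq:dcalB}. The only mildly delicate point in the whole argument is the decay comparison $\wtilQ^7 \zeta_A^{-2} \lesssim \wtilQ^3$, which depends on choosing $A$ large enough to beat the logarithmic growth induced by the inverse of $\alpha$; everything else is direct ODE manipulation.
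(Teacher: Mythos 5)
The proposal is correct and follows essentially the same ODE-plus-$N_0$-bound argument as the paper's (very terse) proof, with two minor implementation differences: you reach $|N_0|\lesssim a_1^2+\int\wtilQ^3 w_1^2$ via \eqref{eq:N0} and the weight comparison $\wtilQ^4\zeta_A^{-2}\lesssim 1$, whereas the paper cites \eqref{eq:estimation N} and uses the exponential decay of $\phi_0$ to absorb $\zeta_A^{-2}$; and you derive \eqref{eq:dcalB} directly from the $\dot b_\pm$ equations and the cross-term estimate, whereas the paper obtains it as a consequence of \eqref{eq:db+ db-} after absorbing a $\delta(b_+^2+b_-^2)$ contribution. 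Both variants are sound and yield the stated conclusion.
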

\begin{proof}
	From \eqref{eq:estimation N} and \eqref{eq:b1b2}, it holds
	\[
	|N_0|\lesssim a_1^2 + \int \wtilQ^3 w_1^2 \lesssim b_+^2 + b_-^2 + \int\wtilQ^3 w_1^2.
	\]
	From \eqref{eq:a1a2} we conclude the estimates \eqref{eq:b+ b-} and \eqref{eq:db+ db-}. Finally, estimate \eqref{eq:dcalB} is a consequence of \eqref{eq:db+ db-} taking $\delta_4>0$ small enough.
\end{proof}

Combining \eqref{eq:dcalH} and \eqref{eq:dcalB}, it holds
\[
\frac{d}{dt}\left(\calB - 2C_5 C_4^{-1}\calH\right) \geq \frac{\mu_0}{2}(a_1^2 + a_2^2) + C_5\int \wtilQ[(\px w_1)^2 + \wtilQ^2 w_1^2] -2C_5|a_1|^3,
\]
and thus, for possibly smaller $\delta>0$,
\begin{equation}\label{eq:d calB-calH}
	\frac{d}{dt}\left(\calB - 2C_5C_4^{-1}\calH\right) \geq \frac{\mu_0}{4}(a_1^2 + a_2^2) + C_5\int \wtilQ[(\px w_1)^2 + \wtilQ^2 w_1^2].
\end{equation}
By the choice of $A=\delta^{-\frac14}$, the bound $|\varphi_A|\lesssim A$, \eqref{eq:zeta} and \eqref{eq:bound}, we have for all $t\geq 0$,
\[
\left| \calI \right| \leq \left| \int \left( \varphi_A \partial_x u_1 + \frac12 \varphi_A' u_1 \right) u_2 \right|  \lesssim A \left( \|\partial_x u_1\|_{L^2}+  \| \wtilQ u_1\|_{L^2} \right)\|u_2\|_{L^2} \lesssim \delta,
\]
Similarly, using that $U = \px - h_0$, $\psi_{A,B}' = \wtilQ\tilde\chi_A^2 \zeta_B^2 + (\tilde\chi_A^2)' \varphi_B$, \eqref{eq:estimates}, \eqref{eq:basic inequalities} and  \eqref{eq:conmutativity sech}, it holds
\[
\begin{aligned}
	\left| \calJ \right| =&~{} \left|  \int \left(\psi_{A,B} (x) \px v_1(t,x) + \frac12\psi_{A,B}' (x) v_1 (t,x) \right)v_2 (t,x)dx \right|  \\
	\lesssim &~{} B\left( \|\partial_x v_1\|_{L^2}+  \| \wtilQ v_1\|_{L^2} \right) \|v_2\|_{L^2}\lesssim \delta.
\end{aligned}
\]
Then, we have
\[
|\calH|\lesssim \delta.
\]
Estimate $|\calB|\lesssim \delta^2$ is also clear from \eqref{eq:bound}.

Therefore, integrating estimate \eqref{eq:d calB-calH} on $[0,t]$ and passing to the limit as $t\to+\infty$, it follows that
\begin{equation*}
	\int_0^\infty \left\{a_1^2 + a_2^2 + \int \wtilQ[(\px w_1)^2 + \wtilQ^2 w_1^2] \right\}dt \lesssim \delta.
\end{equation*}
Since 
\[
\int \wtilQ^2[(\px u_1)^2 + \wtilQ^\frac32 u_1^2] \lesssim \int \wtilQ[(\px w_1)^2 + \wtilQ^2 w_1^2],
\]
this implies
\begin{equation}\label{eq:int a1 a2 u}
	\int_0^\infty \left\{a_1^2 + a_2^2 + \int \wtilQ^2 \left[ (\px u_1)^2 + \wtilQ^\frac32 u_1^2 \right] \right\}dt \lesssim \delta.
\end{equation}
Making use of the above equation, we will complete the proof of Theorem \ref{th:Main}. Let
\[
\calK = \int u_1 u_2 \wtilQ^2  \quad \text{ and }\quad  \calG = \frac12 \int [(\px u_1)^2 + \wtilQ^\frac32 u_1^2 + u_2^2]\wtilQ^2.
\]
Using \eqref{eq:system-perturbated}, we have
\[
\begin{aligned}
	\frac{d}{dt} \calK = &~{} \int  [\dot u_1 u_2 + u_1 \dot u_2]\wtilQ^2 = \int \left[u_2^2 - u_1\left(Lu_1 + N^\perp \right) \right]\wtilQ^2\\
	=&~{} \int [u_2^2 - (\px u_1)^2 - 2\wtilQ^2(1 - \wtilQ)u_1^2]\wtilQ^2 + \frac12 \int (\wtilQ^2)''u_1^2 - \int N^\perp \wtilQ^2 u_1.
\end{aligned}
\]
From \eqref{eq:estimation N}, the exponential decay of $\phi_0$ and the bound \eqref{eq:bound} we can check that it holds
\[
\int N^\perp\wtilQ^2 u_1 \lesssim a_1^2 + \int \wtilQ^{\frac72} u_1^2.
\]
In particular, collecting the above estimates and using that $(\wtilQ^2)''\lesssim \wtilQ^\frac72$, it follows that there exists some $C>0$ such that
\[
\int \wtilQ^2 u_2^2 \leq \frac{d}{dt} \calK + Ca_1^2 + C \int \wtilQ^2 [(\px u_1)^2 + \wtilQ^\frac32 u_1^2].
\]
From this, the bound $|\calK|\lesssim \delta^2$ and \eqref{eq:int a1 a2 u}, we deduce
\begin{equation}\label{eq:bound calG}
	\int_0^\infty [a_1^2 + a_2^2 + \calG]dt \lesssim \delta.
\end{equation}

Analogously, we compute
\[
\begin{aligned}
	\frac{d}{dt}\calG = &~{} \int [(\px \dot u_1)(\px u_1) + \wtilQ^\frac32 \dot u_1 u_1 + \dot u_2 u_2]\wtilQ^2 \\
	= &~  \int \left[(\px u_2)(\px u_1) + \wtilQ^\frac32 u_2 u_1 - \left(Lu_1 + N^\perp \right)u_2 \right]\wtilQ^2 \\
	= &~{} -2\int \wtilQ\wtilQ' u_2 \px u_1 + \int (2\wtilQ^\frac32 - 2\wtilQ^\frac12 + 1)\wtilQ^\frac72 u_1 u_2 - \int \wtilQ^2 N^\perp u_2,
\end{aligned}
\]
and so, using \eqref{eq:estimation N} as before, we obtain
\begin{equation}\label{eq:control calG}
	\left|\frac{d}{dt} \calG \right| \lesssim a_1^2 + \calG.
\end{equation}
By \eqref{eq:bound calG}, there exists an increasing sequence $t_n\to +\infty$ such that
\[
\lim_{n\to\infty} \left[ a_1^2(t_n) + a_2^2(t_n) + \calG(t_n) \right] = 0.
\]
For $t\geq 0$, integrating \eqref{eq:control calG} on $[t, t_n]$, and passing to the limit as $n\to \infty$, we obtain
\[
\calG(t) \lesssim \int_t^\infty [a_1^2 + \calG]dt.
\]
Using \eqref{eq:bound calG}, we deduce that $\lim_{t\to \infty}\calG(t)= 0$.

Finally, by \eqref{eq:system-perturbated}, \eqref{eq:estimation N} and the exponential decay of $\phi_0$, we get
\[
\left| \frac{d}{dt}(a_1^2) \right| + \left|\frac{d}{dt} (a_2^2) \right| \lesssim a_1^2 + a_2^2 + \int \wtilQ^\frac72 u_1^2.\]
Similarly as before, by integration on $[t, t_n]$ and taking $n\to \infty$,
\[
a_1^2(t) + a_2^2(t) \lesssim \int_t^\infty [a_1^2 + a_2^2 + \calG]dt,
\]
which proves $\lim_{t\to\infty} ( |a_1(t)| + |a_2(t)| )= 0$. By the decomposition of solution the \eqref{eq:decomposition}, this clearly implies \eqref{eq:AS}. The proof of Theorem \ref{th:Main} is complete.

\section{Existence of a stable manifold}\label{sec:proof_manifold}

\subsection{Properties of $L$ and $\widetilde L$} 

Now we provide different characterizations of the operators $L$ and $\widetilde L$ appearing in \eqref{eq:L} and \eqref{mL}, respectively. Notice that $\widetilde L = L - 2\widetilde Q^2 \wtilH^2$.  We start with some basic facts.

\begin{lemma}\label{lema_tilde L}
	Consider $\widetilde L$ appearing in \eqref{mL}. Then the following are satisfied:
	\begin{enumerate}
		\item[(i)] $\widetilde L: L^2 (\mathbb R) \longrightarrow L^2(\mathbb R)$ is a self-adjoint operator with dense domain $H^2(\mathbb R)$. 
		\item[(ii)] The odd function $\wtilH \in L^\infty(\R)$ solves $\wtilL  \wtilH =0$ and has only one zero.
		\item[(iii)] $\widetilde L$ has a unique negative eigenvalue.
		\item[(iv)] $\widehat H$ defined as
		\[
		\begin{aligned}
			\widehat H(x):= &~{}\left( \sinh (\alpha^{-1}(x)) + 3\alpha^{-1}(x) \right) \wtilH (x) -4 =: \beta(x)  \wtilH (x) -4\\
			\beta(x)= &~{} 3x +2\alpha^{-1}(x),
		\end{aligned}
		\]
		is a second, linearly independent solution of $\wtilL u=0$, with Wronskian $W[\wtilH,\widehat H]=\wtilH  \widehat H' -\widehat H \wtilH' =3$, and $\lim_\infty \widehat H =+\infty$ in a linear fashion.
	\end{enumerate}
\end{lemma}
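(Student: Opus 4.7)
\textbf{Proof plan for Lemma \ref{lema_tilde L}.} My first step is to rewrite $\widetilde L$ in a more tractable form. Using the identity $\widetilde H^{2}=1-\tfrac{2}{3}\widetilde Q$ (a direct consequence of $H^{2}+\tfrac{2}{3}Q=1$ for the sech-soliton), and $V=2\widetilde Q^{2}(1-\widetilde Q)$, I compute
\[
\widetilde L = L - 2\widetilde Q^{2}\widetilde H^{2} = -\partial_x^{2} + 2\widetilde Q^{2}(1-\widetilde Q) - 2\widetilde Q^{2}\!\left(1-\tfrac{2}{3}\widetilde Q\right) = -\partial_x^{2} - \tfrac{2}{3}\widetilde Q^{3}.
\]
The potential $-\tfrac{2}{3}\widetilde Q^{3}$ is smooth and bounded (by Lemma~\ref{lemma:estimation}), hence by Kato--Rellich it is a bounded symmetric perturbation of $-\partial_x^{2}$ on $L^{2}(\R)$ with domain $H^{2}(\R)$. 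This gives (i). Oddness of $\widetilde H$ and the fact that $\widetilde H$ has exactly one zero (at $x=0$) follow from the odd bijectivity of $\alpha^{-1}$ and the strict monotonicity of $\tanh$.

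For (ii), the identity $\widetilde L\widetilde H=0$ reduces to $\widetilde H''=-\tfrac{2}{3}\widetilde Q^{3}\widetilde H$. Using the chain rule formulas $\widetilde H'=\tfrac{1}{3}\widetilde Q^{2}$ and $\widetilde Q'=-\widetilde Q^{2}\widetilde H$ from \eqref{derivada}, I get $\widetilde H''=\tfrac{2}{3}\widetilde Q\widetilde Q'=-\tfrac{2}{3}\widetilde Q^{3}\widetilde H$, as required.

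For (iv), I will check the claim by direct computation. Writing $\widehat H=\beta\widetilde H-4$ with $\beta=3x+2\alpha^{-1}(x)$, so that $\beta'=3+2\widetilde Q$ and $\beta''=2\widetilde Q'=-2\widetilde Q^{2}\widetilde H$, I differentiate twice:
\[
\widehat H' = (3+2\widetilde Q)\widetilde H+\tfrac{1}{3}\beta\widetilde Q^{2}, \qquad
\widehat H'' = -2\widetilde Q^{2}\widetilde H^{2}+\tfrac{2}{3}(3+2\widetilde Q)\widetilde Q^{2}-\tfrac{2}{3}\beta\widetilde Q^{3}\widetilde H.
\]
Substituting $\widetilde H^{2}=1-\tfrac{2}{3}\widetilde Q$ collapses this to $\widehat H''=\tfrac{8}{3}\widetilde Q^{3}-\tfrac{2}{3}\beta\widetilde Q^{3}\widetilde H$, and a direct match shows $\widetilde L\widehat H=0$. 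The Wronskian computation uses the same simplifications:
\[
W[\widetilde H,\widehat H] = (3+2\widetilde Q)\widetilde H^{2}+\tfrac{4}{3}\widetilde Q^{2} = (3+2\widetilde Q)\!\left(1-\tfrac{2}{3}\widetilde Q\right)+\tfrac{4}{3}\widetilde Q^{2}=3.
\]
Linear independence follows from $W\neq 0$; the linear growth at $+\infty$ is immediate from $\beta(x)\sim 3x$ (since $\alpha^{-1}$ is only logarithmic by Lemma~\ref{lemma:estimation}) and $\widetilde H(x)\to 1$.

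The most delicate point is (iii). My approach is a classical Sturm oscillation argument. Since the potential $-\tfrac{2}{3}\widetilde Q^{3}$ is bounded and tends to $0$ as $|x|\to\infty$, the essential spectrum of $\widetilde L$ is $[0,\infty)$, so any negative spectrum is discrete. Part (ii) provides a zero-energy solution $\widetilde H$ with exactly one zero; part (iv) shows that the second linearly independent zero-energy solution grows linearly, so $\widetilde H$ is in fact the (unique up to scale) subordinate zero-energy solution, i.e.\ a threshold resonance (not an $L^{2}$ eigenfunction, since $\widetilde H\to\pm 1$). By Sturm's comparison/oscillation theorem applied to the Schr\"odinger operator $-\partial_x^{2}-\tfrac{2}{3}\widetilde Q^{3}$, the number of strict sign-changes of the subordinate solution at energy $0$ equals the number of eigenvalues strictly below $0$. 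The single zero of $\widetilde H$ therefore produces exactly one negative eigenvalue, proving (iii). The main obstacle I anticipate is making this oscillation argument fully rigorous in the presence of a threshold resonance rather than a genuine eigenvalue at $0$; this can be handled by a standard limiting argument (perturbing the potential slightly to push the resonance to a negative eigenvalue, counting zeros, and letting the perturbation vanish) or, equivalently, by combining Weyl's theorem on the essential spectrum with a min-max computation using $\widetilde H$ cutoff to an $L^{2}$ test function to produce at least one negative eigenvalue, and using the Sturm node count to rule out a second.
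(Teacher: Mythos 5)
Your proof is correct and takes essentially the same route as the paper: direct verification for (i), (ii), (iv) via the simplification $\widetilde L = -\partial_x^2 - \tfrac{2}{3}\widetilde Q^3$ and the identities $\widetilde H' = \tfrac13\widetilde Q^2$, $\widetilde H^2 = 1 - \tfrac23\widetilde Q$, and Sturm--Liouville (node-counting of the threshold resonance $\widetilde H$) for (iii). The paper's proof is extremely terse, simply declaring (i), (ii), (iv) "direct" and citing Bizo\'n--Kahl for (iii); your write-up is a faithful and accurate fleshing-out of exactly what the paper sweeps under "direct" and "standard Sturm--Liouville theory," including a correct handling of the subtlety that $\widetilde H$ is a resonance (not an $L^2$ eigenfunction) at zero energy.
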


\begin{proof}
	Items $(i)$ and $(ii)$ are direct. By standard Sturm-Liouville theory, $\wtilL $ has a unique negative eigenvalue (see \cite[p. 6]{BizKah16}). This proves $(iii)$. Item $(iv)$ can be checked directly. The proof of Lemma \ref{lema_tilde L} is complete.
\end{proof}

\begin{lemma}\label{lema_virial_final}
	Under $\langle \phi_0, u_1\rangle=0$, one has  
	\[
	\langle \widetilde L u_1, u_1 \rangle \geq 0.
	\]
\end{lemma}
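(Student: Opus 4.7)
The plan is to apply a classical Weinstein-type coercivity criterion, whose hypothesis is the existence of a function $\phi_1 \in L^2(\R)$ satisfying
\[
\widetilde L \phi_1 = \phi_0 \quad \text{and} \quad \langle \phi_1, \phi_0 \rangle < 0.
\]
Granted these two ingredients, a standard spectral decomposition of $u_1$ along the ground state of $\widetilde L$, followed by a Cauchy--Schwarz argument on the semidefinite form $\langle \widetilde L \cdot, \cdot\rangle$ restricted to the orthogonal of that ground state, closes the coercivity. The main effort is to produce $\phi_1$ with the correct sign.

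I would construct $\phi_1$ via variation of parameters, using the two linearly independent solutions of $\widetilde L u = 0$ supplied by Lemma \ref{lema_tilde L}(iv): the odd bounded resonance $\widetilde H$ and the even linearly-growing $\widehat H$, with constant Wronskian $W[\widetilde H, \widehat H] = 3$. The natural candidate is
\[
\phi_1(x) := \tfrac{1}{3}\widetilde H(x) \int_{-\infty}^{x} \widehat H(y)\phi_0(y)\,dy + \tfrac{1}{3}\widehat H(x)\int_{x}^{+\infty} \widetilde H(y)\phi_0(y)\,dy,
\]
and a direct check using $\widetilde L \widetilde H = \widetilde L \widehat H = 0$ and the Wronskian identity confirms $\widetilde L \phi_1 = \phi_0$. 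Two orthogonalities drive the $L^2$-integrability: $\int_\R \widetilde H \phi_0 = 0$, trivially by parity since $\widetilde H$ is odd and $\phi_0$ is even; and $\int_\R \widehat H \phi_0 = 0$, the surprising orthogonality announced in the introduction and established in Section \ref{A:LINEAL-SPECTRAL-THEORY}. Combined with the exponential decay of $\phi_0$ from \eqref{eq:properties-eigenvalL}, these identities force both summands in the formula to decay at an $L^2$ rate at $\pm\infty$, despite the linear growth of $\widehat H$. A parity check, using the change of variable $y \to -y$ together with $\int \widehat H \phi_0 = 0$, shows $\phi_1(-x)=\phi_1(x)$.

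The main obstacle is the sign condition $\langle \phi_1, \phi_0\rangle = \langle \widetilde L \phi_1, \phi_1\rangle < 0$. It is a genuinely quantitative statement, asserting that the projection of the construction $\phi_1$ onto the unique negative eigendirection of $\widetilde L$ dominates its positive-spectrum contributions; no symmetry or variational shortcut is available, since neither $\phi_0$ nor $\widetilde L$ is given explicitly. My attack would be to substitute the formula for $\phi_1$ into $\langle \phi_1, \phi_0\rangle$, rewrite it as a double integral against the explicit Green-type kernel built from $\widetilde H$ and $\widehat H$, and then exploit the sharp pointwise bounds on $\phi_0$ developed in Section \ref{A:LINEAL-SPECTRAL-THEORY} to reduce the negativity to a finite number of explicit integral estimates, supplemented if needed by the limited numerical evaluations at specific points that the paper allows itself elsewhere.

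With $\phi_1$ in hand the conclusion is standard. Let $-\widetilde\mu_0^2 < 0$ and $\widetilde\phi_0$ denote the unique negative eigenvalue of $\widetilde L$ and its $L^2$-normalized eigenfunction. Decompose $u_1 = a\widetilde\phi_0 + v$ with $v\perp \widetilde\phi_0$ and $a = \langle u_1,\widetilde\phi_0\rangle$; then
\[
\langle \widetilde L u_1, u_1\rangle = -\widetilde\mu_0^2 a^2 + \langle \widetilde L v, v\rangle, \qquad \langle \widetilde L v, v\rangle \geq 0.
\]
The constraint $\langle u_1, \phi_0\rangle = 0$ rewrites as $a\langle\widetilde\phi_0,\phi_0\rangle = -\langle v,\phi_0\rangle$, and substituting $\phi_0 = \widetilde L \phi_1$ turns $\langle v,\phi_0\rangle$ into $\langle \widetilde L v,\phi_1^\perp\rangle$, where $\phi_1^\perp$ is the component of $\phi_1$ orthogonal to $\widetilde\phi_0$. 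Cauchy--Schwarz for the semidefinite form $\langle \widetilde L\cdot,\cdot\rangle$ on $\widetilde\phi_0^\perp$ then yields
\[
\langle v,\phi_0\rangle^2 \leq \langle \widetilde L v, v\rangle\,\Bigl(\langle \phi_1,\phi_0\rangle + \tfrac{\langle\widetilde\phi_0,\phi_0\rangle^2}{\widetilde\mu_0^2}\Bigr),
\]
and a short algebraic manipulation shows that this reduces to $\langle \widetilde L v, v\rangle \geq \widetilde\mu_0^2 a^2$ precisely when $\langle\phi_1,\phi_0\rangle \leq 0$; the strict sign $\langle \phi_1,\phi_0\rangle<0$ guarantees in addition $\langle\widetilde\phi_0,\phi_0\rangle\neq 0$, so the decomposition makes sense. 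Combining the above gives $\langle \widetilde L u_1, u_1\rangle \geq 0$.
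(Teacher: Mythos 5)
Your framework coincides with the paper's: invoke Weinstein's coercivity criterion, so the task reduces to producing an even $\phi_1$ with $\widetilde L\phi_1=\phi_0$ and $\langle\phi_1,\phi_0\rangle<0$, and your variation-of-parameters formula for $\phi_1$ agrees with the paper's up to a multiple of $\widetilde H$ (which vanishes once one knows $\int_0^\infty\widehat H\phi_0=0$). The concluding Cauchy--Schwarz/spectral-decomposition step you spell out is the content of Weinstein's lemma and is fine, modulo the standard caveat that the semidefiniteness of $\widetilde L$ on $\widetilde\phi_0^\perp$ comes from Sturm--Liouville uniqueness of the negative eigenvalue, not from a spectral gap (the resonance $\widetilde H$ sits at the bottom of the essential spectrum).

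There are two gaps. First, a sourcing error: the orthogonality $\int_{\mathbb R}\widehat H\phi_0=0$ is not established in Section~\ref{A:LINEAL-SPECTRAL-THEORY}; it is derived \emph{inside} the proof of this very lemma. One first shows $\phi_1\in L^\infty$ (the boundedness does not need the orthogonality, only the exponential decay of $\phi_0$), and then observes that the tempered-distribution pairing $\langle\widetilde L\phi_1,\widehat H\rangle$ is well-defined and equals $\langle\phi_1,\widetilde L\widehat H\rangle=0$, which gives $\langle\phi_0,\widehat H\rangle=0$; only \emph{then} does $\phi_1\in L^2$ follow. Assuming the orthogonality up front in order to secure $L^2$-integrability inverts the logic.

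Second, and more seriously, the sign condition $\langle\phi_1,\phi_0\rangle<0$, which you correctly identify as the crux, is left unproved: you propose reducing it to ``a finite number of explicit integral estimates, supplemented if needed by limited numerical evaluations.'' That is a plan, not a proof, and in fact no numerics are needed. The paper's argument is purely qualitative: by evenness and Fubini the two cross terms in $\int_0^\infty\phi_0\phi_1$ are equal, so
\[
\langle\phi_0,\phi_1\rangle=\tfrac43\int_0^\infty\phi_0(x)\widetilde H(x)\,g(x)\,dx,\qquad g(x):=\int_0^x\phi_0\widehat H,
\]
and then one argues $g<0$ on $(0,\infty)$: indeed $g(0)=0$, $g'(0)=\phi_0(0)\widehat H(0)<0$ since $\widehat H(0)=-4$, $g$ has a unique critical point (at the unique positive zero of $\widehat H$, since $\widehat H$ is increasing), and $g(+\infty)=\int_0^\infty\phi_0\widehat H=0$ by the orthogonality just established. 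A function starting at $0$, initially decreasing, with a single critical point and tending back to $0$ is strictly negative on $(0,\infty)$. Since $\phi_0\widetilde H>0$ there, the integral is strictly negative. Without this structural observation the lemma is not closed.
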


\begin{proof}
	Since $\phi_0$ is even and exponentially decreasing, one has $\langle \phi_0, \wtilH \rangle$ well-defined and equals zero. Let $\phi_1\in L^\infty(\mathbb R)$ be the unique even solution of $\wtilL \phi_1 =\phi_0$ (notice that $\phi_1$ is unique thanks to its even character). It is not difficult to compute a formula for $\phi_1$. Indeed, the most general $\phi_1$ is given by
	\[
	\begin{aligned}
		\phi_1 = &~{} \alpha_{00} \widetilde H+ \beta_{00}\widehat H + \frac13\left(  \widehat H\int^{\infty}_x \phi_0 \wtilH +  \wtilH \int_{0}^x \phi_0 \widehat H \right) . 
	\end{aligned}
	\]
	with $\alpha_{00}, \beta_{00}$ free parameters. The condition $\phi_1$ even forces $\alpha_{00}=0$, and the condition $\phi_1\in L^\infty$ ensures $\beta_{00}=0$. Consequently, $\phi_1$ is unique and given by
	\[
	\phi_1 =  \frac13\left(  \widehat H\int^{\infty}_x \phi_0 \wtilH +  \wtilH \int_{0}^x \phi_0 \widehat H \right). 
	\]
	See Fig. \ref{fig:phi1} for a graph of this function. Additionally,
	\[
	\phi_{1,x} =  \frac13 \left( \widehat H' \int^{\infty}_x \phi_0 \wtilH +  \frac13 \wtilQ^2  \int_{0}^x \phi_0 \widehat H \right) \in L^2 (\mathbb R).
	\]
	One can easily check that $\lim_\infty \phi_{1,x} \widehat H =\lim_\infty \phi_{1} \widehat H_x =0$. Since $\phi_1$ exists, $\widehat H \in S'(\mathbb R)$ and $\widetilde L \phi_1 \in S(\mathbb R)$, naturally the dual pairing $\langle \widetilde L \phi_1 , \widehat H \rangle$ is well-defined and equals  $ \langle \phi_0, \widehat H \rangle$. Consequently, $ \langle \phi_0, \widehat H \rangle =\langle \widetilde L \phi_1 , \widehat H \rangle  =\langle  \phi_1 , \widetilde L \widehat H \rangle =0$. Since both $\phi_1$ and $\widehat H$ are even, we have
	
	\begin{claim}\label{Cl6p3}
		$ \int_0^\infty \phi_0 \widehat H = 0$.
	\end{claim}
	
	As a corollary of this fact, one easily sees that  $\phi_1 \in L^2(\mathbb R)$. Fig. \ref{fig:phi1} shows that $\phi_1$ is probably negative, but this will not be used for the proof. Another consequence of the previous claim is the following: consider the function $g$ defined as
	\[
	[0,\infty) \ni x\longmapsto g(x):= \int_0^x \phi_0 \widehat H .
	\]
	This function is zero at the origin, and because of $\widehat H(0)=-4$, $\phi_0>0$ and $\widehat H$ strictly increasing, at least for $x>0$ small one has  $g(x)<0$. Additionally, $g$ has a unique critical point (where $\widehat H =0$), and converges to a value less or equal than zero as $x\to +\infty$. Therefore, $g(x)<0$ for all $x> 0$. Integrating by parts,
	\[
	\begin{aligned}
		\langle \phi_0,\phi_1 \rangle =&~{} \frac23 \int_0^\infty \phi_0 \left(  \widehat H\int^{\infty}_x \phi_0 \wtilH +\wtilH \int_{0}^x \phi_0 \widehat H\right) \\
		=&~ \frac43 \int_0^\infty \phi_0 \widetilde H \int_0^x \phi_0 \widehat H= \frac43 \int_0^\infty \phi_0 \widetilde H g <0.
	\end{aligned}
	\]
	We conclude that $\langle \phi_1,\phi_0\rangle <0$.  The last inequality implies by classical arguments by Weinstein \cite[Lemma E.1]{Weinstein} that $\langle \wtilL u_1, u_1 \rangle \geq 0$. 
\end{proof}

\begin{figure}[htbp]
	\centering
	\includegraphics[width=0.85\linewidth]{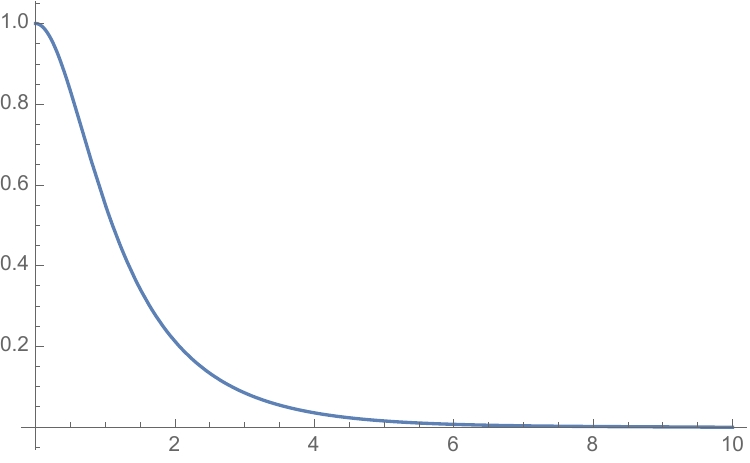}
	\includegraphics[width=0.85\linewidth]{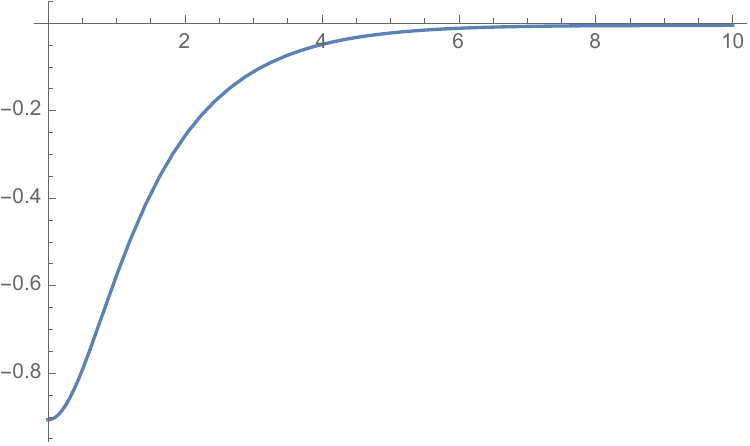}
	\caption{Left: Graph of $\phi_0$ (not rescaled to have unit norm), with associated eigenvalue $\sim -0.658$ and $\mu_0\sim 0.811$ (see Lemma \ref{valor mu0}). Right: Graph of $\phi_1$ solution to $\wtilL\phi_1=\phi_0$, $\phi_1$ even, obtained with $\phi_1(0)=-0.907$.}
	\label{fig:phi1}
\end{figure}

\begin{lemma}\label{lema_final}
	There exists a constant $c_0>0$ such that for any $u\in H_0(\R)$ satisfying $\angles{\phi_0}{u}=\angles{\wtilQ^2\wtilH^3}{u}=0$, one has
	\[
	\angles{\wtilL u}{u} \geq c_0 \|u\|_{H_0}^2.
	\]
\end{lemma}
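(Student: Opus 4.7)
The plan is to argue by contradiction, leveraging an explicit form of $\widetilde L$, a weighted compactness argument, and the characterization of the $H_0$-kernel of $\widetilde L$. The first crucial simplification comes from exploiting the identity $\wtilH^2 = 1-\tfrac{2}{3}\wtilQ$: a direct computation using \eqref{eq:L} yields
\[
\widetilde L \;=\; L - 2\wtilQ^2\wtilH^2 \;=\; -\partial_x^2 - \tfrac{2}{3}\wtilQ^3,
\]
so that $\langle \widetilde L u,u\rangle = \|\partial_x u\|_{L^2}^2 - \tfrac{2}{3}\|\wtilQ^{3/2} u\|_{L^2}^2$, directly comparable with the equivalent $H_0$-norm from Claim \ref{claim:Linfty}. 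I will then suppose for contradiction a sequence $(u_n)\subset H_0$ with $\|u_n\|_{H_0}=1$, satisfying both orthogonalities, and $\langle \widetilde L u_n, u_n\rangle\to 0$, and extract a weak limit $u_\infty$ in $H_0$. Both linear functionals $\langle\phi_0,\cdot\rangle$ (exponential decay of $\phi_0$) and $\langle\wtilQ^2\wtilH^3,\cdot\rangle$ (bounded on $H_0$ since $\wtilQ\in L^2$ by \eqref{eqn:equivalencias}) are weakly continuous, so $u_\infty$ inherits the two orthogonalities.

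The next step will be to upgrade to $\wtilQ^{3/2}u_n \to \wtilQ^{3/2}u_\infty$ strongly in $L^2(\R)$. Local compactness ($H_0\hookrightarrow L^2_{\mathrm{loc}}$ via Rellich) handles any bounded interval; the tail is controlled by the pointwise bound $|u_n(x)|^2 \lesssim \wtilQ^{-1}(x)\|u_n\|_{H_0}^2$ from Claim \ref{claim:Linfty}, which gives $\wtilQ^3 u_n^2 \lesssim \wtilQ^2$, hence equi-integrability at infinity. Combined with weak lower semicontinuity of $\|\partial_x\cdot\|_{L^2}^2$, this yields $\langle \widetilde L u_\infty, u_\infty\rangle \leq \liminf_n\langle\widetilde L u_n, u_n\rangle = 0$; Lemma \ref{lema_virial_final} then forces equality, so $u_\infty$ minimizes the quadratic form on $\{u\in H_0: \langle u,\phi_0\rangle=0\}$.

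By Lagrange multipliers this means $\widetilde L u_\infty = \lambda \phi_0$ for some $\lambda\in\R$. Pairing with the even $L^2$ function $\phi_1$ built in the proof of Lemma \ref{lema_virial_final} (satisfying $\widetilde L\phi_1=\phi_0$ and $\langle \phi_0,\phi_1\rangle<0$), integration by parts (legitimate as $\phi_1,\phi_1'$ decay at least like $|x|^{-2}$ while $u_\infty$ has at worst $|x|^{1/2}$ growth) gives $\lambda\langle \phi_0,\phi_1\rangle = \langle u_\infty,\widetilde L\phi_1\rangle = \langle u_\infty,\phi_0\rangle = 0$, so $\lambda=0$ and $u_\infty\in\ker\widetilde L\cap H_0$. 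The two linearly independent solutions of $\widetilde L u=0$ are $\wtilH$ and $\widehat H$, but $\widehat H\sim 3|x|$ at infinity makes $\wtilQ\widehat H\notin L^2$, so $\widehat H\notin H_0$; hence $u_\infty=c\wtilH$. The second orthogonality then reads $c\int \wtilQ^2\wtilH^4 = 0$, and since $\wtilH$ has a single zero this integral is strictly positive, forcing $c=0$. From $u_\infty=0$, strong convergence gives $\|\wtilQ^{3/2}u_n\|_{L^2}\to 0$; combined with $\langle\widetilde L u_n, u_n\rangle\to 0$ also $\|\partial_x u_n\|_{L^2}\to 0$; Claim \ref{claim:Linfty}'s norm equivalence then forces $\|u_n\|_{H_0}\to 0$, contradicting $\|u_n\|_{H_0}=1$. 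The main obstacle will be the second step, the uniform tail estimate upgrading weak to strong convergence in $L^2(\wtilQ^3\,dx)$: unlike standard exponentially decaying settings, here I must rely exclusively on the $H_0$-embedding bound and the borderline integrability $\wtilQ^2\in L^1$.
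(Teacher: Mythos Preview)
Your proof is correct and follows essentially the same variational/compactness strategy as the paper: minimizing sequence, weak limit in $H_0$, strong convergence in $L^2(\wtilQ^3\,dx)$, Euler--Lagrange equation, and identification of $\ker\wtilL\cap H_0=\mathrm{span}(\wtilH)$. The organizational differences---you impose only the single constraint $\langle\phi_0,u\rangle=0$ in the Lagrange step and test against $\phi_1$ to kill the multiplier, whereas the paper carries three multipliers and tests against $\wtilH$; and your endgame deduces $\|u_n\|_{H_0}\to 0$ from $u_\infty=0$ rather than first forcing $u_\infty\neq 0$---are minor reorderings of the same argument.
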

\begin{proof}
	The proof relies in a similar proof by Weinstein \cite[Prop. 2.9]{Weinstein}. Let define
	\begin{equation}\label{tau}
		\tau = \inf \left\{ \angles{\wtilL u}{u}  : \ \|u\|_{H_0} = 1, \ \angles{\phi_0}{u}=\angles{\wtilQ^2\wtilH^3}{u}=0 \right\}.
	\end{equation}
	We will prove that $\tau > 0$. From Lemma \ref{lema_virial_final} it is sufficient to prove that $\tau = 0 $ leads to a contradiction.
	
	We first prove that $\tau=0$ implies the minimum is attained in the admissible class.
	Given $\set{u_n}$ a minimizing sequence of \eqref{tau} in $H_0(\R)$. Using Lemma \ref{claim:Linfty} and Lemma \ref{lema_virial_final}, for any $\eta>0$ we can choose $u_n$ such that
	\[
	0 < \int (\px u_n)^2 + \wtilQ^3 u_n^2 \leq \frac53 \int\wtilQ^3 u_n^2 + \eta.
	\]
	Since $\{u_n\}$ is uniformly bounded in $H_0(\R)$, we can assume, up to a sequence, that it weakly converges to a function $u_\infty\in H_0(\R)$ as $n\to +\infty$. By the weak convergence and the exponential decay of $\phi_0$ we have that $u_\infty$ satisfies the orthogonal conditions in \eqref{tau}. In addition, the functions $\wtilQ u_n$ are uniformly bounded in $H^1(\R)$, thus we can also assume that $
	\wtilQ u_n \to \wtilQ u_\infty$ as $n\to +\infty$ in $\calC_\text{loc}^0(\R)$. Combining this with the estimates given in Lemma \ref{lemma:estimation} and Lemma \ref{claim:Linfty}, we obtain
	\begin{equation}\label{strg_conv}
		\int \wtilQ^3 (u_n - u_\infty)^2 \to 0, \qquad \hbox{as}\quad  n\to +\infty.
	\end{equation}
	Since $\eta>0$ is arbitrary, this implies $u_\infty\not\equiv 0$.
	
	By Fatou's lemma $\|u_\infty\|_{H_0}\leq 1$. Let us suppose $\|u_\infty\|_{H_0}<1$ and define $v_\infty = u_\infty / \|u_\infty\|_{H_0}$ which is admissible, namely it belongs to the set defining $\tau$ in \eqref{tau}. By the weak convergence of $\px u_n$ and \eqref{strg_conv} we have
	\[
	\angles{\wtilL u_\infty}{u_\infty}\leq \liminf_{n\to\infty}\angles{\wtilL u_n}{u_n} = 0.
	\]
	Hence, $\angles{\wtilL v_\infty}{v_\infty}\leq 0$ and by Lemma \ref{lema_virial_final} the equality is attained. Thus we can take $u_\infty$ satisfying the orthogonality conditions and such that $\|u_\infty\|_{H_0}=1$.
	
	Since the minimum is attained at an admissible function $u_\infty\not\equiv0$, there exist $(u_\infty, \alpha, \beta, \gamma)$ among the critical values of the Lagrange multiplier problem
	\[
	\begin{aligned}
		\wtilL u = \alpha(-\px^2 u + \wtilQ^2 u) + \beta\phi_0 + \gamma\wtilQ^2\wtilH^3  \\
	\end{aligned}
	\]
	such that
	\[
	\|u\|_{H_0}=1, \quad
	\angles{\phi_0}{u}=\angles{\wtilQ^2\wtilH^3}{u}=0.
	\]
	This implies $\alpha = \angles{\wtilL u}{u}$, so $\alpha = \tau = 0$ is a critical value. Therefore, we need to conclude that
	\begin{equation}\label{multiplier}
		\wtilL u_\infty = \beta\phi_0 + \gamma \wtilQ^2\wtilH^3
	\end{equation}
	has no nontrivial solutions $(u_\infty, \beta,\gamma)$ satisfying the constraints. Testing \eqref{multiplier} against $\wtilH$ and integrating by parts, we find that $\gamma = 0$. Therefore, from the proof of Lemma \ref{lema_virial_final} we have that $\wtilL u_\infty = \beta \phi_0$ implies
	\[
	u_\infty = \frac{\beta}{3}\left(  \widehat H\int^{\infty}_x \phi_0 \wtilH +\wtilH \int_{0}^x \phi_0 \widehat H\right),
	\]
	and so $\angles{\phi_0}{u_\infty}\neq0$. This violate the constrains unless $\beta= 0$. Thus  $u_\infty\equiv0$, a contradiction. This conclude the proof of Lemma \ref{lema_final}. 
\end{proof}

\subsection{Improved coercivity estimate}

Additionally, due to the lack of a spectral gap for $L$, we will need a weighted version of a coercivity to control the non-linear term, having the following lemma.
\begin{lemma}\label{lem:coercivity}
	Let $L$ be the operator introduced in \eqref{eq:L}, with essential spectrum $[0,\infty)$ (see Lemma \ref{lemma:Lproperties}). One has that there exists $C_5>0$ such that
	\begin{equation}\label{eq:coer_final}
		\langle Lu, u\rangle \geq C_5 \left( \int (\partial_x  u)^2 + \int \wtilQ^2  u^2 \right),
	\end{equation}
	for all $u\in H^1(\R)$, provided \eqref{eq:perp-conditions} is satisfied.
\end{lemma}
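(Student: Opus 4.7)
The plan is to adapt the Weinstein-type minimization argument carried out for $\widetilde L$ in Lemma \ref{lema_final} to the strictly larger operator $L$, for which the perturbation $L - \widetilde L = 2\widetilde Q^2 \widetilde H^2 \geq 0$ improves the spectral picture (notably, it should eliminate the threshold resonance present for $\widetilde L$). First, I would establish the qualitative bound $\langle L u, u\rangle \geq 0$ whenever $\langle u, \phi_0\rangle = 0$, which is immediate from the spectral theorem and Lemma \ref{lemma:Lproperties}: $L$ has a single simple negative eigenvalue $-\mu_0^2$ with eigenfunction $\phi_0$, and essential spectrum $[0, +\infty)$.

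Following the variational method used in Lemma \ref{lema_final}, set
\[
\tau = \inf\left\{\langle Lu, u\rangle : u \in H^1(\mathbb R),\ \|\partial_x u\|_{L^2}^2 + \|\widetilde Q u\|_{L^2}^2 = 1,\ \langle u, \phi_0\rangle = 0\right\},
\]
so $\tau \geq 0$. Arguing by contradiction, assume $\tau = 0$ and take a minimizing sequence $\{u_n\}$. Using $\langle Lu_n, u_n\rangle = \|\partial_x u_n\|_{L^2}^2 + 2\|\widetilde Q u_n\|_{L^2}^2 - 2\|\widetilde Q^{3/2} u_n\|_{L^2}^2 \to 0$ together with the normalization, the cubic-weight term must satisfy $\|\widetilde Q^{3/2} u_n\|_{L^2}^2 \geq \tfrac12 + o(1)$. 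Extracting a weak limit $u_\infty$ in $H_0$, local Rellich compactness plus the uniform tail bound $\int_{|x|>R} \widetilde Q^3 u^2 \leq CR^{-1}\|u\|_{H_0}^2$ (which follows from $\widetilde Q(x) \lesssim |x|^{-1}$, see Lemma \ref{lemma:estimation}) yields strong convergence $\widetilde Q^{3/2} u_n \to \widetilde Q^{3/2} u_\infty$ in $L^2$; hence $u_\infty \not\equiv 0$. Lower semicontinuity of the quadratic form, the nonnegativity from the first step, and a standard rescaling argument as in Lemma \ref{lema_final} then upgrade $u_\infty$ to an admissible minimizer with $\langle L u_\infty, u_\infty\rangle = 0$ and $\|u_\infty\|_X = 1$.

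Introducing Lagrange multipliers, $u_\infty$ solves $L u_\infty = \alpha(-\partial_x^2 + \widetilde Q^2) u_\infty + \beta \phi_0$; pairing with $u_\infty$ forces $\alpha = 0$, and pairing with $\phi_0$ together with self-adjointness and $L\phi_0 = -\mu_0^2 \phi_0$ forces $\beta = 0$. Thus $Lu_\infty = 0$ with $0 \neq u_\infty \in H_0$. The main obstacle is to rule out this possibility: since the potential of $L$ decays only polynomially and $0$ sits at the bottom of the essential spectrum, bounded and even mildly decaying zero-energy solutions of $Lu=0$ a priori belong to $H_0$, so the conclusion rests on the detailed spectral analysis of $L$ carried out independently in Section \ref{A:LINEAL-SPECTRAL-THEORY}, where it must be shown that $L$ admits neither a zero eigenvalue nor a threshold resonance in $H_0$ (the candidate resonant profile $\widetilde H$ of $\widetilde L$ fails to solve $Lu = 0$ precisely because $L - \widetilde L = 2\widetilde Q^2 \widetilde H^2$ acts nontrivially on it). Once this spectral exclusion is in place, we conclude $u_\infty \equiv 0$, contradicting $\|u_\infty\|_X = 1$ and proving $\tau > 0$, i.e., the desired coercivity \eqref{eq:coer_final}.
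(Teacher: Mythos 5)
Your proposal takes a genuinely different route from the paper. The paper proves coercivity by splitting $L = \varepsilon_0(-\partial_x^2 + \wtilQ^2) + L_{\varepsilon_0}$ for a small fixed $\varepsilon_0$, verifying that $L_{\varepsilon_0}$ inherits the same spectral structure as $L$ (essential spectrum $[0,\infty)$, unique negative eigenvalue, no positive eigenvalues, exponentially decaying ground state), and then checking the Weinstein criterion $\langle L_{\varepsilon_0}^{-1}\phi_0,\phi_0\rangle < 0$ to conclude $\langle L_{\varepsilon_0}u,u\rangle \geq 0$ on $\{\phi_0\}^\perp$. The $\varepsilon_0$-piece then gives the coercivity constant directly, without ever needing to rule out zero-energy solutions of $Lu=0$ in $H_0$: the offset $\varepsilon_0(-\partial_x^2 + \wtilQ^2)$ makes the compactness/minimizer issues disappear.

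Your direct variational route runs into a concrete gap precisely at the step you flag as handled elsewhere. After extracting a minimizer $u_\infty \in H_0$, $\|u_\infty\|_{H_0}=1$, satisfying $Lu_\infty = 0$ with $\langle u_\infty,\phi_0\rangle = 0$, you assert that Section~\ref{A:LINEAL-SPECTRAL-THEORY} "must show" that $L$ admits no zero eigenvalue or $H_0$-threshold resonance. It does not. That section establishes that the essential spectrum is $[0,\infty)$, that there are no strictly positive eigenvalues, that the negative eigenvalue is unique, and properties of $V_0$ for the conjugated operator $L_0$, but nowhere does it assert or prove the absence of a zero eigenvalue or a threshold resonance for $L$. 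Your parenthetical remark that $\widetilde H$ fails to solve $Lu=0$ rules out only one explicit candidate, not the possibility of some other bounded or $1/|x|$-decaying zero-energy solution. To close the gap you would need an additional argument: for instance, combine the asymptotic ODE analysis of $-u''+V u = 0$ with $V \sim 2/x^2$ at infinity (solutions behave as $x^2$ or $x^{-1}$, so any $H_0$ zero-energy solution is automatically $L^2$) with the isospectral transfer $\sigma_p(L)\setminus\{-\mu_0^2\} = \sigma_p(L_0)$ from Corollary~\ref{cor:uniqueness} and the strict positivity of $L_0$ (since $V_0>0$ by Lemma~\ref{lem:positivity}), to conclude $0\notin\sigma_p(L)$. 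These ingredients are available in the paper, but assembling them is a nontrivial supplementary step, and without it your proof is incomplete. The paper's $\varepsilon_0$-splitting sidesteps the whole issue, which is precisely its advantage.
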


In the following we give a proof of Lemma \ref{lem:coercivity}.
\begin{proof}[Proof of Lemma \ref{lem:coercivity}]
	Recall that, under the orthogonality condition $\langle u, \phi_0 \rangle =0$, one has $\langle Lu, u\rangle \geq 0.$ Now we prove that there is a lower bound given by a suitable  $L^2$ weighted term. Let $\varepsilon_0>0$ be sufficiently small, indeed, $\varepsilon_0= \frac1{1000}$ is good enough. Consider the decomposition
	\[
	L= \varepsilon_0 \left(-\partial_x^2 +\wtilQ^2 \right) + L_{\varepsilon_0},
	\]
	with  
	\[
	L_{\varepsilon_0} := -(1-\varepsilon_0)\partial_x^2 +  2\wtilQ^{2}\left(1 - \frac12\varepsilon_0 - \wtilQ \right).
	\]
	Let us prove that under $\langle u, \phi_0 \rangle =0$, one has $\langle L_{\varepsilon_0} u, u\rangle \geq 0$.
	
	\medskip
	
	The idea of proof is standard. Carefully following Sections \ref{A:LINEAL-SPECTRAL-THEORY} and \ref{B:POSITIVITY-POTENTIAL}, if $\varepsilon_0$ is sufficiently small, one has
	\begin{itemize}
		\item $L_{\varepsilon_0}$ has a negative eigenvalue and essential spectrum $[0,\infty)$ (Lemma \ref{lemma:Lproperties});
		\item $L_{\varepsilon_0}$ has no positive eigenvalues (Lemma \ref{lema_posi}); 
		\item  $L_{\varepsilon_0}$ has a unique negative eigenvalue $-\mu_{\varepsilon_0}^2$ (Corollary \ref{cor:uniqueness}), an associated exponentially decreasing unique $L^2$ normalized eigenfunction $\phi_{\varepsilon_0}$ (Lemma \ref{lem_decay_expo} and Corollary \ref{cor:parity});
		\item Lemma \ref{valor mu0} is also satisfied: by explicit computations one has for the chosen $\varepsilon_0$, and $f$ as in \eqref{test_function},
		\[
		\begin{aligned}
			\langle L_{\varepsilon_0} f,f \rangle =&~{} (1-\varepsilon_0)\int f'^2 +2 \int f^2 \widetilde Q^2\left(1- \frac12\varepsilon_0- \widetilde Q\right)\\
			=&~{} (1-\varepsilon_0)  \int f'^2(x)dx +2 \int f^2(\alpha(y))Q\left(1-  Q \right)(y)dy \\
			&~{} -\varepsilon_0 \int f^2(\alpha(y))Q(y)dy\\
			\sim &~{} -0.6564;
		\end{aligned}
		\]
		so $\mu_0^2\geq 0.656$ and $0.809 \leq \mu_0 $. Additionally, 
		\[
		L\geq  L_{\varepsilon_0} \geq (1-\varepsilon_0) \left(-\partial_x^2 -0.845Q_p^{7/2} \right);
		\]
		Consequently,
		\[
		0.808 \leq \mu_{\varepsilon_0} \leq 0.882.
		\]
		\item If $\langle u_1, \phi_{\varepsilon_0} \rangle =0$, one has $\langle L_{\varepsilon_0} u_1, u_1\rangle \geq 0$. 
	\end{itemize}
	Finally, using a standard argument by Weinstein \cite{Weinstein}, the proof concludes if one shows that $\langle L_{\varepsilon_0}^{-1} \phi_0, \phi_0\rangle <0$.
	
	Using that $L\phi_0 = -\mu_0^2 \phi_0$ and a direct computation from the definitions yielding $L_{\varepsilon_0} = (1 - \varepsilon_0)L + \varepsilon_0 \wtilQ^2(1-2\wtilQ)$, we have
	\[
	L_{\varepsilon_0} \phi_0 = - (1 - \varepsilon_0)\mu_0^2\phi_0 + \varepsilon_0\wtilQ^2(1-2\wtilQ)\phi_0.
	\]
	Applying the operator $L_{\varepsilon_0}^{-1}$ in both sides we obtain
	\[
	L_{\varepsilon_0}^{-1}\phi_0 = -\frac{\phi_0 - \varepsilon_0 L_{\varepsilon_0}^{-1}(\wtilQ^2(1-2\wtilQ) \phi_0)}{\mu_0^2(1 - \varepsilon_0)},
	\]
	and so
	\[
	\langle L_{\varepsilon_0}^{-1} \phi_0, \phi_0\rangle =
	- \frac{1 - \varepsilon_0\langle L_{\varepsilon_0}^{-1}(\wtilQ^2(1-2\wtilQ)\phi_0), \phi_0 \rangle }{\mu_0^2(1 - \varepsilon_0)}.
	\]
	Denoting $u_\varepsilon = L_{\varepsilon}^{-1}(\wtilQ^2(1-2\wtilQ)\phi_0) \in L^\infty(\mathbb R)$, this function must be solution of
	\[
	-(1-\varepsilon)\px^2 u + 2\wtilQ^2\left(1- \frac{\varepsilon}{2} - \wtilQ\right)u = \wtilQ^2(1-2\wtilQ)\phi_0,
	\]
	such that $\partial_x u_\varepsilon \in L^2$ and $\partial_x^2 u_\varepsilon$ decay as $1/x^2$. Testing against $\partial_x^{-2} \phi_0 := \int_{-\infty}^x\int_{-\infty}^y \phi_0(s)dsdy$ (which grows linearly), and using that $\wtilQ$ decays as $1/x$ and $ \phi_0$ exponentially,
	\[
	-(1-\varepsilon) \left\langle \px^2 u_\varepsilon, \partial_x^{-2} \phi_0 \right\rangle + 2 \left\langle \wtilQ^2\left(1- \frac{\varepsilon}{2} - \wtilQ\right)u_\varepsilon, \partial_x^{-2} \phi_0\right\rangle = \left\langle \wtilQ^2(1-2\wtilQ)\phi_0, \partial_x^{-2} \phi_0 \right\rangle,
	\]
	and therefore $\left|  \left\langle u_\varepsilon, \phi_0 \right\rangle \right| \leq C$ independent of $\varepsilon>0$ small.
	Hence, there exist $\varepsilon_1>0$ sufficiently small such that for all $0<\varepsilon_0 \leq \varepsilon_1$,
	\[
	\langle L_{\varepsilon_0}^{-1}\phi_0, \phi_0\rangle \leq - \frac{1 - {\varepsilon_0} C_{\varepsilon_1}}{\mu_0^2 ( 1- \varepsilon_0)} < 0.
	\]
	Defining $C_5 = \min\{\varepsilon_0, \varepsilon_1\}$ we obtain \eqref{eq:coer_final}.
\end{proof}

\subsection{Construction of a stable manifold} The end of the proof is standard and follows \cite{KMM19}, with a main difference given by the control of the resonance. By Lemma \ref{lem:bound internal modes} and a standard contradiction argument, we construct initial data leading to global solution close to the ground state $\wtilH$.

{\bf Step 1.} 
Let $u_1$ be as in \eqref{eq:decomposition}-\eqref{eq:perp-conditions}. Consider \eqref{mL0} with $\bar w_1 = a_1 \phi_0 + u_1$ and $\bar w_2 = a_2\phi_0 + u_2$. One gets
replacing and using orthogonality \eqref{eq:perp-conditions} that
\begin{equation}\label{expansion_buena}
	\begin{aligned}
		2\left\{ E(\phi_1, \phi_2) - E(\wtilH, 0)\right\} =&~ \mu_0^2(a_2^2 - a_1^2) +  \|u_2\|_{L^2}^2 + \langle L u_1, u_1\rangle + 2\int \wtilQ^2\wtilH(a_1\phi_0 + u_1)^3 \\
		& + \frac12\int \wtilQ^2(a_1\phi_0 + u_1)^4 \\
		=&~
		\mu_0^2(a_2^2 - a_1^2) + \|u_2\|_{L^2}^2 + \langle L u_1, u_1\rangle + \frac12\int \wtilQ^2(u_1^2 + 4\wtilH u_1)u_1^2
		\\
		& + 2a_1\int\wtilQ^2\phi_0 u_1^3 + 3a_1\int \wtilQ^2(2\wtilH + a_1\phi_0)\phi_0 u_1^2 \\
		& + 2a_1^2\int\wtilQ^2(3\wtilH + a_1\phi_0)\phi_0^2 u_1 + \frac12 a_1^3 \int \wtilQ^2(4\wtilH + a_1\phi_0)\phi_0^3\\
		=&: I_1+I_2+I_3.
	\end{aligned}
\end{equation}
In the term $I_1$, we complete the square root of the fourth term,
\[
\begin{aligned}
& \frac12\int \wtilQ^2 (u_1^2 + 4\wtilH u_1 + (2\wtilH)^2 - (2\wtilH)^2)u_1^2  = \frac12\int \wtilQ^2 (u_1 + 2\wtilH)^2 u_1^2 - 2\int \wtilQ^2\wtilH^2 u_1^2.
\end{aligned}
\]
Additionally,
\[
V - 2\wtilQ^2\wtilH^2 = 2\wtilQ^2(1 - \wtilQ - \wtilH^2) = 2\wtilQ^2\left( \frac23 \wtilQ - \wtilQ \right) = -\frac23 \wtilQ^3,
\]
obtaining that 
\begin{equation}\label{mL}
\ba
	I_1= &~{} \mu_0^2(a_2^2 - a_1^2) + \|u_2\|_{L^2}^2 + \int \underbrace{\left(-\px^2 u_1 - \frac23 \wtilQ^3 u_1 \right)}_{\wtilL u_1}u_1 +  \frac12\int \wtilQ^2(u_1 + 2\wtilH)^2 u_1^2.
\ea
\end{equation}
Now we perform the necessary estimates for $I_2$ and $I_3$ in \eqref{expansion_buena}. We have
\be\label{okokok}
\ba
\left| 2a_1\int\wtilQ^2\phi_0 u_1^3 \right| \lesssim&~ |a_1| \| \wtilQ^{\frac12} u_1 \|_{L^\infty}  \int  \phi_0\wtilQ ^{\frac32} u_1^2 \\
\lesssim&~ |a_1| \| \wtilQ^{\frac12} u_1 \|_{L^\infty} \| \wtilQ u_1\|_{L^2}^2\\
\lesssim&~  |a_1| \| u_1\|_{H_0}^3,
\ea
\ee
\[
\left| 3a_1\int \wtilQ^2(2\wtilH + a_1\phi_0)\phi_0 u_1^2 \right| \lesssim   |a_1| \| \wtilQ u_1\|_{L^2}^2,
\]	
and
\[
\left| 2a_1^2\int\wtilQ^2(3\wtilH + a_1\phi_0)\phi_0^2 u_1\right| \lesssim C_\epsilon a_1^4 + \epsilon \| \wtilQ u_1\|_{L^2}^2.
\]	
so that
\begin{equation}\label{I_2_cota}
	|I_2| \le C |a_1|^3 + \epsilon \|  u_1\|_{H_0}^2.
\end{equation}
Finally,
\begin{equation}\label{I_3_cota}
	|I_3| =\left| \frac12 a_1^3 \int \wtilQ^2(4\wtilH + a_1\phi_0)\phi_0^3 \right| \lesssim  |a_1|^3.
\end{equation}
Completing the square as in \eqref{mL}, \eqref{I_2_cota} and \eqref{I_3_cota} lead to
\begin{equation}\label{la_clave}
\begin{aligned}
	& -4\mu_0^2 b_+b_-  + \|u_2\|_{L^2}^2 + \langle \widetilde L u_1,u_1\rangle + \int \wtilQ^2(u_1 + 2\wtilH)^2u_1^2  \\
	& \hspace{3.2cm} \leq  C \{ E(\phi_1, \phi_2) - E(\wtilH, 0)\}  + C |a_1|^3 + \epsilon \|u_1\|_{H_0}^2.
\end{aligned}
\end{equation}
We remark that $\widetilde L \widetilde H =0$.
Let us further decompose $u_1$ now as
\[
u_1 = a(t) \widetilde H + \tilde u_1, \quad  \langle \phi_0, \tilde u_1\rangle=\langle \widetilde Q^2 \widetilde H^3, \tilde u_1\rangle=0.
\] 
Clearly $a(t)= \frac{\langle u_1,\wtilQ^2 \wtilH^3\rangle}{ \langle \wtilH, \wtilQ^2 \wtilH^3 \rangle}$. Note that $a(t)$ is well-defined, $ \langle \widetilde L u_1, u_1\rangle = \langle \widetilde L \tilde u_1, \tilde u_1\rangle$, and we have
\be\label{expansionamiento}
\ba
& a^2  \| \widetilde H \|_{H_0}^2 + \|\tilde u_1\|_{H_0}^2 +O\left(|a| \|\tilde u_1\|_{H_0} \right) \\
&\hspace{3cm} =a^2  \| \widetilde H \|_{H_0}^2 + \|\tilde u_1\|_{H_0}^2   + \frac83a \int \wtilQ^3 \widetilde H \tilde u_1 \\
&\hspace{3cm}  =a^2  \| \widetilde H \|_{H_0}^2 + \|\tilde u_1\|_{H_0}^2   + 2a \int   \tilde u_1 \left(- \widetilde H''   + \wtilQ^2\widetilde H \right)\\
&\hspace{3cm}  =  \int (a \widetilde H' )^2 + \int \wtilQ^2 (a \widetilde H)^2 +  \int ( \partial_x \tilde u_1)^2 + \int \wtilQ^2 \tilde u_1^2\\
&\hspace{3cm}  \quad + 2a \int  \widetilde H'  \partial_x \tilde u_1 + 2a \int \wtilQ^2\tilde u_1\widetilde H\\
&\hspace{3cm}  =  \int (a \widetilde H' + \partial_x \tilde u_1)^2 + \int \wtilQ^2 (a \widetilde H + \tilde u_1)^2 =  \|u_1\|_{H_0}^2.
\ea
\ee
Let $\delta_0>0$ be defined by
\[
\delta_0^2 = b_+^2(0) + b_-^2(0) + \|u_2(0)\|_{L^2}^2 + \| u_1(0)\|_{H_0}^2 + \left\|\wtilQ (u_1(0) + 2\wtilH)u_1(0)\right\|_{L^2}^2.
\]
{From \eqref{expansion_buena} and conservation of energy applied at $t=0$, one gets $| E(\phi_1, \phi_2) - E(\wtilH, 0)| \lesssim \delta_0^2$. Thus, from \eqref{la_clave} at some $t>0$ gives
	\begin{equation}\label{por favor}
	\begin{aligned}
	& \|u_2\|_2^2 + \langle \widetilde L \tilde u_1, \tilde u_1\rangle + \int \wtilQ^2(u_1 + 2\wtilH)^2 u_1^2 \lesssim \delta_0^2 + b_+^2 + b_-^2 + |a_1|^3 + \epsilon \norm{u_1}_{H_0}^2.
	\end{aligned}
	\end{equation}
	For the non-linear term, since $\angles{\wtilQ^2\wtilH^3}{\tilu_1}=0$,
	\[
	\begin{aligned}
		\int \wtilQ^2(u_1 + 2\wtilH)^2 u_1^2 =&~{} \int \wtilQ^2 \left( a(2+a)\wtilH^2 + 2(1+a)\wtilH\tilde u_1 + \tilde u_1^2 \right)^2 \\
		\geq &~{} a^2(2+a)^2 \int \wtilQ^2 \wtilH^4 \\
		&~{} + 4a(1+a)(2+a)\int \wtilQ^2\wtilH^3 \tilde u_1 + 2a(2+a)\int \wtilQ^2 \wtilH^2 \tilde u_1^2 \\
		\geq&~{} a^2(2+a)^2 \int \wtilQ^2 \wtilH^4 - C|a|\| \tilde u_1\|_{H_0}^2.
	\end{aligned}
	\]
	Replacing in \eqref{por favor} and using \eqref{expansionamiento}, we obtain
	\begin{equation*}
	\ba
	& \|u_2\|_2^2 + \langle \widetilde L \tilde u_1, \tilde u_1\rangle + a^2(2+a)^2  \lesssim \delta_0^2 + b_+^2 + b_-^2 + |a_1|^3 + \epsilon a^2 + \epsilon \norm{\tilu_1}_{H_0}^2 + |a\|\tilde u_1\|_{H_0}^2.
	\ea
	\end{equation*}
	We apply Lemma \ref{lema_final} now, where we get for $\epsilon$ and $\delta_0$ small,
	\begin{equation}\label{eq:energy estimate}
	\begin{aligned}
	&	a^2 + \|u_2\|_{L^2}^2 + \| \tilde u_1\|_{H_0}^2 \lesssim |b_+|^2 + |b_-|^2 + \delta_0^2 +  O\left( |b_+|^3 , |b_-|^3 , |a|^3 , \|\tilu_1\|_{H_0}^3 \right).
	\end{aligned}
	\end{equation}
	
	{\bf Step 2.} Let $\bfvareps = (\varepsilon_1, \varepsilon_2)\in \calA_0$ (see \eqref{eq:A0}). Then the condition $\langle \bfvareps, \bfZ_+ \rangle = 0$ rewrites
	\[
	\langle \varepsilon_1, \phi_0 \rangle + \langle \varepsilon_2, \mu_0^{-1}\phi_0 \rangle = 0.
	\]
	Notice that the LHS above is perfectly well-defined thanks to the decay properties of $\phi_0$, see \eqref{eq:properties-eigenvalL}. Define $b_-(0)$ and $(u_1(0), u_2(0))$ such that
	\be\label{def_b0m}
	b_-(0) = \langle \varepsilon_1, \phi_0\rangle = -\langle \varepsilon_2, \mu_0^{-1} \phi_0\rangle,
	\ee
	and
	\[
	\varepsilon_1 = b_-(0)\phi_0 + a(0) \widetilde H + \tilde u_1(0), \qquad \varepsilon_2 = -b_-(0)\mu_0\phi_0 + u_2(0).
	\]
	Then, it holds
	\[
	\langle \tilde u_1(0), \phi_0\rangle =\langle \tilde u_1(0), \widetilde Q^2 \widetilde H^3\rangle = \langle u_2(0), \phi_0\rangle = 0.
	\]
	Recall that ${\bf \widetilde H}=(\widetilde H,0)$. From \eqref{eq:A0} and \eqref{eq:calM}, we observe that the initial data in the statement of Theorem \ref{th:Manifold} decomposes as
	\[
	(\phi, \pt\phi)(0) = (1+a(0)){\bf \widetilde H} + (\tilde u_1, u_2)(0)  + b_-(0)\bfY_- + h(\bfvareps)\bfY_+.
	\]
	Now, we prove that there exist a function 
	\[
	h(\bfvareps) := b_+(0)
	\]
	such that the corresponding solution $(\phi, \pt\phi)$ is global and satisfies \eqref{eq:cont initial data}. Explicitly, we show that at least considering $h(\bfvareps)=b_+(0)$, the statement is satisfied.
	
	Let us consider $\delta_0 > 0$ small and $K>1$ large to be chosen later.  Recall that
	\[
	\|u_1\|_{H_0}^2=  \| \partial_x u_1\|_{L^2}^2 + \| \wtilQ u_1 \|_{L^2}^2. 
	\]
	In line with the approach outlined in \cite{KMM19}, we introduce the following bootstrap estimates
	\begin{gather}\label{eq:bootstrap 1}
		| a | \leq K^2 \delta_0, \quad  \| \tilde u_1\|_{H_0} \leq K^2 \delta_0 \quad \text{ and } \quad \|u_2\|_{L^2}\leq K^2 \delta_0, \\\label{eq:bootstrap 2}
		|b_-|\leq K\delta_0, \\\label{eq:bootstrap 3}
		|b_+|\leq K^5 \delta_0^2.
	\end{gather}
	Given any $(\tilde u_1(0), u_2(0))$, $b_+(0)$, $b_-(0)$ and $a(0)$ such that
	\begin{equation}\label{eq:u10 u20 b-0}
	\begin{gathered}
		\quad |a(0)|\leq \delta_0,\quad \| \tilde  u_1(0)\|_{H_0}\leq \delta_0, \quad \|u_2(0)\|_{L^2}\leq \delta_0, \quad |b_-(0)|\leq \delta_0, \\
		\| \wtilQ (u_1(0) + 2\wtilH)u_1(0)\|_{L^2} \leq \delta_0
	\end{gathered}
	\end{equation}
	and $b_+(0)$ satisfying
	\begin{equation}\label{eq:b+0}
		|b_+(0)|\leq K^5\delta_0,
	\end{equation}
	let
	\[
	T = \sup\left\{t\geq 0 \text{ such that } \eqref{eq:bootstrap 1}, \eqref{eq:bootstrap 2}, \eqref{eq:bootstrap 3} \text{ hold on } [0,t] \right\}.
	\]
	Considering that $K>1$, it follows that $T$ is well defined in $[0, +\infty]$. We will prove that there exists at least a value of $b_+(0)$ as in \eqref{eq:b+0}, $b_+(0)\in \left[-K^5\delta_0^2, K^5\delta_0^2\right]$ such that $T=\infty$. We proceed by contradiction, assuming that any $b_+(0)\in  \left[-K^5\delta_0^2, K^5\delta_0^2 \right]$ leads to $T<\infty$.
	By \eqref{eq:bootstrap 1}, we have
	\begin{equation}\label{eq:bootstrap 4}
		a^2+ \|\tilde u_1\|_{H_0}^2  + \|u_2\|_{L^2}^2 \leq 3K^4\delta_0^2.
	\end{equation}
	First, we strictly improve the estimate \eqref{eq:bootstrap 4}.  From the conservation of energy and the coercivity of $\widetilde L$, estimate \eqref{eq:energy estimate} holds (notice that this estimate is independent of $K$). Furthermore, from \eqref{eq:bootstrap 2}-\eqref{eq:bootstrap 3}, it holds
	\[
	a^2 + \| \tilde u_1\|_{H_0}^2  + \|u_2\|_{L^2}^2 \leq C_6 \left( K^2\delta_0^2 + K^{10} \delta_0^4 + \delta_0^2 \right),
	\]
	for some constant $C_6>0$. Thus, using first the largeness of $K$, and after fixing $K$, the smallness of $\delta_0$, it holds
	\begin{equation}\label{eq:constrains 1}
		C_6 \leq \frac14 K^2, \quad K^4\delta_0 \leq 1,
	\end{equation}
	and we obtain $a^2 + \| \tilde u_1\|_{H_0}^2 + \|u_2\|_{L^2}^2 \leq \frac34 K^4\delta_0^2$, which strictly improves the inequality \eqref{eq:bootstrap 4}.
	
	\medskip
	
	Second, we use \eqref{eq:db+ db-} to control $b_-$. By \eqref{eq:bootstrap 1}-\eqref{eq:bootstrap 2}-\eqref{eq:bootstrap 3}, we have
	\[
	\left|\frac{d}{dt} (e^{2\mu_0 t}b_-^2)\right| \leq C_7(K^{15} \delta_0^6 + K^6\delta_0^3)e^{2\mu_0 t},
	\]
	for some constant $C_7>0$. Therefore, by integration on $[0,t]$ and using \eqref{eq:u10 u20 b-0}, we obtain
	\[
	b_-^2 \leq \frac{C_7}{2\mu_0}(K^{15} \delta_0^6 + K^6\delta_0^3) + \delta_0^2.
	\]
	Under the constraints
	\begin{equation}\label{eq:constrains 2}
		\frac{C_7}{2\mu_0}K^{15}\delta_0^4 \leq \frac14 K^2, \quad \frac{C_7}{2\mu_0}K^{6}\delta_0 \leq \frac14 K^2, \quad 1 \leq \frac14 K^4,
	\end{equation}
	it holds $b_-^2 \leq \frac34 K^2\delta_0^2$ which strictly improves \eqref{eq:bootstrap 2}.
	
	By the improved estimates (under the constraints \eqref{eq:constrains 1}-\eqref{eq:constrains 2}) and a continuity argument, we observe that if $T<+\infty$, then $b_+(T) = K^5 \delta_0^2$.
	
	Next, we analyze the growth of $b_+$. If $t\in [0, T]$ is such that $|b_+(t)| = K^5\delta_0^2$, then it follows from \eqref{eq:b+ b-} that
	\[
	\begin{aligned}
		\frac{d}{dt}(b_+^2) \geq&~{} 2\mu_0 b_+^2 - 2C_4 |b_+|\left(b_+^2 + b_-^2 + \int \wtilQ^3 w_1^2 \right) \\
		\geq&~{} 2\mu_0 b_+^2 - 2C_4|b_+|(b_+^2 + K^2\delta_0^2 + K^4\delta_0^2) \\
		\geq&~{} 2\mu_0 K^{10}\delta_0^4 - C_8K^5\delta_0(K^{10}\delta_0^4 + K^4\delta_0^2),
	\end{aligned}
	\]
	for some constant $C_8>0$. Under the constraints
	\begin{equation}\label{eq:constrains 3}
		C_8K^{15}\delta_0^2 \leq \frac12 \mu_0 K^{10}, \quad C_8K^9 \leq \frac12 \mu_0 K^{10},
	\end{equation}
	the following inequality holds
	\[
	\frac{d}{dt}(b_+^2) \geq \mu_0 K^{10}\delta_0^4 > 0.
	\]
	By standard arguments, the above condition implies that $T$ is the first time for which $|b_+(t)|=K^5\delta_0^2$. Furthermore, $T$ depends continuously on the variable $b_+(0)$. Now, the image of the continuous map defined by
	\[
	b_+(0)\in \left[ -K^5\delta_0^2, K^5\delta_0^2 \right]\longmapsto b_+(T)\in \left\{-K^5\delta_0^2, K^5\delta_0^2 \right\},
	\]
	is exactly $\left\{-K^5\delta_0^2, K^5\delta_0^2 \right\}$, which is a contradiction.
	
	As a consequence, provided the constraints in \eqref{eq:constrains 1}, \eqref{eq:constrains 2}, \eqref{eq:constrains 3} are fulfilled, there exists at least one value of $b_+(0)\in (-K^5\delta_0^2, K^5\delta_0^2)$ such that $T=\infty$. Finally, to satisfy the conditions \eqref{eq:constrains 1}, \eqref{eq:constrains 2}, \eqref{eq:constrains 3} we fix a large enough $K>0$, depending only on the constants $C_6, C_7$ and $C_8$, and then choose $\delta_0>0$ small enough.
	
	\subsection{Uniqueness and Lipschitz regularity}
	The following proposition implies both the uniqueness of the choice of $h(\bfvareps)=b_+(0)$, for a given $\bfvareps\in\calA_0$, and the Lipschitz regularity of the graph $\calM$ defined from the resulting map $\bfvareps\in \calA_0\longmapsto h(\bfvareps)$ (see \eqref{eq:calM}). This is sufficient to complete the proof of Theorem \ref{th:Manifold}.
	
	\begin{proposition}
		There exist $C, \delta>0$ such if $(\phi, \pt\phi)$ and $(\tilde\phi, \pt\tilde\phi)$ are two solutions of \eqref{eq:varEL} satisfying for all $t\geq0$,
		\begin{equation}\label{eq:smallness}
			\|(\phi, \pt\phi)(t) - (\wtilH, 0)\|_{H_0 \times L^2}<\delta, \quad \|(\tilde\phi, \pt\tilde\phi)(t) - (\wtilH, 0)\|_{H_0 \times L^2}<\delta.
		\end{equation}
		Then, decomposing
		\begin{equation*}
			(\phi, \pt\phi) = (\wtilH, 0) + \bfvareps + b_+(0)\bfY_+, \quad (\tilde\phi, \pt\tilde\phi) = (\wtilH, 0) + \tilde\bfvareps + \tilde b_+(0)\bfY_+
		\end{equation*}
		with $\langle \bfvareps, \bfZ_+\rangle = \langle \tilde\bfvareps, \bfZ_+\rangle = 0$, it holds
		\begin{equation}\label{eq:lipschitz}
			|b_+(0) - \tilde b_+(0)|\leq C\delta^{\frac12}\|\bfvareps - \tilde\bfvareps\|_{H_0\times L^2}.
		\end{equation}
	\end{proposition}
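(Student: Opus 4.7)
The plan is to adapt the virial machinery developed in Sections \ref{Sect:3}--\ref{sec:proof theorem 1} to the \emph{difference} of the two solutions, then exploit the exponential instability of the $\bfY_+$ mode in the spirit of a center-stable manifold Lipschitz estimate.

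First, I set $(\eta_1,\eta_2)=(\phi_1-\tilde\phi_1,\phi_2-\tilde\phi_2)$ and expand around $\wtilH$ as $\phi_1=\wtilH+w_1$, $\tilde\phi_1=\wtilH+\tilw_1$. Using $\wtilH^2=1-\tfrac23\wtilQ$ to identify $V=\wtilQ^2-3\wtilQ^2\wtilH^2$, a direct computation gives
\[
\partial_t\eta_1=\eta_2,\qquad \partial_t\eta_2=-L\eta_1-R,\quad R:=\wtilQ^2\bigl[3\wtilH(w_1+\tilw_1)+w_1^2+w_1\tilw_1+\tilw_1^2\bigr]\eta_1,
\]
so that the difference solves the \emph{linearized} system with a perturbative forcing of size $O(\delta)$ relative to $\eta_1$. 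Decompose, in parallel with Subsection \ref{subsec:decomposition},
\[
\eta_1=a_1^\eta\phi_0+u_1^\eta,\qquad \eta_2=\mu_0 a_2^\eta\phi_0+u_2^\eta,\qquad \langle u_i^\eta,\phi_0\rangle=0,
\]
and set $b_\pm^\eta=\tfrac12(a_1^\eta\pm a_2^\eta)$. Since $\bfvareps=(\phi,\pt\phi)(0)-\widetilde{\bfH}-b_+(0)\bfY_+$ and analogously for $\tilde\bfvareps$, one has $b_+^\eta(0)=b_+(0)-\tilde b_+(0)$, and the remaining components at $t=0$ are bounded by $C\|\bfvareps-\tilde\bfvareps\|_{H_0\times L^2}$.

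Next, I apply the two virial identities to $(\eta_1,\eta_2)$. The key point is that $R$ plays the role of the nonlinear term $N^\perp$ in \eqref{eq:system-perturbated}, except that smallness is now supplied by the uniform bound $\|w_1\|_{H_0}+\|\tilw_1\|_{H_0}\lesssim\delta$ from \eqref{eq:smallness}, rather than by a bootstrap on $\eta_1$ itself. Consequently each estimate in Lemma \ref{lem:virial I non linear} and in Section \ref{sec:second virial estimate} transfers verbatim with $\|\wtilQ^{1/2}u_1\|_{L^\infty}$ replaced by $\delta$. With the same choices $A=\delta^{-1/4}$, $B=\alpha^{-1}(\delta^{-1/8})$, and with virial functionals $\mathcal{I}^\eta,\mathcal{J}^\eta$ and $\mathcal{H}^\eta=\mathcal{J}^\eta+8CC_0^{-1}\ln(\delta_3^{-1/8})^{-1}\mathcal{I}^\eta$ built from $(\eta_1,\eta_2)$, the analogue of \eqref{eq:dcalH} reads
\[
\frac{d}{dt}\mathcal{H}^\eta\leq -C_3\int\wtilQ\bigl[(\px w_1^\eta)^2+\wtilQ^2(w_1^\eta)^2\bigr]+C\delta\bigl(|b_+^\eta|^2+|b_-^\eta|^2\bigr),
\]
where $w_1^\eta=\zeta_A u_1^\eta$. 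Combining with the analogue of Lemma \ref{lem:bound internal modes} for the difference and integrating in time on $[0,\infty)$, using that $\mathcal{H}^\eta$ is bounded uniformly in $t$ by $C(\delta|b_+^\eta(0)|+\|\bfvareps-\tilde\bfvareps\|^2)$, yields
\[
\int_0^\infty\!\Bigl\{(a_1^\eta)^2+(a_2^\eta)^2+\int\wtilQ^2\bigl[(\px u_1^\eta)^2+\wtilQ^{3/2}(u_1^\eta)^2\bigr]\Bigr\}dt\leq C\|\bfvareps-\tilde\bfvareps\|_{H_0\times L^2}^2+C\delta|b_+^\eta(0)|^2.
\]

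Finally, I isolate $b_+^\eta(0)$. The equation for the unstable mode is $\dot b_+^\eta=\mu_0 b_+^\eta-N_0^\eta/(2\mu_0)$ with $|N_0^\eta|=|\langle R,\phi_0\rangle|\lesssim\delta\,\|\wtilQ u_1^\eta\|_{L^2}+\delta|a_1^\eta|$ by Cauchy--Schwarz and the exponential decay of $\phi_0$. Because both solutions remain $O(\delta)$ for all $t\geq 0$, $b_+^\eta$ stays bounded, so multiplying by $e^{-\mu_0 t}$ and integrating from $0$ to $+\infty$ gives the Duhamel formula
\[
b_+^\eta(0)=\frac{1}{2\mu_0}\int_0^\infty e^{-\mu_0 s}N_0^\eta(s)\,ds.
\]
Estimating by Cauchy--Schwarz in the time variable and invoking the time-integrated bound above,
\[
|b_+^\eta(0)|\lesssim\delta\Bigl(\int_0^\infty e^{-\mu_0 s}ds\Bigr)^{1/2}\Bigl(\int_0^\infty e^{-\mu_0 s}\bigl(\|\wtilQ u_1^\eta\|_{L^2}^2+(a_1^\eta)^2\bigr)ds\Bigr)^{1/2}\lesssim\delta^{1/2}\bigl(\|\bfvareps-\tilde\bfvareps\|+\delta^{1/2}|b_+^\eta(0)|\bigr),
\]
and absorbing the $\delta|b_+^\eta(0)|$ term on the left using $\delta\ll 1$ gives \eqref{eq:lipschitz}.

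The main obstacle, as in the existence part of Theorem \ref{th:Manifold}, is controlling the $R$-contribution in the second virial estimate without access to the quartic positivity exploited in \eqref{eq:I34est}: the quadratic-in-$\eta_1$ forcing $R$ has only $\delta$-smallness and no hidden defocusing structure. However, since $R$ is linear in $\eta_1$ (unlike the genuinely nonlinear $N$ treated earlier), the analogues of the bounds in Lemma \ref{lem:virial I non linear} and Subsections 4.3--4.5 go through with straightforward Cauchy--Schwarz estimates absorbing constants of order $\delta$ into the good-sign terms, so no new algebraic miracle is required; it is essentially a bookkeeping exercise. The absence of a resonance modulation $a(t)\wtilH$ in the difference is crucial here: unlike the construction, we do \emph{not} need the refined splitting $u_1=a\wtilH+\tilde u_1$, since the coercivity of $L$ on $\{\phi_0\}^\perp$ via Lemma \ref{lem:coercivity} directly controls $\|u_1^\eta\|_{H_0}$ without involving $\widetilde L$.
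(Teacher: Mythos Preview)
Your approach has a genuine gap, and it is also considerably more involved than what is actually needed.

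The gap is in the sentence ``using that $\mathcal{H}^\eta$ is bounded uniformly in $t$ by $C(\delta|b_+^\eta(0)|+\|\bfvareps-\tilde\bfvareps\|^2)$''. The virial functionals $\mathcal I^\eta(t),\mathcal J^\eta(t)$ are built from $(u_1^\eta,u_2^\eta)$ \emph{at time $t$}, and the only a priori information available is that each solution stays within $\delta$ of $\widetilde\bfH$, hence $\|(u_1^\eta,u_2^\eta)(t)\|_{H_0\times L^2}\lesssim\delta$. This gives $|\mathcal H^\eta(t)|\lesssim A\delta^2=\delta^{7/4}$, which is \emph{not} controlled by the initial difference. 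Asserting a bound on $\mathcal H^\eta(t)$ in terms of $b_+^\eta(0)$ and $\|\bfvareps-\tilde\bfvareps\|$ presupposes exactly the kind of uniform-in-time control on the difference that the proposition is meant to establish, so the argument is circular at this step. Without that bound, the time-integrated estimate you feed into the Duhamel formula is not available, and the closing inequality fails.

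The paper's proof bypasses the virial machinery entirely. It works directly with the three scalar quantities $\beta_+=(\check b_+)^2$, $\beta_-=(\check b_-)^2$, and $\beta_c=\langle L\check u_1,\check u_1\rangle+\|\check u_2\|_{L^2}^2$, where the coercivity of $L$ on $\{\phi_0\}^\perp$ (Lemma~\ref{lem:coercivity}) makes $\beta_c$ equivalent to $\|\check u_1\|_{H_0}^2+\|\check u_2\|_{L^2}^2$. A short computation gives $|\dot\beta_c|+|\dot\beta_+-2\mu_0\beta_+|+|\dot\beta_-+2\mu_0\beta_-|\le K\delta(\beta_c+\beta_++\beta_-)$, and then a bootstrap/contradiction argument shows that if $\beta_+(0)$ were large compared to $\delta(\beta_c(0)+\beta_-(0))$, the unstable mode would grow exponentially, violating the global $O(\delta)$ bound. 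Negating this yields $\beta_+(0)\lesssim\delta(\beta_c(0)+\beta_-(0))$, which is exactly \eqref{eq:lipschitz}. Your Duhamel idea for $b_+^\eta(0)$ is correct and is morally the same mechanism, but the paper's packaging avoids any need for time-integrated decay estimates on the difference, and hence avoids the circularity.
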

	\begin{proof}
		We decompose the two solutions $(\phi, \pt\phi)$ and $(\tilde\phi, \pt\tilde\phi)$ satisfying \eqref{eq:smallness} as in Subsection \ref{subsec:decomposition}. In particular, from \eqref{eq:bound}, there exists $C_0>0$ such that for all $t>0$,
		\begin{equation}\label{eq:bound tilde}
			\begin{aligned}
				& \|\px u_1(t)\|_{L^2} + \|\px \tilde u_1(t)\|_{L^2}  + \| \wtilQ u_1\|_{L^2} + \|\wtilQ  \tilde u_1\|_{L^2} \\
				& \quad + \|u_2(t)\|_{L^2} + \|\tilde u_2(t)\|_{L^2} + |b_{\pm}(t)| + |\tilde b_{\pm}(t)| \leq C_0 \delta.
			\end{aligned}
		\end{equation}
		We denote
		\[
		\begin{gathered}
			\checka_1 = a_1 - \tila_1, \quad \checka_2 = a_2 - \tila_2, \quad \checkb_+ = b_+ - \tilb_+, \quad \checkb_- = b_- - \tilb_-, \\
			\checku_1 = u_1 - \tilu_1, \quad \checku_2 = u_2 - \tilu_2, \quad \checkN = N - \tilN,\\
			\checkN^{\perp} = N^\perp - \tilN^\perp, \quad \checkN_0^\perp = N_0 - \tilN_0.
		\end{gathered}
		\]
		Then, from \eqref{eq:a1a2} and \eqref{eq:system-perturbated}, the equations of $(\checku_1, \checku_2, \checkb_+, \checkb_-)$ write
		\begin{equation}\label{eq:checkb}
			\begin{cases}
				\dot{\checkb}_+(t) = \mu_0 \checkb_+(t) - \dfrac{\checkN_0}{2\mu_0}\\[0.3cm]
				\dot{\checkb}_-(t) = -\mu_0 \checkb_-(t) + \dfrac{\checkN_0}{2\mu_0},
			\end{cases}
			\qquad \text{ and } \qquad
			\begin{cases}
				\dot{\checku}_1 = \checku_2 \\[0.1cm]
				\dot \checku_2 = - L\checku_1 - \checkN^{\perp}.
			\end{cases}
		\end{equation}
		We claim that
		\begin{equation}\label{eq:est checkN}
			|\checkN_0| + \|\checkN^\perp\|_{L^2} \leq C\delta (|\checkb_+| + |\checkb_-| + \|\wtilQ u_1\|_{L^2}).
		\end{equation}
		Indeed, recalling the definition of $N$ \eqref{eq:N}, we obtain
		\[
		|\checkN| \lesssim \wtilQ^2(|\checka_1|\phi_0 + |\checku_1|)(|a_1|\phi_0 + |\tila_1|\phi_0 + |u_1| + |\tilu_1|).
		\]
		Using the H\"older inequality and again \eqref{eq:bound tilde}, we find $\|\checkN\|_{L^2}\leq \delta(|\checka_1| + \|\wtilQ u_1\|_{L^2})$ and estimate \eqref{eq:est checkN} follows.
		
		Let define
		\[
		\beta_+ = \checkb_+^2, \quad \beta_- = \checkb_-^2, \quad \beta_c = \langle L\checku_1, \checku_1\rangle + \langle \checku_2, \checku_2\rangle.
		\]
		Computing the variation of these terms using \eqref{eq:checkb}, we get
		\[
		\ba
		& \dot \beta_c = -2\langle \checkN^\perp, \checku_2\rangle, \quad \dot\beta_+ - 2\mu_0\beta_+ = -\frac{1}{\mu_0}\checkb_+\checkN_0, \\
		&  \dot\beta_- + 2\mu_0\beta_- = \frac1\mu_0 \checkb_-\checkN_0.
		\ea
		\]
		By \eqref{eq:est checkN} and the coercivity property \eqref{eq:coer_final}, we have
		\begin{equation}\label{eq:bound betas}
			|\dot\beta_c| + | \dot\beta_+ - 2\mu_0\beta_+| + |\dot\beta_- + 2\mu_0\beta_-| \leq K\delta(\beta_c + \beta_+ + \beta_-),
		\end{equation}
		for some $K>0$. In order to obtain a contradiction, assume that the following holds
		\begin{equation}\label{eq:contradiction}
			0 \leq K\delta (\beta_c(0) + \beta_+(0) + \beta_-(0)) < \frac{\mu_0}{10}\beta_+(0).
		\end{equation}
		We consider the following bootstrap estimate
		\begin{equation}\label{eq:bootstrap 5}
			K\delta(\beta_c + \beta_+ + \beta_-) \leq \mu_0\beta_+.
		\end{equation}
		Define
		\[
		T = \sup\{t>0 \text{ such that \eqref{eq:bootstrap 5} holds} \} > 0.
		\]
		We work on the interval $[0, T]$. Note that from \eqref{eq:bound betas} and \eqref{eq:bootstrap 5}, it holds
		\begin{equation}\label{eq:exp growth}
			\mu_0 \beta_+ \leq 2\mu_0\beta_+ - K\delta(\beta_c + \beta_+ + \beta_-) \leq \dot\beta_+.
		\end{equation}
		Then, $\beta_+$ is positive and increasing on $[0, T]$.
		
		Next, by \eqref{eq:bound betas} and \eqref{eq:bootstrap 5},
		\[
		\dot\beta_c \leq \mu_0\beta_+ \leq \dot\beta_+,
		\]
		and thus, integrating and using that $\beta_+(0)>0$, we obtain
		\[
		\beta_c(t) \leq \beta_c(0) + \beta_+(t) - \beta_+(0)\leq \beta_c(0) + \beta_+(t).
		\]
		Furthermore, by \eqref{eq:contradiction} and the growth of $b_+$, for $\delta$ small enough, we get
		\[
		\ba
		K\delta\beta_c(t) \leq &~{} K\delta(\beta_c(0) + \beta_+(t)) \leq \frac{\mu_0}{10}\beta_+(0) + K\delta\beta_+(t)\leq \frac{\mu_0}{5}\beta_+(t).
		\ea
		\]
		For $\beta_-$, by \eqref{eq:bound betas} and \eqref{eq:bootstrap 5},
		\[
		\dot\beta_- \leq -2\mu_0 \beta_- + \mu_0\beta_+,
		\]
		by integration and the growth of $b_+$, we have
		\[
		\beta_-(t) \leq e^{-2\mu_0 t}\beta_-(0) + \mu_0\beta_+(t)e^{-2\mu_0 t}\int_0^t e^{2\mu_0 s}ds \leq \beta_-(0) + \frac12\beta_+(t).
		\]
		Therefore, using \eqref{eq:contradiction}, for $\delta$ small enough, we get
		\[
		K\delta\beta_-(t) \leq K\delta(\beta_-(0) + \beta_+(t)) \leq \frac{\mu_0}{10}\beta_+(0) + K\delta\beta_+(t)\leq \frac{\mu_0}{5}\beta_+(t).
		\]
		Finally, it is clear that for $\delta$ small, it holds $K\delta\beta_+\leq \frac{\mu_0}{5}\beta_+$.
		
		Considering the previous estimates, we have proved that, for all $t\in[0,T]$,
		\[
		K\delta(\beta_c(t) + \beta_+(t) + \beta_-(t)) \leq \frac35 \mu_0 \beta_+(t).
		\]
		By a continuity argument, this means that $T=+\infty$. However, by the exponential growth of $b_+$ given by \eqref{eq:exp growth}, and since $\beta_+(0)>0$, we obtain a contradiction with the global bound \eqref{eq:bound tilde} on $|b_+|$.
		
		Since estimate \eqref{eq:contradiction} is contradicted, and since it holds
		\[
		\bfvareps = \bfu(0) + b_-(0)\bfY_-, \quad \tilde\bfvareps = \tilde\bfu(0) + \tilb_-(0)\bfY_-,
		\]
		with $\langle \bfu(0), \bfY_-\rangle = \langle \tilde\bfu(0), \bfY_-\rangle = 0$, we have proved \eqref{eq:lipschitz}.
	\end{proof}
	
	\section{Spectral Theory for $L$}
	\label{A:LINEAL-SPECTRAL-THEORY}
	
	In this section, we describe the spectral properties of the operator $L$ introduced in equation \eqref{eq:L}. Being a variable coefficients operator with no explicit eigenfunctions, the understanding here becomes more subtle, and some interesting new features appear in the spectral analysis.
	
	\medskip
	
	Notice that $L$ correspond to a Schr\"{o}dinger operator with potential $V(x) = 2\wtilQ^2(x)(1-\wtilQ(x))$, where we have defined the function
	\[
	\wtilQ(x) = Q(\alpha^{-1}(x)) \quad \text{with }\  \alpha(x) = \frac13(\sinh x + x). 
	\]
	Unlike standard operators \cite{PosTel33}, $L$ has a complicated structure with slow decay, essentially just enough to run suitable estimates.
	
	\begin{remark}
		A direct analysis shows that the null space of $P_0 = -\px^2$ is spanned by functions of the type ${1, x}$ as $x\to \infty$.
		Note that this set is linearly independent and there are no $L^2(\R)$ integrable functions in the semi-infinite line $[0, +\infty)$. Therefore, the analysis of $V$ becomes essential to understand the set of possible solutions in $L^2(\R)$ for the operator $L$.
	\end{remark}
	
	\begin{lemma}\label{lemma:Lproperties}
		The linear operator $L$ defined by
		\begin{equation}\label{def_L_V}
			L \phi := -\partial_x^2\phi + V(x)\phi, \quad \textup{with} \quad V(x) = 2\wtilQ^2(x)(1 - \wtilQ(x)),
		\end{equation}
		with dense domain $\mathcal{D}(L) = H^2(\R)$, satisfies the following properties.
		\begin{enumerate}
			\item \label{l:L1} The essential spectrum of $L$ is $[0, +\infty)$.
			\item \label{l:L2} $\sigma_{disc}(L)\cap \mathbb{R}_-$ is not empty.
			\item \label{l:L3} The operator $L$ has a first simple eigenvalue $\lambda_0$, with associated eigenfunction $\phi_0$ that satisfies
			\begin{equation}\label{eq:eigen}
				L\phi_0 = \lambda_0 \phi_0, \quad \phi_0 \in H^2(\mathbb R).
			\end{equation}
		\end{enumerate}
	\end{lemma}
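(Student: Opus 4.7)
The three claims are decoupled and can be handled in sequence. For (\ref{l:L1}), the plan is to apply Weyl's essential spectrum theorem. The potential $V(x)=2\wtilQ^{2}(x)(1-\wtilQ(x))$ is bounded on $\mathbb R$ because $0\leq\wtilQ\leq 3/2$, and by Lemma \ref{lemma:estimation} one has $V(x)=O(|x|^{-2})$ as $|x|\to\infty$. In particular the multiplication operator by $V$ is relatively $(-\partial_x^{2})$-compact (equivalently, $V(1-\partial_x^{2})^{-1}$ is compact on $L^{2}(\mathbb R)$, as it is the composition of a bounded operator vanishing at infinity with a smoothing resolvent whose kernel decays). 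Weyl's theorem then gives $\sigma_{\textup{ess}}(L)=\sigma_{\textup{ess}}(-\partial_x^{2})=[0,\infty)$.

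For (\ref{l:L2}), the plan is to produce a trial function $f\in H^{1}(\mathbb R)$ with $\langle Lf,f\rangle<0$; combined with item (\ref{l:L1}), the min-max principle then forces at least one eigenvalue strictly below $0$ in $\sigma_{\textup{disc}}(L)$. Observe that $V(0)=2(3/2)^{2}(-1/2)=-9/4<0$, and by monotonicity and parity $V$ is negative on the symmetric interval $(-x_*,x_*)$ where $\wtilQ(x_*)=1$ (numerically $x_*\simeq 1.02$, obtained from $\cosh^{2}(\alpha^{-1}(x_*)/2)=3/2$). A smooth bump function $f$ supported in $(-x_*,x_*)$ that is nearly constant on a large subinterval can be chosen so that the kinetic term $\|f'\|_{L^2}^{2}$ is smaller than $\big|\int V f^{2}\big|$, producing the required negative Rayleigh quotient. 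Alternatively, one can follow the explicit computation used in Lemma \ref{valor mu0}, which provides the quantitative bound $\langle Lf,f\rangle\lesssim -0.66$ for a specific test function $f$ after the change of variables $x=\alpha(y)$.

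For (\ref{l:L3}), existence of the lowest negative eigenvalue $\lambda_0$ follows from (\ref{l:L2}) together with the lower bound $\lambda\geq -\|V_-\|_{L^\infty}\geq -9/4$ on any negative eigenvalue. Simplicity is the standard 1D ground-state argument: any associated eigenfunction $\phi_0$ can be taken to be of one sign (otherwise $|\phi_0|$ would achieve strictly smaller energy, contradicting minimality), and two strictly positive $L^{2}$ solutions of the same second-order ODE must be proportional. The regularity $\phi_{0}\in H^{2}(\mathbb R)$ is then immediate from the eigenvalue equation: $\phi_{0}''=(V-\lambda_{0})\phi_{0}\in L^{2}$ since $V\in L^{\infty}$ and $\phi_{0}\in L^{2}$.

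The main obstacle will be step (\ref{l:L2}), that is, the quantitative construction of a trial function with negative energy. A purely qualitative scaling argument is unavailable here because a direct computation using $\wtilQ^{2}dx=Q\,dy$ and $\int Q\,dy=\int Q^{2}\,dy=6$ gives $\int V\,dx=0$, so the sign of the Rayleigh quotient is not determined by global integrals. The trial function must therefore be localized in the compact region $\{\wtilQ>1\}$, and the inequality $\|f'\|_{L^{2}}^{2}<\int |V|f^{2}$ must be verified either explicitly via a concrete bump function or via the change-of-variables computation already carried out for Lemma \ref{valor mu0}.
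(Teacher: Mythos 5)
Your proposal is correct in all three parts, and item~(1) is proved essentially as in the paper (Weyl-type perturbation of $-\partial_x^2$ by a potential vanishing at infinity). The genuine divergence is in items~(2) and~(3). For~(2), the paper does not construct a bump function or appeal to the Rayleigh quotient of the Gaussian-polynomial trial state from Lemma~\ref{valor mu0}; instead it observes the exact algebraic identity that $\wtilQ$ itself maps under $L$ to a strictly negative multiple of $\wtilQ^4$ (via $\partial_x\wtilQ=-\wtilQ^2\wtilH$, $\partial_x^2\wtilQ=2\wtilQ^3-\tfrac53\wtilQ^4$, $V\wtilQ=2\wtilQ^3-2\wtilQ^4$), so that $\langle L\wtilQ,\wtilQ\rangle$ is manifestly negative without any localization argument. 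This sidesteps entirely the issue you correctly identify --- $\int V\,dx=0$ after the change of variables $x=\alpha(y)$, so a naive global integral criterion is inconclusive --- but it is a computation specific to this potential, whereas your bump-function strategy is the generic route and would also work. For~(3), the paper shifts $L$ by a constant to make the potential positive and applies the Krein--Rutman theorem to the compact, strongly positive resolvent $L_c^{-1}$ on a cone of nonnegative functions, obtaining simplicity and positivity of the ground state in one stroke; you instead use the standard 1D variational argument ($|\phi_0|$ is also a minimizer, strong maximum principle gives strict positivity, Wronskian argument forces the eigenspace to be one-dimensional). Both routes are valid; Krein--Rutman has the minor advantage of not requiring you to first establish that the infimum of the Rayleigh quotient is attained, while your variational route is more elementary but does quietly lean on that compactness step (which follows here from $\inf\sigma(L)<\inf\sigma_{\mathrm{ess}}(L)=0$, as you implicitly use). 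Your plan would benefit from actually exhibiting the bump function in~(2), or simply pointing to $\wtilQ$ as the trial state once you notice it works, rather than leaving the key inequality $\|f'\|_{L^2}^2<\int|V|f^2$ as an unverified assertion.
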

	\begin{proof}
		Proof of \eqref{l:L1}.  Clearly $L$ is self-adjoint on $H^2(\R)$, so the whole spectrum of $L$ is contained on the real axis. Even more, since $\alpha(x)$ is strictly monotone, positive and $\alpha^{-1}(x) \to \pm \infty$ as $x\to \pm \infty$, we can see from Lemma \ref{lemma:estimation} that the associated potential $V(x)$ goes to 0 when $x  \to \pm \infty$. This imply by standard arguments (see \cite{DunSch63}, Chapter XIII, section 6) that the essential spectrum of $L$ is $[0,+\infty)$.
		
		Proof of \eqref{l:L2}. First note that by choosing $\phi = \wtilQ$ we obtain
		\begin{align*}
			L\wtilQ &= -\partial_x^2 \wtilQ + 2\wtilQ^3(1-\wtilQ)= \px(\wtilQ^2 \wtilH) + 2\wtilQ^3(1-\wtilQ) \\
			&= -2\wtilQ^3 \wtilH^2 + \frac13\wtilQ^4 + 2\wtilQ^3(1-\wtilQ)= -\frac13 \wtilQ^4,
		\end{align*}
		and then
		\[
		\langle L \wtilQ, \wtilQ \rangle = -\frac13 \int \wtilQ^5(x)dx = -\frac13 \int Q^4(y)dy < 0.
		\]
		This conclude that $\sigma_{disc}(L)\cap \R_- \neq \varnothing$.
		
		\medskip
		
		Proof of \eqref{l:L3}. First, since $L$ is bounded from below we consider the operator $L_c = L + c$ for a large enough constant $c>0$ such that the associated potential is strictly positive. Since for any $f\in \mathcal{C}_0^1(\R)$ the problem
		\begin{equation*}
			\begin{cases}
				-L_c v(y) = f(y), \quad y\in\R \\[0.1cm]
				\hspace{1.2cm} v \in H^2(\R),
			\end{cases}
		\end{equation*}
		has a unique solution satisfying
		$\|v\|_{H^{2}} \lesssim \|f\|_{H^{1}}$, it follows that $L_c^{-1}:\mathcal{C}^1(\R) \to \mathcal{C}^1(\R)$ is linear compact. From the strong maximum principle theorem if $f\geq 0$ then $v=L_c^{-1}f>0$ in $\R$. This implies that $L_c^{-1}$ is a strongly positive operator over the set of nonnegative functions. Now it follows from the Krein-Rutman theorem (see \cite{De85} \cite{KR48}) that the radius of the operator $r(L_c^{-1})$ is a positive simple eigenvalue, and the associated eigenfunction $f$ is nonnegative. Thus $\phi_0=L_c^{-1}f$ satisfies
		\[ -L \phi_0(x) = \lambda_0 \phi_0(x), \quad x\in\R\]
		with $\phi_0>0$ in $\R$, and $\lambda_0 = r(L_c^{-1}) - c$ a simple eigenvalue.
	\end{proof}
	Eigenvalues embedded in the continuous spectrum of $L$ depend directly on the decay and oscillation of the potential $V$. As emphasized in \cite[Chapter XIII, Section 13]{ReedSimon78}, the existence of embedded eigenvalues in the continuous spectrum of $L$ depends on detailed assumptions over the decay, symmetry and oscillation of the potential $V$.
	\begin{lemma}\label{lema_posi}
		The operator $L$ has no strictly positive eigenvalues. 
	\end{lemma}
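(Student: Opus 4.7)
I would argue by contradiction and exploit the slow but integrable decay of $V$ combined with a classical Levinson asymptotic theorem for second-order ODEs with $L^1$ perturbations. Suppose $\lambda>0$ is an eigenvalue of $L$ with eigenfunction $\phi\in H^2(\R)\setminus\{0\}$. By Lemma~\ref{lemma:estimation}, $\wtilQ(x)\lesssim |x|^{-1}$ at infinity, so $|V(x)|\lesssim x^{-2}$ for $|x|\geq 1$, and in particular $V\in L^1([1,+\infty))$.

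First, rewriting the eigenvalue equation as $\phi''=(V-\lambda)\phi$, I would view it as an $L^1$ perturbation at $+\infty$ of the constant-coefficient equation $\psi''+\lambda\psi=0$. The Levinson / Coddington--Levinson asymptotic theorem then provides a basis $(\phi_+,\phi_-)$ of the two-dimensional solution space with
\[
\phi_\pm(x)=e^{\pm i\sqrt{\lambda}\,x}(1+o(1)),\qquad \phi_\pm'(x)=\pm i\sqrt{\lambda}\,e^{\pm i\sqrt{\lambda}\,x}(1+o(1)),\qquad x\to+\infty.
\]
Because $V$ is real, one may take $\phi_-=\overline{\phi_+}$, and the Wronskian $W[\phi_+,\phi_-]$ converges to $-2i\sqrt{\lambda}\neq 0$, confirming linear independence.

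Next, I would decompose the real-valued solution $\phi$ as $\phi=c_+\phi_++c_-\phi_-$ with $c_\pm\in\C$; reality of $\phi$ forces $c_-=\overline{c_+}$. If $c_+\neq 0$, the asymptotic formula yields
\[
|\phi(x)|^2=4|c_+|^2\cos^2\!\bigl(\sqrt{\lambda}\,x+\arg c_+\bigr)+o(1),
\]
whose Ces\`aro mean equals $2|c_+|^2>0$, so $\int_N^{+\infty}|\phi|^2\,dx=+\infty$ for every $N$. This contradicts $\phi\in L^2(\R)$. Hence $c_+=c_-=0$, so $\phi\equiv 0$ in a neighborhood of $+\infty$, and by uniqueness for the second-order linear ODE, $\phi\equiv 0$ on all of $\R$, a contradiction.

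The main obstacle is the rigorous construction of the asymptotic basis $(\phi_+,\phi_-)$. This is a classical statement once $V\in L^1$ near infinity: one sets up the integral equation
\[
\phi_\pm(x)=e^{\pm i\sqrt{\lambda}\,x}+\int_x^{+\infty}\frac{\sin\!\bigl(\sqrt{\lambda}(y-x)\bigr)}{\sqrt{\lambda}}\,V(y)\phi_\pm(y)\,dy
\]
and solves it by contraction mapping in a weighted sup-norm space, gaining the $o(1)$ error from $\int_x^{+\infty}|V(y)|\,dy\to 0$. The precise polynomial decay $|V(x)|\lesssim x^{-2}$ ensured by Lemma~\ref{lemma:estimation} is exactly what makes this argument applicable without any additional decay hypothesis, in line with the treatment of the polynomial tail throughout this paper. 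No spectral information about eigenvalues or resonances of $L$ is needed beyond this asymptotic analysis.
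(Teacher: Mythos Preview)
Your proof is correct. Both your argument and the paper's rely on the same key input, namely that $V\in L^1$ near infinity (which follows from the polynomial decay established in Lemma~\ref{lemma:estimation}). The paper simply verifies this integrability and then invokes the Kato--Agmon--Simon theorem (Reed--Simon, Theorem~XIII.56) as a black box to conclude. You instead unpack what is essentially the one-dimensional content of that theorem: constructing the Jost-type basis $\phi_\pm$ via the Volterra integral equation and showing directly that any nontrivial real combination fails to lie in $L^2$. Your route is more self-contained and makes the mechanism (oscillatory asymptotics incompatible with $L^2$) completely explicit, at the cost of a few extra lines; the paper's route is shorter but relies on a cited result whose exact hypotheses one must check. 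Neither approach uses any spectral input beyond the decay of $V$.
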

	\begin{proof}
		By Lemma \ref{lemma:estimation} we have a polynomial decrease of $V\sim |x|^{-2}$, and even more
		\[
		\begin{aligned}
			\int_0^{\infty} |V(x)|dx = &~{}  2\int_0^{\infty} \wtilQ^2(x)|1 - \wtilQ(x)|dx \\
			=&~{} 2\int_0^{\infty} Q(s)|1 - Q(s)|ds \\
			\leq&~{} \int_0^{\infty} Q(s)ds < +\infty.
		\end{aligned}
		\]
		This, and the fact that $V$ is a symmetric function on $\mathbb R$, allows us apply a particular case of the Kato-Argmon-Simon Theorem (see \cite[Theorem XIII.56]{ReedSimon78}), where we conclude that $L$ has no strictly positive eigenvalues.
	\end{proof}

	\begin{lemma}\label{valor mu0} One has the following bounds for the first negative eigenvalue $\lambda_0=-\mu_0^2$ in terms of $\mu_0$:
		\[
		0.808 \leq \mu_0 \leq 0.883.
		\]
	\end{lemma}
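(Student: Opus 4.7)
The plan is to prove the two-sided bound on $\mu_0$ via the Rayleigh--Ritz variational principle (for the lower bound on $\mu_0$) together with spectral monotonicity in the potential (for the upper bound on $\mu_0$). Both steps exploit the fact that after the change of variables $x = \alpha(y)$, every integrand coming from $L$ applied to a function of the form $f = g \circ \alpha^{-1}$ turns into an explicit combination of $Q$, $H$ and their powers, which reduces all required quadrature to Beta-function integrals $\int_{-\infty}^{\infty}\sech^{2n}(y/2)\,dy = 2\sqrt{\pi}\,\Gamma(n)/\Gamma(n+\tfrac12)$.

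First, for the lower bound $\mu_0 \ge 0.808$, I would use the min--max characterization
\[
-\mu_0^2 \;=\; \lambda_0 \;\le\; \frac{\langle L f,f\rangle}{\|f\|_{L^2}^2}
\]
with an explicit even test function $f$ (a natural choice being $f(x) = \widetilde Q^p(x)$ for a suitable $p$, equivalently $f = Q^p \circ \alpha^{-1}$, or a family of the form $\sech(\beta\alpha^{-1}(x))$). Setting $x = \alpha(y)$ one gets
\[
\langle L f, f\rangle \;=\; \int (f')^2\,dx + \int V f^2\,dx \;=\; \int\!\bigl[p^2 Q^{2p+1}H^2 + 2(Q-Q^2)Q^{2p}\bigr]\!dy,
\]
while $\|f\|_{L^2}^2 = \int Q^{2p-1}\,dy$. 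Both are elementary in $\sech(y/2)$. Optimizing the single real parameter $p$ (and verifying the computation against the numerical evaluation at a nearby $p$) should yield a Rayleigh quotient $< -(0.808)^2 \simeq -0.653$.

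For the upper bound $\mu_0 \le 0.883$, I would use that $\lambda_0(-\partial_x^2 + V)$ is monotone nondecreasing in the potential: if a reference potential $V_*$ satisfies $V_*(x) \le V(x)$ for all $x$ then $\mu_0^2 \le -\lambda_0(-\partial_x^2+V_*)$. The natural choice is a P\"oschl--Teller comparison $V_*(x) = -\nu(\nu+1)\gamma^2 \sech^2(\gamma x)$, for which the lowest eigenvalue is $-\gamma^2 \lfloor\nu\rfloor^2$ (or the appropriate closed-form value in the non-reflectionless case). The parameters $(\nu,\gamma)$ are to be tuned so that $V_* \le V$ on $\mathbb R$ and simultaneously the first eigenvalue of $-\partial_x^2 + V_*$ equals at worst $-0.78$. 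Because $V$ is even with $V(0) = -9/4$, a single well crossing zero where $\widetilde Q = 1$, and polynomial tails $\sim 2/x^2$, the pointwise inequality $V_* \le V$ can be verified on three regions: a bounded symmetric interval where both sides are explicit smooth functions that one can compare by finitely many evaluations, and the two tails where the quadratic decay of $V$ dominates the exponential decay of $V_*$.

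The main obstacle is the numerical calibration: the test function exponent $p$ in the lower bound and the P\"oschl--Teller parameters $(\nu,\gamma)$ in the upper bound must be chosen sharply enough to pin down the stated constants $0.808$ and $0.883$. Additionally, because $\widetilde Q = Q \circ \alpha^{-1}$ is only implicit (only $\alpha$ has a closed form), the pointwise comparison $V_* \le V$ on the bounded interval cannot be reduced to a closed-form inequality and must instead be controlled by using monotone bounds on $\alpha^{-1}$ together with finitely many explicit evaluations, of the kind already used in the proof of Lemma~\ref{lem:identidades} to locate the roots of $V''$ and in the discussion following \eqref{test_function} in Lemma~\ref{lem:coercivity} (where an analogous numerical Rayleigh-quotient computation gives $\langle L_{\varepsilon_0}f,f\rangle \simeq -0.6564$).
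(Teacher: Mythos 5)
Your overall framework is exactly the paper's: a Rayleigh--Ritz variational bound from above on $\lambda_0$ (equivalently from below on $\mu_0$), together with a P\"oschl--Teller potential-comparison operator from below on $\lambda_0$ (equivalently from above on $\mu_0$). The upper-bound half matches the paper almost verbatim: they take $L \ge L_p := -\partial_x^2 - 0.845\,Q_{9/2}^{7/2}$, which is precisely a $\sech^2(\tfrac{7}{4}x)$ well calibrated so that the pointwise inequality $V \ge -0.845\,Q_{9/2}^{7/2}$ only needs to be checked on the small compact interval $[0,x_0]$ where $V<0$, the tails being automatic since $V\sim |x|^{-2}$ dominates the exponentially small $\sech^2$. (A small slip: for $-\partial_x^2-\nu(\nu+1)\gamma^2\sech^2(\gamma x)$ the ground state energy is $-\gamma^2\nu^2$, not $-\gamma^2\lfloor\nu\rfloor^2$; the floor only governs the number of bound states. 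You hedged on this, and the paper's $m=\tfrac{1}{40}(-35+\sqrt{4943})$ is exactly that $\nu$.)

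The genuine gap is in the lower bound. You propose the one-parameter family $f=\widetilde Q^{\,p}$ and its variant $\sech(\beta\alpha^{-1})$, with the reasonable observation that the resulting Rayleigh quotient collapses to explicit $\sech^{2n}$ integrals. But this family appears to be just barely insufficient. Writing $R(p)=\bigl[(p^2+2)I_{2p+1}-(\tfrac{2p^2}{3}+2)I_{2p+2}\bigr]/I_{2p-1}$ with $I_n=\int Q^n\,dy$ and using $H^2=1-\tfrac23 Q$, one finds numerically $R(1.5)\approx -0.643$, $R(1.7)\approx -0.648$, $R(1.8)\approx -0.645$; the optimum over $p$ sits near $-0.648$, i.e.\ $\mu_0\ge 0.805$, which falls short of the claimed $0.808$. (The $\sech(\beta\alpha^{-1})$ family looks no better -- the $Q^{-1}$ weight in $\|f\|^2$ after the change of variables forces $\beta>\tfrac12$ and a quick estimate at $\beta\approx 0.8$ gives $R\approx -0.54$.) The paper instead uses $f(x)=c_0\,e^{-x^2/2}(a_4x^4+a_2x^2+a_0)$ with three numerically optimized coefficients, reaching $\langle Lf,f\rangle\simeq -0.652$. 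So the minimal ``flexibility'' needed is real: a one-parameter family in the natural $\widetilde Q$-scale does not reach the threshold, and since the downstream repulsivity estimates in Section \ref{B:POSITIVITY-POTENTIAL} explicitly plug in $0.808$, the weaker bound $0.805$ would require re-auditing those computations. You correctly flagged calibration as the main obstacle; the point is that it is not merely a matter of optimizing $p$, but of choosing a qualitatively richer trial space, and the paper's Gaussian-times-quartic is one workable choice.
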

	
	\begin{proof}
		Recall that 
		\[
		\lambda_0 = \inf_{\|f\|_{L^2} =1} \left( L f,f\right).
		\]
		We introduce now the following test function: 
		\begin{equation}\label{test_function}
			f(x):=  c_0 e^{-\frac12x^2} \left( a_4 x^4+ a_2 x^2+ a_0 \right),
		\end{equation}
		with
		\[
		a_4:= -0.0574167, \quad a_2:=0.115416 , \quad a_0:=- 0.761391.
		\]
		Here, $c_0$ is an explicit normalizing constant, obtained from
		\[
		\ba
		1= &~{} \int f^2 \\
		= &~{}c_0^2 \int  e^{-x^2}\left( a_4^2 x^8 +2a_4a_2 x^6 +( a_2^2+2a_4a_0) x^4 +2a_2a_0 x^2+ a_0^2 \right).
		\ea
		\]
		and the fact that from Wolfram Mathematica,
		\[
		\int e^{-x^2} =\sqrt{\pi }, \quad\int x^2 e^{-x^2} =\frac{\sqrt{\pi }}{2}, \quad\int x^4e^{-x^2} =\frac{3 \sqrt{\pi }}{4},
		\]
		and
		\[
		\int x^6e^{-x^2} =\frac{15 \sqrt{\pi }}{8}, \quad\int x^8 e^{-x^2} =\frac{105 \sqrt{\pi }}{16}. 
		\]
		One can easily see from the previous exact integrals that 
		\[
		c_0 \sim 1.0000005590505727.
		\]
		 Then, since $\alpha(y)=\frac13 (y+ \sinh y)$ is bijection,
		\[
		\begin{aligned}
			\left( L f,f\right) = &~{}  \int f'^2 +2 \int f^2 \widetilde Q^2(1-\widetilde Q) \\
			= &~{} \int f'^2(x)dx +2 \int f^2(\alpha(y)) Q(1-Q)(y)dy
			\sim -0.652,
		\end{aligned} 
		\]
		and therefore $\mu_0^2 \geq 0.652$ and  $0.808\leq \mu_0$. In the other sense, if 
		\[
		Q_p= \left(\frac{p+1}{2\cosh^2\left( \frac{p-1}2 x\right)} \right)^{1/(p-1)}, \quad p=9/2,
		\]
		we have $L\geq L_p:=  -\partial_x^2 -0.845Q_p^{7/2}$. This is a consequence of the fact that 
		\be\label{checkeo8}
		2\wtilQ^2(x)(1 - \wtilQ(x)) \geq - 0.845Q_p^{7/2}(x) = -\frac{2.32375}{\cosh^2\left( \frac74 x\right)} .
		\ee
		By parity, this is an inequality that need to be checked only in the region in $[0,\infty)$ where $1 - \wtilQ(x) \leq 0$, which is the small compact region $[0, x_0]$, with {\color{blue} $x_0\sim 1.01634$}. This is easily checked to high accuracy by graphing both functions, see Fig. \ref{fig:pordebajo}. Indeed, we have that \eqref{checkeo8} is equivalent to check the inequality made of explicit functions
		\be\label{pordebajo}
		2Q^2(y)(1 - Q(y)) \geq  -\frac{2.32375}{\cosh^2\left( \frac74 \alpha(y)\right)}.
		\ee
		Notice that $L_p$ is a classical operator with explicit first eigenfunction $Q_p^m$, $m=\frac1{40} (-35 + \sqrt{4943}) \sim 0.88$ and first eigenvalue $-m^2\sim -0.7791$, from which $ \mu_0 \leq 0.883$.
	\end{proof}
	
	\begin{figure}[htbp]
		\centering
		\includegraphics[width=0.82\linewidth]{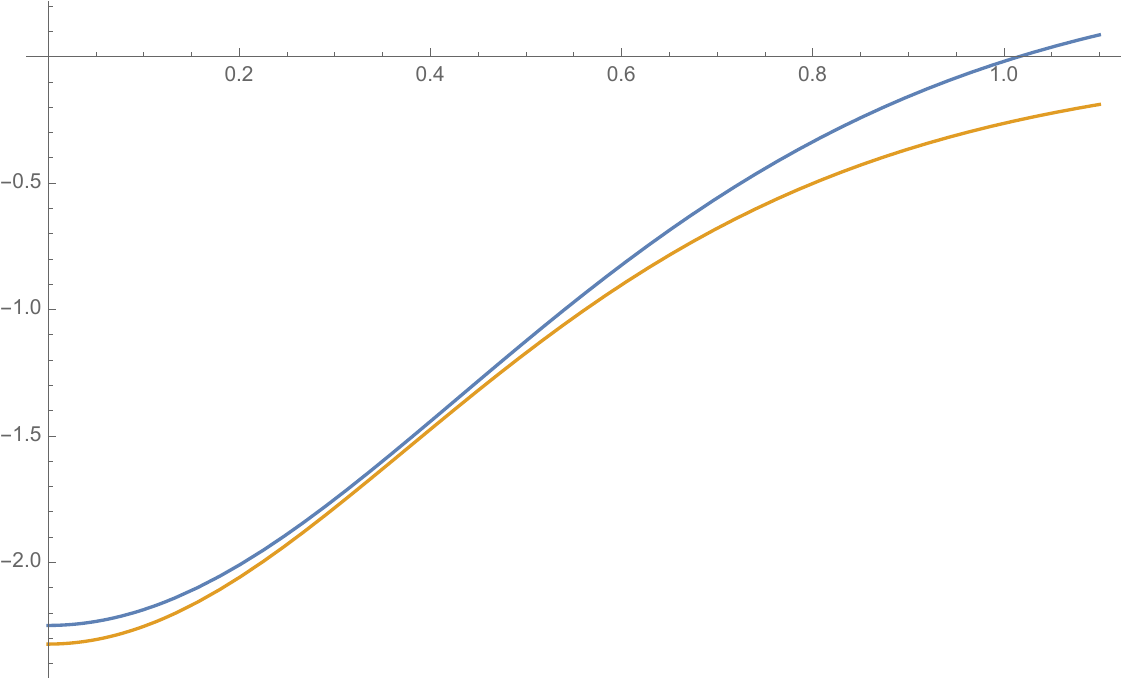}
		\includegraphics[width=0.82\linewidth]{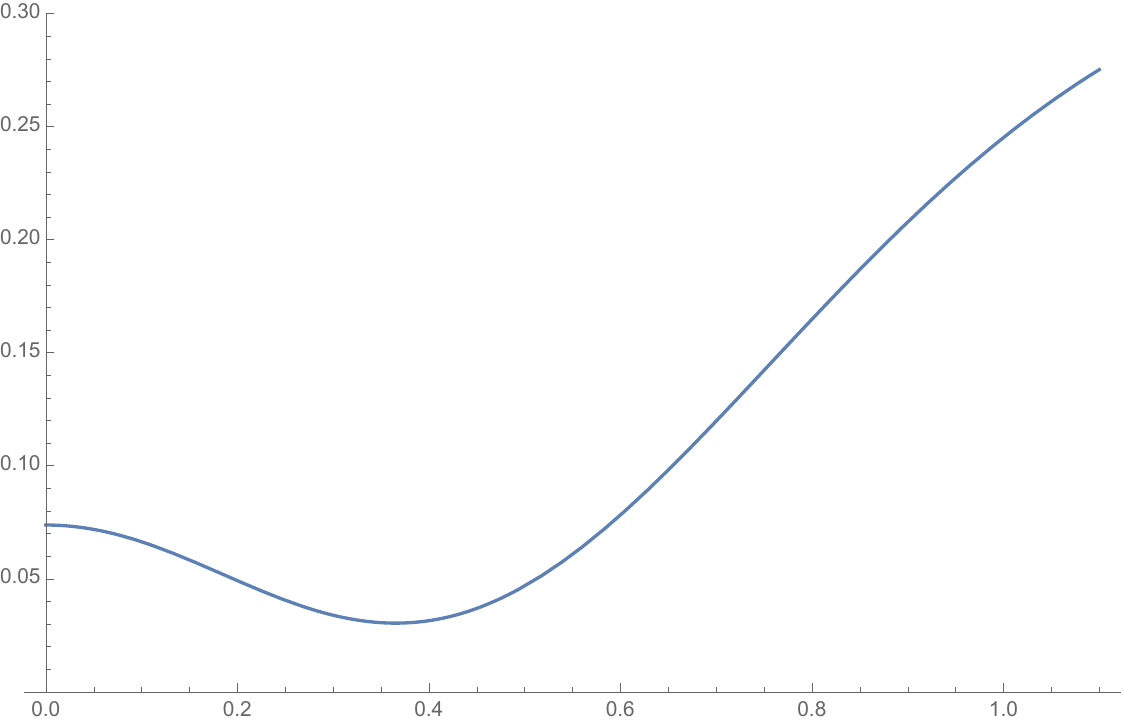}
		\caption{Above: Comparison between the potentials $2\wtilQ^2(x)(1 - \wtilQ(x))$ (blue line) and $- 0.845Q_{9/2}^{7/2}(x)$ (yellow line) in the region $[0,1.1]$. Below: Plot of the difference $2\wtilQ^2(x)(1 - \wtilQ(x))+ 0.845Q_{9/2}^{7/2}(x)$ in the considered region.}
		\label{fig:pordebajo}
	\end{figure}
	
	\begin{lemma}\label{lem_decay_expo}
		For the operator $L$, the associated eigenfunction $\phi_0$ of the first simple eigenvalue $-\mu_0^2$ satisfies, along with its derivatives, an exponential decay given by
		\begin{equation}\label{eq:exp-decay}
			|\phi_0(x)|, ~ |\px \phi_0(x)|, ~ |\px^2 \phi_0(x)| \lesssim e^{-\frac{\sqrt2}{2}\mu_0 |x|}
		\end{equation}
	\end{lemma}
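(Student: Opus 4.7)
The plan is to establish the exponential decay via a weighted energy (Agmon-type) estimate, from which the rate $\tfrac{\sqrt 2}{2}\mu_0$ emerges naturally. First, I would test the equation $-\phi_0''+V\phi_0=-\mu_0^2\phi_0$ against $e^{2\beta x}\phi_0\,\chi_n^2$, where $\beta\in(0,\tfrac{\sqrt 2}{2}\mu_0)$ and $\chi_n=\chi(\cdot/n)$ is a smooth cut-off, then integrate by parts twice. The decisive algebraic step is to rewrite the drift term $2\beta\!\int\!\chi_n^2 e^{2\beta x}\phi_0\phi_0'$ using $2\phi_0\phi_0'=(\phi_0^2)'$ and integrate once more, producing the negative contribution $-2\beta^2\!\int\!\chi_n^2 e^{2\beta x}\phi_0^2$. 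After controlling the cut-off errors by Cauchy--Schwarz, sending $n\to\infty$ gives
\[
\int e^{2\beta x}(\partial_x\phi_0)^2 + \int (V+\mu_0^2-2\beta^2)\,e^{2\beta x}\phi_0^2 \;\le\; C(\beta),
\]
which explains why $\tfrac{\sqrt 2}{2}\mu_0$ is the natural threshold: at that value $\mu_0^2-2\beta^2$ vanishes and only the positive $V$ survives at infinity.

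\textbf{From weighted energy to pointwise bounds.} By Lemma~\ref{lemma:estimation} and the strict monotonicity of $\wtilQ$ on $[0,\infty)$, there exists $R_0>0$ with $V=2\wtilQ^2(1-\wtilQ)\ge 0$ for $|x|\ge R_0$, while $V\in L^\infty(\R)$ globally. For $\beta<\tfrac{\sqrt 2}{2}\mu_0$ the coefficient $V+\mu_0^2-2\beta^2$ is thus bounded below by a positive constant outside $[-R_0,R_0]$, and the bounded negative contribution on that compact set is absorbed using $\phi_0\in L^2$. This yields $e^{\beta x}\phi_0,\,e^{\beta x}\partial_x\phi_0\in L^2(\R_+)$, hence $e^{\beta x}\phi_0\in H^1(\R_+)\hookrightarrow L^\infty(\R_+)$, giving $|\phi_0(x)|\lesssim e^{-\beta x}$ for $x\ge 0$; a symmetric argument with weight $e^{-2\beta x}$ handles $x\le 0$. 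The second-derivative bound then follows directly from the equation $\partial_x^2\phi_0=(V+\mu_0^2)\phi_0$ together with $V\in L^\infty$, and the first-derivative bound from the weighted $H^1$-control combined with local elliptic regularity. To reach the endpoint $\beta=\tfrac{\sqrt 2}{2}\mu_0$ of the lemma, one either absorbs the $\epsilon$-loss in the implicit constant (the statement uses $\lesssim$), or exploits the strict positivity $V(x)\sim c/x^2>0$ at infinity to retain a residual weighted bound at the critical exponent and bootstrap via the equation.

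\textbf{Main obstacle.} The most delicate point is the treatment of the cut-off error terms, which are supported on annuli $\{n\le|x|\le 2n\}$ where the weight $e^{2\beta x}$ is already exponentially large, so that naive $L^2$-control of $\phi_0$ is insufficient to send $n\to\infty$. The cleanest remedy is a continuity-in-$\beta$ bootstrap: first obtain a rough exponential rate (for instance by using that $\phi_0$ is an $L^2$ eigenfunction below the essential spectrum, which forces some exponential decay by standard arguments), then use that a priori decay to control the cut-off errors and progressively increase $\beta$ up to the target. An alternative that sidesteps cut-offs entirely is a maximum-principle comparison: since $\phi_0>0$ by the Krein--Rutman construction of Lemma~\ref{lemma:Lproperties} and $\phi_0''\ge(\mu_0^2-\varepsilon)\phi_0$ outside a compact set for any $\varepsilon>0$, comparing $\phi_0$ against the explicit supersolution $Ce^{-\sqrt{\mu_0^2-\varepsilon}|x|}$ yields the bound without any weighted integration, and actually at the sharper rate $\mu_0$; I would use this as a cross-check on the Agmon argument.
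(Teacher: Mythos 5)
Your proposal takes a genuinely different route from the paper. The paper's proof is a direct, pointwise ODE comparison with no weighted integrals at all: writing $\phi_0'' = q\phi_0$ with $q=\mu_0^2+V$, setting $v=\phi_0^2$ one gets $\tfrac12 v'' = (\phi_0')^2 + q\phi_0^2 \ge \mu_0^2 v$ once $x\ge x_r$ (where $V\ge 0$). Introducing $z:=e^{-\sqrt 2\mu_0 x}(v'+\sqrt 2\mu_0 v)$, one checks $z'=e^{-\sqrt 2\mu_0 x}(v''-2\mu_0^2 v)\ge 0$, so $z$ is non-decreasing; if $z$ were ever positive, $v'+\sqrt 2\mu_0 v$ would grow exponentially, contradicting the integrability of $\phi_0^2$ and $\phi_0\phi_0'$. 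Hence $z\le 0$, i.e.\ $(e^{\sqrt2\mu_0 x}v)'\le 0$, giving $v\lesssim e^{-\sqrt 2\mu_0 x}$ and the stated rate $\tfrac{\sqrt2}{2}\mu_0$ for $\phi_0$ after taking a square root; the derivative bounds then follow by integrating the ODE. No cut-offs, no bootstrap, no a priori decay input. The $\tfrac{\sqrt2}{2}$ in the lemma is therefore not an Agmon threshold but the factor coming from working with $v=\phi_0^2$. Of your two proposed routes, the maximum-principle comparison (positivity of $\phi_0$ from Krein--Rutman, $\phi_0''\ge(\mu_0^2-\varepsilon)\phi_0$ outside a compact set, compare with the supersolution $Ce^{-\sqrt{\mu_0^2-\varepsilon}\,|x|}$) is closest in spirit to the paper's argument, is entirely cut-off free, and as you observe yields the sharper exponent $\mu_0$; it deserves to be your primary route rather than a cross-check.

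Two cautions on your Agmon route. First, the coefficient $\mu_0^2-2\beta^2$ in your display is an artifact of integrating the drift term by parts a second time rather than completing the square $(\phi_0')^2+2\beta\phi_0\phi_0'=(\phi_0'+\beta\phi_0)^2-\beta^2\phi_0^2$; the latter gives $\mu_0^2-\beta^2$ and the true Agmon exponent $\mu_0$, so your interpretation of $\tfrac{\sqrt2}{2}\mu_0$ as a natural threshold for this method is misleading. Second, your bootstrap leans on ``some exponential decay by standard arguments'' for a discrete eigenvalue below the essential spectrum --- but that rough decay is exactly what this lemma (or Agmon's theorem) establishes, so as written the argument is circular. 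The standard non-circular fix is to replace $e^{2\beta x}$ by the bounded weight $e^{2\beta\rho_N(x)}$ with $\rho_N=\min(|x|,N)$: all integrals then converge without cut-offs, the identity holds unconditionally, and one proves the resulting bound uniformly in $N$ before letting $N\to\infty$. With that correction your Agmon route does work, but it is substantially heavier than either the paper's $v=\phi_0^2$ argument or your own maximum-principle alternative.
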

	\begin{proof}
		This result follows from a standard argument of ODE (see e.g. \cite{BerLio83}) adapted for the particular variable coefficient problem analyzed in this article. For the sake of completeness, we show it here.
		
		By Lemma \ref{lemma:Lproperties} $\phi_0$ is a normalized even solution of class $H^1(\R)$ associated with the principal eigenvalue $\lambda_0 = -\mu_0^2$ satisfying the equation
		\[ \px^2 \phi_0 = q(x) \phi_0 \]
		where $q(x) = \mu_0^2 + V(x)$. In the following we restrict our analysis in the semi-infinite line $[0, +\infty)$ due to the parity of $\phi_0$. Since $V\geq 0$ for $x \geq x_r$, with $x_r = \alpha(2\arcosh(\sqrt{3/2}))$, one has the bound by below
		\[ 
		q(x) \geq \mu_0^2,
		\]
		for any $x \geq x_r$.
		
		We define $v = \phi_0^2\geq 0$, which verifies
		\[
		\frac12 \px^2 v(x) = (\px \phi_0)^2(x) + q(x)\phi_0^2(x) \geq \mu_0^2 v(x),
		\]
		for any $x\geq x_r$.
		
		Now let define the auxiliary function $z = e^{-\sqrt{2}\mu_0 x}(\px v + \sqrt2\mu_0 v)$ to compare the decreasing rate of $\phi_0$ with respect to an exponential.  We have
		\[ \px z = e^{-\sqrt2\mu_0 x} (\px^2 v - 2\mu_0^2 v) \geq 0,\]
		hence $z$ is non-decreasing on $[x_r, +\infty)$.
		
		Next, we prove that $z\leq 0$ for $x\geq x_r$ by contradiction: If there exists a $x_0>x_r$ such that $z(x_0)>0$, then
		\[
		z(x) \geq z(x_0) > 0 , 
		\]
		for all $x\geq x_0$. This implies that
		\[
		\px v + \sqrt2\mu_0 v \geq z(x_0) e^{\sqrt{2}\mu_0 x},
		\]
		then $\px v + \sqrt2 \mu_0 v$ is not integrable on $(x_0, + \infty)$. But $\phi_0 \px \phi_0$ and $\phi_0^2$ are integrable on $(x_0, + \infty)$, so that $\px v$ and $v$ are integrable. This is a contradiction, hence we conclude that $z(x)\leq 0$ for $x>x_r$.
		
		In particular, we have the inequality
		\[ 
		\px(e^{\sqrt2 \mu_0 x}v) = e^{2\sqrt2\mu_0 x} z \leq 0 \quad \text{for } x\geq x_r, 
		\]
		This implies that $v(x)\lesssim e^{-\sqrt2\mu_0 x}$. Replacing the definition of $v$, we obtain the decay estimate for the first eigenfunction given by
		\[
		|\phi_0(x)| \lesssim e^{-\frac{\sqrt2}{2}\mu_0x }.
		\]
		To obtain the exponential decay of $\px \phi_0$, we use the trivial bound
		\[ 
		\mu_0^2 \leq q(x) \leq \mu_0^2 + 1,
		\]
		for all $x> x_r$. Hence, integrating over $(x_1,x_2)$
		\[ \mu_0^2 \int_{x_1}^{x_2} \phi_0 \leq \px \phi_0(x_2) - \px \phi_0(x_1) \leq (\mu_0^2 + 1)\int_{x_1}^{x_2} \phi_0, \]
		and from the exponential decay of $\phi_0$, letting $x_1, x_2 \to +\infty$ proves that $\px \phi_0$ has a limit at infinity. From the exponential decay of $\phi_0$, this limit must be zero. Therefore
		\[ |\px\phi_0(x)| \leq (\mu_0^2 + 1) \int_x^{\infty} |\phi_0| \lesssim  e^{-\frac{\sqrt2}{2}\mu_0 x}. \]
		Finally, the exponential decay for $\px^2\phi_0$ follows directly from the decay of $\phi_0$.
	\end{proof}
	
	\begin{corollary}\label{cor:dphi}
		If $\phi_0:\R\to \R$ is a positive function, then $\phi_0'(x)$ is non-positive for all $x\geq 0$, and has a unique root at 0. 
	\end{corollary}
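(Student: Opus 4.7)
The plan is to study the ODE $\phi_0'' = (V + \mu_0^2)\phi_0$ on the half-line $[0,\infty)$ and track the sign of $\phi_0'$ using the sign of the ``effective potential'' $V + \mu_0^2$. Since $V$ is even and the first eigenvalue is simple (Lemma \ref{lemma:Lproperties}), the eigenfunction $\phi_0$ can be taken even, which gives $\phi_0'(0) = 0$ and reduces everything to the analysis on $[0,\infty)$. The guiding picture is that $V + \mu_0^2$ changes sign exactly once on $[0,\infty)$ at some point $x^\star$, with $\phi_0$ concave on $[0,x^\star]$ and convex on $[x^\star,\infty)$; the exponential decay of $\phi_0'$ from Lemma \ref{lem_decay_expo} then forces $\phi_0'<0$ throughout.

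First I would compute $V(0) = 2\widetilde Q(0)^2(1 - \widetilde Q(0)) = -9/4$ from $\widetilde Q(0)=3/2$, and combine this with the bound $\mu_0^2 \leq 0.78$ of Lemma \ref{valor mu0} to get $\phi_0''(0) = (V(0)+\mu_0^2)\phi_0(0) < 0$. Together with $\phi_0'(0)=0$ this gives $\phi_0'(x)<0$ for $x>0$ small. Next I would determine the shape of $V$ on $[0,\infty)$: using \eqref{derivada}, $V' = 2\widetilde Q\,\widetilde Q'(2 - 3\widetilde Q)$ with $\widetilde Q'= -\widetilde Q^2\widetilde H\le 0$ on $[0,\infty)$, so $V$ is strictly increasing while $\widetilde Q > 2/3$ and strictly decreasing while $\widetilde Q < 2/3$. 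Via the monotone reparametrization $x=\alpha(y)$, this reduces to the one-variable analysis of $2Q^2(1-Q)$: $V$ rises from $-9/4$ to a positive maximum $8/27$ at the unique point $x_{\max}$ where $\widetilde Q = 2/3$, and then decreases to $0$. Consequently $V + \mu_0^2 = 0$ has a unique solution $x^\star \in (0,x_{\max})$, with $V+\mu_0^2<0$ on $[0,x^\star)$ and $V+\mu_0^2>0$ on $(x^\star,\infty)$.

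The argument then closes in two pieces. On $[0,x^\star]$ the ODE gives $\phi_0''(x) = (V(x)+\mu_0^2)\phi_0(x) \leq 0$, with strict inequality on the interior, so $\phi_0'$ is strictly decreasing from $\phi_0'(0)=0$ and therefore $\phi_0'(x)<0$ on $(0,x^\star]$. On $[x^\star,\infty)$ the sign flips: $\phi_0''(x)\ge 0$, strictly for $x>x^\star$, so $\phi_0'$ is strictly increasing on this half-line. By Lemma \ref{lem_decay_expo}, $\phi_0'(x)\to 0$ as $x\to\infty$, and a strictly increasing function converging to $0$ must lie strictly below $0$; otherwise, at any point $x_0$ where $\phi_0'(x_0)\geq 0$ one would have $\phi_0'(x)>\phi_0'(x_0)\geq 0$ for $x>x_0$, contradicting the limit. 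Hence $\phi_0'(x)<0$ on $[x^\star,\infty)$ as well, yielding $\phi_0'<0$ on $(0,\infty)$ with its only zero at $x=0$.

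The only (mild) obstacle is the absence of a closed form for $\widetilde Q$ and hence for $V$, but this is bypassed entirely by passing to the $y$-variable via $\alpha$: all the needed qualitative information about $V+\mu_0^2$ follows from the explicit one-variable function $2Q^2(1-Q)$, and the single numerical inequality $V(0)+\mu_0^2<0$ is comfortably guaranteed by Lemma \ref{valor mu0}.
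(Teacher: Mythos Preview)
Your proof is correct and follows essentially the same approach as the paper's: both arguments hinge on the single sign change of $V+\mu_0^2$ at some $x^\star>0$, use $\phi_0'(0)=0$ from evenness, and invoke the decay $\phi_0'\to 0$ from Lemma~\ref{lem_decay_expo}. The paper integrates $\phi_0''=(V+\mu_0^2)\phi_0$ from $0$ to $x$ (for $x<x^\star$) and from $x$ to $\infty$ (for $x>x^\star$), which is exactly your concavity/convexity argument written in integral form; you are actually a bit more careful than the paper in verifying explicitly that $V+\mu_0^2$ changes sign only once.
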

	\begin{proof}
		First, we denote as $x_0>0$ the point where $V(x_0) = - \mu_0^2$.
		
		If $0 < x < x_0$, then integrating equation \eqref{eq:properties-eigenvalL} between $0$ and $x$, and by Corollary \ref{cor:parity} we have
		\begin{align*}
			\phi_0'(x) &= \int_0^x (\mu_0^2 + V(y))\phi_0(y)dy < 0.
		\end{align*}
		If $x>x_0$, we integrate \eqref{eq:properties-eigenvalL} and by the decay estimate over $\phi_0'$ we obtain that
		\[ 
		\phi_0'(x) = -\int_x^\infty (\mu_0^2 + V(y)) \phi_0(y)dy < 0, 
		\]
		since $\phi_0$ and $\mu_0^2 + V(y)$ are positive for $y\geq x_0$.
	\end{proof}

	\medskip

	\section{Positivity and repulsivity of the potential}
	\label{B:POSITIVITY-POTENTIAL}
	Now, we focus on proving some results related to the transformed problem for the Schr\"odinger equation for $L_0$, see subsection \ref{subsec:virial II} for details. In particular, the objective of this section is to prove the repulsivity of the potential $V_0$ (in the sense that $xV_0' \leq 0$ for any $x$), and its strict repulsivity in a particular subregion of space. Recall that this is one of the most relevant facts needed to apply a virial argument to describe the stability of the kink \cite[Theorem XIII.60]{ReedSimon78}.  This result becomes subtle due to the lack of an explicit form for the eigenvalue, in contrast to other recent works. See also the cubic-quintic NLS case by Martel \cite{Martel1,Martel2} and the works \cite{Mau23,MM23} for problems in some sense similar to ours. Hence, we must establish some results with an auxiliary function that determines the transformed problem.

	\subsection{Key properties and positivity} 
	We start out with a fundamental lemma. For this, let $\phi_0$ be the positive, even and exponentially decaying eigenfunction satisfying \eqref{eq:eigen}, and  define $h_0:\R_{+} \to \R$ as 
	\begin{equation}\label{def:h0}
		h_0(x) = \frac{\phi_0'(x)}{\phi_0(x)}.
	\end{equation}
	Finally, recall $L$ and $V$ from \eqref{def_L_V}.
	
	\begin{lemma}\label{lem:h0 properties}
		Let $h_0$ be as in \eqref{def:h0}. Then one has the following:
		\begin{enumerate}
			\item \label{h0:1} The function $h_0$ is well defined over $\R_+$. It is non-positive and one can write the principal eigenfunction $\phi_0$ of the operator $L$ as follows
			\begin{equation}\label{eq:h0 phi0}
				\phi_0(x) = \phi_0(0)\exp\left(\int_0^{x} h_0(y)dy \right).
			\end{equation}		
			\item \label{h0:2} The function $h_0$ is the unique solution of the initial value problem
			\begin{equation}\label{eq:h0 ivp}
				\begin{cases}
					\ h_0'(x) + h_0^2(x) = \mu_0^2 + V(x), \quad \text{for } x\geq 0, \\
					\ \hspace{1.5cm} h_0(0) = 0.
				\end{cases}
			\end{equation}	
			\item \label{h0:3} We have the integral formulation
			\begin{equation}\label{eq:h0'}
				h_0'(x) = -\frac{1}{\phi_0^2(x)} \int_x^{\infty}V'(y)\phi_0^2(y)dy
			\end{equation}
			for all $x\geq 0$.
		\end{enumerate}
	\end{lemma}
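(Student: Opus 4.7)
My plan is to address the three claims in the order they are stated, since each builds on the previous. For part (\ref{h0:1}), the key point is that the principal eigenfunction $\phi_0$ constructed in Lemma \ref{lemma:Lproperties} is strictly positive on $\R$ (this follows from the Krein--Rutman argument used there, which yields a strictly positive eigenfunction). Consequently $h_0=\phi_0'/\phi_0$ is well-defined and smooth on $\R_+$. The sign statement $h_0\le 0$ is a direct consequence of Corollary \ref{cor:dphi}, which asserts $\phi_0'\le 0$ on $\R_+$ with a unique zero at the origin. The exponential representation \eqref{eq:h0 phi0} then follows by recognizing $h_0=(\log\phi_0)'$ and integrating from $0$ to $x$.

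For part (\ref{h0:2}), this is a classical Riccati reduction. Starting from the eigenvalue equation in \eqref{eq:properties-eigenvalL}, rewritten as $\phi_0''=(\mu_0^2+V)\phi_0$, I would compute
\[
h_0' = \frac{\phi_0''\phi_0-(\phi_0')^2}{\phi_0^2}=\frac{\phi_0''}{\phi_0}-h_0^2=(\mu_0^2+V)-h_0^2,
\]
which is the ODE in \eqref{eq:h0 ivp}. The initial condition $h_0(0)=0$ uses that $\phi_0$ is even (a property that follows from the simplicity of the first eigenvalue together with the evenness of $V$, as noted earlier in Section \ref{A:LINEAL-SPECTRAL-THEORY}), forcing $\phi_0'(0)=0$; combined with $\phi_0(0)>0$ this yields $h_0(0)=0$. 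Uniqueness follows from standard Picard--Lindel\"of theory applied locally near $x=0$, extended to $\R_+$ via the a priori bound coming from the representation through $\phi_0$.

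For part (\ref{h0:3}), the approach is to introduce the Wronskian-like quantity $F(x):=\phi_0(x)\phi_0''(x)-(\phi_0'(x))^2$, so that $h_0'=F/\phi_0^2$. A direct differentiation, using $\phi_0''=(\mu_0^2+V)\phi_0$ to eliminate the second and third derivatives, gives the clean identity
\[
F'(x)=\phi_0\phi_0'''-\phi_0'\phi_0''=V'(x)\phi_0^2(x).
\]
Integrating from $x$ to $+\infty$ and dividing by $\phi_0^2(x)$ produces \eqref{eq:h0'}, \emph{provided} that $F(x)\to 0$ as $x\to +\infty$. This decay-at-infinity step is the main technical obstacle: it is where the assumption actually bites, and one must invoke the exponential decay of $\phi_0$, $\phi_0'$, and (consequently via the ODE) $\phi_0''$ established in \eqref{eq:exp-decay} of Lemma \ref{lem_decay_expo}, together with the fact that $V'$ has only polynomial decay (Lemma \ref{lemma:estimation}). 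Since $\phi_0^2$ dominates any polynomial factor from $V'$, both $F(x)\to 0$ and the integral $\int_x^\infty V'\phi_0^2\,dy$ converges absolutely, closing the argument. The remaining two parts (1)--(2) are essentially algebraic once the spectral information from Section \ref{A:LINEAL-SPECTRAL-THEORY} is in hand; the vanishing of the boundary term at infinity in (3) is the only place where nontrivial analysis is required.
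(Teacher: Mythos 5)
Your proof is correct, and parts (\ref{h0:1}) and (\ref{h0:2}) follow the paper's own route essentially verbatim: positivity of $\phi_0$ from Krein--Rutman, the sign of $h_0$ from Corollary \ref{cor:dphi}, the exponential representation by integrating $(\log\phi_0)'$, and the Riccati reduction together with evenness to pin down the initial condition.

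For part (\ref{h0:3}) your organization differs in a small but pleasant way. The paper starts from $(\phi_0')^2(x)=-\int_x^\infty\bigl((\phi_0')^2\bigr)'\,dy$, substitutes $2\phi_0'\phi_0''=(\mu_0^2+V)(\phi_0^2)'$, integrates by parts to obtain an expression for $h_0^2(x)$, and then feeds this back into the Riccati equation \eqref{eq:h0 ivp} to extract $h_0'$. You instead work directly with the ``Wronskian'' $F=\phi_0\phi_0''-(\phi_0')^2=\phi_0^2 h_0'$, compute $F'=V'\phi_0^2$ from the eigenvalue equation, and integrate once. Both arguments rest on the same identity, namely $\dfrac{d}{dx}\bigl(\phi_0^2 h_0'\bigr)=V'\phi_0^2$, but your version avoids the last step of invoking the Riccati equation and makes the role of the boundary term at infinity explicit (which the paper leaves implicit in its claim that the decay estimates ``give'' the integral identity). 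Your justification of $F(x)\to 0$ via the exponential decay of $\phi_0$, $\phi_0'$, $\phi_0''$ in Lemma \ref{lem_decay_expo}, together with boundedness of $V'$ for absolute convergence of the integral, is exactly the verification the paper presupposes. In short: same mathematical content, slightly more transparent bookkeeping of the boundary term.
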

	\begin{proof}
		Proof of \eqref{h0:1}. By \eqref{eq:eigen}, the first eigenvalue $-\mu_0^2$ associated with $L$ obey the equation
		\begin{equation}\label{auxiliar0}
			\phi_0''(x) = (\mu_0^2 + V(x))\phi_0.
		\end{equation}
		From Lemma \ref{lemma:Lproperties}, $\phi_0$ is the unique positive and even eigenfunction, and it has no roots. From Corollary \ref{cor:dphi} we have that $\phi_0'(x)$ is negative for $x>0$. This proves that $h_0$ is well defined over $\R_+$, and even more, by direct integration we have that the identity
		\begin{equation*}
			\phi_0(x) = \phi_0(0)\exp\left(\int_0^{x} h_0(y)dy \right),
		\end{equation*}
		is well defined over all $x\in [0, + \infty)$. The extension to any $x\in\mathbb R$ is direct.
		
		Proof of \eqref{h0:2}. This is a direct fact from the parity of $h_0$ and the eigenvalue equation \eqref{eq:eigen} that obeys $\phi_0$.
		
		Proof of \eqref{h0:3}. From \eqref{auxiliar0} and the decay estimates \eqref{eq:exp-decay} we have
		\begin{align*}
			(\phi_0'(x))^2 &= -\int_x^{\infty} (\mu_0^2 + V(y))(\phi_0^2)'(y)dy \\
			&= (\mu_0^2 + V(x))\phi_0^2(x) + \int_x^{\infty} V'(y) \phi_0^2(y) dy.
		\end{align*}
		Dividing by $\phi_0^2$ and by definition of $h_0$, we obtain
		\begin{equation*}
			h_0^2(x) = \mu_0^2 + V(x) + \frac{1}{\phi_0^2(x)} \int_x^\infty V'(y)\phi_0^2(y)dy.
		\end{equation*}
		Replacing in \eqref{eq:h0 ivp} we have \eqref{eq:h0'}.
		
		\medskip
		
	\end{proof}
	\begin{remark}
		The function $h_0$ is primordial to understand the Darboux transformation applied in Subsection \ref{subsec:transformedproblem}, since we can write the operators $L_0, U, U^*$ as follows
		\[
		L_0 = -\px^2 + 2(h_0^2 - \mu_0^2) - V,
		\]
		\[ U = \px - h_0, \qquad U^* = -\px - h_0. \]
	\end{remark}
	
	\begin{remark}
		Lemma \ref{lem:h0 properties} also suggests a growing dependence of the sign of $h_0'$ with respect to the potential $V'$. This fact and the convexity of $h_0$ will allow us to obtain useful bounds to control the derivative of the transformed potential $V_0'$.
	\end{remark}
	
	\begin{lemma}\label{def: roots}
		There exist only a unique positive root $x_0$ of $V(x)$, a unique positive root $x_1$ of $V'(x)$, and two positive roots $\{x_{2,1} , x_{2,1}\}$ of $V''(x)$. Moreover, $0 < x_{2,1} < x_0 < x_1 < x_{2,2}$ (see also Figure \ref{fig:better bound}).
	\end{lemma}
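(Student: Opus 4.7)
\textbf{Proof plan for Lemma \ref{def: roots}.}

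The plan is to reduce each statement to an algebraic condition on $\wtilQ$ alone, using the derivative identities $\wtilQ' = -\wtilQ^2\wtilH$ and $\wtilH' = \tfrac{1}{3}\wtilQ^2$ together with $\wtilH^2 = 1 - \tfrac{2}{3}\wtilQ$ (all established in Lemma~\ref{lemma:estimation} and the surrounding computations). Throughout I will use that on $[0,\infty)$ the map $x \mapsto \wtilQ(x)$ is a strictly decreasing bijection from $[0,\infty)$ onto $(0,\tfrac{3}{2}]$, and $\wtilH(x) > 0$ for $x > 0$. Thus each equation $\wtilQ(x) = q$ with $q \in (0, \tfrac{3}{2})$ has a unique positive solution, and two such roots are ordered by the reverse order of the corresponding $q$-values.

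First I would handle $V = 2\wtilQ^2(1-\wtilQ)$: since $\wtilQ > 0$, $V(x) = 0$ for $x > 0$ is equivalent to $\wtilQ(x) = 1$, giving the unique root $x_0$ with $\wtilQ(x_0) = 1$. Next, differentiating and substituting $\wtilQ' = -\wtilQ^2 \wtilH$ yields
\[
V'(x) = -2\wtilQ^3(x)\,\wtilH(x)\,\bigl(2 - 3\wtilQ(x)\bigr),
\]
and for $x > 0$ the only vanishing factor is $2 - 3\wtilQ$, giving the unique root $x_1$ with $\wtilQ(x_1) = \tfrac{2}{3}$.

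The slightly more involved step is $V''$. Differentiating once more and eliminating $\wtilH^2$ via $\wtilH^2 = 1 - \tfrac{2}{3}\wtilQ$, the plan is to obtain the factorization
\[
V''(x) = \wtilQ^4(x)\,p\bigl(\wtilQ(x)\bigr), \qquad p(q) := 18 q^2 - \tfrac{100}{3} q + 12.
\]
The two roots of $p$ are
\[
q_{\pm} = \frac{25 \pm \sqrt{139}}{27},
\]
which a direct numerical check places in the interval $(0, \tfrac{3}{2})$, with $q_+ \approx 1.36$ and $q_- \approx 0.49$. Pulling back through $\wtilQ^{-1}$ produces exactly two positive roots $x_{2,1}, x_{2,2}$ of $V''$, characterized by $\wtilQ(x_{2,1}) = q_+$ and $\wtilQ(x_{2,2}) = q_-$.

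Finally the ordering follows by comparing the $\wtilQ$-values at the four roots: since $\wtilQ$ is strictly decreasing on $[0,\infty)$ and
\[
q_+ > 1 > \tfrac{2}{3} > q_-,
\]
we obtain $x_{2,1} < x_0 < x_1 < x_{2,2}$. The only nontrivial part of the argument is the algebraic reduction producing the explicit quadratic $p(q)$ (and verifying both roots land in the admissible interval $(0, \tfrac{3}{2})$); everything else is a monotonicity argument on $\wtilQ$.
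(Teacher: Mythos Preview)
Your proof is correct and follows essentially the same approach as the paper: both reduce the root-counting to algebraic conditions on $\wtilQ$ via the derivative identities, arrive at the same quadratic $9q^2-\tfrac{50}{3}q+6$ (up to a factor of $2$) with roots $q_\pm=(25\pm\sqrt{139})/27$, and use the strict monotonicity of $\wtilQ$ on $[0,\infty)$. Your ordering argument, comparing the $\wtilQ$-values $q_+>1>\tfrac{2}{3}>q_-$ directly, is slightly cleaner than the paper's approach of evaluating the sign of $V''$ at $0$, $x_0$, and $x_1$, but the two are equivalent in content.
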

	
	\begin{remark}\label{rem:V}
		Explicitly, one has
		\begin{equation*}
			\begin{cases}
				\ V(x) \leq 0 \quad \text{ for } 0\leq x\leq x_0, \\
				\ V(x) \geq 0 \quad \text{ for } x\geq x_0.
			\end{cases} \qquad
			\begin{cases}
				\ V'(x) \geq 0 \quad \text{ for } 0\leq x\leq x_1, \\
				\ V'(x) \leq 0 \quad \text{ for } x\geq x_1.
			\end{cases} \qquad
		\end{equation*}
		
		\begin{equation*}
			\begin{cases}
				\ V''(x) \geq 0 \quad \text{ for } 0\leq x\leq x_{2,1}, \\
				\ V''(x) \leq 0 \quad \text{ for } x_{2,1}\leq x\leq x_{2,2}, \\
				\ V''(x) \geq 0 \quad \text{ for } x\geq x_{2,2}.\\
			\end{cases}
		\end{equation*}
	Below we provide a rigorous proof of the ordering $0 < x_{2,1} < x_0 < x_1 < x_{2,2}$ and $x_{2,2}>1$.
	\end{remark}
	
	\begin{proof}[Proof of Lemma \ref{def: roots}]
		Since $Q(x)$ is positive, even, decreasing for $x>0$, and has range $(0,\frac32)$, we easily see that for $V(x)=2\wtilQ^2(x)(1 - \wtilQ(x))$, its root $x_0>0$ is unique. From \eqref{tilde Q tilde H} and \eqref{eq:dalpha}, $V'$ satisfies 
		\begin{equation}\label{Vp}
			\begin{aligned}
				V' (x)= &~{} 4 \wtilQ (x)\wtilQ'(x) -6 \wtilQ^2(x) \wtilQ' (x)\\
				=&~{}  2\wtilQ^2(x) Q' (\alpha^{-1}(x))(2-3\wtilQ(x)).
			\end{aligned}
		\end{equation}
		By the same arguments as before, $x_1>0$ is unique. Moreover, $V'>0$ in $(0,x_1)$ and negative in $(x_1,\infty)$. Notice that $V(x_0)=2\wtilQ^2(x_0)(1 - \wtilQ(x_0))=0$, and since $x_0>0$,
		\[
		\ba
		V'(x_0)= &~{} 2\wtilQ^2(x_0) Q' (\alpha^{-1}(x_0))(2-3\wtilQ(x_0)) \\
		= &~{}-2\wtilQ^3(x_0) Q' (\alpha^{-1}(x_0))>0.
		\ea
		\]
		Therefore, by uniqueness $x_0<x_1$. Since also $V'(0)=0$, one has $0<x_{2,1}<x_1$, where $x_{2,1}>0$ is a root of $V''$. Finally,
		\[
		\begin{aligned}
		V'' (x)=&~{} 8 \wtilQ^2 (x)Q'^2(\alpha^{-1}(x))  +4 \wtilQ^3(x) Q'' (\alpha^{-1}(x)) \\
		&~{} -18 \wtilQ^3 (x) Q'^2(\alpha^{-1}(x))   -6 \wtilQ^4 (x)Q'' (\alpha^{-1}(x)).
		\end{aligned}
		\]
		Since $Q''=Q-Q^2$ and $Q'^2 =Q^2-\frac23Q^3$, we obtain
		\begin{equation}\label{Vpp}
			V'' (x)= 2\wtilQ^4(x) \left( 6 -\frac{50}3 \wtilQ(x) +9\wtilQ^2(x)\right).
		\end{equation}
		Notice that $\wtilQ \in (0,\frac32)$ in $x>0$. The equation $9m^2-\frac{50}3m +6=0$ has two positive real roots: $m_\pm =\frac1{27} (25 \pm \sqrt{139})$, $m_-\sim 0.49$ and $m_+\sim 1.36$, both below $\frac32$. Since $\alpha^{-1}$ is a bijection this implies that $V''$ has only two positive roots, $x_{2,1}$ and $x_{2,2}$. Let us check that $x_{2,1}<x_0$ and $x_{2,2}>x_1$. Indeed,
		\[
		\ba
		& V''(0)=2\left(\frac32 \right)^4 \left( 6 -\frac{50}3 \left(\frac32 \right) +9\left(\frac32 \right)^2\right)\sim 12.65,\\
		& V'' (x_0) = -\frac{10}3<0, 
		\ea
		\]
		therefore $x_{2,1}$ first root of $V''$ must satisfy $x_{2,1}<x_0$. Finally, since $\wtilQ(x_1) = \frac23$ and $V'(x_1)=0$ as unique root, we have
		\[
		V''(x_1)=2\left(\frac23 \right)^4 \left( 6 -\frac{50}3 \left(\frac23 \right) +9\left(\frac23 \right)^2\right)\sim -0.44,
		\]
		implying that $x_{2,2}>x_1$. Finally, the fact that $x_{2,2}>1$ is a consequence of the fact that $x_0>1$, proved in Lemma \ref{valor mu0}, and the ordering proved above. The proof is complete. 
	\end{proof}
	
	Recall that $h_0(x)< 0$ if $x>0$ (Lemma \ref{lem:h0 properties}).
	
	\begin{lemma}\label{lem:bounds h0}
		If we define
		\begin{equation}\label{tmu0}
			\widetilde \mu_0:= \sqrt{\mu_0^2 + \max_{y>0}V(y)},\quad  \max_{y>0} V(y) = \frac{8}{27},
		\end{equation}
		the following upper and lower bounds for $h_0$ are satisfied:
		\begin{enumerate}
			\item \label{l:b1} For all $x\geq0$,
			\begin{equation}\label{eq:h0 bound 0<x}
				-\tilde{\mu}_0 \leq h_0(x).
			\end{equation}
			\item \label{l:b2}
			For all $x\geq x_0$,
			\begin{equation}\label{eq:h0 bound x1<x}
				h_0(x) \leq -\mu_0.
			\end{equation}
			In addition, we have the limit
			\begin{equation}\label{lim h0}
				\lim_{x\to +\infty} h_0(x) = -\mu_0.
			\end{equation}
		\end{enumerate}
	\end{lemma}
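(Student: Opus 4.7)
\emph{Plan of proof.} The natural strategy is to exploit the factorization suggested by the two target values $-\mu_0$ and $-\widetilde{\mu}_0$, introducing the comparison functions
\[
u(x) := \phi_0'(x) + \mu_0 \phi_0(x), \qquad v(x) := \phi_0'(x) + \widetilde{\mu}_0 \phi_0(x),
\]
and reducing each bound to a sign question about $u$ and $v$. Using the eigenvalue equation $\phi_0'' = (\mu_0^2 + V)\phi_0$ one immediately obtains
\[
u' - \mu_0 u = V\phi_0, \qquad v' - \widetilde{\mu}_0 v = (V - \max V)\phi_0,
\]
so that both quantities satisfy first-order linear ODEs with sign-definite source terms on the relevant regions. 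Since $\phi_0 > 0$, the bounds $h_0 \leq -\mu_0$ (on $[x_0,\infty)$) and $h_0 \geq -\widetilde{\mu}_0$ (on $[0,\infty)$) are equivalent to $u \leq 0$ and $v \geq 0$ respectively.

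For item \eqref{l:b2}, the key remark is that $V \geq 0$ on $[x_0,\infty)$ by Lemma \ref{def: roots}, so $(ue^{-\mu_0 x})' = V\phi_0 e^{-\mu_0 x} \geq 0$ and hence $u e^{-\mu_0 x}$ is non-decreasing on $[x_0,\infty)$. If $u(x_*) > 0$ for some $x_* \geq x_0$, monotonicity would force $u(x) \geq u(x_*) e^{\mu_0(x - x_*)}$, contradicting $u(x) \to 0$ guaranteed by the exponential decay of $\phi_0$ and $\phi_0'$ (Lemma \ref{lem_decay_expo}). Hence $u \leq 0$ on $[x_0,\infty)$, proving \eqref{eq:h0 bound x1<x}.

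For item \eqref{l:b1}, notice that $V - \max V \leq 0$ everywhere, so $(v e^{-\widetilde{\mu}_0 x})' \leq 0$ on all of $[0,\infty)$; thus $v e^{-\widetilde{\mu}_0 x}$ is non-increasing. The initial value $v(0) = \widetilde{\mu}_0 \phi_0(0) > 0$ is positive because $\phi_0'(0) = 0$ by evenness of $\phi_0$. Should $v(x_*) < 0$ at some $x_* > 0$, the monotonicity above would imply $v(x) \leq v(x_*) e^{\widetilde{\mu}_0(x - x_*)} \to -\infty$ as $x \to \infty$, but $v(x) \to 0$ from the exponential decay in Lemma \ref{lem_decay_expo}. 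Therefore $v \geq 0$ on $[0,\infty)$, giving \eqref{eq:h0 bound 0<x}.

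Finally, for the limit \eqref{lim h0}, set $\eta(x) := -h_0(x) - \mu_0$, which is bounded and non-negative on $[x_0,\infty)$ by items \eqref{l:b1}-\eqref{l:b2}. Substituting in the Riccati equation \eqref{eq:h0 ivp} gives
\[
\eta' = \eta^2 + 2\mu_0 \eta - V \geq 2\mu_0 \eta - V.
\]
Suppose, for contradiction, that $\limsup_{x\to\infty}\eta(x) = \eta_0 > 0$; pick a sequence $x_n \to \infty$ with $\eta(x_n) \geq \eta_0/2$. Multiplying the differential inequality by the integrating factor $e^{-2\mu_0 x}$ and integrating from $x_n$ yields
\[
\eta(x)e^{-2\mu_0 x} \geq e^{-2\mu_0 x_n}\left( \eta(x_n) - \tfrac{1}{2\mu_0}\|V\|_{L^\infty([x_n,\infty))}\right).
\]
Since $V(x) \to 0$ as $x \to +\infty$, for $n$ large the parenthesis is bounded below by $\eta_0/4$, whence $\eta(x) \geq \frac{\eta_0}{4} e^{2\mu_0(x-x_n)}$, contradicting the boundedness $\eta \leq \widetilde{\mu}_0 - \mu_0$. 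Thus $\eta \to 0$ and \eqref{lim h0} follows.

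The main obstacle is precisely this last step: $-\mu_0$ is the \emph{unstable} fixed point of the autonomous limiting Riccati equation $h' = \mu_0^2 - h^2$, and one must use simultaneously the bound $h_0 \leq -\mu_0$ (which pins $\eta$ to the correct side), the boundedness from below $h_0 \geq -\widetilde{\mu}_0$, and the decay $V \to 0$, to trap $\eta$ at $0$ via a Grönwall/contradiction argument, rather than the usual attractor dynamics argument.
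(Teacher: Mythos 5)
Your proof is correct, but it follows a genuinely different route from the paper's. Where the paper works directly with the Riccati variable $h_0$ and the integral identity $h_0'(x) = -\phi_0^{-2}(x)\int_x^\infty V'\phi_0^2$, you work with the linear functions $u = \phi_0' + \mu_0\phi_0$ and $v = \phi_0' + \widetilde\mu_0\phi_0$, which satisfy first-order linear ODEs with sign-definite forcing, and you conclude via integrating factors plus the decay of $\phi_0, \phi_0'$ from Lemma~\ref{lem_decay_expo}. For item~\eqref{l:b1} the paper instead locates the unique interior minimum $\bar x$ of $h_0$ (a sign analysis of $h_0'$ from the integral formula) and evaluates the Riccati equation there, giving $h_0^2(\bar x) = \mu_0^2 + V(\bar x) \leq \widetilde\mu_0^2$; your ODE-comparison approach avoids this structural discussion at the cost of invoking decay at infinity. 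For item~\eqref{l:b2} the paper proves $\mu_0^2 - h_0^2 \leq 0$ directly from the integral formula with a case split across $x_0 \leq x \leq x_1$ and $x \geq x_1$; your argument handles the whole interval $[x_0,\infty)$ in one stroke since all it uses is $V \geq 0$ there, which is cleaner. For the limit~\eqref{lim h0} the paper is simpler: it bounds $|h_0^2 - \mu_0^2| \leq 2|V(x)|$ directly from the integral formula and lets $x \to \infty$, whereas your Grönwall-type contradiction against the unstable fixed point is more elaborate (though valid). One point worth making explicit in your write-up: the contradiction in items~\eqref{l:b1}–\eqref{l:b2} uses that the forced growth rates $\mu_0$ and $\widetilde\mu_0$ exceed the decay rate $\tfrac{\sqrt 2}{2}\mu_0$ from Lemma~\ref{lem_decay_expo}, which is indeed the case; but strictly you only need $u(x), v(x) \to 0$, not any specific rate, so the argument is robust.
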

	\begin{proof}
		Proof of \eqref{l:b1}. By Lemma \ref{def: roots} we know that $V'(x)$ has a unique positive root $x_1$. Then, by \eqref{eq:h0'} and Remark \ref{rem:V} we conclude that $h_0'$ is positive for large $x$ and it has at most one positive root. Now, from Lemma \ref{valor mu0}, \eqref{eq:h0 ivp}, $Q(0)=\widetilde Q(0)=\frac32$ and \eqref{def_L_V}, $h_0'$ satisfies
		\[ 
		h_0'(0) = \mu_0^2 + V(0) = \mu_0^2 - \frac94 \sim -1.59.
		\]
		Also, by Remark \ref{rem:V}, and \eqref{eq:h0'} we obtain $h_0'(x_1) > 0$. Therefore there exists a unique positive root of $h_0'$, that we denote $\bar x$, with $0<\bar x < x_1$. Moreover, $h_0' < 0$ in $(0, \bar x)$ and positive in $(\bar x, \infty)$. Due to the sign of $h_0$, $\bar x$ must correspond to the global minimum for $h_0$ in the positive line. With this result, $h_0\leq 0$ and using \eqref{eq:h0 ivp} and \eqref{tmu0},
		\[
		\begin{aligned}
			h_0^2(x) \leq  h_0^2(\bar x) = \mu_0^2 + V(\bar x) \leq  \mu_0^2 + \max_{y>0} V(y) = \tilde \mu_0^2.
		\end{aligned}
		\]
		This concludes \eqref{eq:h0 bound 0<x}.	
		
		Proof of \eqref{l:b2}. First, from Remark \ref{rem:V}, if $x \geq x_1$ then $V(x) >0$, $V'(x)\leq 0, \phi_0'(x) <0$, and by \eqref{eq:h0 ivp} and \eqref{eq:h0'} we have
		\[
		\begin{aligned}
			\mu_0^2 - h_0^2(x) = &~{}  -\frac{1}{\phi_0^2(x)} \int_x^{\infty} V'(y)\phi_0^2(y)dy - V(x)\\
			\leq &~{} - \int_x^{\infty} V'(y) dy - V(x)  = 0.
		\end{aligned}
		\]
		Since $h_0(x)\leq 0$, we conclude that $h_0(x) \leq -\mu_0$.
		
		Similarly, from Remark \ref{rem:V}, if $x_0 \leq x \leq x_1$ we have that $V(x), V'(x) \geq 0$, $\phi_0'(x)<0$. Then
		\begin{align*}
			\mu_0^2 - h_0^2(x) = &~{} -\frac{1}{\phi_0^2(x)} \int_x^{x_1} V'(y)\phi_0^2(y)dy + \frac{1}{\phi_0^2(x)} \int_{x_1}^{\infty} |V'(y)|\phi_0^2(y)dy - V(x) \\
			\leq &~{}   -\frac{\phi_0^2(x_1)}{\phi_0^2(x)} \int_x^{x_1} V'(y)dy  - \frac{\phi_0^2(x_1)}{\phi_0^2(x)} \int_{x_1}^{\infty} V'(y)dy - V(x) \\
			=&~{} -\left( 1- \frac{\phi_0^2(x_1)}{\phi_0^2(x)}\right)V(x) \leq 0.
		\end{align*}
		We conclude that $h_0(x)\leq -\mu_0$ for all $x \geq x_0$.
		
		If we consider $x\geq x_1$ we have $V'(x)\leq0$, and using \eqref{eq:h0 ivp} and \eqref{eq:h0'} and by triangle inequality we have
		\be\label{Vpneg}
		\ba
		\left| h_0^2(x) - \mu_0^2\right| = &~{}  |V(x)-h_0'(x)| \\
		\leq &~{} \frac{1}{\phi_0^2(x)} \int_x^{\infty} |V'(y)|\phi_0^2(y)dy + |V(x)| \leq 2|V(x)|.
		\ea
		\ee
		Taking $x$ to infinity in this last inequality, we obtain \eqref{lim h0}.
	\end{proof}
	
	We will need a refined version of the previous result. The next lemma will be used to obtain better bounds for $h_0$ in the interval $(0, x_0)$.
	
	\begin{lemma}
		For all $x\geq0$, one has
		\begin{equation}\label{eq:h2<h0<h1}
			(\mu_0^2 - \widetilde\mu_0^2)x - 2\wtilQ(x)\wtilH(x) \leq h_0(x) \leq \mu_0^2 x - 2\wtilQ(x)\wtilH(x),
		\end{equation}
		where $\widetilde\mu_0$ is defined in \eqref{tmu0}, and $\wtilH$ is the modified version by $\alpha^{-1}$ of the kink $H$ satisfying \eqref{eq:static-EL}. Even more,
		\begin{equation}\label{eq:better bound}
			\mu_0^2 x - R(x) \leq h_0 \qquad \text{for all } x>0,
		\end{equation}
		where we define the auxiliary function
		\[
		R(x):= 2\ln\left(\frac32\right) - 2\ln(\wtilQ) + 2\wtilQ\wtilH + \frac{\mu_0^2 - \tilde\mu_0^2}{2}x^2.
		\]
	\end{lemma}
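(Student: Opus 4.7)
The starting point is to integrate the Riccati equation $h_0' + h_0^2 = \mu_0^2 + V$ with $h_0(0)=0$ from $0$ to $x$ and get the exact identity
\[
h_0(x) + \int_0^x h_0^2(s)\,ds \;=\; \mu_0^2 x - 2\widetilde Q(x)\widetilde H(x).
\]
For this I first record two antiderivative identities that follow from the algebra of $\widetilde Q, \widetilde H$ in \eqref{derivada} and from $\widetilde H^2 = 1 - \tfrac23\widetilde Q$: namely $(2\widetilde Q\widetilde H)' = \tfrac23\widetilde Q^3 - 2\widetilde Q^2\widetilde H^2 = -V$, hence $\int_0^x V = -2\widetilde Q\widetilde H$; and $(\ln\widetilde Q)' = \widetilde Q'/\widetilde Q = -\widetilde Q\widetilde H$, hence $\int_0^x 2\widetilde Q\widetilde H\,ds = 2\ln(3/(2\widetilde Q))$. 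Both identities appear directly in $R$.

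Given the exact identity above, \eqref{eq:h2<h0<h1} is immediate: the upper bound comes from $h_0^2 \ge 0$, while the lower bound uses the crude pointwise estimate $h_0^2 \le \widetilde\mu_0^2$ supplied by Lemma~\ref{lem:bounds h0}(\ref{l:b1}), so that $\int_0^x h_0^2 \le \widetilde\mu_0^2 x$ and one recovers $(\mu_0^2-\widetilde\mu_0^2)x - 2\widetilde Q\widetilde H \le h_0(x)$.

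For the sharper bound \eqref{eq:better bound}, the refinement is to replace the constant bound $\widetilde\mu_0^2$ on $h_0^2$ by the $x$-dependent one coming from the lower bound just proved. Setting $m(x) := (\widetilde\mu_0^2 - \mu_0^2)x + 2\widetilde Q\widetilde H \ge 0$, Lemma~\ref{lem:h0 properties}(\ref{h0:1}) and \eqref{eq:h2<h0<h1} place $h_0(x) \in [-m(x),0]$, and the elementary inequality $y^2 \le (-y)\,m$ for $y\in[-m,0]$ yields the pointwise estimate $h_0^2(x) \le m(x)\,(-h_0(x))$. Inserting this in the Riccati equation converts it into the linear differential inequality
\[
h_0'(x) - m(x)\,h_0(x) \;\ge\; \mu_0^2 + V(x).
\]
The integrating factor is $E(x) = \exp\!\bigl(-\int_0^x m\bigr)$, which by the two identities of the first paragraph factors explicitly as $E(x) = \tfrac{4}{9}\widetilde Q^2(x)\,e^{-(\widetilde\mu_0^2-\mu_0^2)x^2/2}$ (in particular $E(0)=1$). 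Multiplying by $E$, integrating from $0$, and then re-expressing $\int_0^x E(\mu_0^2+V)\,ds$ through a further integration by parts against $(2\widetilde Q\widetilde H)'=-V$ allows one to recombine the logarithmic piece $2\ln(3/(2\widetilde Q))$, the boundary term $2\widetilde Q\widetilde H$, and the Gaussian quadratic in $x$ into exactly the combination defining $R(x)$, yielding $h_0(x) \ge \mu_0^2 x - R(x)$.

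The main obstacle is the bookkeeping in the last paragraph: the integrating factor mixes the polynomial decay of $\widetilde Q$ with a Gaussian in $x$, and one has to carry out the integration by parts on $\int E\cdot V$ carefully so that the surviving terms recombine into $-\ln E + 2\widetilde Q\widetilde H$ modulo the explicit quadratic correction, reproducing $R$ rather than a looser surrogate. A secondary technical point is ensuring that $m(x)\ge 0$ throughout, which is clear since both $(\widetilde\mu_0^2-\mu_0^2)x$ and $2\widetilde Q\widetilde H$ are nonnegative for $x\ge 0$, so the linear comparison argument is legitimate.
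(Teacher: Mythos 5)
Your argument for \eqref{eq:h2<h0<h1} is correct and matches the paper's in substance: integrating the Riccati equation to $h_0(x)+\int_0^x h_0^2=\mu_0^2x-2\wtilQ(x)\wtilH(x)$ and then using $h_0^2\ge0$ for the upper bound and $h_0^2\le\widetilde\mu_0^2$ for the lower bound is exactly the integrated form of the paper's comparison-ODE argument with the auxiliary profiles $h_1,h_2$.

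For \eqref{eq:better bound} your linearization $h_0^2\le m(-h_0)$, with $m:=-h_2\ge0$, is a genuinely different route from the paper's (which compares $h_0$ with the solution of $h_3'=\mu_0^2-h_2^2+V$), and the idea is appealing, but the concluding step you flag as ``bookkeeping'' is an actual gap. The solution of $h'-mh=\mu_0^2+V$, $h(0)=0$, is $E^{-1}(x)\int_0^x E(\mu_0^2+V)$ with $E=\exp(-\int_0^x m)$, and it does \emph{not} equal $\mu_0^2x-R(x)$. Integrating by parts $\int_0^x EV=-2\int_0^x E(\wtilQ\wtilH)'$ produces the boundary term $-2E\wtilQ\wtilH$ but also a leftover $2\int_0^x E\,h_2\,\wtilQ\wtilH$, which involves $\int E\,s\,\wtilQ\wtilH$ and $\int E\,\wtilQ^2\wtilH^2$ and does not close into the elementary pieces appearing in $R$; equivalently, a direct check shows $\mu_0^2x-R$ fails to satisfy $h'-mh=\mu_0^2+V$. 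More fundamentally, with the coefficient $\tfrac{\mu_0^2-\widetilde\mu_0^2}{2}<0$ on $x^2$ as written, $\mu_0^2x-R(x)\sim\tfrac{\widetilde\mu_0^2-\mu_0^2}{2}x^2\to+\infty$ while $h_0\to-\mu_0<0$, so the displayed inequality cannot hold for all $x>0$ (it is only invoked later on the bounded interval $(x_{2,1},x_0)$). The paper's own derivation shares an analogous defect: the asserted $h_3=\mu_0^2x-R$ does not integrate $\mu_0^2-h_2^2+V$, since that would force $\int_0^x h_2^2=-\int_0^x h_2$, which fails pointwise. So the formula for $R$ and the claimed domain of validity need repair, and your final identification step does not supply a proof as stated.
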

	\begin{proof}
		First, we consider the initial value problem:
		\begin{equation}\label{eq:tilh ivp}
			\begin{cases}
				\ h_1' = \mu_0^2 + V \\
				\ h_1(0) = 0.
			\end{cases}
		\end{equation}
		Using \eqref{eq:dalpha}, and a change of variables, we have
		\[
		\begin{aligned}
			\int_0^x V(y)dy = &~{}  2\int_0^x \widetilde Q^2(y)(1-\widetilde Q(y))dy =  2\int_0^{\alpha^{-1}(x)}  Q(s)(1- Q(s))ds \\
			=&~{} 2\int_0^{\alpha^{-1}(x)} Q''(s)ds = 2Q' (\alpha^{-1}(x)) =- 2\wtilQ(x)\wtilH(x).
		\end{aligned}
		\]
		Then, the explicit solution for \eqref{eq:tilh ivp} problem is given by
		\[ 
		h_1(x) = \int_0^x (\mu_0^2 + V(y))dy = \mu_0^2 x + \int_0^x V(y)dy = \mu_0^2 x - 2\wtilQ(x)\wtilH(x). 
		\]
		Notice that $h_1(0) = h_0(0)=0$, and from \eqref{eq:tilh ivp} one has $h_0'(x)\leq h_1'(x)$ for all $x\geq0$. Thus, the inequality
		\[
		h_0(x) \leq \mu_0^2 x - 2\wtilQ(x)\wtilH(x),
		\]
		holds for all $x\geq 0$. This proves the upper bound in \eqref{eq:h2<h0<h1}.
		
		Second, we consider the initial value problem:
		\be\label{h_2}
		\begin{cases}
			\ h_2' = \mu_0^2 - \widetilde\mu_0^2 + V \\
			\ h_2(0) = 0.
		\end{cases}
		\ee
		The explicit solution for this problem is given by
		\[
		\begin{aligned}
			h_2(x) = &~{} \int_0^x (\mu_0^2 - \widetilde\mu_0^2 + V(y))dy
			=(\mu_0^2 - \widetilde\mu_0^2) x + \int_0^x V(y)dy \\
			=&~{} (\mu_0^2 - \widetilde\mu_0^2) x - 2\wtilQ(x)\wtilH(x). 
		\end{aligned}
		\]
		Using \eqref{eq:h0 bound 0<x}, one has
		\[
		h_2'(x) \leq \mu_0^2 + V(x) - h_0^2(x) = h_0'(x),
		\]
		for all $x\geq 0$. Since $h_2(0) = h_0(0)=0$, this implies that $h_2(x)\leq h_0(x) \leq0$. Hence,
		\[
		(\mu_0^2 - \widetilde\mu_0^2) x - 2\wtilQ(x)\wtilH(x)\leq h_0(x)
		\]
		for all $x \geq 0$, obtaining the lower bound in \eqref{eq:h2<h0<h1}. We notice that we can improve this bound analogously. Let $h_2$ be given in \eqref{h_2}. If we consider the initial value problem
		\be\label{h_3}
		\begin{cases}
			\ h_3' = \mu_0^2 - h_2^2 + V \\
			\ h_3(0) = 0,
		\end{cases}
		\ee
		the explicit solution $h_3$ is given by
		\[
		h_3(x) = \mu_0^2 x -  2\ln\left(\frac32\right) + 2\ln(\wtilQ) - 2\wtilQ\wtilH - \frac{\mu_0^2 - \tilde\mu_0^2}{2}x^2.
		\]
		Notice that $h_0^2(x) \leq h_2^2(x)$. Since from \eqref{h_3} and \eqref{eq:h0 ivp} we have $h_3'(x)\leq h_0'(x)$ for all $x>0$, and $h_3(0) = h_0(0) = 0$, we conclude that $h_3\leq h_0$, and this proves \eqref{eq:better bound}.
	\end{proof}
	
	Now, we are in condition to obtain estimates for $h_0$ in the interval $(0, x_0)$ in the next lemma, useful for the proof of repulsivity in the transformed potential.
	
	\begin{lemma}\label{lem8p8}
	 One has the following properties:
		\begin{enumerate}
			\item \label{l:b3} For $0\leq x\leq x_{2,1}$ we have
			\begin{equation}\label{eq:h0 bound 0<x<x0}
				-4k_0\wtilH(x) \leq h_0(x)
			\end{equation}
		where $k_0:= \frac{9 - 4\mu_0^2}{3}$ is a positive constant.
			\item \label{l:b4} For all $x$ such that $x_{2,1} \leq x \leq x_0$,
			\begin{equation}\label{eq:h0 bound x0<x<x1}
				(\mu_0^2 - \widetilde\mu_0^2) (x-x_0) -\widetilde\mu_0 \leq h_0(x) \leq - \frac{\mu_0}{x_0}x.
			\end{equation}
		\end{enumerate}	
	\end{lemma}
	\begin{proof}	
		Proof of \eqref{l:b3}. Thanks to Lemma \ref{valor mu0} the constant $k_0$ is strictly positive. We define the auxiliary function
		\begin{equation}\label{def g}
			g(x) = h_0(x) +  4k_0\wtilH(x).
		\end{equation}
		By direct calculation, we obtain $g(0) = g'(0) = 0$, and by the mean value theorem,
		\begin{equation}\label{eq:MVT g}
			g(x) = g'(\xi)x,
		\end{equation}
		for some $\xi \in (0, x)$. Thus, to prove the positivity of $g$ for $0\leq x\leq x_{2,1}$, it is enough to study the sign of $g'$.
		Deriving $g$, and using \eqref{eq:h0 ivp}, \eqref{tilde Q tilde H}, \eqref{eq:dalpha}, one has that proving $g'\geq0$ is equivalent to prove
		\begin{equation}\label{eq:equiv bound}
			h_0^2 \leq \mu_0^2 + V + \frac43 k_0\wtilQ^2.
		\end{equation}
		for $0\leq x\leq x_{2,1}$. Using \eqref{eq:better bound} we have that
		\begin{equation}\label{eq:1}
			h_0^2 \leq \mu_0^4x^2 - 2\mu_0^2 xR(x) + R^2(x).
		\end{equation}
		The RHS of this last equation is explicit except for $\mu_0$, so comparing both RHSs of \eqref{eq:equiv bound} and \eqref{eq:1}, it is sufficient to prove the following,
		\[
		\mu_0^4x^2 - 2\mu_0^2 xR(x) + R^2(x) \leq \mu_0^2 + V + \frac43 k_0\wtilQ^2,
		\]
		equivalent to prove
		\begin{equation}\label{eq:2}
		\mu_0^4 x^2 + \left(\frac49\wtilQ^2 - 2xR(x) - 1\right)\mu_0^2 \leq V + \wtilQ^2 - R^2
		\end{equation}
		for all $0\leq x\leq x_{2,1}$. Now, applying Lemma \ref{valor mu0}, one has
		\begin{equation}\label{eq:3}
		\mu_0^4 x^2 + \left(\frac49\wtilQ^2 - 2xR(x) - 1\right)\mu_0^2 \leq G(\alpha^{-1}(x)),
		\end{equation}
		where
		\[
		G(s):= (0.883)^4  \alpha(s)^2 + \left(\frac49 Q^2 - 2\alpha(s)R(\alpha(s)) - 1\right)(0.808)^2
		\]
		is given by explicit functions. Combining these last inequalities, we obtain
		\begin{equation}\label{eq:4}
			G(s) \leq V(\alpha(s)) + Q^2(s) - R^2(\alpha(s)),
		\end{equation}
		for $0\leq s \leq \alpha^{-1}(x_{2,1})$ (see Figure \ref{fig:better bound}). Indeed, \eqref{eq:4} is equivalent to the estimate
		\[
		\ba
		& (0.883)^4  \alpha(s)^2 + (0.808)^2 \left(\frac49Q(s)^2 - 2\alpha(s)R(\alpha(s)) - 1\right)\\
		&\qquad \leq  3Q^2(s) -2Q^3(s)  -R^2(\alpha(s)),
		\ea
		\]
		where we have used that 
		\[
		V(\alpha(s)) = 2Q^2(s)(1 -Q(s)),
		\]
		and
		\be\label{Ralphas}
		R(\alpha(s)):= 2\ln\left(\frac32\right) - 2\ln(Q) (s)+ 2 Q (s) H(s) + \frac{\mu_0^2 - \tilde\mu_0^2}{2}\alpha(s)^2.
		\ee
		
		\begin{figure}[ht]
			\centering
			\includegraphics[width=0.89\textwidth]{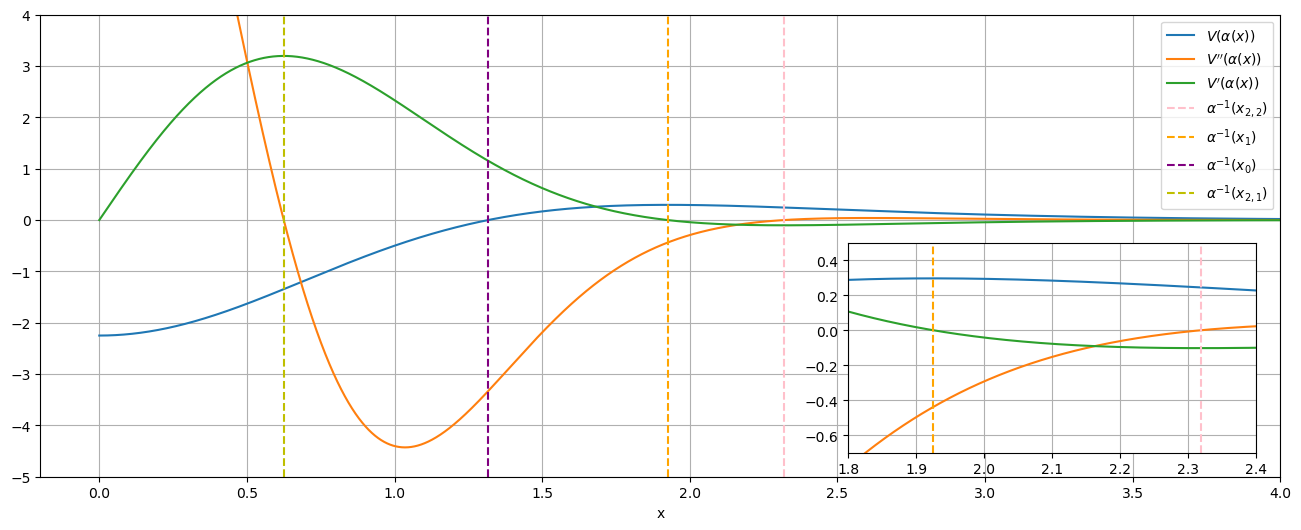}
			\includegraphics[width=0.89\textwidth]{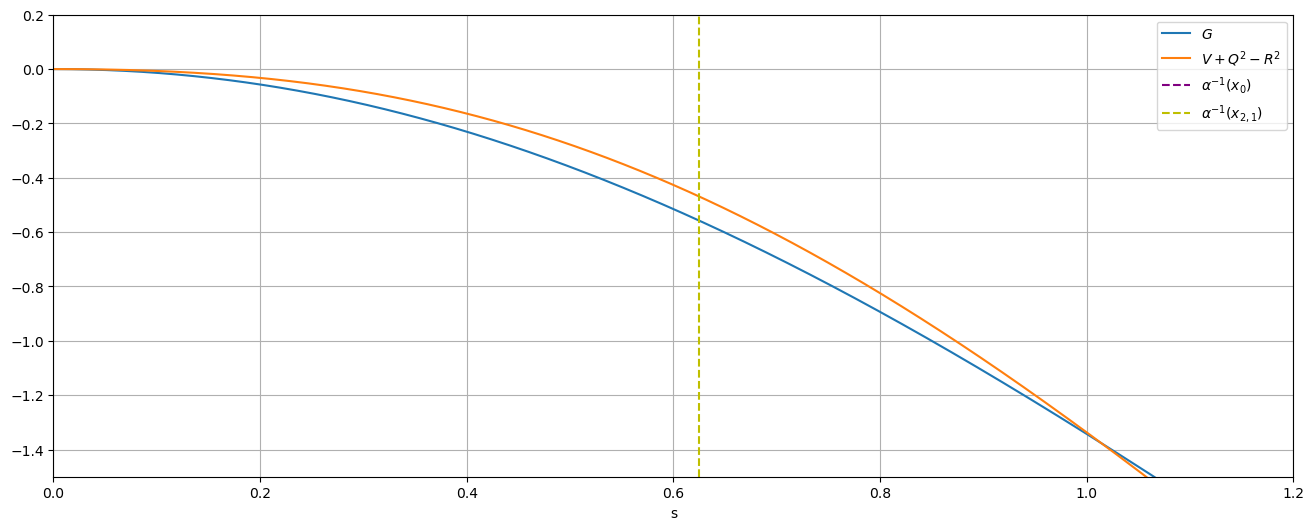}
			\caption{Above: Numerical computation of $V(\alpha(x)), V'(\alpha(x)), V''(\alpha(x))$ where their roots are explicitly plotted in dashed vertical lines. In particular we observe that $0 < x_{2,1} < x_0 < x_1 < x_{2,2}$. Below: Numerical computation of auxiliary functions $G(s)$ and $ V(\alpha(s)) + Q^2(s) - R^2(\alpha(s)$. In particular we observe that $G \leq V + Q^2 + R$ for $x\in(0, x_{2,1})$.}
			\label{fig:better bound}
		\end{figure}
		
		Replacing \eqref{eq:4} into \eqref{eq:3} we obtain \eqref{eq:2}, and we conclude via \eqref{eq:1} that $g'(x)\geq0$. This proves that $g$ is a positive function for  $0\leq x \leq x_{2,1}$. Hence, by \eqref{def g} and \eqref{eq:MVT g} we conclude \eqref{eq:h0 bound 0<x<x0}.
		
		\medskip
		
		Proof of \eqref{l:b4}. We claim that $h_0$ is a convex function for $x\in(0, x_0)$. First, from the proof of Lemma \ref{lem:bounds h0} we know that $h_0'$ has a unique root denoted by $\bar x$, with $h_0'(x)< 0$ in $(0, \bar x)$ and negative sign in $(\barx, \infty)$. Now using that $V(x_0)=0$, \eqref{eq:h0 bound x1<x}, and \eqref{eq:h0 ivp}, we have
		\[
		h_0'(x_0) = \mu_0^2 - h_0^2(x_0) \leq \mu_0^2 - \mu_0^2 = 0.
		\]
		This implies that $h_0'$ is negative in $(0, x_0)$.
		
		In addition, if $x\in (x_{2,1}, x_0)$ we know from \eqref{eq:h0 bound 0<x} that $-\widetilde \mu_0 \leq h_0$. Hence, replacing in \eqref{eq:h0 ivp}, we obtain
		\begin{equation}\label{eq:bound h0'}
			\mu_0^2 - \widetilde\mu_0^2 + V(x) \leq \mu_0^2 - h_0^2 + V = h_0'(x) \leq 0. 
		\end{equation}
		Taking derivative in \eqref{eq:h0 ivp}, using that $h_0', h_0 \leq 0$, the lower bounds from \eqref{eq:h0 bound 0<x} \eqref{eq:better bound} and \eqref{eq:bound h0'},
		\begin{align*}
			h_0'' =&~{} V' -2h_0h_0' \geq V' - 2(\mu_0^2 x - R(x)) h_0' \\
			\geq &~{} V' - 2(\mu_0^2 x - R(x)) (\mu_0^2 - \widetilde\mu_0^2 + V) \\
			\geq &~{} V' - 2(0.808^2 x - R(x)) \left(-\frac{8}{27} + V\right) \\
			=: &~{} j_1(\alpha^{-1}(x))\wtilH
		\end{align*}
		where $j_1$ is obtained employing Lemma \ref{valor mu0}. Computing this function, we have that 
		\be\label{checkeo7}
		\hbox{$j_1(s)>0$ for all $s\in (\alpha^{-1}(x_{2,1}), \alpha^{-1}(x_0))$}
		\ee
		 (see Fig. \ref{fig:positivity bounds} upper panel). Hence, by bijectivity of $\alpha:\R\to\R$, we conclude 
		\[
		h_0''(x) \geq j_1(\alpha^{-1}(x)) > 0,
		\]
		for all $x\in (x_{2,1}, x_0)$. This proves the convexity of $h_0(x)$ over $(x_{2,1}, x_0)$. Using \eqref{eq:h0 ivp}, \eqref{eq:h0 bound 0<x}, if $x_{2,1} \leq x \leq x_0$, by definition of convexity,
		\[
		\begin{aligned}
			h_0(x) \geq &~{} h_0'(x_0)(x - x_0) + h_0(x_0) \\
			= &~{} (\mu_0^2 - h_0^2(x_0) )(x - x_0) + h_0(x_0) \geq (\mu_0^2 - \widetilde\mu_0^2) (x - x_0) -\widetilde\mu_0.
		\end{aligned}
		\]
		This proves the lower bound in \eqref{eq:h0 bound x0<x<x1}.
		
		If now $0\leq x\leq x_{2,1}$, first we define the positive constants
		\[
		\begin{aligned}
		\kbar_0:=\frac{1}{3}(9 - 4(0.883)^2), \quad \bark_0:=\frac{1}{3}(9 - 4(0.808)^2).	
		\end{aligned}
		\]
		Notice that thanks to Lemma \ref{valor mu0} the inequality $\kbar_0<k_0<\bark_0$ holds. Using that $h_0', h_0 \leq 0$, $V'\geq0$, \eqref{eq:h0 bound 0<x<x0},\eqref{eq:h0 ivp}, and replacing $V, V'$, we have the following set of inequalities
		\[
		\begin{aligned}
			h_0'' = &~{} V' - 2h_0h_0'  \\		
			\geq &~{} V' + 2k_0\left(\mu_0^2 + V - k_0^2 \wtilH^2 \right)\wtilH \\
			= &~{} \left(-2k_0(k_0^2 - \mu_0^2) + \frac43 k_0^3\wtilQ + 4k_0 \wtilQ^2 - 4(1 + k_0)\wtilQ^3 + 6\wtilQ^4 \right)\wtilH \\
			\geq &~{} \left(-2\bark_0(\bark_0^2 - \mu_0^2) + \frac43 \kbar_0^3\wtilQ + 4\kbar_0\wtilQ^2 - 4(1 + \bark_0)\wtilQ^3 + 6\wtilQ^4 \right)\wtilH \\
			:= &~{} j_2(\alpha^{-1}(x))\wtilH.
		\end{aligned}
		\]
		Considering the variable $s = \alpha^{-1}(x)$, we obtain that $j_2(s)$ simplifies to 
		\be\label{j2final}
		\begin{aligned}
		j_2(s) = &~{} -2\bark_0(\bark_0^2 - \mu_0^2) + \frac43 \kbar_0^3 Q(s) \\
		&~{} + 4\kbar_0 Q^2 (s)- 4(1 + \bark_0) Q^3(s) + 6 Q^4(s).
		\end{aligned}
		\ee
		The expression \eqref{j2final} is bounded employing Lemma \ref{valor mu0}. Computing (see Fig. \ref{fig:positivity bounds} upper panel), we have that 
		\be\label{checkeo7}
		\hbox{$j_2(s)>0$ for all $s\in (0, \alpha^{-1}(x_{2,1}))$}.
		\ee
		Hence, by bijectivity of $\alpha$, we conclude $h_0''(x)>0$ for all $x\in (0, x_{2,1})$.
		
		This proves the convexity of $h_0$ over $(0, x_0)$, and it is enough to conclude \eqref{eq:h0 bound x0<x<x1}. Indeed, using convexity between $(0, h_0(0))$ and $(x_0,
		h_0(x_0))$, and \eqref{eq:h0 bound x1<x}, we have 
		\[
		h_0(x) \leq \frac{h_0(x_0)}{x_0}x \leq  - \frac{\mu_0}{x_0}x.
		\]
		This proves the upper bound in \eqref{eq:h0 bound x0<x<x1}, where $x_{2,1}<x<x_0$ and we conclude the proof of Lemma \ref{lem8p8}.
	\end{proof}

	\subsection{Positivity.}\label{posi}
	Now, employing the estimates over $h_0$ in the previous subsection and the integral form of $h_0'$, we are in position to deal with the sign of $V_0$.
	
	\begin{lemma}\label{lem:positivity}
		The potential $V_0$ in \eqref{eq:L0} is non-negative over the real line. In particular $L_0$ only has nonnegative spectrum.
	\end{lemma}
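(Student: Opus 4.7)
The plan is to exploit the two equivalent rewritings
\[
V_0 = 2h_0^2 - 2\mu_0^2 - V = V - 2h_0', \qquad V_0(x) = V(x) + \frac{2}{\phi_0^2(x)}\int_x^\infty V'(y)\phi_0^2(y)\,dy,
\]
obtained from the Riccati equation $h_0' + h_0^2 = \mu_0^2 + V$ in Lemma~\ref{lem:h0 properties}\eqref{h0:2} together with \eqref{eq:h0'}. By evenness of $V$ and $h_0^2$, the potential $V_0$ is even, so it suffices to prove $V_0(x)\geq 0$ on $[0,\infty)$. I will split $[0,\infty)$ according to the sign regions from Lemma~\ref{def: roots} and, in each subinterval, insert the sharpest available control on $h_0$ from Lemma~\ref{lem:bounds h0} and its refinements, together with the numerical enclosure $\mu_0 \in [0.808,0.883]$ from Lemma~\ref{valor mu0}.

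On the compact range $0\leq x\leq x_0$, where $V\leq 0$, I rewrite $V_0 = 2h_0^2 + |V| - 2\mu_0^2$ and insert the lower bounds on $|h_0|$ from \eqref{eq:h0 bound 0<x<x0} on $[0,x_{2,1}]$ and from \eqref{eq:h0 bound x0<x<x1} on $[x_{2,1},x_0]$. Composing with $\alpha^{-1}$ and using the closed-form expressions for $V$, $\wtilH$, $\wtilQ$, the inequality collapses to an explicit one-variable polynomial/rational bound on a compact interval, of the same flavour as the auxiliary functions $j_1,j_2$ that already appeared in the proof of \eqref{eq:h0 bound x0<x<x1}. The transition point $x_{2,1}$ is the only place where two different bounds must be spliced, and Lemma~\ref{def: roots} guarantees the relevant signs agree there.

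On the unbounded half-line $x\geq x_0$, where $V\geq 0$, I will integrate the original integral formula by parts and use Corollary~\ref{cor:dphi} (so $\phi_0'\leq 0$) to get
\[
V_0(x) = -V(x) + \frac{4}{\phi_0^2(x)}\int_x^\infty V(y)\,|\phi_0'(y)|\,\phi_0(y)\,dy.
\]
Substituting $|\phi_0'|=|h_0|\phi_0\geq \mu_0\phi_0$ (from \eqref{eq:h0 bound x1<x}) and then bounding $\phi_0^2(y)/\phi_0^2(x)\geq e^{-2\tilde\mu_0(y-x)}$ by means of the uniform lower bound $h_0\geq-\tilde\mu_0$ of \eqref{eq:h0 bound 0<x}, the claim reduces to the scalar inequality
\[
4\mu_0\int_x^\infty V(y)\,e^{-2\tilde\mu_0(y-x)}\,dy \;\geq\; V(x)\qquad \text{for all } x\geq x_0,
\]
with $\tilde\mu_0^2=\mu_0^2+\tfrac{8}{27}$. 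Again, after the substitution $s=\alpha^{-1}(y)$ this is explicit and verifiable from the closed forms of $V$ and $Q$, using the numerical window for $\mu_0$.

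Once $V_0\geq 0$ is established, the spectral conclusion is immediate: the quadratic form $\langle L_0 f,f\rangle = \|f'\|_{L^2}^2 + \int V_0 f^2$ is non-negative, so $\sigma(L_0)\subset[0,\infty)$; moreover $V_0(x)\to 0$ as $|x|\to\infty$ by \eqref{lim h0} and the decay of $V$, so the essential spectrum is $[0,\infty)$ by the same Weyl-type argument used for Lemma~\ref{lemma:Lproperties}\eqref{l:L1}. I expect the hardest step to be the explicit verification on $x\geq x_0$, particularly near $x=x_0$, where $h_0\leq -\mu_0$ becomes essentially sharp and the integral term must compensate $-V(x)$ with a uniform positive margin throughout the interval $\mu_0\in[0.808,0.883]$; the preparatory lemmas on $h_0$ have been tailored precisely to keep this margin open.
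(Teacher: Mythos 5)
Your approach is genuinely different from the paper's, and unfortunately the part on $[0,x_0]$ does not close.

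\textbf{Direction of the $h_0$ bound on $[0,x_{2,1}]$.} You propose to lower-bound $|h_0|$ using \eqref{eq:h0 bound 0<x<x0}, but that inequality reads
\[
\tfrac43\bigl(\mu_0^2-\tfrac94\bigr)\wtilH(x)\leq h_0(x),
\]
and since $\mu_0^2-\tfrac94<0$ and $\wtilH\geq 0$ on $[0,\infty)$, this is a \emph{lower} bound on the (nonpositive) quantity $h_0$, i.e.\ an \emph{upper} bound on $|h_0|=-h_0$. Inserting an upper bound on $|h_0|$ into $V_0=2h_0^2+|V|-2\mu_0^2$ gives a lower bound on nothing. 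The bound you would actually need (an upper bound on $h_0$ near $0$) comes from \eqref{eq:h2<h0<h1}, namely $h_0\leq\mu_0^2 x-2\wtilQ\wtilH$, which you do not invoke.

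\textbf{Failure of the pointwise bound near $x_0$.} Even setting the first issue aside, the strategy of proving $V_0=2h_0^2+|V|-2\mu_0^2\geq 0$ by explicit lower bounds on $|h_0|$ cannot work close to $x_0$ with the bounds available in the paper. On $[x_{2,1},x_0]$ the sharpest explicit lower bound is $|h_0(x)|\geq\frac{\mu_0}{x_0}x$ (from the right-hand side of \eqref{eq:h0 bound x0<x<x1}), which is tight at $x=x_0$. Expanding at $x=x_0-\varepsilon$ one obtains
\[
2h_0^2-2\mu_0^2+|V| \;\geq\; \Bigl(V'(x_0)-\tfrac{4\mu_0^2}{x_0}\Bigr)\varepsilon+O(\varepsilon^2),
\]
and, using $\wtilQ(x_0)=1$, $\wtilH(x_0)=1/\sqrt3$, one gets $V'(x_0)=2/\sqrt3\approx 1.15$ while $4\mu_0^2/x_0\geq 4\cdot0.652/1.02\approx 2.57$. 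The coefficient is strictly negative, so this lower bound on $V_0$ becomes negative just to the left of $x_0$, and no amount of numerical verification will fix it. The positivity of $V_0(x_0)=-2h_0'(x_0)\geq 0$ is a cancellation the explicit $h_0$ bounds are too crude to capture.

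\textbf{Why the paper's route avoids both problems.} The paper does not try to bound $V_0$ pointwise from the Riccati identity. It integrates by parts once more, obtaining $V_0(x)=\phi_0^{-2}(x)\int_x^\infty K(y)\,\phi_0^2(y)\,dy$ with $K=V'-2h_0V$, and then shows $K\geq 0$ pointwise. Since $K$ is \emph{linear} in $h_0$ and multiplied by $V$, one picks whichever one-sided $h_0$ bound is helpful according to $\operatorname{sign}V(y)$; the dangerous quadratic $h_0^2$ term that wrecked your estimate near $x_0$ never appears, and the transition at $y=x_0$ is harmless because the factor $V(y)$ vanishes there. This is the essential structural idea you would need to reproduce.

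\textbf{The half-line $[x_0,\infty)$.} Your integration by parts, the substitution $|\phi_0'|\geq\mu_0\phi_0$ and the weight bound $\phi_0^2(y)/\phi_0^2(x)\geq e^{-2\tilde\mu_0(y-x)}$ are all correct, and the reduction to the scalar integral inequality $4\mu_0\int_x^\infty V(y)e^{-2\tilde\mu_0(y-x)}dy\geq V(x)$ is a legitimate alternative. Asymptotically it holds with margin $2\mu_0/\tilde\mu_0-1>0$, and near $x_0$ it is trivial because $V(x_0)=0$; but in the intermediate range the verification is an integral estimate rather than a pointwise evaluation of explicit functions, which is a harder object to control rigorously over the whole window $\mu_0\in[0.808,0.883]$. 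The paper's $K\geq 0$ reduction makes every verification pointwise. So even if you repaired the compact-range argument, this half of your approach would require a qualitatively different and heavier computation than anything the paper actually performs.
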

	\begin{proof}
		To prove the positivity of $V_0$, first we will obtain a convenient formulation of the potential in terms of an integral. By definition of $V_0$ and \eqref{eq:h0'} we have
		\[
		V_0(x) = V(x) + \frac{2}{\phi_0^2(x)}\int_x^{\infty}V'(y)\phi_0^2(y)dy.
		\]
		Integrating by parts to eliminate the potential $V$ on the right hand side, and using \eqref{eq:h0 phi0}, we obtain
		\begin{align*}
			V_0(x) =& \frac{1}{\phi_0^2(x)}\int_x^{\infty}V'(y)\phi_0^2(y)dy - \frac{1}{\phi_0^2(x)}\int_x^{\infty} V(y)(\phi_0^2(y))'dy \\
			=&\frac{1}{\phi_0^2(x)}\int_x^{\infty}[V'(y) - 2h_0(y)V(y)]\phi_0^2(y)dy.
		\end{align*}
		Thus, we have the integral formulation of $V_0$,
		\begin{align*}
			V_0(x) &=\frac{1}{\phi_0^2(x)}\int_x^{\infty}K(y)\phi_0^2(y)dy,
		\end{align*}
		where we have defined $K(y):= V'(y) - 2h_0(y)V(y).$ We will prove the positivity of $K(y)$ for all $y\geq0$.
		
		For $y\geq x_0$ this is straightforward, since we know that $V(y), V'(y)\geq 0$ and $h_0(y)<0$, then $K(y)$ must be non-negative.
		
		For $x_{2,1}\leq y\leq x_0$, we know that $V(y), h_0(y)\leq 0$. Using the bound \eqref{eq:better bound} for $h_0(y)$, using Lemma \ref{valor mu0}, and replacing directly $V', V$, we have
		\[
		\begin{aligned}
			K(y) =&~{} V' - 2h_0V \geq V' - 2(\mu_0^2 x - R)V \geq V' - 2(0.808^2 x - R)V \\
			= &~{} 2\wtilQ^2[2(0.808^2 x - R) - 2((0.808^2 x - R) + \wtilH)\wtilQ + 3\wtilH\wtilQ^2 ] \\
			=: &~{} 2\wtilQ^2 k_1(\alpha^{-1}(y)).
		\end{aligned}
		\]
		We recall that the function $k_1$ is explicitly known employing Lemma \ref{valor mu0}. Indeed, we have
		\be\label{def_k1}
		\ba
		k_1(s):= &~{} 2(0.808^2 \alpha(s) - R(\alpha(s))) - 2((0.808^2  \alpha(s)  - R(\alpha(s))) \\
		&~{}+ H(s))Q(s) + 3H(s)Q(s)^2,
		\ea
		\ee
		with $R(\alpha(s))$ given in \eqref{Ralphas}. Computing this, we have that $k_1(s)$ from \eqref{def_k1} satisfies
		\be\label{checkeo6}
		\hbox{$k_1(s)>0$ for all $s\in (\alpha^{-1}(x_{2,1}), \alpha^{-1}(x_0))$}
		\ee
		(see Fig. \ref{fig:positivity bounds} lower panel). Hence, by bijectivity of $\alpha$, we conclude $K(y)>0$ for all $y\in (x_{2,1}, x_0)$.
		
		For $0\leq y \leq x_{2,1}$ we just consider the bound \eqref{eq:h0 bound 0<x<x0} for $h_0$ instead of \eqref{eq:h0 bound 0<x}. Then we proceed analogously:
		\be\label{cota_K}
		\begin{aligned}
			K(y) \geq &~{} V' - \frac83\left(\mu_0^2 - \frac94 \right)\wtilH V \\
			= &~{} 2\wtilQ^2\wtilH\left[-\frac23(4\mu_0^2 - 9) + 8\left(\frac13\mu_0^2 - 1\right)\wtilQ + 3\wtilQ^2 \right] \\[0.1cm]
			\geq &~{} 4\wtilQ^2\wtilH\left[-\frac13(4(0.883)^2 - 9) + 4\left(\frac13 (0.808)^2 - 1\right)\wtilQ + \frac32\wtilQ^2 \right] \\
			=: &~{} 4\wtilQ^2\wtilH k_2(\alpha^{-1}(y)),
		\end{aligned}
		\ee
		where $k_2$ is explicitly known employing Lemma \ref{valor mu0}. Indeed, one has the simple formula
		\be\label{def_k2}
		k_2(s):= -\frac13(4(0.883)^2 - 9) + 4\left(\frac13 (0.808)^2 - 1\right)Q(s) + \frac32 Q^2(s).
		\ee
		Computing, we have that 
		\be\label{checkeo5}
		\hbox{$k_2(s)>0$ for all $s\in (0, \alpha^{-1}(x_{2,1}))$}
		\ee
		(see Fig. \ref{fig:positivity bounds} lower panel). Hence, by bijectivity of $\alpha$, we conclude $K(y)>0$ for all $y\in (0, x_{2,1})$.
	\end{proof}
	
	One of the most crucial properties about $L$ for our analysis of the stability of the kink is that it possesses only one negative eigenvalue.
	
	\begin{corollary}\label{cor:uniqueness}
		The operator $L$ has a unique negative eigenvalue $-\mu_0^2<0$ of multiplicity one.
	\end{corollary}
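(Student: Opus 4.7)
The plan is to use the Darboux factorization already set up in \eqref{eq:L0}, namely $L = U^*U - \mu_0^2$ and $L_0 = UU^* - \mu_0^2$ with the intertwining $UL = L_0 U$, anchored on the principal eigenfunction $\phi_0$ from Lemma \ref{lemma:Lproperties}. The just-proved positivity $V_0 \geq 0$ of Lemma \ref{lem:positivity} is the crucial ingredient: it allows one to transfer any hypothetical extra eigenvalue of $L$ up to $L_0$ and derive a contradiction.

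Concretely, suppose $\psi \in H^2(\R)\setminus\{0\}$ satisfies $L\psi = -\tau^2\psi$ for some $\tau>0$. Testing the factored identity against $\psi$ gives
\[
\|U\psi\|_{L^2}^2 = \langle U^*U\psi,\psi\rangle = (\mu_0^2 - \tau^2)\|\psi\|_{L^2}^2,
\]
which immediately rules out $\tau^2 > \mu_0^2$, i.e.\ no eigenvalue of $L$ lies strictly below $-\mu_0^2$. For $\tau^2 \in (0,\mu_0^2]$, extend $h_0$ to all of $\R$ by odd symmetry (consistent with the evenness of $\phi_0$); by Lemma \ref{lem:bounds h0} this $h_0$ is bounded on $\R$, so $U\psi = \psi' - h_0\psi \in H^1(\R) \subset L^2(\R)$. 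The intertwining yields $L_0(U\psi) = -\tau^2(U\psi)$ in the distributional sense, and pairing against $U\psi$ gives
\[
-\tau^2 \|U\psi\|_{L^2}^2 = \langle L_0 U\psi, U\psi\rangle = \|\partial_x U\psi\|_{L^2}^2 + \langle V_0\, U\psi, U\psi\rangle \geq 0
\]
by Lemma \ref{lem:positivity}. Since $\tau>0$, this forces $U\psi \equiv 0$, i.e.\ $\psi' = h_0\psi$; integrating (cf.\ \eqref{eq:h0 phi0}) gives $\psi = c\phi_0$ for some constant $c$, and plugging back into the eigenequation forces $\tau^2 = \mu_0^2$.

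Combining the two halves, $-\mu_0^2$ is the only negative eigenvalue of $L$ and its eigenspace is spanned by $\phi_0$, which is precisely the statement of Corollary \ref{cor:uniqueness}. The one delicate point to check is the quadratic-form argument for $L_0$: one must ensure $U\psi \in H^1(\R)$ so that $\langle L_0 U\psi, U\psi\rangle$ is well-defined, and this uses exactly the uniform boundedness of $h_0$ from Lemma \ref{lem:bounds h0}, extended to negative $x$ by symmetry. Given Lemma \ref{lem:positivity}, the remaining steps are purely algebraic manipulations of the Darboux pair, with no further spectral analysis needed.
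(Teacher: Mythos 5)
Your proof is correct and relies on the same core ingredients as the paper's: the Darboux factorization $L = U^*U - \mu_0^2$, the intertwining $UL = L_0U$, and the positivity $V_0 \ge 0$ from Lemma~\ref{lem:positivity}. The paper packages the argument as an isospectrality claim $\sigma_p(L_0) = \sigma_p(L)\setminus\{-\mu_0^2\}$ (which requires a separate check that $-\mu_0^2 \notin \sigma_p(L_0)$, done there via $U^*\varphi = 0 \Rightarrow \varphi = c\phi_0^{-1}\notin L^2$), and then invokes Lemma~\ref{lemma:Lproperties}(3) for simplicity; your quadratic-form argument $\|U\psi\|^2 = (\mu_0^2-\tau^2)\|\psi\|^2$ gets both the lower bound $\tau^2 \le \mu_0^2$ and the simplicity of $-\mu_0^2$ in one stroke, and sidesteps the $\phi_0^{-1}$ step entirely. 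Your explicit check that $U\psi\in H^1$ (via boundedness of $h_0$ from Lemma~\ref{lem:bounds h0}, plus $|h_0'|\lesssim 1$ from the Riccati equation \eqref{eq:h0 ivp}) makes the quadratic-form pairing rigorous; this point is implicit in the paper. Net effect: same route, slightly streamlined.
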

	
	\begin{remark}
		Corollary \ref{cor:uniqueness} shows the unstable character of the kink solution $H$, under which the asymptotic stability could only hold if one already has orbital stability. 
	\end{remark}
	
	\begin{proof}
		This is just a consequence of removing the first eigenvalue once we obtain the transformed super-symmetric partner operator $L_0$. We recall the following decomposition
		\begin{equation*}
			L = (-\px - h_0)(\px + h_0) - \mu_0^2 = U^*U - \mu_0^2,		
		\end{equation*}
		and changing the order of the operators $U$ and $U^*$, we define
		\begin{equation}\label{eq:L0 decomposition}
			L_0 =  (\px + h_0)(-\px - h_0) - \mu_0^2 = UU^* - \mu_0^2,
		\end{equation}
		obtaining the super-symmetric relation
		\begin{equation}\label{UL=L0U}
			UL=L_0U
		\end{equation}
		which is, by construction, isospectral to $L$ except for $\lambda = -\mu_0$. This is, we claim 
		\[
		\sigma_p(L_0) = \sigma_p(L)\setminus\{-\mu_0^2\}.
		\]
		Let $\lambda \neq -\mu_0^2$ be an eigenvalue of $L$, with the corresponding eigenfunction $\phi$. Then, by equation \eqref{UL=L0U} we get $L_0(U\phi) = \lambda U\phi$. Since by Lemma \ref{lemma:Lproperties} $\lambda_0 = -\mu_0^2$ is a simple eigenvalue, we have that $U\phi\not\equiv 0$. This proves that $\sigma_p(L)\setminus\{-\mu_0^2\}\subseteq \sigma_p(L_0)$. For the reversed inclusion, we only need to prove that $-\mu_0^2 \notin \sigma_p(L_0)$, since for the rest we could repeat the same procedure as above, but relative to the eigenvalues of $L_0$. By contradiction, we assume that there exists some $\varphi\in L^2(\R)$ such that $L_0\varphi = -\mu_0^2\varphi$. Then, by \eqref{eq:L0 decomposition}, we obtain $UU^*\varphi = 0$, and using that $\text{ran}(U^*)\perp \ker(U)$ we have that $U^*\varphi=0$, which implies that $\varphi = \phi_0^{-1}$, which is a contradiction since $g\in L^2(\R)$.
		
		By Lemma \ref{lem:positivity} we conclude that $L_0$ has no negative eigenvalues, and from the above we conclude that $-\mu_0^2$ is the unique negative eigenvalue associated with the operator $L$. 
	\end{proof}
	
	\begin{corollary}\label{cor:parity}
		Given $\phi_0$ eigenfunction associated with the unique negative eigenvalue $-\mu_0^2$, then $\phi_0$ is an even function and $\px \phi_0$ is odd.
	\end{corollary}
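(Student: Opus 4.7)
The plan is to exploit the parity of the potential together with the simplicity of the eigenvalue established in Corollary~\ref{cor:uniqueness}. First, I would observe that since $\wtilQ$ is even (Lemma~\ref{lemma:estimation}), the potential $V(x) = 2\wtilQ^2(x)(1-\wtilQ(x))$ is also even, and hence the operator $L$ commutes with the parity operator $P:f(x)\mapsto f(-x)$ on the domain $H^2(\R)$.

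Next, consider the eigenfunction $\phi_0 \in H^2(\R)$ from \eqref{eq:eigen}. Setting $\psi(x):=\phi_0(-x)$, the commutation $PL=LP$ gives $L\psi = -\mu_0^2\psi$, so $\psi$ is again an eigenfunction associated with the eigenvalue $-\mu_0^2$. By Corollary~\ref{cor:uniqueness}, this eigenvalue is simple, so there must exist $\lambda\in\R$ with $\psi = \lambda\phi_0$. Applying $P$ once more yields $\phi_0 = \lambda^2\phi_0$, and since $\phi_0\not\equiv 0$ this forces $\lambda\in\{-1,+1\}$.

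To rule out $\lambda = -1$, I would invoke the construction in the proof of Lemma~\ref{lemma:Lproperties}: the Krein--Rutman argument produces $\phi_0$ as a strictly positive function on $\R$. If $\lambda=-1$ we would have $\phi_0(-x)=-\phi_0(x)$, which is incompatible with positivity (evaluating at $x=0$ would give $\phi_0(0)=0$, contradicting $\phi_0>0$). Consequently $\lambda = +1$, i.e., $\phi_0(-x)=\phi_0(x)$, so $\phi_0$ is even. Differentiating this identity gives $\partial_x\phi_0(-x) = -\partial_x\phi_0(x)$, hence $\partial_x\phi_0$ is odd.

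There is no real obstacle here; the argument is entirely structural. The only subtle point to double-check is that the Krein--Rutman eigenfunction in Lemma~\ref{lemma:Lproperties} is indeed identified (up to sign and scalar) with the $\phi_0$ in the statement, which is immediate from the simplicity of the eigenvalue.
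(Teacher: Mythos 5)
Your proposal is correct and follows essentially the same route as the paper: use parity invariance of $L$ (since $V$ is even) plus simplicity of the eigenvalue from Corollary~\ref{cor:uniqueness} to conclude $\phi_0(-x)=\pm\phi_0(x)$, and rule out the odd case by positivity of the Krein--Rutman eigenfunction. You merely spell out the intermediate steps (the dichotomy $\lambda\in\{\pm1\}$ and the evaluation at $x=0$) that the paper leaves implicit.
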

	\begin{proof}
		The parity follows from the fact that $L$ is invariant over the reflection $x \to -x$, so the eigenfunctions are even or odd, and since $\phi_0$ is positive in the real line we conclude it is even. Since $\lambda_0$ is the unique negative eigenvalue of multiplicity one, $\phi_0$ is unique, even, and $\px\phi_0$ is odd.
	\end{proof}
	
	\begin{figure}[h!]
		\centering
		\includegraphics[width=0.95\textwidth]{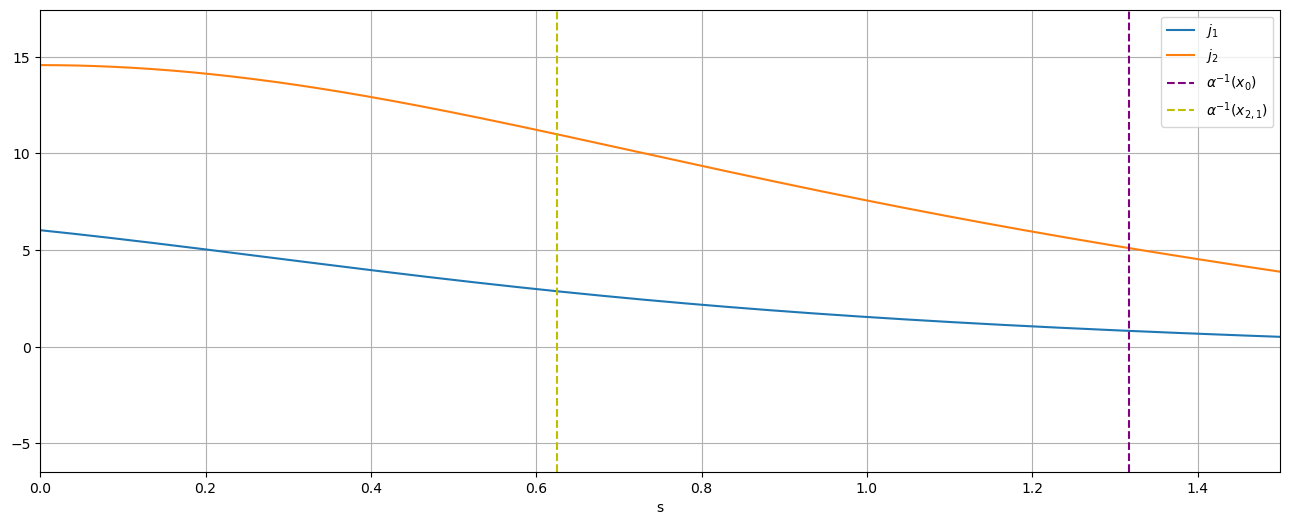}
		\includegraphics[width=0.95\textwidth]{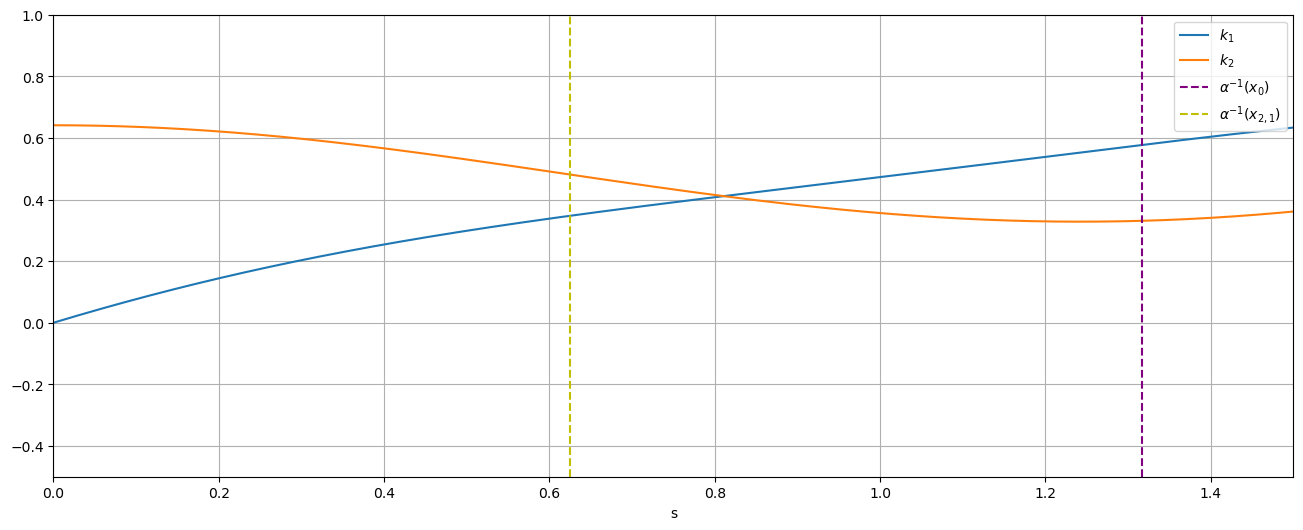}
		\caption{Above: Numerical computation of $j_1(s)$, lower bound for $h_0''$ for $s$ in $(x_{2,1}, x_0)$, and  $j_2(s)$, lower bound for $s$ in $(0, x_{2,1}$). Below: Numerical computation of $k_1(s)$, lower bound for $K(\alpha^{-1}(s))$ with $s$ in $(x_{2,1}, x_0)$, and  $k_2(s)$, lower bound for $K(\alpha^{-1}(s))$ with $s$ in $(0, x_{2,1}$).}
		\label{fig:positivity bounds}
	\end{figure}

	\subsection{Repulsivity}
	\begin{lemma}\label{lem:repulsivity}
		The derivative of the transformed potential $V_0'(x)$ is odd and negative for any $x\neq0$. In particular, $L_0$ has a repulsive potential.
	\end{lemma}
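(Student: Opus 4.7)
The oddness of $V_0'$ is immediate from parities established earlier: $\phi_0$ is even by Corollary~\ref{cor:parity}, hence $h_0 = \phi_0'/\phi_0$ is odd, and since $V$ is even, $V_0 = 2h_0^2 - 2\mu_0^2 - V$ is even, so $V_0'$ is odd. I therefore concentrate on proving $V_0'(x) \leq 0$ for every $x > 0$; the case $x < 0$ follows by oddness.

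The starting identity is
\[
V_0'(x) = 4 h_0(x) h_0'(x) - V'(x) = -V'(x) - \frac{4 h_0(x)}{\phi_0^2(x)} \int_x^\infty V'(y)\phi_0^2(y)\,dy,
\]
where the second form comes from \eqref{eq:h0'}. After one integration by parts this also reads
\[
V_0'(x) = -V'(x) + 4 h_0(x) V(x) + \frac{8 h_0(x)}{\phi_0^2(x)}\int_x^\infty V(y) h_0(y) \phi_0^2(y)\,dy.
\]
The idea is to split $(0,\infty)$ into the four subintervals $(0,x_{2,1})$, $(x_{2,1},x_0)$, $(x_0,x_1)$, $(x_1,\infty)$ determined by the roots of $V$, $V'$, $V''$ (Lemma~\ref{def: roots} and Remark~\ref{rem:V}) and, on each, combine the sign information for $V,V',V''$ with the bounds on $h_0$ from Lemma~\ref{lem:bounds h0}, \eqref{eq:h0 bound 0<x<x0}, \eqref{eq:h0 bound x0<x<x1} and \eqref{eq:h2<h0<h1}, together with the convexity of $h_0$ on $(0, x_0)$ and the identity $h_0' = \mu_0^2 + V - h_0^2$, to reduce the target inequality $V_0'(x) \leq 0$ to a one-variable inequality in $s = \alpha^{-1}(x)$ that can be checked explicitly using Lemma~\ref{valor mu0} to pinch $\mu_0$.

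The two easy regions are the outermost and innermost ones. On $(x_1,\infty)$ one has $V \geq 0$, $V' \leq 0$ and $h_0 \leq -\mu_0$; the second integrated form makes the integral term, together with $4h_0 V$, manifestly non-positive, while $-V' \geq 0$ is controlled by using $|h_0|\geq \mu_0$ in $4 h_0 h_0'$. On $(0, x_{2,1})$, $V \leq 0$ and $V' \geq 0$, and the two-sided bounds \eqref{eq:h0 bound 0<x<x0}, \eqref{eq:h2<h0<h1} give $4 h_0 h_0' = 4 h_0(\mu_0^2 + V - h_0^2)$ in terms of explicit functions of $s$; combined with the explicit form of $V'$ this reduces to a polynomial-type inequality in $Q(s)$, verified as in the checks of Lemma~\ref{lem:positivity} (cf.\ Figure~\ref{fig:positivity bounds}).

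The main obstacle is the transitional region $(x_{2,1}, x_1)$, where $V$ changes sign at $x_0$ and the unique zero $\bar x \geq x_0$ of $h_0'$ makes $4 h_0 h_0'$ and $-V'$ carry competing signs, so quantitative estimates must exactly balance. There I will use the convexity-derived bound in \eqref{eq:h0 bound x0<x<x1} for $h_0$ on $(x_{2,1}, x_0)$, and the linear upper bound $h_0 \leq -\mu_0 x/x_0$ together with $h_0 \leq -\mu_0$ on $(x_0, x_1)$, to obtain two-sided estimates for $h_0^2$, and hence for $4 h_0 h_0'$ via $h_0' = \mu_0^2 + V - h_0^2$. Inserted into $V_0' \leq 0$, these reduce the problem, on each of $(x_{2,1}, x_0)$ and $(x_0, x_1)$, to an explicit inequality in $s$ that is verified numerically over the bounded $s$-range, in the same spirit as the verifications carried out in the proofs of Lemma~\ref{def: roots}, \eqref{eq:h0 bound x0<x<x1}, and Lemma~\ref{lem:positivity}.
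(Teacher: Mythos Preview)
Your plan shares the paper's region-splitting strategy, but it misses the paper's key structural step and contains a sign error that undermines the argument on $(x_1,\infty)$.

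\medskip

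\textbf{The sign error on $(x_1,\infty)$.} In your ``second integrated form''
\[
V_0'(x) = -V'(x) + 4 h_0(x) V(x) + \frac{8 h_0(x)}{\phi_0^2(x)}\int_x^\infty V(y) h_0(y) \phi_0^2(y)\,dy,
\]
for $x\geq x_1$ the integrand satisfies $V(y)\geq 0$, $h_0(y)\leq 0$, so the integral is $\leq 0$; multiplied by $8h_0(x)/\phi_0^2(x)<0$ the last term is \emph{non-negative}, not non-positive as you claim. Thus both $-V'\geq 0$ and the integral term are unfavorable, and only $4h_0 V\leq 0$ helps; you give no argument for why it dominates. Working instead with $V_0' = 4h_0(\mu_0^2+V-h_0^2)-V'$ requires a quantitative lower bound on $h_0' = \mu_0^2+V-h_0^2$, but the available two-sided bounds $\mu_0\leq |h_0|\leq \tilde\mu_0$ only give $h_0'\in[V-8/27,\,V]$, and the lower end is negative on all of $(x_1,\infty)$---too slack to beat $|V'|$.

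\medskip

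\textbf{What the paper does instead.} The paper performs one further integration by parts to obtain
\[
V_0'(x) = \frac{h_0(x)}{\phi_0^2(x)}\int_x^\infty I(y)\left(\frac{\phi_0}{h_0}\right)^{\!2}(y)\,dy,\qquad I(y)=V''(y)h_0(y)-V'(y)\bigl(h_0^2(y)+\mu_0^2+V(y)\bigr).
\]
Since $h_0(x)\leq 0$, the whole problem reduces to the pointwise claim $I(y)\geq 0$ for $y>0$. This is the decisive simplification: $I$ is linear in $h_0$ in its first term and involves $h_0^2$ only additively in the second, so on each of the five subintervals determined by $x_{2,1},x_0,x_1,x_{2,2}$ the signs of $V,V',V''$ together with the one-sided bounds on $h_0$ from Lemma~\ref{lem:bounds h0} and \eqref{eq:h0 bound 0<x<x0}--\eqref{eq:h0 bound x0<x<x1} yield an explicit lower bound $I(\alpha(s))\geq i_k(s)$ whose positivity is checked directly. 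Your pointwise approach to $4h_0 h_0'-V'$ needs simultaneous tight upper and lower control of $h_0$ because the map $t\mapsto 4t(\mu_0^2+V-t^2)$ is cubic, and the lemmas in the paper do not provide that precision on $(x_1,\infty)$. The integral formula for $I$ is the missing idea.
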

	
	The rest of this section is devoted to prove Lemma \ref{lem:repulsivity}.
	
	\subsubsection{An integral formula.} By \eqref{eq:h0 phi0} we have that $(\phi_0^2)' = 2h_0\phi_0^2$. Using this, the definition of $V_0$ in \eqref{eq:L0} and $h_0$, \eqref{eq:h0'}, and integration by parts, we get
	\begin{align*}
		V_0'(x) &= 4h_0(x)h_0'(x) - V'(x) \\[0.1cm]
		&=  -\frac{2h_0(x)}{\phi_0^2(x)}\int_x^\infty V'(y)\phi_0^2(y)dy - \frac{h_0(x)}{\phi_0^2(x)} \int_x^{\infty} \frac{V'(y)}{h_0(y)}(\phi_0^2(y))'dy - V'(x) \\[0.1cm]
		&= -\frac{2h_0(x)}{\phi_0^2(x)}\int_x^\infty V'(y)\phi_0^2(y)dy  +  \frac{h_0(x)}{\phi_0^2(x)}\int_x^\infty \left(\frac{V'(y)}{h_0(y)}\right)'\phi_0^2(y) dy \\
		&\quad - \left.\frac{h_0(x)V'(y)\phi_0^2(y)}{\phi_0^2(x)h_0(y)}\right|_{x}^{\infty} - V'(x)\\[0.1cm]
		&= \frac{h_0(x)}{\phi_0^2(x)}\int_x^\infty \left(\frac{V'(y)}{h_0(y)} - 2V(y)\right)' \phi_0^2(y) dy \\
		&= \frac{h_0(x)}{\phi_0^2(x)}\int_x^\infty \left(\frac{V''(y)}{h_0(y)} - \frac{V'(y)h_0'(y)}{h_0^2(y)} - 2V'(y)  \right)\phi_0^2(y) dy \\[0.1cm]
		&= \frac{h_0(x)}{\phi_0^2(x)}\int_x^\infty \left(V''(y)h_0(y) - V'(y)h_0'(y) - 2V'(y)h_0^2(y)  \right)\left(\frac{\phi_0}{h_0}\right)^2(y) dy,
	\end{align*}
	Thus, we have the equivalent formulation
	\begin{equation}\label{eq:integral formulation}
		V_0'(x) = \frac{h_0(x)}{\phi_0^2(x)}\int_x^\infty I(y) \left(\frac{\phi_0}{h_0}\right)^2(y) dy,
	\end{equation}
	where, using equation \eqref{eq:h0 ivp}, we have
	\begin{equation}\label{def:Iy}
		I(y) = V''(y)h_0(y) - V'(y)(h_0^2(y) + \mu_0^2 + V(y)).  
	\end{equation}
	Due to the dependence of this expression on the sign of the potential and its derivatives, we will divide the proof depending on the roots $\{x_0, x_1, x_{2,1},x_{2,2}\}$ (see Lemma \ref{def: roots}).
	
	\medskip
	
	To prove that $V_0'$ is non positive, we restrict our analysis to the interval $(0, \infty)$ by parity. We will prove the positivity of $I(y)$ for all $y\geq 0$ by separate cases.
	
	\subsubsection{Positivity for $x_1\leq y < \infty$.}
	Firstly, we consider the case $y\geq x_{2,2}$.  Then Remark \ref{rem:V} ensures that $V(y), V''(y)\geq0$, $V'(y)\leq0$. We apply in \eqref{def:Iy} the bounds \eqref{eq:h0 bound 0<x} and \eqref{eq:h0 bound x1<x} for $h_0$, and Lemma \ref{valor mu0}:
	\[
	\begin{aligned}
		I(y)  = &~{} - V''(y)|h_0(y)| + |V'(y)|(h_0^2(y) + \mu_0^2 + V(y)) \\
		\geq &~{}  - \widetilde\mu_0 V''(y) + |V'(y)|\left( 2\mu_0^2 + V(y) \right) \\
		\geq &~{} - 1.038 V''(y) + (2\cdot 0.808^2 + V(y))|V'(y)|.
	\end{aligned}
	\]
	Replacing directly $V, V'$, $V''$ and considering the variable $s = \alpha^{-1}(y)$, we obtain
	\[
	\begin{aligned}
		I(\alpha(s))  \geq &~{} - 2.075 Q^4\left( 6 -\frac{50}3 Q +9Q^2\right) \\
		&~{}  + 4(2 - 3Q)(0.652 + Q^2 - Q^3)Q^3 H \\
		= &~{} 2Q^3\left[ 2.611 - 6(1.038 + 0.652H)Q + \left(\frac{50}{3}1.038 + 4H \right)Q^2 \right.\\
		&~{}\qquad \left. - (9.342 + 20H)Q^3 + 6Q^4H \right] \\
		=: &~{} 2Q^3 i_1(s).
	\end{aligned}
	\]
	By the exponential decay of $Q$, we obtain explicitly via computation that 
	\be\label{checkeo4}
	\hbox{$i_1(s)>0$ for all $s\geq \alpha^{-1}(x_{2,2})$}
	\ee
	(see Fig. \ref{fig:bound 0 x1} down panel). Hence, we conclude $I(y)>0$ for all $y\geq x_{2,2}$ by the bijection of $\alpha:\R\to\R$.
	
	If now $x_1 \leq y \leq x_{2,2}$, then $V(y)\geq0$, $V'(y), V''(y)\leq0$, applying \eqref{eq:h0 bound 0<x}, \eqref{eq:h0 bound x1<x}, and Lemma \ref{valor mu0}, replacing $V, V'$ and $V''$,
	\[
	\begin{aligned}
		I(y) = &~{} |V''(y)h_0| + |V'(y)| \left(  h_0^2(y) + \mu_0^2 + V(y) \right) \\
		\geq &~{} \mu_0 |V''(y)| + |V'(y)| \left( 2\mu_0^2 + V(y) \right) \\
		\geq &~{}  0.808 |V''(y)| + |V'(y)| \left(2\cdot 0.808^2 + V(y) \right).
	\end{aligned}
	\]
	Again, replacing $V, V'$, $V''$ and considering the variable $s = \alpha^{-1}(y)$, we obtain
	\[
	\begin{aligned}
		I(\alpha(s)) = &~{} -2\mu_0  Q^4\left( 6 -\frac{50}3 Q +9Q^2\right) + 4(2 - 3Q)(0.808^2 + Q^2 - Q^3)Q^3H \\
		= &~{} 2Q^3H \bigg[ 4\cdot 0.808^2 H - 6\cdot0.808(1 + 0.808H)Q \\[0.1cm]
		&~{} \hspace{1.3cm} + \left(\frac{50}{3}\cdot 0.808 + 4H \right)Q^2 - (10H + 9\cdot 0.808)Q^3 + 6HQ^4 \bigg] \\
		=: &~{} 2Q^3H i_2(s),
	\end{aligned}
	\]
	where $\hat k(s)$ is explicitly known employing Lemma \ref{valor mu0}. Computing this function, we have that 
	\be\label{checkeo3}
	\hbox{$i_2(s)>0$ for all $s\in (\alpha^{-1}(x_1), \alpha^{-1}(x_{2,2}))$}
	\ee
	(see Fig. \ref{fig:bound 0 x1}). Hence, by bijectivity of $\alpha$, we conclude $I(y)>0$ for all $y\in (x_1, x_{2,2})$.
	
	\subsubsection{Positivity for $x_0\leq y < x_1$.} In this case $V(y), V'(y)\geq0$, and $V''(y)\leq 0$. This, combined with inequalities \eqref{eq:h0 bound x1<x}, \eqref{eq:h0 bound 0<x}, and Lemma \ref{valor mu0}, gives us that $I$ satisfies the following inequality for all $y\in[x_0, x_1]$:
	\begin{align*}
		I(y) = &~{} |V''(y)h_0(y)| -  V'(y) \left( h_0^2(y) + \mu_0^2 + V(y) \right) \\
		\geq &~{} \mu_0 |V''(y)| - V'(y) \left( \widetilde\mu_0^2 + \mu_0^2 + V(y) \right) \\
		\geq &~{}  0.808|V''(y)| - V'(y) \left( 1.959 + V(y) \right) .
	\end{align*}
	Replacing $V, V'$, $V''$ and considering the variable $s = \alpha^{-1}(y)$, we obtain
	\[
	\begin{aligned}
		I(\alpha(s)) \geq &~{} -2\cdot 0.808  Q^4\left( 6 -\frac{50}3 Q +9Q^2\right) \\
		&~{} + 2(2 - 3Q)(1.959 + 2Q^2 - 2Q^3)Q^3H \\
		= &~{} 2Q^3\bigg[ 3.842 H - 3(1.959 + 1.616 H)Q + \left(\frac{50}{3}\cdot 0.808 + 4H \right)Q^2 \\
		&~{} \qquad  - (7.272 + 10H)Q^3 + 6HQ^4 \bigg] \\
		=: &~{} 2Q^3 i_3(s),
	\end{aligned}
	\]
	where $i_3(s)$ is explicitly known thanks to Lemma \ref{valor mu0}. A simple graph reveals that 
	\be\label{checkeo2}
	\hbox{$i_3(s)>0$ for all $s\in (\alpha^{-1}(x_0), \alpha^{-1}(x_{1}))$}
	\ee
	(see Fig. \ref{fig:bound 0 x1} above). Hence, by bijectivity of $\alpha$, we conclude $I(y)>0$ for all $y\in (x_0, x_1)$.
	
	\begin{figure}[ht]
		\centering
		\includegraphics[width=0.95\textwidth]{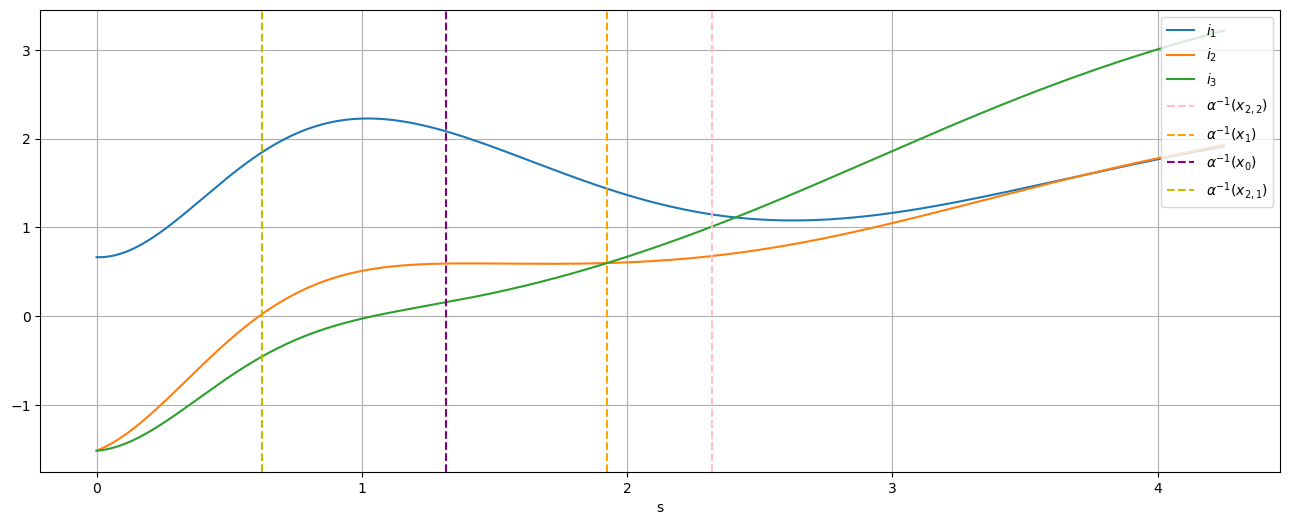}
		\includegraphics[width=0.95\textwidth]{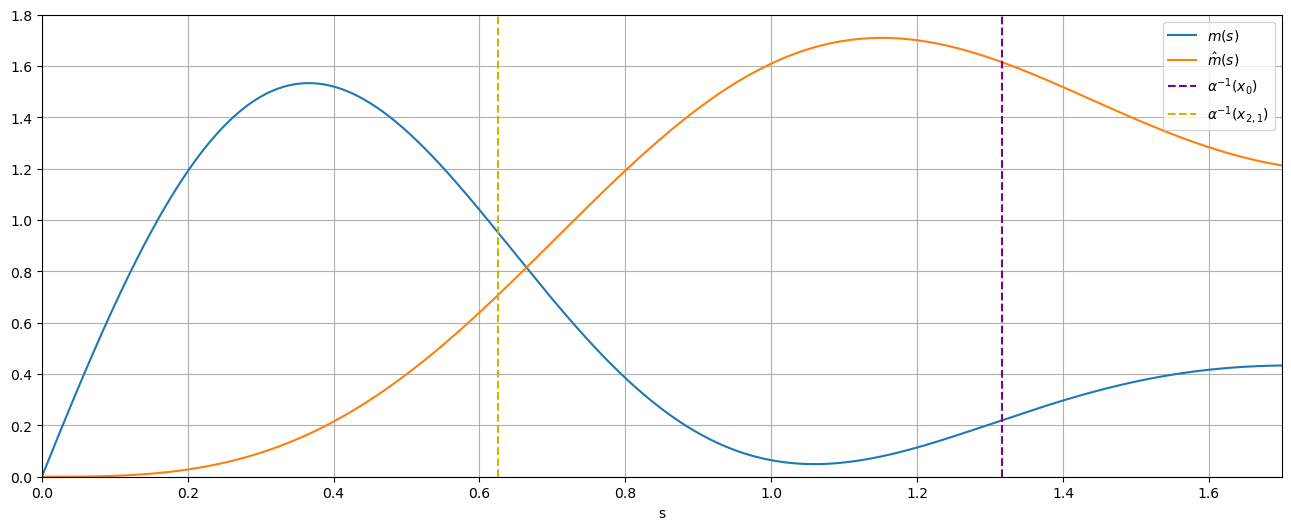}
		\caption{Above: Numerical computation of the bounds for $I(\alpha(x))$ in the intervals $(\alpha^{-1}(x_0), \alpha^{-1}(x_1))$, $(\alpha^{-1}(x_1), \alpha^{-1}(x_{2,2}))$, and $(\alpha^{-1}(x_{2,2}), \infty)$. Below: Numerical computation of the bounds for $I(\alpha(x))$ in terms if $m(s)$ and $\hat m(s)$ in the intervals $(0, \alpha^{-1}(x_{2,1}))$ and $(\alpha^{-1}(x_{2,1}), \alpha^{-1}(x_0))$.}
		\label{fig:bound 0 x1}
	\end{figure}
	
	\subsubsection{Positivity for $x_{2,1}\leq y < x_0$.} If $y$ is a positive real number such that $x_{2,1} \leq y < x_0$, then $V(y), V''(y)\leq 0$, $V'(y)\geq0$. We separate the study in two cases.
	
	\medskip
	
	{\bf Case 1.} If $h_0^2(y) + \mu_0^2 + V(y) \leq 0$, directly by the sign of the expression in \eqref{def:Iy}
	\begin{align*}
		I(y) = |V''(y)h_0(y)| + |V'(y)(h_0^2(y) + \mu_0^2 + V(y))| \geq 0.
	\end{align*}

	\medskip	
	
	{\bf Case 2.} On the other hand, if $h_0^2(y) + \mu_0^2 + V(y) \geq 0$, by \eqref{eq:h0 bound x0<x<x1} and Lemma \ref{valor mu0} we know
	\[
	\begin{aligned}
	h_0^2(y) + \mu_0^2 + V(y) \geq & \left(\frac{8}{27} (x-x_0) + \widetilde\mu_0 \right)^2 + \mu_0^2 + V(y) \\
	\geq & \left(\frac{8}{27} (x-x_0) + 0.974 \right)^2 + 0.652 + V(y) .
	\end{aligned}
	\]
	Hence, using \eqref{eq:h0 bound x0<x<x1} and the above estimate to bound by below \eqref{def:Iy},
	\begin{align*}
		I(y) \geq -\frac{\mu_0}{x_0} y V''(y) - V'(y)\left( \left(	\frac8{27} (y-x_0) + 0.974 \right)^2 + 0.652 + V(y)\right).
	\end{align*}
	Replacing $V, V'$, $V''$ and considering the variable $s = \alpha^{-1}(y)$, we obtain
	\[
	\begin{aligned}
		 I(\alpha(s)) \geq&~ - 2\frac{0.808}{x_0}\alpha(s)  Q^4\left( 6 -\frac{50}3 Q +9Q^2\right) \\
		&~{} + 2(2-3Q)Q^3H\left( \left(	\frac8{27} (\alpha(s)-x_0) + 0.974 \right)^2 + 0.652 + 2Q^2(1-Q)\right) \\
		=:&~ m(s),
	\end{aligned}
	\]
	where $m(s)$ is explicitly known employing Lemma \ref{valor mu0}. The only problem is probably an estimate for $x_0$. Everything being explicit, one easily checks that 
	\be\label{checkeo1}
	\hbox{$m(s)>0$ for all $s\in (\alpha^{-1}(x_{2,1}), \alpha^{-1}(x_0))$}
	\ee
	(see Fig. \ref{fig:bound 0 x1} panel below). Hence, since $\alpha$ is bijective, we conclude $I(y)>0$ for all $y\in (x_{2,1}, x_0)$.

	\subsubsection{Positivity for $0\leq y < x_{2,1}$.} Finally, for this case $V(y)\leq 0$, $V'(y), V''(y) \geq 0$, and using \eqref{eq:h0 bound 0<x<x0} we obtain
		\[
		\ba
		& h_0^2(y) + \mu_0^2 + V(y) \\
		&\qquad \leq \left(\mu_0^2 - \frac94 \right)^2y^2 + \mu_0^2 + V(y) \leq \left(0.652 - \frac94 \right)^2 y^2 + 0.78 + V(y) \leq 0,
		\ea
		\]
	where the last inequality was obtained using the bounds for $\mu_0$ of Lemma \ref{valor mu0}. Hence, this combined with inequalities \eqref{eq:h0 bound x1<x}, \eqref{eq:h0 bound 0<x} gives us that $I$ satisfies for all $y\in(0, x_{2,1})$:
	\begin{align*}
		I(y) = &~{} V''(y)|h_0(y)| +  V'(y) \left| h_0^2(y) + \mu_0^2 + V(y) \right|.
	\end{align*}
	Bounding by below, we have
	\[
	I(y) \geq \left(0.652 - \frac94\right) y V''(y) - V'(y) \left( \left(0.652 - \frac94 \right)^2y^2 + 0.652 + V(y)\right)
	\]
	Replacing $V, V'$, $V''$ and considering the variable $s = \alpha^{-1}(y)$, we obtain 
	\begin{align*}
		I(\alpha(s)) \geq &~{} -3.196  \alpha(s)Q^4\left( 6 -\frac{50}3 Q +9Q^2\right) \\
		&~{}  + 2Q^3 H(2 - 3Q)\left( 2.553604 \alpha^2(s) + 0.652 + 2Q^2(1-Q)\right) \\ 
		=: &~{} \hat m(s).
	\end{align*}
	where $\hat m(s)$ is explicitly known since $Q$ and $H$ are explicit, and $\alpha(s)=\frac13(s+\sinh s)$. Computing this function, we have that 
	\be\label{checkeo0}
	\hbox{$\hat m(s)>0$ for all $s\in (0, \alpha^{-1}(x_{2,1}))$}
	\ee
	(see Fig. \ref{fig:bound 0 x1} down panel). Hence, by bijectivity of $\alpha$, we conclude $I(y)>0$ for all $y\in (0, x_{2,1})$. This proves that $I(y)\geq 0$ for all $y\geq0$.

	\subsubsection{Proof of Lemma \ref{lem:repulsivity}.}
	Since $h_0(x) \leq 0$ for all $x\geq 0$, we conclude by \eqref{eq:integral formulation}
	\[
	\ba
	& V_0'(x)\\
	& = \frac{h_0(x)}{\phi_0^2(x)}\int_x^\infty \left(V''(y)h_0(y) - V'(y)h_0'(y) - 2V'(y)h_0^2(y)  \right)\left(\frac{\phi_0}{h_0}\right)^2(y) dy \leq 0,
	\ea
	\]
	for all $x\geq0$.

	\subsection{Decay of the derivative of the potential}
	
	In order to prove the positivity of the transformed problem, we need an upper bound for $V_0'$. We state the following lemma.
	\begin{lemma}\label{lem:decay of V0}
		For $|x|\gg 1$ we have that $V_0$ is strictly negative, and decay as $V'(x)$. Even more, the following bound
		\begin{equation}\label{eq:bound V0'}
		\left( \frac{2\tilde\mu_0}{\mu_0} +1\right) V'(x) \leq V_0'(x) \leq \frac12 V'(x),
		\end{equation}
		is satisfied for all $x\geq x_{2,2}$.
	\end{lemma}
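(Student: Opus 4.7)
The plan is to reduce everything to a pointwise inequality on the second derivative $h_0''$. Starting from $V_0 = 2h_0^2 - 2\mu_0^2 - V$ and the Riccati identity \eqref{eq:h0 ivp}, one substitutes $h_0^2 = \mu_0^2 + V - h_0'$ to obtain the clean formula $V_0 = V - 2h_0'$, whence $V_0' = V' - 2h_0''$. Differentiating \eqref{eq:h0 ivp} gives $h_0'' + 2h_0 h_0' = V'$, and for $x \geq x_{2,2}$ (so in particular $x \geq x_1 > x_0$) one has $h_0(x) < 0$ and $V'(x) \leq 0$ by Lemmas \ref{lem:h0 properties}, \ref{lem:bounds h0}, and \ref{def: roots}, hence $h_0''(x) = 2|h_0(x)|h_0'(x) - |V'(x)|$. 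The desired inequality $3V'(x) \leq V_0'(x) \leq \tfrac12 V'(x)$ is therefore equivalent to
\[
-\tfrac14|V'(x)| \leq h_0''(x) \leq |V'(x)|, \qquad\text{i.e.,}\qquad \tfrac{3}{8}|V'(x)| \leq |h_0(x)| h_0'(x) \leq |V'(x)|.
\]

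For the upper bound on $|h_0|h_0'$ (which yields the lower bound $V_0' \geq 3V'$), I would use the integral representation $h_0'(x) \phi_0^2(x) = \int_x^\infty |V'(y)|\phi_0^2(y)\,dy$ from \eqref{eq:h0'}. On $[x_{2,2},\infty)$, Lemma \ref{def: roots} gives $V''\geq 0$ so that $|V'|$ is nonincreasing, and Lemma \ref{lem:bounds h0} gives $h_0 \leq -\mu_0$ on $[x_0,\infty)$ so that $(\phi_0^2)'/\phi_0^2 = 2h_0 \leq -2\mu_0$, yielding $\phi_0^2(y)/\phi_0^2(x) \leq e^{-2\mu_0(y-x)}$ for $y\geq x$. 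Combining these two monotonicities produces $h_0'(x) \leq |V'(x)|/(2\mu_0)$. Then $|h_0|h_0' \leq \widetilde\mu_0 |V'|/(2\mu_0) \leq |V'|$, the last step using $\widetilde\mu_0 \leq 2\mu_0$, which in turn follows from $\widetilde\mu_0^2 = \mu_0^2 + 8/27$ and the lower bound $\mu_0^2 \geq 0.652$ of Lemma \ref{valor mu0}.

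For the lower bound $|h_0|h_0' \geq \tfrac38|V'|$, the strategy is more delicate and uses a refined control of $|h_0|$ near its limit $\mu_0$. Setting $g = -h_0 - \mu_0 \geq 0$ on $[x_0,\infty)$, one derives from \eqref{eq:h0 ivp} the equation $g' - 2\mu_0 g = g^2 - V$; integrating against the factor $e^{-2\mu_0 x}$ from $x$ to $\infty$ (using $g\to 0$) and discarding the nonnegative $g^2$ term gives $g(x) \leq \int_x^\infty V(s)e^{-2\mu_0(s-x)}\,ds$, and one integration by parts then yields the sharp estimate $g(x) \leq V(x)/(2\mu_0)$. Substituting this back into $h_0' = V - 2\mu_0 g - g^2$ or, equivalently, using the analogous lower bound $\phi_0^2(y) \geq \phi_0^2(x)e^{-2\widetilde\mu_0(y-x)}$ together with iterated integration by parts of $\int_x^\infty |V'(y)|e^{-2\widetilde\mu_0(y-x)}\,dy = |V'(x)|/(2\widetilde\mu_0) - (2\widetilde\mu_0)^{-1}\int_x^\infty V''(s)e^{-2\widetilde\mu_0(s-x)}\,ds$, and controlling the correction by the asymptotic decay $V''(s) \lesssim \widetilde Q^4(s)$ from Lemma \ref{lemma:estimation}, I would obtain $h_0'(x) \geq |V'(x)|/(2\mu_0) - O(V(x)\,|V'(x)|)$; combining with $|h_0| \geq \mu_0$ then closes the estimate.

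The principal obstacle is precisely this lower bound, because the repulsivity inequality $V_0' \leq 0$ from Lemma \ref{lem:repulsivity} only yields $h_0'' \geq -|V'|/2$, a factor of two away from the needed $h_0'' \geq -|V'|/4$. Bridging that factor of two requires the quantitative sharpening of $|h_0|$ away from $\widetilde\mu_0$ toward $\mu_0$, which is only available for $x$ large enough. To make the estimate uniform on the full half-line $[x_{2,2},\infty)$, I would split into a large-$x$ region where the asymptotic argument above works, and a compact transition interval $[x_{2,2}, x^*]$ where the bound is verified by direct computation using the explicit formulas for $V$, $V'$ from Lemma \ref{def: roots}, the bounds on $h_0$ from Lemma \ref{lem:bounds h0}, and the numerical estimates of $\mu_0$ from Lemma \ref{valor mu0}, in the same spirit as the case analyses in the proofs of Lemmas \ref{lem:positivity} and \ref{lem:repulsivity}.
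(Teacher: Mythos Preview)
Your reduction via the identity $V_0 = V - 2h_0'$ (hence $V_0' = V' - 2h_0''$) is correct and is a clean simplification that the paper does not make; it reduces both inequalities to pointwise bounds on $h_0''$, exactly as you state. For the inequality $3V' \leq V_0'$ your argument is essentially the same as the paper's: both pull $|V'(y)|$ out of the integral representation \eqref{eq:h0'} using $V''\geq 0$ and $|h_0|\geq \mu_0$ on $[x_{2,2},\infty)$, then cap $|h_0|$ by $\widetilde\mu_0$.

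For the harder inequality $V_0' \leq \tfrac12 V'$ the approaches diverge. You translate it into $|h_0|h_0' \geq \tfrac38|V'|$ and propose to close it by (i) an asymptotic argument giving $h_0'(x) \geq |V'(x)|/(2\widetilde\mu_0) - O(\widetilde Q(x)|V'(x)|)$, which together with $|h_0|\geq\mu_0$ and $\mu_0/(2\widetilde\mu_0) > 3/8$ handles large $x$, and (ii) a direct numerical check on a compact interval $[x_{2,2},x^*]$. The paper instead manipulates $V_0' = 4h_0h_0' - \tfrac32 V' + \tfrac12 V'$ through the integral formula \eqref{eq:h0'} and one integration by parts to reach
\[
V_0'(x) = \tfrac12 V'(x) + \tfrac12\,\frac{h_0(x)}{\phi_0^2(x)}\int_x^\infty J(y)\Big(\frac{\phi_0}{h_0}\Big)^2(y)\,dy,\qquad J = 3V''h_0 - V'(3\mu_0^2 - h_0^2 + 3V),
\]
and then checks $J\geq 0$ on all of $[x_{2,2},\infty)$ using only the signs of $V,V',V''$, the uniform bounds $-\widetilde\mu_0\leq h_0\leq -\mu_0$, and Lemma~\ref{valor mu0}. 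This gives the inequality uniformly in one step, without having to split off a compact region or control higher-order corrections in $\widetilde Q$. Your route is more elementary at the level of the identity $V_0' = V' - 2h_0''$, but the paper's integral representation with $J$ pays off by avoiding the asymptotic--plus--compact splitting altogether.
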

	\begin{proof}
		Due to the parity we restrict our analysis to the positive axis, and we can assume that $x\geq x_{2,2}$.
		
		First, we prove the lower bound using that from Lemma \ref{rem:V} $|V'(x)|$ decrease for $x\geq x_{2,2}$, and in addition employing equations \eqref{eq:h0 phi0}, \eqref{eq:h0'}, \eqref{eq:h0 bound x1<x}, we have that
		\begin{align*}
			|V_0'(x)| &\leq \left| \frac{4h_0(x)}{\phi_0^2(x)}\int_x^\infty V'(y)\phi_0^2(y)dy \right| + |V'(x)| \\[0.1cm]
			&= \left| \frac{4h_0(x)}{\phi_0^2(x)}\int_x^\infty \frac{V'(y)}{2h_0(y)}(\phi_0^2(y))'dy \right| + |V'(x)| \\
			&\leq \left| \frac{2\mu_0^{-1}h_0(x)V'(x)}{\phi_0^2(x)}\int_x^\infty (\phi_0^2(y))'dy \right| + |V'(x)| \\
			&\leq \left( \frac{2\tilde\mu_0}{\mu_0} +1\right) |V'(x)| ,
		\end{align*}
		for all $x\geq x_{2,2}$.
		
		Second, analogously to the proof of Lemma \ref{lem:repulsivity} we use the integral formula for $h_0$ and apply specific bounds. Using the definition of $V_0$, Lemma \ref{lem:h0 properties}, equation \eqref{eq:h0 ivp}, and integration by parts,
		\begin{align*}
			V_0'(x) &= 4h_0(x)h_0'(x) - \frac32V'(x) + \frac12V'(x) \\[0.1cm]
			&= h_0(x)h_0'(x) + 3h_0(x)h_0'(x) - \frac32V'(x) + \frac12V'(x) \\[0.1cm]
			&=  -\frac{h_0(x)}{\phi_0^2(x)}\int_x^\infty V'(y)\phi_0^2(y)dy \\
			&\quad  - \frac32\frac{h_0(x)}{\phi_0^2(x)} \int_x^{\infty} \frac{V'(y)}{h_0(y)}(\phi_0^2(y))'dy - \frac32V'(x) + \frac12V'(x)\\[0.1cm]
			&= -\frac{h_0(x)}{\phi_0^2(x)}\int_x^\infty V'(y)\phi_0^2(y)dy  +  \frac32\frac{h_0(x)}{\phi_0^2(x)}\int_x^\infty \left(\frac{V'(y)}{h_0(y)}\right)'\phi_0^2(y) dy \\
			&\quad - \left.\frac32\frac{h_0(x)V'(y)\phi_0^2(y)}{\phi_0^2(x)h(y)}\right|_{x}^{\infty} - \frac32V'(x) + \frac12V'(x)\\[0.1cm]
			&= \frac{h_0(x)}{\phi_0^2(x)}\int_x^\infty \left(\frac32\frac{V'(y)}{h_0(y)} - V(y)\right)' \phi_0^2(y) dy + \frac12V'(x)\\[0.1cm]
			&= \frac12\frac{h_0(x)}{\phi_0^2(x)}\int_x^\infty \left(3\frac{V''(y)}{h_0(y)} - 3\frac{V'(y)h_0'(y)}{h_0^2(y)} - 2V'(y)  \right)\phi_0^2(y) dy + \frac12V'(x)\\[0.1cm]
			&= \frac12\frac{h_0(x)}{\phi_0^2(x)}\int_x^\infty \left(3V''(y)h_0(y) - 3V'(y)h_0'(y) - 2V'(y)h_0^2(y)  \right)\left(\frac{\phi_0}{h_0}\right)^2(y) dy \\
			&\quad + \frac12V'(x).
		\end{align*}
		Thus, we define the integral form for $V_0'$ given by
		\begin{equation}\label{eq:integral form decay}
			V_0'(x) = \frac12\frac{h_0(x)}{\phi_0^2(x)}\int_x^\infty J(y) \left(\frac{\phi_0}{h_0}\right)^2(y) dy + \frac12V'(x)
		\end{equation}
		where we have denoted $J(y)$ as the term in parenthesis in the penultimate equation. Using equation \eqref{eq:h0 ivp} we have
		\begin{equation}\label{eq:Jy}
			J(y) = 3V''(y)h_0(y) - V'(y)(3\mu_0^2 - h_0^2(y) + 3V(y)).
		\end{equation}
		Thus, we only have to prove the positivity of $J(y)$ to obtain \eqref{eq:bound V0'}.  Applying the bound \eqref{eq:h0 bound x1<x}, and the fact that $V(y)>0$,
		\[
		3\mu_0^2 - h_0^2(y) + 3V(y) \geq 3\mu_0^2 - \widetilde\mu_0^2 + 3V(y) ~\geq~ 2\mu_0^2 - \frac{8}{27} > 0.
		\]
		Bounding by below \eqref{eq:Jy} and using Lemma \ref{valor mu0}, since $V'(y)<0$,
		\[
		\begin{aligned}
			J(y) \geq &~{} - 3V''(y) - \left( 3\mu_0^2 - \widetilde\mu_0^2 + 3V(y) \right) V'(y) \\
			 \geq &~{} -3V''(y) - (1.0 + 3V(y))V'(y) \geq 0,
		\end{aligned}
		\]
		for all $y\geq x_{2,2}$, where we obtain the last inequality via the explicit expressions using \eqref{eq:L}, \eqref{Vp} and \eqref{Vpp}.
		Hence, recalling \eqref{eq:integral form decay}, we obtain that
		\[ 
		V_0'(x) = \frac12\underbrace{\frac{h_0(x)}{\phi_0^2(x)}}_{\leq 0}\underbrace{\int_x^\infty J(y) \left(\frac{\phi_0}{h_0}\right)^2(y) dy}_{\geq 0} + \frac12V'(x) \leq \frac12 V'(x) \leq 0. 
		\]
		This ends the proof of Lemma \ref{lem:decay of V0}.
	\end{proof}

\bibliographystyle{amsplain}

\end{document}